\newtheorem{thm}{Theorem}
\numberwithin{thm}{section}
\newtheorem{theorem}[thm]{Theorem}
\newtheorem*{theorem*}{Theorem}
\newtheorem{corollary}[thm]{Corollary}
\newtheorem*{corollary*}{Corollary}
\newtheorem{lemma}[thm]{Lemma}
\newtheorem{prop}[thm]{Proposition}
\newtheorem{proposition}[thm]{Proposition}
\newtheorem*{conjecture*}{Conjecture}
\newtheorem*{question*}{Question}
\theoremstyle{definition}
\newtheorem{defn}[thm]{Definition}
\newtheorem{definition}[thm]{Definition}
\newtheorem*{definitions*}{Definitions}
\theoremstyle{remark}
\newtheorem*{rem*}{Remark}
\newtheorem{remark}[thm]{Remark}
\newtheorem*{remark*}{Remark}
\newtheorem*{remarks*}{Remarks}
\newtheorem*{example*}{Example}
\newtheorem*{examples*}{Examples}
\newcommand{\acknowledgments}{{\noindent \em Acknowledgments.} }
\newcommand{\R}{\mathbb{R}}
\newcommand{\Z}{\mathbb{Z}}
\newcommand{\Q}{\mathbb{Q}}
\newcommand{\C}{\mathbb{C}}
\newcommand{\N}{\mathbb{N}}
\newcommand{\ep}{\epsilon}
\newcommand{\ga}{\gamma}
\def\lf{\lfloor}
\def\rf{\rfloor}
\def\A{{\mathcal A}}
\def\cz{{\mu_\text{CZ}}}
\renewcommand{\H}{\mathcal{H}}
\newcommand{\J}{\mathcal{J}}
\newcommand{\V}{\mathcal{V}}
\newcommand{\U}{\mathcal{U}}
\newcommand{\jtil}{\widetilde J}
\newcommand{\jbar}{\bar J}
\newcommand{\util}{\widetilde u}
\newcommand{\vtil}{\widetilde v}
\newcommand{\F}{\mathcal{F}}
\newcommand{\M}{\mathcal{M}}
\renewcommand{\P}{\mathcal{P}}
\newcommand{\D}{\mathbb{D}}
\newcommand{\pt}{{\rm pt}}
\newcommand{\mchi}{\hat\chi}
\begin{document}

\title{Local contact homology and applications}

\author[U. Hryniewicz]{Umberto L. Hryniewicz}
\address{Universidade Federal do Rio de Janeiro, Instituto de Matem\'atica,
Cidade Universit\'aria, CEP 21941-909 - Rio de Janeiro - Brazil}
\email{umberto@labma.ufrj.br}

\author[L. Macarini]{Leonardo Macarini}
\address{Universidade Federal do Rio de Janeiro, Instituto de Matem\'atica,
Cidade Universit\'aria, CEP 21941-909 - Rio de Janeiro - Brazil}
\email{leonardo@impa.br}

\setcounter{tocdepth}{1}

\date{\today}

\begin{abstract} 
We introduce a local version of contact homology for an isolated periodic orbit of the Reeb flow and prove that its rank is uniformly bounded for isolated iterations. Several applications are obtained, including a generalization of Gromoll-Meyer's theorem on the existence of infinitely many simple periodic orbits, resonance relations and conditions for the existence of non-hyperbolic periodic orbits.
\end{abstract}

\maketitle

\tableofcontents

\section{Introduction}

Since the seminal works of Floer~\cite{Fl1,Fl2,Fl3} several Morse theoretic tools based on elliptic PDEs have been developed in order to study global properties of Hamiltonian systems. It is hinted at in~\cite{Fl1} that Floer homology should be intuitively thought of as the homology of a suitable Conley index for the ``unregularized'' gradient flow of the Hamiltonian action functional. Loosely speaking, the most immediate isolated invariant set is the set of all bounded trajectories, {\it i.e.}, solutions of Floer's equation with finite energy. It becomes clear from Floer's work that one must look for smaller isolated invariant sets in order to obtain tools for computing, estimating and applying Floer homology to prove existence/multiplicity results for periodic orbits.

Local Floer homology of isolated orbits is a successful instance of this program: an isolated periodic orbit of a Hamiltonian system carries such a homological Conley index, which is defined by counting bounded trajectories connecting orbits that bifurcate after a small perturbation of the system. Many applications of local Floer homology have been obtained, recently the most notorious one being Ginzburg's proof of the Conley conjecture~\cite{Gi}.

The same program should be carried out for contact homology, which was introduced in~\cite{EGH} inside the bigger framework of Symplectic Field Theory (SFT). The purpose of this work is to define local contact homology of isolated closed Reeb orbits and provide a number of applications.

\subsection{Main result}

Let $(N^{2n-1},\xi)$ be a closed co-oriented contact manifold, {\it i.e.}, $\xi\subset TN$ is the hyperplane distribution given by $\xi = \ker\alpha$, for some $1$-form $\alpha$ on $N$ such that $\alpha \wedge (d\alpha)^{n-1}$ is a volume form. We call $\alpha$ a defining contact form for $\xi$ when it induces the given co-orientation. The associated Reeb vector field $X$ is determined by $i_Xd\alpha=0$, $i_X\alpha=1$. By a periodic, or closed, Reeb orbit we mean a pair $\gamma = (x,T)$ where $T>0$ and $x$ is a $T$-periodic trajectory of $X$. Each such $\gamma=(x,T)$ can be identified with a point of $C^\infty(S^1,N)/S^1$ given by the loop $t \in S^1 \simeq \R/\Z \mapsto x(Tt) \in N$. One calls $\gamma$ isolated if it is an isolated point of the set of closed Reeb orbits in $C^\infty(S^1,N)/S^1$.

In this work we associate to $\gamma$ its local contact homology $HC_*(\alpha,\gamma)$ by making a small nondegenerate perturbation of $\alpha$ and counting rigid finite-energy holomorphic cylinders in the symplectization of an isolating neighborhood $K$ of $\gamma$, asymptotic to good periodic orbits which are homotopic to $\gamma$ in $K$. See Section~\ref{lch_section} for details. 

Our first main result establishes a uniform bound for the rank of local contact homology under iterations of a periodic orbit.

\begin{theorem}\label{main}
Let $\gamma$ be a periodic orbit of the Reeb flow such that $\gamma^j$ is isolated for every $j \in \N$. Then there exists a constant $B>0$ satisfying $\dim HC_l(\alpha,\gamma^j) < B$ for every $j \in \N$ and $l\in\Z$.
\end{theorem}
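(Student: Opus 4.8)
The plan is to reduce the computation of $HC_*(\alpha,\gamma^j)$ to a finite–dimensional local invariant of the Poincar\'e return map of $\gamma$ and then to exploit the arithmetic of iterating a symplectic matrix. First I would use the fact (established in Sections~\ref{isolating_nbds_section}--\ref{lch_section}) that $HC_*(\alpha,\gamma^j)$ is independent of all auxiliary choices and, since there are no finite–energy planes in the symplectization of an isolating neighborhood, is a genuinely local object. Identifying a collar of $\gamma$ with $S^1\times B^{2n-2}$ and the return map with a germ $\phi$ of a symplectomorphism of $(\R^{2n-2},0)$ fixing the origin, one gets an isomorphism, up to an overall shift by the mean Conley--Zehnder index and the usual correction accounting for bad orbits, between $HC_*(\alpha,\gamma^j)$ and a Morse/Floer-theoretic local invariant of the iterate $\phi^j$ at its fixed point $0$. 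The hypothesis that $\gamma^j$ is isolated for every $j$ says exactly that $0$ is an isolated fixed point of $\phi^j$ for all $j$.

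Second, I would bound the \emph{support} of $HC_*(\alpha,\gamma^j)$. After a $C^\infty$-small non-degenerate perturbation the orbit $\gamma^j$ breaks into finitely many non-degenerate closed orbits contained in the isolating neighborhood, and their reduced Conley--Zehnder indices all lie in a single interval $I_j$ whose length is at most the nullity $\nu(\gamma^j)=\dim\ker\!\bigl(d\phi^j(0)-\mathrm{Id}\bigr)\le 2n-2$. Hence $HC_k(\alpha,\gamma^j)=0$ unless $k\in I_j$, and the length of $I_j$ is bounded independently of $j$ (only the location of $I_j$ drifts with $j$, according to the mean index).

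It then remains to bound $\dim HC_k(\alpha,\gamma^j)$ in each fixed degree uniformly in $j$; combined with the previous step this yields the theorem. For this I would analyze the linearized return map $P=d\phi(0)\in Sp(2n-2,\R)$ and split $\R^{2n-2}=E\oplus F$ $P$-invariantly, with $P|_E$ having only roots of unity as eigenvalues and $P|_F$ having no eigenvalue that is a root of unity. Along the $F$-directions every iterate $\phi^j$ stays non-degenerate, so by a K\"unneth-type product formula for local contact homology (proved by constructing an adapted almost complex structure for which the relevant cylinders split) those directions contribute only an overall degree shift and a single generator. This reduces matters to a symplectomorphism germ $\psi$ of $(\R^{2d},0)$ with $d\le n-1$, whose linearization has only roots of unity as eigenvalues and for which $0$ is an isolated fixed point of all iterates. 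Letting $p$ be the least common multiple of the orders of those roots of unity, $d(\psi^{p})(0)$ is unipotent, and by the Gromoll--Meyer theory of iterated (critical) fixed points the local homologies of $\psi^{ip}$, $i\ge 1$, realize only finitely many isomorphism types up to a degree shift; the finitely many residue classes of $j$ modulo $p$ account for the remaining iterates. Hence $\dim HC_k(\alpha,\gamma^j)$ is bounded independently of $j$ and $k$, and with the support bound this gives a uniform bound $B$ on $\dim HC_*(\alpha,\gamma^j)$.

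The main obstacle is this last step: transplanting the Gromoll--Meyer iteration estimate — originally a statement about the energy functional on a loop space — to the present setting, which requires a suitable normal form for the iterates of a symplectomorphism germ with unipotent linearization together with the K\"unneth splitting mentioned above. A secondary point requiring care is bookkeeping of which iterates $\gamma^j$ are good, so that the bad-orbit correction in Step~1 does not disturb the identification with the local invariant of $\phi^j$; this is where the parity conditions built into the definition of good orbits must be matched with the $\mathbb{Z}/p$-symmetry of the perturbation picture.
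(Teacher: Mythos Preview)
Your overall architecture matches the paper's: reduce to the local Floer homology of the return map, then bound that uniformly over iterates. But the two steps are weighted very differently from how you present them.

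For the first step you assert an \emph{isomorphism} (up to shift and a ``bad-orbit correction'') between $HC_*(\alpha,\gamma^j)$ and the local Floer homology of $\phi^j$. For a \emph{prime} orbit this is Proposition~\ref{comp_prop}, but for an $m$-fold iterate the paper only establishes the inequality $\dim HC_*(\alpha,\gamma) \leq \dim HF_*(\psi^m,p)$ (Proposition~\ref{est_lch_prop}), and this is where the real work lies. One lifts to the $m$-fold cover of the tubular neighborhood, where the orbit becomes prime and Proposition~\ref{comp_prop} identifies the lifted complex with the local Floer complex; a $\Z_m$-action by chain maps is then constructed on this lifted complex, and $HC_*(\alpha,\gamma)$ is identified with the homology of the image of the averaging operator, hence with a direct summand. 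What you call ``the usual correction accounting for bad orbits'' \emph{is} this $\Z_m$-equivariance together with the sign analysis of Section~\ref{lch:iterated}; it is the heart of the reduction, not a bookkeeping afterthought, and your proposal does not supply it.

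For the second step the paper does not build a K\"unneth splitting or a normal form for the germ. It invokes Ginzburg--G\"urel~\cite{GG} as a black box: for every \emph{admissible} $j$ (i.e.\ $\lambda^j\neq 1$ for every eigenvalue $\lambda\neq 1$ of $d\psi^m$) one has $\dim HF_*(\psi^{mj},p)\leq \dim HF_*(\psi^m,p)$. A short lemma, borrowed from Gromoll--Meyer, then writes $\N$ as a finite union of sequences each consisting of admissible iterates of some fixed $\gamma^{m_k}$, and the uniform bound follows. Your K\"unneth-plus-unipotent sketch is essentially an outline of how one might \emph{prove}~\cite{GG}; the ``main obstacle'' you flag is exactly what that paper handles. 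So your route is not wrong, but it expands a citation into a substantial subproof while compressing the paper's genuinely new contribution (Proposition~\ref{est_lch_prop} and the $\Z_m$-action) into a single unjustified sentence.
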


Theorem~\ref{main} has numerous applications concerning global properties of Reeb dynamics, out of which we emphasize Theorem~\ref{inf_orbits} and its Corollary~\ref{corGM} generalizing a celebrated theorem of Gromoll and Meyer~\cite{GM}. Other applications are Theorem~\ref{grate} which provides a criterion to get infinitely many closed orbits in terms of growth rates, Theorem~\ref{thm:mec} generalizing resonance relations obtained by Ginzburg and Kerman~\cite{GK} to degenerate contact forms, Theorem~\ref{thm: hyperbolic1}, its Corollary~\ref{cor:spheres} and Theorem~\ref{thm: hyperbolic2} which exploit local contact homology to obtain criteria for existence of non-hyperbolic orbits.

It is the result of joint work of Viktor Ginzburg, Doris Hein and the authors~\cite{ghhm} that the arguments used in the proof of the Conley conjecture~\cite{Gi} can be brought to the realm of Reeb flows through local contact homology to deduce the existence of two periodic orbits for Reeb flows on the tight $3$-sphere, a result independently obtained by Cristofaro-Gardiner and Hutchings~\cite{CGH}.

Theorem~\ref{main} is based on two main building blocks. The first is Proposition~\ref{est_lch_prop} establishing that the rank of local contact homology of an isolated periodic orbit is less than or equal to the rank of the local Floer homology of the associated local return map. The second is the main result of~\cite{GG} giving a uniform bound for the rank of local Floer homology of admissible iterations. It should be mentioned that we prove a significantly stronger statement than Theorem~\ref{main}: if the isolated orbit $\gamma$ has multiplicity $m$ then $\Z_m := \Z/m\Z$ acts by chain maps on the chain complex of local Floer homology of the local return map, and local contact homology turns out to be isomorphic to the homology of the subcomplex of $\Z_m$-invariant chains.

Theorem~\ref{main} and its applications rely on the interplay between local and linearized contact homologies. The definitions of both homologies encounter well-known transversality problems. Hence, Theorem~\ref{main} and its dynamical applications, and also the results from~\cite{ghhm}, are conditional on completion of foundational work by Hofer, Wysocki and Zehnder~\cite{HWZ1,HWZ2,HWZ3} setting up the analytical stage where the relations between local and linearized contact homologies can be studied.

\subsection{Applications}

Our applications are very much in the spirit of Symplectic Dynamics, as explained by Bramham and Hofer~\cite{BH}. Modern methods in symplectic geometry, like pseudo-holomorphic curve theory, have proved to be successful in studying global properties of Hamiltonian systems. 

Here we will explore these ideas, more precisely, we use contact homology as introduced by Eliashberg, Givental and Hofer~\cite{EGH}. There are different versions of contact homology. We consider cylindrical and linearized contact homology. The former is an invariant of contact structures defined by contact forms satisfying restrictive dynamical assumptions on the Reeb flow. The definition of the latter, in turn, does not impose dynamical restrictions but requires extra geometric data and a more elaborate construction. 

If $\alpha$ is a nondegenerate defining contact form for $\xi$, co-orientations considered, then, with the help of a suitable regular cylindrical almost complex structure $J$ on the symplectization of $\xi$, one can define a differential graded algebra (DGA) $(\A(\alpha),d)$ associated to the pair $(\alpha,J)$. Here $\A(\alpha)$ is the graded supercommutative unital algebra freely generated by the good closed $\alpha$-Reeb orbits with coefficients in a suitable ring. The grading is given by the reduced Conley-Zehnder index. See Section~\ref{preliminaries} for the precise definitions of good/bad closed Reeb orbits, their Conley-Zehnder indices, and for the precise geometric properties of $J$. The differential $d$ is a degree $-1$ derivation on $\A(\alpha)$ defined by the algebraic count of rigid finite-energy holomorphic spheres with one positive puncture in the symplectization, see~\cite{EGH}. Throughout this work, we will {\it always assume that the first Chern class of the contact structure vanishes}, so that one can use $\Q$ as the coefficient ring. The homology of this DGA depends only on the contact structure and is referred to as the full contact homology of $\xi$ with rational coefficients, see~\cite{Bo} for a survey.

An augmentation on $(\A(\alpha),d)$ is an algebra homomorphism $\ep : \A(\alpha) \to \Q$ satisfying 
\begin{equation}
\begin{array}{cccc} \ep(1)=1, & \ep \circ d = 0 & \text{and} & \ep(\gamma)=0 \ \ \text{if} \ \ |\gamma| \neq 0. \end{array}
\end{equation}
It can be used to linearize $d$ as follows. Let $C(\alpha)$ be the $\Q$-vector space freely generated by the good closed $\alpha$-Reeb orbits, graded by $|\cdot|$. The algebra $\A(\alpha)$ can be decomposed according to word length $\A(\alpha) = \A_0 \oplus \A_1 \oplus \A_2 \oplus \dots$ where $\A_0=\Q$. If $\pi_1$ is the projection onto $\A_1$ and $S^{\epsilon} : \A(\alpha) \to \A(\alpha)$ is the algebra homomorphism determined by $S^{\epsilon}(1)=1$ and $S^{\epsilon}(\gamma) = \gamma + \epsilon(\gamma)$ on generators, then the linearized differential is defined by
\begin{equation*}
\begin{array}{ccc}
d_{\epsilon} : C_*(\alpha) \to C_{*-1}(\alpha) & & d_{\epsilon} = \pi_1 \circ S^{\epsilon} \circ d.
\end{array}
\end{equation*}
Then $(C(\alpha),d_{\epsilon})$ is a chain complex and its homology is the so-called linearized contact homology, denoted by $HC^\ep_*(\alpha)$. The dependence on $J$ is not explicit in order to keep the notation simpler. Two augmentations $\epsilon,\epsilon'$ for $(\mathcal A(\alpha),d)$ are said to be homotopic if there exists a degree $+1$ derivation $K$ such that $\epsilon'=\epsilon\circ \Phi$ where $\Phi = e^{dK+Kd}$. Then the linearization $\Phi_\epsilon = \pi_1 \circ S^\epsilon \circ \Phi$ of $\Phi$ is a chain map and an isomorphism between the complexes $(C(\alpha),d_\epsilon)$ and $(C(\alpha),d_{\epsilon'})$. In the following we denote by ${\rm Aug}(\A(\alpha),d)$ the set of all augmentations of $(\A(\alpha),d)$. The above notion of homotopic augmentations is a relation that generates an equivalence relation on ${\rm Aug}(\A(\alpha),d)$, and we denote by $\overline{\rm Aug}(\A(\alpha),d)$ the set of equivalence classes. An element of $\overline{\rm Aug}(\A(\alpha),d)$ will be referred to as a homotopy class of augmentations. As a consequence, linearized homology depends only on the homotopy class of the augmentation, up to isomorphism.

Let us briefly discuss invariance properties of linearized contact homology. Augmentations are defined for the DGA associated to a nondegenerate defining contact form for $\xi$ and a regular cylindrical almost complex structure on the symplectization, but one may consider homotopy classes of augmentations for the {\it co-oriented contact structure} as the following discussion shows; we refer to~\cite{Bo} for some details.

Consider two pairs $(\alpha_0,J_0)$ and $(\alpha_1,J_1)$ as above. There are positive constants $c_0>c_1$ such that $c_0\alpha_0=fc_1\alpha_1$ for some $f:N\to(1,+\infty)$ smooth. If we choose a smooth function $\phi:[-1,1]\to\R$ satisfying $\phi(-1)=0$, $\phi(1)=1$ and $\phi'>0$, then $\Omega=d((1-\phi+\phi f)c_1\alpha_1)$ is an exact symplectic form on $[-1,1]\times N$ satisfying $\Omega|_{T(\{1\}\times N)}=c_0d\alpha_0$, $\Omega|_{T(\{-1\}\times N)}=c_1d\alpha_1$, where we identified $\{\pm 1\}\times N \simeq N$. It is not hard to construct almost complex structures $\jbar$ on $\R\times N$ which are $\Omega$-tamed on $[-1,1]\times N$ and satisfy $\jbar|_{[1,+\infty)\times N} = J_0$, $\jbar|_{(-\infty,-1]\times N} = J_1$. The data $(\Omega,\jbar)$ induces a chain map $\Psi: (\A(\alpha_0),d_0) \to (\A(\alpha_1),d_1)$ between the corresponding DGAs defined by counting rigid $\jbar$-holomorphic finite-energy punctured spheres with one positive puncture. An augmentation $\ep_1$ of $(\A(\alpha_1),d_1)$ can be pulled-back to an augmentation $\Psi^*\ep_1 = \ep_1 \circ \Psi$ of $(\A(\alpha_0),d_0)$, and the linearization $\Psi_{\epsilon_1} = \pi_1 \circ S^{\epsilon_1} \circ \Psi$ of $\Psi$ by $\ep_1$ induces an isomorphism $HC^{\Psi^*\ep_1}(\alpha_0) \simeq HC^{\ep_1}(\alpha_1)$. It turns out that, in fact, the homotopy class of $\Psi^*\ep_1$ depends on the cobordism data only up to a generic homotopy, 
see~\cite[Theorem 3.2]{CH}. Therefore, we end up with a way to identify homotopy classes of augmentations of both DGAs: the homotopy class of the augmentation $\epsilon_0$ of $(\A(\alpha_0),d_0)$ is identified with the homotopy class of the augmentation $\epsilon_1$ of $(\A(\alpha_1),d_1)$ if $\epsilon_0$ and $\Psi^*\epsilon_1$ are homotopic, where $\Psi$ is obtained from a cobordism data as above. 
This relation generates an equivalence relation on $\coprod_{(\alpha,J)} \overline{\rm Aug}(\A(\alpha),d)$, where the disjoint union is taken over all pairs $(\alpha,J)$ satisfying the conditions necessary to get $(\A(\alpha),d)$ well-defined. The resulting equivalence classes will be referred to as homotopy classes of augmentations for the {\it co-oriented contact structure} $\xi$. We will abuse a bit the notation and denote such equivalence classes by $[\ep]$ and its corresponding linearized contact homology by $HC_*^{[\ep]}(\xi)$.

Let $b_*^{[\ep]}(\xi)$ be the rank of $HC_*^{[\ep]}(\xi)$. We say that $\xi$ is homologically unbounded if there is a homotopy class of augmentations $[\ep]$ for $\xi$ and a sequence of integers $l_i$ such that $|l_i| \to \infty$ and $b_{l_i}^{[\ep]}(\xi) \to \infty$ as $i\to\infty$. The following theorem follows easily from Theorem \ref{main} and the Morse inequalities in Proposition~\ref{mi}. Its proof is given in~\ref{inf_orbs} below.

\begin{theorem}
\label{inf_orbits}
Suppose that $\xi$ is homologically unbounded. Then the Reeb flow of every defining contact form for $\xi$ has infinitely many geometrically distinct periodic orbits.
\end{theorem}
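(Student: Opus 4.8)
The plan is to argue by contradiction. Suppose the Reeb flow of some contact form $\alpha$ for $\xi$ has only finitely many geometrically distinct periodic orbits, say those represented by simple orbits $\gamma_1,\dots,\gamma_N$. Every periodic orbit of the flow is then an iterate $\gamma_k^j$ for some $k \in \{1,\dots,N\}$ and $j \in \N$. The first observation I would make is that each $\gamma_k^j$ is automatically isolated: since there are only finitely many geometrically distinct orbits, no sequence of distinct periodic orbits can converge to $\gamma_k^j$, so the hypothesis of Theorem~\ref{main} is satisfied for each of the simple orbits $\gamma_1,\dots,\gamma_N$. Applying Theorem~\ref{main} to each $\gamma_k$ yields constants $B_1,\dots,B_N$ with $\dim HC_*(\alpha,\gamma_k^j) < B_k$ for all $j$; set $B = \max_k B_k$, so that $\dim HC_*(\alpha,\gamma_k^j) < B$ for every $k$ and every $j$.

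Next I would invoke the Morse inequalities of Proposition~\ref{mi}, which relate the linearized contact homology $HC_*^{[\ep]}(\xi)$ — computed from a non-degenerate contact form for $\xi$ together with an augmentation $\ep$ in the given homotopy class — to the local contact homologies of the periodic orbits of $\alpha$. Concretely, after a small non-degenerate perturbation of $\alpha$ the generators of the chain complex computing $HC_*^{[\ep]}(\xi)$ in degree $*$ come from perturbing the orbits $\gamma_k^j$, and each $\gamma_k^j$ contributes at most $\dim HC_*(\alpha,\gamma_k^j)$ generators in each degree (this is exactly the content of the local-to-global Morse inequalities). Since $\xi$ is homologically unbounded, fix a homotopy class $[\ep]$ and a sequence $|l_i| \to \infty$ with $b_{l_i}^{[\ep]}(\xi) \to \infty$. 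For each fixed degree $l_i$, the Morse inequality gives
\[
b_{l_i}^{[\ep]}(\xi) \;\le\; \sum_{k=1}^{N} \sum_{j \in \N} \dim HC_{l_i}(\alpha,\gamma_k^j).
\]
The key point now is a bound on which iterates $\gamma_k^j$ can contribute to a fixed degree $l_i$: the degree of (a generator coming from) $\gamma_k^j$ is, up to a bounded error controlled by the dimension $n$ and the mean index, a linear function of $j$ with slope the mean Conley-Zehnder index $\hat\Delta(\gamma_k)$; more precisely, by the standard iteration estimates $|\cz(\gamma_k^j) - j\hat\Delta(\gamma_k)| \le n-1$, so the local contact homology $HC_{l_i}(\alpha,\gamma_k^j)$, being supported in a window of width $O(n)$ around $\cz(\gamma_k^j)$, is non-zero only for $j$ in a set of bounded cardinality (bounded independently of $i$) whenever $\hat\Delta(\gamma_k) \ne 0$; the orbits with $\hat\Delta(\gamma_k) = 0$ have all their iterates concentrated in a single bounded band of degrees and hence contribute to only finitely many $l_i$ in total. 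Therefore for $|l_i|$ large the right-hand side of the displayed inequality is bounded by $C \cdot B$ for a constant $C$ depending only on $N$ and $n$, contradicting $b_{l_i}^{[\ep]}(\xi) \to \infty$.

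The main obstacle, and the step that needs the most care, is making the counting argument in the previous paragraph rigorous: one must control, uniformly in the iteration index $j$, both the location of the degrees in which $HC_*(\alpha,\gamma_k^j)$ is supported and the fact that the Morse inequalities of Proposition~\ref{mi} genuinely package the perturbed generators degree by degree. For the first part the relevant input is the common iteration inequality for the Conley-Zehnder index together with the observation that local contact homology of $\gamma_k^j$ vanishes outside an interval of length $2n-1$ centered near $\cz(\gamma_k^j)$ (reflecting that it embeds into local Floer homology of the return map, whose support has bounded width); for the second part one checks that Proposition~\ref{mi} is stated precisely as a degreewise inequality $b_l^{[\ep]}(\xi) \le \sum \dim HC_l(\alpha,\gamma)$ summed over the finitely many geometrically distinct orbits and all their iterates. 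A secondary subtlety is handling orbits with vanishing mean index, but since such an orbit contributes to a bounded range of degrees independent of the iterate, it can affect only finitely many of the $l_i$ and is harmless in the limit. Once these points are in place, the contradiction is immediate and the theorem follows.
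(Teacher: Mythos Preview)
Your proposal is correct and follows essentially the same approach as the paper. The paper organizes the argument slightly more cleanly by first extracting a lemma (Lemma~\ref{unif_bound}) stating that for a simple orbit $\gamma$ with nonzero mean index, $\sum_j \dim HC_i(\alpha,\gamma^j)$ is bounded independently of $i$, and then observing that for $i \geq 2n-3$ only orbits with positive mean index contribute to $c_i$ at all (so the zero mean index case is disposed of automatically rather than by the separate argument you give); but the ingredients---Theorem~\ref{main}, the weak Morse inequality~\eqref{mi:weak}, and the mean-index support estimate~\eqref{meanindex}---are identical to yours.
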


The main point of the above theorem is that there are no genericity assumptions on $\alpha$ like being nondegenerate or Morse-Bott. Examples of homologically unbounded contact structures can be obtained by cosphere bundles. More precisely, given a closed oriented manifold $M$ of dimension $n$, it is proved in \cite{CL} that
\begin{equation}
\label{CL}
HC_{*+(n-3)}^{[\ep_0]}(S^*M, \xi_0) \simeq H_*(\Lambda M/S^1, M;\Q),
\end{equation}
where $\xi_0$ is the standard contact structure of the unit cotangent bundle $S^*M$, $\ep_0$ is given by the obvious filling of $S^*M$, $\Lambda M$ is the free loop space on $M$ and $M \subset \Lambda M$ indicates the subset of constant loops \cite[Theorem 4.4]{Bo}.\footnote{The trivializations of the contact structure over periodic orbits used in \cite{CL} send the vertical distribution in the cotangent bundle to a fixed Lagrangian subspace in $\R^{2n-2}$. This fixes the grading in the isomorphism \eqref{CL}.} A result due to Vigu\'e-Poirrier and Sullivan \cite{VS} establishes that if $M$ is simply connected then the rank of $H_*(\Lambda M/S^1, M;\Q)$ is asymptotically unbounded if the cohomology algebra of $M$ is not generated by a single class. Consequently, we have the following generalization of a celebrated result due to Gromoll and Meyer \cite{GM}. It is also proved by Mark McLean~\cite{McL2} for more general coefficients using symplectic homology.

\begin{corollary}\label{corGM}
Let $M$ be a closed oriented manifold and assume that the rank of $H_*(\Lambda M/S^1, M;\Q)$ is asymptotically unbounded. Then every hypersurface in $T^*M$ which is fiberwise starshaped with respect to the zero section has infinitely many, geometrically distinct, periodic orbits. In particular, the result holds if $M$ is simply connected and its cohomology algebra over $\Q$ is not generated by a single class. 
\end{corollary}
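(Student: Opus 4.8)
The plan is to deduce the corollary from Theorem~\ref{inf_orbits} together with the isomorphism~\eqref{CL}, by way of the standard correspondence between fiberwise starshaped hypersurfaces in $T^*M$ and contact forms for $(S^*M,\xi_0)$.

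First I would set up that correspondence. A fiberwise starshaped hypersurface $\Sigma \subset T^*M$ is by definition transverse to the Liouville vector field, so the restriction $\alpha_\Sigma := \lambda|_\Sigma$ of the tautological $1$-form is a contact form on $\Sigma$ whose Reeb flow coincides, up to reparametrization, with the characteristic flow of $d\lambda$ on $\Sigma$; moreover radial projection onto the unit cosphere bundle gives a contactomorphism from $(\Sigma,\ker\alpha_\Sigma)$ onto $(S^*M,\xi_0)$. Hence the geometrically distinct closed characteristics of $\Sigma$ are in bijection with the geometrically distinct periodic Reeb orbits of a contact form for $\xi_0$, and it suffices to prove that the Reeb flow of \emph{every} contact form for $\xi_0$ has infinitely many geometrically distinct periodic orbits.

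Next I would check that $\xi_0$ is homologically unbounded. The canonical symplectic form on $T^*M$ is exact, so the obvious filling $D^*M$ induces the augmentation $\ep_0$ over $\Q$ occurring in~\eqref{CL}, and that isomorphism reads $b_{l+(n-3)}^{[\ep_0]}(\xi_0) = \dim H_l(\Lambda M/S^1,M;\Q)$ for every $l$. Since $M$ is closed, each of these groups is finite-dimensional; as the rank of $H_*(\Lambda M/S^1,M;\Q)$ is unbounded by hypothesis, the set $\{\, l : \dim H_l(\Lambda M/S^1,M;\Q) \geq B \,\}$ is then infinite for every $B$, so one can pick $d_1 < d_2 < \cdots$ with $\dim H_{d_i}(\Lambda M/S^1,M;\Q) \to \infty$. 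Putting $l_i := d_i + (n-3)$ we get $|l_i| \to \infty$ and $b_{l_i}^{[\ep_0]}(\xi_0) \to \infty$, so $\xi_0$ is homologically unbounded. Theorem~\ref{inf_orbits} now yields infinitely many geometrically distinct periodic orbits for every contact form for $\xi_0$, which with the previous paragraph proves the first assertion. For the last clause, if $M$ is simply connected and its homological algebra over $\Q$ is not generated by a single class, the theorem of Vigu\'e-Poirrier and Sullivan \cite{VS} quoted above gives that the rank of $H_*(\Lambda M/S^1,M;\Q)$ is unbounded, so the first part applies.

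I expect the only point requiring real care to be the first paragraph: checking that, after radial projection, the characteristic dynamics on $\Sigma$ is genuinely realized as the Reeb flow of a contact form for the \emph{fixed} structure $\xi_0$ — so that Theorem~\ref{inf_orbits}, which concerns a single contact structure, applies — and that the notion of ``geometrically distinct'' transports correctly across this identification. Everything after that is a direct concatenation of~\eqref{CL}, the definition of homologically unbounded, Theorem~\ref{inf_orbits}, and \cite{VS}.
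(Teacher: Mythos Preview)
Your proposal is correct and follows exactly the approach implicit in the paper's presentation: the corollary is stated there without a separate proof, as an immediate consequence of the isomorphism~\eqref{CL} (showing $\xi_0$ is homologically unbounded), Theorem~\ref{inf_orbits}, and the Vigu\'e-Poirrier--Sullivan result~\cite{VS}. You have simply made explicit the standard identification of fiberwise starshaped hypersurfaces with contact forms for $(S^*M,\xi_0)$ and the extraction of the sequence $l_i$, which the paper leaves to the reader.
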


Another source of examples is given by connected sums. Given two contact manifolds $(N_1,\xi_1)$ and $(N_2,\xi_2)$ it is well known that its connected sum $N_1 \# N_2$ carries a contact structure $\xi_1 \# \xi_2$, see \cite{Ge}. Moreover, homotopy classes of augmentations $[\ep_1]$ and $[\ep_2]$ for $\xi_1$ and $\xi_2$, respectively, induce a homotopy class of augmentations $[\ep_1 \# \ep_2]$ for $\xi_1 \# \xi_2$. A result due to Bourgeois and van Koert \cite{Bo, BvK} gives the long exact sequence
\begin{align*}
\cdots & \rightarrow HC_{*-1}(S^{2n-3},\xi_0) \rightarrow HC^{[\ep_1 \# \ep_2]}_*(N_1 \# N_2,\xi_1 \# \xi_2) \\
& \rightarrow  HC^{[\ep_1]}_*(N_1,\xi_1) \oplus HC^{[\ep_2]}_*(N_2,\xi_2) \rightarrow HC_{*-2}(S^{2n-3},\xi_0) \rightarrow \cdots
\end{align*}
where $HC_{*}(S^{2n-3},\xi_0)$ is the cylindrical contact homology of the standard contact structure on $S^{2n-3}$. Since the rank of $HC_{*}(S^{2n-3},\xi_0)$ is at most one in any degree, we conclude that the connected sum of any contact manifold (admitting an augmentation) with a homologically unbounded contact manifold is homologically unbounded.

Now, fix a nondegenerate contact form $\alpha$ for $\xi$ with an augmentation $\ep$. There is a natural filtration in contact homology given by the action. Given $a \in \R$ denote the truncated homology by $HC_*^{a,\ep}(\alpha)$ given by the homology of the subcomplex generated by good closed Reeb orbits of action $< a$. In general, it depends on the contact form, a regular cylindrical almost complex structure and the augmentation. Following \cite{McL,Se}, we define the growth rate of $HC^{\ep}(\alpha)$ as
$$ \Gamma^{\ep}(\alpha) = \limsup_{a\to\infty}\frac{1}{\log a}\log \dim \iota(HC^{a,\ep}(\alpha)), $$
where $\iota: HC^{a,\ep}(\alpha) \to HC^{[\ep]}(\xi)$ is the map induced by the inclusion. The argument in \cite[Section 4a]{Se} shows that the set $\{\Gamma^{\ep}(\alpha);\ \ep\text{ is an augmentation for }\alpha\}$ is an invariant of the contact structure. Since our context is different from the one in \cite{Se} (in particular, we have to deal with augmentations) we will give a proof of this fact in Section~\ref{inv:grate}. The following theorem will be proved in Section~\ref{proof:grate}.

\begin{theorem}
\label{grate}
If there exists a nondegenerate contact form $\alpha$ for $\xi$ with an augmentation $\ep$ such that $\Gamma^{\ep}(\alpha) > 1$ then every contact form defining $\xi$ has infinitely many geometrically distinct periodic orbits.
\end{theorem}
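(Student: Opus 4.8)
The plan is to argue by contradiction: suppose $\alpha$ is a contact form for $\xi$ whose Reeb flow has only finitely many geometrically distinct periodic orbits $\gamma_1,\dots,\gamma_N$. By a standard perturbation argument we may assume each $\gamma_k$ is isolated in the stronger sense that every iterate $\gamma_k^j$ is isolated (otherwise the flow would already have infinitely many orbits). Then Theorem~\ref{main} applies to each $\gamma_k$, giving a constant $B_k > 0$ with $\dim HC_*(\alpha,\gamma_k^j) < B_k$ for all $j \in \N$; set $B = \max_k B_k$. Next I would combine this with the Morse inequalities of Proposition~\ref{mi} and the action filtration: for a fixed augmentation $\ep$ and action level $a$, the truncated homology $HC_*^{a,\ep}(\alpha)$ has rank bounded by a sum of local contributions $\dim HC_*(\alpha,\gamma_k^j)$ over iterates $\gamma_k^j$ with action at most $a$, hence by $B$ times the number of such closed orbits up to action $a$.

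The crux of the argument is then a counting estimate: since there are only $N$ simple orbits, with smallest period $T_0 > 0$, the number of (iterated) Reeb orbits with action $\le a$ is at most $N \lfloor a/T_0 \rfloor$, which grows \emph{linearly} in $a$. Consequently $\dim \iota(HC_*^{a,\ep}(\alpha)) \le \dim HC_*^{a,\ep}(\alpha) \le B\, N\, a/T_0$, and therefore
\[
\Gamma^{\ep}(\alpha) = \limsup_{a\to\infty}\frac{\log \dim \iota(HC_*^{a,\ep}(\alpha))}{\log a} \le \limsup_{a\to\infty}\frac{\log(BN a/T_0)}{\log a} = 1
\]
for every augmentation $\ep$ of $\alpha$. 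This contradicts the hypothesis that some contact form for $\xi$ admits an augmentation with growth rate strictly greater than $1$, once we invoke the fact (proved in Section~\ref{inv:grate}) that the set of growth rates $\{\Gamma^{\ep}(\alpha)\}$ is an invariant of the contact structure $\xi$; in particular a contact form with all iterates isolated and finitely many simple orbits forces $\Gamma^{\ep} \le 1$ for the \emph{same} invariant, contradicting $\Gamma^{\ep_0}(\alpha_0) > 1$ for the given form $\alpha_0$.

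I expect the main obstacle to be the reduction step in which one passes from "finitely many periodic orbits" to "all iterates of all orbits are isolated, and local contact homologies of iterates assemble correctly into the action-truncated contact homology." Two points need care here: first, a periodic orbit of the Reeb flow need not a priori have isolated iterates even if the flow has finitely many geometrically distinct orbits, so one must rule out accumulation of an orbit by its own iterates — this is handled because an orbit accumulated by distinct orbits would yield infinitely many of them, but one should check the degenerate-iterate bookkeeping carefully. Second, the Morse-inequality bound of Proposition~\ref{mi} relates \emph{global} linearized contact homology to local contact homology, and to feed it the action filtration one needs the compatibility of the spectral sequence with the action-window truncation; this is where invoking Section~\ref{inv:grate} and the inclusion map $\iota$ precisely as set up in the definition of $\Gamma^\ep$ is essential. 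Once these are in place, the linear-growth counting estimate is elementary and the contradiction is immediate.
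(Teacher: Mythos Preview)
Your proposal is correct and follows essentially the same route as the paper: assume finitely many simple orbits, invoke Theorem~\ref{main} for uniform bounds on the local contact homology of iterates, use the filtered Morse inequalities (Remark~\ref{mi-filtered}) to bound $\dim HC_*^{a,\ep_0}(\alpha)$ linearly in $a$, and then appeal to the invariance of the growth rate established in Section~\ref{inv:grate} to reach a contradiction with $\Gamma^{\ep_0}(\alpha_0)>1$. One small clarification: no ``perturbation argument'' is needed to ensure that every iterate $\gamma_k^j$ is isolated --- as you yourself note parenthetically, this is automatic once there are only finitely many geometrically distinct orbits, since a sequence of distinct closed orbits converging to $\gamma_k^j$ would already produce infinitely many simple orbits.
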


There are several examples of contact manifolds satisfying the previous hypothesis, see \cite{MP}.

Our next application is a generalization of a result due to Ginzburg and Kerman \cite{GK} on resonance relations. Assume that there exist integers $l_-$ and $l_+$ such that $HC_l^{[\ep]}(\xi)$ has finite rank for every $l\leq l_-$ and $l\geq l_+$. Under this assumption the positive/negative mean Euler characteristic is defined as
$$
\chi_\pm^{[\ep]}(\xi) = \lim_{m\to\infty} \frac{1}{m} \sum_{l=|l_\pm|}^m (-1)^lb_{\pm l}^{[\ep]}(\xi)
$$
provided that the limits exist. Given an isolated periodic orbit $\gamma$, its positive/negative local Euler characteristic is defined as
$$ \chi_\pm(\alpha,\gamma) = \sum_{\pm i\geq 0} (-1)^i\dim HC_i(\alpha,\gamma). $$
The sum above is finite. Now, assume that $\gamma^j$ is isolated for every $j \in \N$. The positive/negative local {\it mean} Euler characteristic of $\gamma$ is defined as
$$ \mchi_\pm(\alpha,\gamma) = \lim_{m\to\infty} \frac{1}{m} \sum_{j=1}^m \chi_\pm(\alpha,\gamma^j) $$ provided that the limits exist.

\begin{theorem}
\label{thm:mec}
Let $\alpha$ be a contact form for $\xi$ with finitely many simple closed orbits. Given any homotopy class of augmentations $[\ep]$ for $\xi$, the positive/negative mean Euler characteristic satisfies
\[
\chi_\pm^{[\ep]}(\xi) = \sum_{\{ \gamma \text{ such that } \pm\Delta(\gamma) > 0 \}} \frac{\mchi_\pm(\alpha,\gamma)}{\Delta(\gamma)},
\]
where $\Delta(\gamma)$ is the mean index of $\gamma$ and the sum runs over the set of simple periodic orbits $\gamma$ such that $\pm\Delta(\gamma) > 0$, provided that $\chi_\pm^{[\ep]}(\xi)$ and $\mchi_\pm(\alpha,\gamma)$ are defined.
\end{theorem}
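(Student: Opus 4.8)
The plan is to evaluate both sides on the chain complex of a small non-degenerate perturbation $\alpha_\delta$ of $\alpha$ and match them after reorganizing a double sum. Write $\gamma_1,\dots,\gamma_k$ for the simple orbits of $\alpha$; by hypothesis each $\gamma_i^j$ is isolated, and for $\delta$ small the closed orbits of $\alpha_\delta$ fall into clusters $\mathcal V_i^j$, each consisting of finitely many non-degenerate orbits $C^0$-close to some $\gamma_i^j$. The complex of $\alpha_\delta$ computes $HC_*^{[\ep]}(\xi)$, and filtering it by the action makes its associated graded the direct sum of the local complexes of the clusters (this is the mechanism behind Proposition~\ref{mi}, and uses the confinement of low-energy cylinders near an isolated orbit from Section~\ref{lch_section}); hence there is a spectral sequence with $E^1\cong\bigoplus_{i,j}HC_*(\alpha,\gamma_i^j)$ converging to $HC_*^{[\ep]}(\xi)$. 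Two inputs make this usable: the mean index estimate $|\cz(\gamma_i^j)-j\Delta(\gamma_i)|\le n-1$, with $\Delta(\gamma_i^j)=j\Delta(\gamma_i)$, which together with a uniform bound on the index spread inside a cluster confines $HC_*(\alpha,\gamma_i^j)$ to a window of degrees centered at $j\Delta(\gamma_i)$ of length bounded independently of $j$; and Theorem~\ref{main}, giving $\dim HC_*(\alpha,\gamma_i^j)<B$ for all $j$. Fixing $l_\pm$ large, it follows that for $l\ge l_+$ only clusters with $\Delta(\gamma_i)>0$ and $j$ in a bounded interval around $l/\Delta(\gamma_i)$ contribute to $E^1_l$, so $\dim E^1_l$ is bounded uniformly in $l$, and symmetrically for $l\le l_-$; in particular all the quantities in the statement are defined, as already observed using Theorems~\ref{inf_orbits} and \ref{main}.

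The second step is the Euler-characteristic form of the spectral sequence. Each higher differential $d^r$ lowers total degree by one, so a telescoping computation over the pages gives, for every $L$,
\[
\sum_{l=l_+}^{L}(-1)^l\dim E^\infty_l=\sum_{l=l_+}^{L}(-1)^l\dim E^1_l-(-1)^{l_+}\sum_{r\ge1}\mathrm{rk}\,d^r_{l_+}-(-1)^{L}\sum_{r\ge1}\mathrm{rk}\,d^r_{L+1},
\]
where $d^r_l$ is the degree-$l$ component of $d^r$. The two sums of ranks are bounded by $\dim E^1_{l_+}$ and $\dim E^1_{L+1}$ respectively, hence by a constant independent of $L$; since $\dim E^\infty_l=b_l^{[\ep]}(\xi)$ for $l\ge l_+$ this yields
\[
\sum_{l=l_+}^{L}(-1)^l b_l^{[\ep]}(\xi)=\sum_{l=l_+}^{L}(-1)^l\dim E^1_l+O(1)
\]
with the error uniform in $L$.

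The third step reorganizes the right-hand side. Since $\dim E^1_l=\sum_{i,j}\dim HC_l(\alpha,\gamma_i^j)$,
\[
\sum_{l=l_+}^{L}(-1)^l\dim E^1_l=\sum_{i,j}\ \sum_{l=l_+}^{L}(-1)^l\dim HC_l(\alpha,\gamma_i^j),
\]
and the inner sum equals $\chi\big(HC_*(\alpha,\gamma_i^j)\big)$ when the degree window of $\gamma_i^j$ lies inside $[l_+,L]$, equals $0$ when that window misses $[l_+,L]$, and has absolute value $<B$ otherwise. Using the index window, the windows contained in $[l_+,L]$ are exactly those of the $\gamma_i^j$ with $\Delta(\gamma_i)>0$ and $j$ in an interval $[\,l_+/\Delta(\gamma_i)+O(1),\ L/\Delta(\gamma_i)+O(1)\,]$; for such $j$ the window lies in positive degrees, so $\chi\big(HC_*(\alpha,\gamma_i^j)\big)=\chi_+(\alpha,\gamma_i^j)$. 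The remaining contributions — clusters with $\Delta(\gamma_i)\le0$ reaching degree $\ge l_+$ (only $O(1)$ values of $j$), clusters with $\Delta(\gamma_i)>0$ whose window straddles an endpoint ($O(1)$ values of $j$ per $\gamma_i$, uniformly in $L$), and the discrepancy between the interval above and $\{1,\dots,\lfloor L/\Delta(\gamma_i)\rfloor\}$ — are each $<B$ and $O(1)$ in number, so
\[
\sum_{l=l_+}^{L}(-1)^l b_l^{[\ep]}(\xi)=\sum_{\Delta(\gamma_i)>0}\ \sum_{j=1}^{\lfloor L/\Delta(\gamma_i)\rfloor}\chi_+(\alpha,\gamma_i^j)+O(1).
\]
Dividing by $L$, letting $L\to\infty$, and using $\lfloor L/\Delta(\gamma_i)\rfloor/L\to1/\Delta(\gamma_i)$ together with the definition of $\mchi_+(\alpha,\gamma_i)$ (whose limit exists by Theorem~\ref{main}) gives $\chi_+^{[\ep]}(\xi)=\sum_{\Delta(\gamma)>0}\mchi_+(\alpha,\gamma)/\Delta(\gamma)$. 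The case of $\chi_-^{[\ep]}(\xi)$ is identical, replacing high positive degrees by low negative ones.

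The point requiring genuine care is the construction in the first step: identifying $HC_*(\alpha,\gamma_i^j)$ with the homology of the corresponding block of the global complex and producing the local-to-global spectral sequence. Since the action values $jT(\gamma_i)$ of distinct clusters can be arbitrarily close, no single $\delta$ separates all clusters; it is cleanest to argue for each fixed $L$ with a perturbation $\delta=\delta(L)$ that separates the finitely many clusters whose window meets $[l_+,L]$, which is harmless because $b_l^{[\ep]}(\xi)$ and each $HC_*(\alpha,\gamma_i^j)$ are perturbation-independent. Everything else is bookkeeping of the $O(1)$ corrections, all controlled by Theorem~\ref{main}.
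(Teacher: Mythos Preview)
Your argument is correct and reaches the same destination as the paper, but by a parallel road. The paper does not set up the action spectral sequence explicitly; instead it invokes the strong Morse inequalities of Proposition~\ref{mi}, applied at two consecutive degrees $m$ and $m+1$, to conclude directly that
\[
\Big|\sum_{l=2n-3}^{m}(-1)^l b_l^{[\ep]}(\xi)\ -\ \sum_{l=2n-3}^{m}(-1)^l c_l\Big| \le C + c_{m+1} - b^{[\ep]}_{m+1} = O(1),
\]
the $O(1)$ coming from Lemma~\ref{unif_bound}. This is exactly the content of your second step, since the Morse inequalities are the degreewise shadow of the spectral sequence you build; your boundary terms $\sum_r \mathrm{rk}\,d^r_{l_+}$ and $\sum_r \mathrm{rk}\,d^r_{L+1}$ are what the paper packages as the constant $C$ and the Morse-number correction $c_{m+1}-b^{[\ep]}_{m+1}$. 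The reorganization of the double sum in your third step then matches the paper's second displayed chain of inequalities almost term for term (the paper phrases the window bookkeeping as a two-sided estimate on $\sum_{j=1}^m\chi_+(\alpha,\gamma^j)$ versus $\sum_j\sum_{i\le m\Delta(\gamma)+2n-4}(-1)^i\dim HC_i$). What your approach buys is conceptual transparency about where the $O(1)$ comes from; what the paper's buys is brevity, since Proposition~\ref{mi} is already in hand and absorbs the delicate point you flag in your last paragraph about action values of distinct clusters being arbitrarily close.
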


Notice that in the previous theorem we do not assume $\alpha$ to be nondegenerate. When $\alpha$ is nondegenerate the local mean Euler characteristic of a periodic orbit is easily computed and we obtain

\begin{corollary}[Theorem 1.7 and Remark 1.10 in \cite{GK}]
\label{cor:mec}
If $\alpha$ is nondegenerate and has finitely many simple periodic orbits then
$$ \chi^{[\ep]}_\pm(\xi) = \frac{1}{2}\sum_{\gamma\in B^\pm(\alpha)} \frac{(-1)^{|\gamma|}}{\Delta(\gamma)} + \sum_{\gamma\in G^\pm(\alpha)} \frac{(-1)^{|\gamma|}}{\Delta(\gamma)}, $$
where $B^\pm(\alpha)$ (resp. $G^\pm(\alpha)$) is the set of simple periodic orbits with positive/negative mean index whose even iterates are bad (resp. good). 
\end{corollary}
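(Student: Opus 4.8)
The plan is to specialize Theorem~\ref{thm:mec}: under the non-degeneracy hypothesis every iterate $\gamma^j$ of a simple orbit $\gamma$ is non-degenerate, so its local contact homology is immediate, and the right-hand side of Theorem~\ref{thm:mec} collapses to the asserted expression.

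First I would record $HC_*(\alpha,\gamma^j)$ when $\gamma^j$ is non-degenerate. Since $\alpha$ is already non-degenerate one needs no perturbation in the definition, and inside a small enough isolating neighborhood $K$ of $\gamma^j$ the only closed Reeb orbit homotopic to $\gamma^j$ in $K$ is $\gamma^j$ itself; the differential then vanishes for degree reasons (a single generator, concentrated in one degree). Hence
$$
\dim HC_i(\alpha,\gamma^j)=
\begin{cases}
1, & \gamma^j \text{ good and } i=|\gamma^j|,\\
0, & \text{otherwise},
\end{cases}
$$
so that $\chi_\pm(\alpha,\gamma^j)=(-1)^{|\gamma^j|}$ when $\gamma^j$ is good and $\pm|\gamma^j|\ge 0$, and $\chi_\pm(\alpha,\gamma^j)=0$ when $\gamma^j$ is bad.

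Next I would compute $\mchi_\pm(\alpha,\gamma)$ for each simple orbit $\gamma$; orbits with $\Delta(\gamma)=0$ do not appear in the formula of Theorem~\ref{thm:mec}, so assume $\Delta(\gamma)\ne 0$, say $\Delta(\gamma)>0$ (the other sign being symmetric). Since $|\gamma^j|=j\Delta(\gamma)+O(1)\to+\infty$, for all large $j$ we have $|\gamma^j|\ge 0$, so $\chi_+(\alpha,\gamma^j)$ equals $(-1)^{|\gamma^j|}$ if $\gamma^j$ is good and $0$ if $\gamma^j$ is bad. Here I would invoke the standard parity fact: a good iterate $\gamma^j$ satisfies $\cz(\gamma^j)\equiv\cz(\gamma)\pmod 2$, hence $|\gamma^j|\equiv|\gamma|\pmod 2$. (Indeed, for $j$ odd the iterate is automatically good and $\cz(\gamma^j)\equiv\cz(\gamma)$; for $j$ even the parity of $\cz(\gamma^j)$ differs from that of $\cz(\gamma)$ exactly when the linearized return map of $\gamma$ has an odd number of real eigenvalues in $(-\infty,-1)$, i.e.\ exactly when $\gamma^j$ is bad.) Consequently $\chi_+(\alpha,\gamma^j)=(-1)^{|\gamma|}$ for every large good $j$. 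If $\gamma\in G^+(\alpha)$ then every $\gamma^j$ is good, so $\frac1m\sum_{j=1}^m\chi_+(\alpha,\gamma^j)\to(-1)^{|\gamma|}$; if $\gamma\in B^+(\alpha)$ then $\gamma^j$ is good precisely for odd $j$, so the same Ces\`aro average tends to $\frac12(-1)^{|\gamma|}$. Thus $\mchi_+(\alpha,\gamma)=(-1)^{|\gamma|}$ for $\gamma\in G^+(\alpha)$ and $\mchi_+(\alpha,\gamma)=\frac12(-1)^{|\gamma|}$ for $\gamma\in B^+(\alpha)$, and symmetrically for $\mchi_-$.

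Finally I would substitute these values into $\chi_\pm^{[\ep]}(\xi)=\sum_{\pm\Delta(\gamma)>0}\mchi_\pm(\alpha,\gamma)/\Delta(\gamma)$ provided by Theorem~\ref{thm:mec}, obtaining exactly the stated identity. The only ingredient that is not purely formal is the parity fact for good orbits; I would treat it as known, citing the iteration theory of the Conley--Zehnder index together with the description of good and bad orbits recalled in the introduction, rather than reproving it. I anticipate no further obstacle beyond keeping track of the finitely many small iterates, which do not affect the Ces\`aro limits defining $\mchi_\pm$.
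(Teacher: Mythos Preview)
Your proposal is correct and follows essentially the same approach as the paper: both specialize Theorem~\ref{thm:mec} by computing $\mchi_\pm(\alpha,\gamma)$ in the non-degenerate case, obtaining $(-1)^{|\gamma|}$ when all iterates are good and $\tfrac{1}{2}(-1)^{|\gamma|}$ when the even iterates are bad. The paper's proof is a single displayed formula without justification, whereas you spell out the computation of the local contact homology of a non-degenerate iterate and the parity argument for $|\gamma^j|$, but the underlying strategy is identical.
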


An application of the previous theorem is the following result. Recall that a periodic orbit is hyperbolic if its linearized return map has no eigenvalue in the unit circle.

\begin{theorem}
\label{thm: hyperbolic1}
Suppose that there is a homotopy class of augmentations $[\ep]$ for $\xi$ such that $HC_{n-3}^{[\ep]}(\xi)$ has finite rank and that there exists an integer $C>0$ such that
$$ (-1)^{n}\sum_{i=0}^{mC} (-1)^{\pm i+n-3}b_{\pm i+n-3}^{[\ep]}(\xi) < (-1)^{n}mC\chi_\pm^{[\ep]}(\xi) $$
for every $m \in \N$. If a contact form $\alpha$ for $\xi$ has finitely many geometrically distinct closed orbits then there is a non-hyperbolic one.
\end{theorem}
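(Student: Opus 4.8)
The plan is to argue by contradiction. Assume that some contact form $\alpha$ for $\xi$ has only finitely many geometrically distinct closed orbits and that every one of them is hyperbolic. Since a hyperbolic orbit is non-degenerate, $\alpha$ is non-degenerate; in particular every iterate $\gamma^j$ of every simple orbit $\gamma$ is isolated, so Theorem~\ref{main}, Theorem~\ref{thm:mec}, Corollary~\ref{cor:mec} and the Morse inequalities of Proposition~\ref{mi} are all available. As there are only finitely many simple orbits and, for a hyperbolic orbit, the reduced Conley--Zehnder index of the $j$-th iterate satisfies $\mu(\gamma^j)=j\,\Delta(\gamma)+O(1)$ with a correction periodic in $j$, in every degree $l$ with $|l|$ large only finitely many iterates are relevant; hence $b_l^{[\ep]}(\xi)<\infty$ for $|l|$ large, $\chi_\pm^{[\ep]}(\xi)$ is defined, and by Theorem~\ref{main} all the Ces\`aro limits below converge.

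The first step is to record the shape of the local contact homology of a hyperbolic iterate: since $\gamma^j$ is a non-degenerate isolated orbit with no other orbit nearby, the complex of Section~\ref{lch_section} has a single generator and no differential, so $HC_*(\alpha,\gamma^j)$ is $\Q$ concentrated in degree $\mu(\gamma^j)$ when $\gamma^j$ is good and vanishes when $\gamma^j$ is bad; in particular $\dim HC_*(\alpha,\gamma^j)\le 1$. Therefore $\chi_\pm(\alpha,\gamma^j)=(-1)^{\mu(\gamma^j)}$ if $\gamma^j$ is good and $\pm\mu(\gamma^j)\ge 0$, and $\chi_\pm(\alpha,\gamma^j)=0$ otherwise, so $\mchi_\pm(\alpha,\gamma)$ is an explicit average of signs $(-1)^{\mu(\gamma^j)}$ over the good iterates; feeding this into Theorem~\ref{thm:mec} (equivalently, applying Corollary~\ref{cor:mec}) writes $\chi_\pm^{[\ep]}(\xi)=\sum_{\pm\Delta(\gamma)>0}\mchi_\pm(\alpha,\gamma)/\Delta(\gamma)$.

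The heart of the proof is to compare this with the truncated alternating sum $S_\pm(N):=\sum_{i=0}^{N}(-1)^{\pm i+n-3}b_{\pm i+n-3}^{[\ep]}(\xi)$. Because every orbit is hyperbolic there is no index spreading: a generic chain complex computing $HC_*^{[\ep]}(\xi)$ is generated, in each degree $l$ with $|l|$ large, by the boundedly many good iterates with $\mu=l$, and both this generating set and the differential counts repeat along $l$ up to the shift prescribed by the mean indices $\Delta(\gamma)$, by translation invariance in the symplectization. Hence, evaluating $S_\pm(mC)$ along the arithmetic progression of those $m\in\N$ for which $mC$ is a common multiple of twice the mean indices and of the periods of the index iteration patterns of all simple orbits, one obtains $S_\pm(mC)=mC\,\chi_\pm^{[\ep]}(\xi)+R$, where $R$ is a bounded quantity, constant along this progression. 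I would then analyze $R$ — it is determined by the finitely many low iterates and by the behaviour of the complex near degree $n-3$, where the assumption that $HC_{n-3}^{[\ep]}(\xi)$ has finite rank guarantees it is finite — and show that $(-1)^nR\ge 0$, so that $(-1)^nS_\pm(mC)\ge(-1)^n mC\,\chi_\pm^{[\ep]}(\xi)$ for all such $m$, contradicting the strict inequality in the hypothesis.

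The main obstacle is exactly this last comparison: turning ``no index spreading, hence an eventually periodic complex'' into the exact identity $S_\pm(mC)=mC\,\chi_\pm^{[\ep]}(\xi)+R$ with $R$ controlled. This requires (i) bounding the discrepancy between the chain-level alternating count of generators and the homology-level sum $S_\pm$, i.e.\ the rank of the part of the differential hitting the cut-off degree — here one uses that in large degree there are only boundedly many generators and that these differential ranks are eventually periodic; (ii) treating negative-hyperbolic orbits, whose mean index is a half-integer and whose even iterates are bad, so that only the odd iterates contribute and the relevant period doubles; and (iii) controlling the boundary of the truncation at $n-3$, which is precisely where the finiteness of $HC_{n-3}^{[\ep]}(\xi)$ is used.
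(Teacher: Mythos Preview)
Your overall strategy---argue by contradiction, use that hyperbolic iterates have one-dimensional local homology, invoke Corollary~\ref{cor:mec} for $\chi_\pm^{[\ep]}(\xi)$, and compare with the truncated alternating sum---matches the paper's. But the argument breaks down precisely where you flag it, and the paper's resolution is different from what you outline.

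Your claim that ``the differential counts repeat along $l$ up to the shift prescribed by the mean indices $\Delta(\gamma)$, by translation invariance in the symplectization'' is not justified: $\R$-invariance of $J$ does not make the moduli spaces $\M(\gamma^{j},\eta^{k})$ periodic in $(j,k)$. Even if the complex were eventually periodic, you still would not know the sign of your remainder $R$. The paper avoids this entirely by never analyzing periodicity of the Betti numbers directly. Instead it works with the Morse type numbers $c_i = \sum_{k,j}\dim HC_i(\alpha,\gamma_k^j)$, which for hyperbolic orbits are exactly computable via $\cz(\gamma^j)=j\cz(\gamma)$: one checks that $\sum_{i=n-3}^{mC'+n-3}(-1)^i c_i = mC'\chi_+^{[\ep]}(\xi)$ exactly, for $C'=\mathrm{lcm}\{C,2\prod_k\Delta(\gamma_k)\}$. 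The comparison with the $b_i$'s then comes from the strong Morse inequality, not from any periodicity of the differential.

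The second and more serious gap is how you use the hypothesis $\dim HC_{n-3}^{[\ep]}(\xi)<\infty$. You treat it as a finiteness condition at the boundary of the truncation, but its real role is structural: the paper first proves (Theorem~\ref{index zero}) that under this hypothesis there can be no hyperbolic orbit with $\cz=0$. The argument is a short piece of linear algebra: if $\gamma$ has index zero then so do all $\gamma^k$, while there are only finitely many orbits of index $\pm 1$; hence the boundaries $\partial\gamma^{k_1},\dots,\partial\gamma^{k_{N+1}}$ are linearly dependent in the $N$-dimensional space of index $-1$ chains, producing closed non-exact cycles of arbitrarily high action, so $\dim HC_{n-3}^{[\ep]}(\xi)=\infty$. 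Once there is no orbit of index zero, one has $HC_{n-3}^{(a,b),\ep_0}(\alpha)=0$ for all $a<b$, and the subadditivity argument in Proposition~\ref{mi} yields the \emph{sharp} strong Morse inequality
\[
b_i^{[\ep]}-b_{i-1}^{[\ep]}+\cdots\pm b_{n-3}^{[\ep]} \;\le\; c_i-c_{i-1}+\cdots\pm c_{n-3}
\]
without the additive constant. This sharp inequality, combined with the exact computation of $\sum(-1)^i c_i$ above, gives $(-1)^{n}\sum_{i=n-3}^{mC'+n-3}(-1)^i b_i^{[\ep]} \ge (-1)^{n}mC'\chi_+^{[\ep]}(\xi)$, contradicting the hypothesis. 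In your language, eliminating index-zero orbits is exactly what forces $(-1)^n R\ge 0$; without that step the inequality need not hold.
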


Examples satisfying these hypotheses can be obtained using Yau's computation of the contact homology of subcritical Stein fillable contact manifolds \cite{Y}. More precisely, it is proved in \cite{Y} that given a subcritical Stein domain $(V^{2n},J)$ such that $\partial V = N$ then the cylindrical contact homology is given by
$$ HC_*(\xi) \simeq \oplus_{m\geq0} H_{2(n+m-1)-*}(V), $$
where $\xi$ is the maximal complex subbundle of $TN$. One can check from this that if $n$ is even and $V$ has trivial homology in every odd degree then $N$ satisfies the hypotheses of Theorem~\ref{thm: hyperbolic1} for the positive Euler characteristic. In particular, we get a result obtained by Viterbo in~\cite{Vit}.

\begin{corollary}\label{cor:spheres}
Assume that the Reeb flow associated to a contact form on $S^{4k-1}$ defining its standard contact structure admits finitely many periodic orbits. Then there must be at least one nonhyperbolic periodic orbit.
\end{corollary}

As a byproduct of the proof of Theorem~\ref{thm: hyperbolic1} we also obtain

\begin{theorem}\label{thm: hyperbolic2}
Suppose that there is a homotopy class of augmentations $[\ep]$ for $\xi$ such that
\[
0 < \dim HC_{n-3}^{[\ep]}(\xi) < \infty.
\]
If a contact form $\alpha$ for $\xi$ has finitely many geometrically distinct closed orbits then there is a non-hyperbolic one.
\end{theorem}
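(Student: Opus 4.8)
The plan is to argue by contradiction, running the same bookkeeping as in the proof of Theorem~\ref{thm: hyperbolic1} but isolating the contribution of the single degree $n-3$. Suppose a contact form $\alpha$ for $\xi$ has only finitely many geometrically distinct closed orbits $\gamma_1,\dots,\gamma_r$ and that each of them is hyperbolic. A hyperbolic orbit is automatically non-degenerate---its linearized first return map does not have $1$ as an eigenvalue on $\xi$---so $\alpha$ is non-degenerate and every iterate $\gamma_k^j$ is again hyperbolic and non-degenerate. I would use two structural inputs. First, for a hyperbolic orbit $\gamma$ the local Floer homology of its return map is at most one-dimensional, so by Proposition~\ref{est_lch_prop} the local contact homology $HC_*(\alpha,\gamma^j)$ is at most one-dimensional and concentrated in the single degree $|\gamma^j|=\cz(\gamma^j)+n-3$: it equals $\Q$ when $\gamma^j$ is good and $0$ when $\gamma^j$ is bad; moreover $\Delta(\gamma)\in\Z$, and $\cz(\gamma^j)=j\,\Delta(\gamma)$ when $\gamma$ is positive hyperbolic. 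Second, the Morse inequalities of Proposition~\ref{mi} bound $b_k^{[\ep]}(\xi)$ by the number $c_k$ of good iterates $\gamma_\ell^j$ with $|\gamma_\ell^j|=k$, and, by Theorem~\ref{main} or directly from the above, $c_k$ is finite for every $k$ outside a bounded band of degrees; the exceptional band, if nonempty, is populated exclusively by orbits of zero mean index.

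I would then sort the simple orbits by the sign of $\Delta(\gamma)$. Every orbit with $\pm\Delta(\gamma)>0$ has $|\gamma^j|\to\pm\infty$, so it contributes only finitely many generators to each fixed degree; summing the resulting geometric series and invoking Theorem~\ref{thm:mec} together with Corollary~\ref{cor:mec} recovers the \emph{finite} quantity $\chi_\pm^{[\ep]}(\xi)=\sum_{\pm\Delta(\gamma)>0}\mchi_\pm(\alpha,\gamma)/\Delta(\gamma)$, a sum from which orbits of zero mean index are absent. Orbits with $\Delta(\gamma)=0$ are the delicate case: for such a $\gamma$ all good iterates live in a uniformly bounded band of degrees around $n-3$, and if $\gamma$ is positive hyperbolic with $\cz(\gamma)=0$ then every iterate is good and lies in degree exactly $n-3$, so $\gamma$ alone deposits infinitely many generators in the chain group in degree $n-3$. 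With this decomposition in hand one writes, setting $S_\pm(N)=\sum_{i=0}^{N}(-1)^{\pm i+n-3}b_{\pm i+n-3}^{[\ep]}(\xi)$ and pulling out the $i=0$ summand, $S_\pm(N)=N\,\chi_\pm^{[\ep]}(\xi)+(-1)^{n-3}b_{n-3}^{[\ep]}(\xi)+g_\pm(N)$ with $g_\pm$ bounded and eventually periodic, the period being the least common multiple of the nonzero mean indices.

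The final step, which also closes the proof, is to establish the dichotomy $\dim HC_{n-3}^{[\ep]}(\xi)\in\{0,\infty\}$, which is incompatible with the hypothesis $0<\dim HC_{n-3}^{[\ep]}(\xi)<\infty$. If no simple orbit of zero mean index feeds degree $n-3$, then $c_{n-3}<\infty$ and $b_{n-3}^{[\ep]}(\xi)$ is finite; here $b_{n-3}^{[\ep]}(\xi)$ is forced to vanish by the same mean--Euler--characteristic rigidity exploited, via common index jumps, in the proof of Theorem~\ref{thm: hyperbolic1} (it is at this point that the auxiliary strict inequality enters in Theorem~\ref{thm: hyperbolic1}, while here it is the strict positivity $\dim HC_{n-3}^{[\ep]}(\xi)>0$ that produces the contradiction). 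If instead some zero-mean-index orbit does feed degree $n-3$, then the chain group there is infinite-dimensional; filtering the linearized complex by the action---which the differential strictly decreases---one checks that these infinitely many generators cannot all cancel in homology, because the only generators that could cancel them occupy degrees $n-2$ and $n-4$ and, again by one-dimensionality of every local contact homology, form a comparably structured but strictly action-lagging family, so $\dim HC_{n-3}^{[\ep]}(\xi)=\infty$. Either outcome contradicts the hypothesis, and a non-hyperbolic orbit must exist.

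I expect the second horn of the dichotomy to be the main obstacle: ruling out the situation in which an infinite family of degree-$(n-3)$ generators produced by a positive hyperbolic orbit of zero mean index is exactly annihilated in homology by an equally infinite family of degree-$(n-2)$ generators produced by odd iterates of a negative hyperbolic orbit of zero mean index. Making this precise requires combining the action filtration with the fact that between distinct simple-orbit towers of comparable action the linearized differential is \emph{strictly} action-decreasing, so that any such cancellation is forced to be triangular and hence leaves an infinite-dimensional residue in $HC_{n-3}^{[\ep]}(\xi)$; it is exactly here that the hyperbolicity of all orbits, through the one-dimensionality of every local contact homology, is used in full strength.
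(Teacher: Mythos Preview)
Your overall framework---contradiction, then the dichotomy $\dim HC_{n-3}^{[\ep]}(\xi)\in\{0,\infty\}$ when all orbits are hyperbolic---is exactly the paper's. The paper packages the ``$\infty$'' horn as a separate statement (Theorem~\ref{index zero}) and then deduces Theorem~\ref{thm: hyperbolic2} in two lines: $b_{n-3}>0$ forces a generator in degree $n-3$, hence a closed orbit with $\cz=0$; Theorem~\ref{index zero} then yields $b_{n-3}=\infty$, contradicting $b_{n-3}<\infty$.

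Your first horn is overcomplicated. If no orbit has $\cz=0$ then the chain group $C_{n-3}$ is literally zero, so $b_{n-3}=0$; no mean Euler characteristic is needed, and the machinery of Theorem~\ref{thm: hyperbolic1} (which relies on an extra strict-inequality hypothesis you do not have here) is not available anyway.

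The second horn is where the real content lies, and here there is a genuine gap. The scenario you flag as the ``main obstacle''---an infinite family of degree-$(n-2)$ generators coming from odd iterates of a negative hyperbolic orbit with zero mean index---simply cannot occur. For any hyperbolic orbit $\psi$ one has $\cz(\psi^k)=k\,\cz(\psi)$ (equation~\eqref{indhyp}), so $\Delta(\psi)=\cz(\psi)$, and $\Delta(\psi)=0$ forces every iterate to have $\cz=0$, hence to sit in degree $n-3$, not $n-2$. More to the point, \eqref{indhyp} implies that an orbit with $\cz=\pm1$ must be simple; since there are only finitely many simple orbits, $C_{n-2}$ and $C_{n-4}$ are finite-dimensional. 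This is the decisive observation, and it makes the ``$\infty$'' horn immediate: $\ker(\partial\colon C_{n-3}\to C_{n-4})$ has finite codimension in the infinite-dimensional space $C_{n-3}$, while $\mathrm{im}(\partial\colon C_{n-2}\to C_{n-3})$ is finite-dimensional, so $HC_{n-3}$ is infinite-dimensional. The paper's proof of Theorem~\ref{index zero} makes this concrete via a pigeonhole argument: with $N=\dim C_{n-4}$, any $N+1$ high-action iterates $\gamma^{k_1},\dots,\gamma^{k_{N+1}}$ have linearly dependent boundaries in $\Q^N$, producing a cycle which cannot be exact once its action exceeds that of every generator of $C_{n-2}$.

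Your ``triangular, action-lagging'' heuristic does not substitute for this finiteness: without knowing $\dim C_{n-2},\dim C_{n-4}<\infty$, one can easily build acyclic complexes with infinite-dimensional graded pieces and strictly action-decreasing differential, so the action filtration alone does not force an infinite-dimensional residue.
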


A homology computation in~\cite{AM} shows that there is a family of inequivalent contact structures on $S^2 \times S^3$ meeting this assumption. The isomorphism \eqref{CL} implies that the unit cotangent bundle of a closed oriented manifold with non-trivial fundamental group and compact universal covering also satisfies this condition. By the aforementioned long exact sequence, the connected sum of any manifold meeting the assumption in Theorem~\ref{thm: hyperbolic2} with any contact manifold $(N,\xi)$ such that $\dim HC_{n-3}^{[\ep]}(\xi) < \nolinebreak \infty$ has a contact structure satisfying this assumption as well.

\vskip .2cm

\noindent {\bf Organization of the paper.} Section~\ref{preliminaries} furnishes the basic material on pseudo-holomorphic curves necessary for this work. In Section~\ref{isolating_nbds_section} we define isolating neighborhoods of isolated closed Reeb orbits. This notion is crucial in the construction of local contact homology accomplished in Section~\ref{lch_section}. The computation of local contact homology is focused in Sections \ref{lch:isolated} and \ref{lch:iterated} where we deal with prime and iterated periodic orbits respectively. Morse inequalities are achieved in Section~\ref{section:mi}. They are a cornerstone in the proof of our applications presented in Section~\ref{proof_appls}. Finally, the appendices provide technical details about holomorphic curves needed for defining local contact homology and for proving Theorem~\ref{main}. Appendix~\ref{exist_orbits} handles basic compactness issues related to the degeneration of Morse-Bott data to degenerate data; this is necessary since the SFT-compactness theorem from~\cite{sftcomp} is not available in such limiting situations. 

\vskip .4cm

\noindent {\bf Note 1.} After we started to write the present paper, we were aware that Mark McLean obtained similar results using symplectic homology~\cite{McL2}. The results can be related to ours using the Bourgeois-Oancea long exact sequence~\cite{BO}. However, although our techniques lead to similar results, we think they are complementary to McLean's since they furnish an equivariant version of the local homology of closed Reeb orbits.

\vskip .2cm

\noindent {\bf Note 2.} In~\cite{fabert} Fabert introduces a local version of Symplectic Field Theory, in the same spirit as the local Gromov-Witten theory from~\cite{BP}. Thus, our use of the word ``local'' may generate a conflict in terminology, since we used it in analogy to local Floer homology.

\vskip .2cm

\acknowledgments We are grateful to Alberto Abbondandolo, Urs Frauenfelder, Viktor Ginzburg, Nancy Hingston, Michael Hutchings, Alexandru Oancea, Gabriel Paternain and Otto van Koert for useful discussions regarding this paper. Special thanks to Alberto Abbondandolo for pointing out to us, in the beginning of this work, Ginzburg-Gurel's result in \cite{GG} and its relationship with Gromoll-Meyer's theorem, to Viktor Ginzburg for pointing out a mistake in a first draft of this paper, and to Michael Hutchings for identifying an issue in our attempt to define local contact homology using domain-dependent almost complex structures in a previous version of this paper.

We thank the IAS for its hospitality and for supporting the preparation of this work.

These results were first presented by the first author at the Workshop on Conservative Dynamics and Symplectic Geometry, IMPA, Rio de Janeiro, Brazil, August 1--5, 2011. He thanks the organizers for the opportunity to participate in such a great event.

This material is based upon work supported by the National Science Foundation under agreement No. DMS-0635607. Any opinions, findings and conclusions or recommendations expressed in this material are those of the authors and do not necessarily reflect the views of the National Science Foundation.

Both authors were also supported by CNPq.

\section{Preliminaries}\label{preliminaries}

\subsection{Stable Hamiltonian structures}

We start by recalling the concept of a stable Hamiltonian structure from~\cite{sftcomp}.

\begin{definition}\label{shs_1}
A stable Hamiltonian structure on a $(2n-1)$-manifold $N$ is a triple $\H = (\xi,X,\omega)$ where $\xi \subset TN$ is a hyperplane distribution, $X$ is a vector field everywhere transverse to $\xi$ such that its flow preserves $\xi$, $\omega$ is a closed $2$-form such that $\omega|_\xi$ turns $\xi$ into a symplectic vector bundle and satisfies $i_X\omega=0$.
\end{definition}

We refer to $X$ as the Hamiltonian vector field of $\H$. Any contact form $\alpha$ on $N$ induces a stable Hamiltonian structure $(\xi,R,Cd\alpha)$, where $R$ is the associated Reeb vector field, $\xi = \ker\alpha$ is the contact structure and $C>0$.

\begin{remark}\label{shs_2}
Given such $\H = (\xi,X,\omega)$ one can define a $1$-form $\lambda$ on $N$ by
\begin{equation}\label{1form}
\begin{array}{cc} i_X\lambda=1, & \ker \lambda = \xi. \end{array}
\end{equation}
Then $\lambda \wedge \omega^n$ is a volume form and $\ker d\lambda \supseteq \ker \omega = \R X$. In particular $\mathcal L_X\lambda = 0$ and $\mathcal L_X\omega=0$. The stable Hamiltonian structure could be alternatively defined as the pair $(\lambda,\omega)$. In this case, $X$ and $\xi$ would be uniquely determined by $i_X\lambda=1$, $i_X\omega=0$ and $\xi = \ker \lambda$.
\end{remark}

Throughout Section~\ref{preliminaries} we fix a compact $(2n-1)$-manifold $N$ equipped with a stable Hamiltonian structure $\H = (\xi,X,\omega)$, and assume that $c_1(\xi)$ vanishes.

\subsubsection{Periodic orbits}\label{s:perorbs}

If $x:\R\to N$ is a periodic trajectory of $X$ with period $T>0$ then $x_T : S^1 = \R/\Z \to N$, $t \mapsto x(Tt)$, defines an element of $C^\infty(S^1,N)$. A periodic orbit $\gamma$ of $X$ is the element of $C^\infty(S^1,N)/S^1$ induced by some $x_T$ as above. We write $\gamma = (x,T)$ and $\gamma^k = (x,kT)$ for any $k\in\Z^+$. The set $x(\R) \subset N$ is called the geometric image of $\gamma$. If $T$ is the minimal positive period of $x$ then we call $\gamma$ simply covered, or prime. The set of periodic orbits of $X$ will be denoted by $\P(\H)$. When $\H$ is induced by some contact form $\alpha$ we may write $\P(\alpha)$, or simply $\P$ when the context is clear. For any given $K\subset N$ we denote by $\P(\H,K)$, or $\P(\alpha,K)$, the subset of orbits with geometric image contained in $K$.

The flow $\{\phi_t\}_{t\in\R}$ of $X$ induces a $\omega$-symplectic linear flow $d\phi_t : \xi \to \xi$. If $\gamma=(x,T) \in \P$ and $1$ is not in the spectrum of $d\phi_T : \xi_{x(0)} \to \xi_{x(0)=x(T)}$ then $\gamma$ is called nondegenerate. When every $\gamma \in \P(\H)$ ($\gamma \in \P(\H,K))$ is nondegenerate we call $\H$ nondegenerate (on $K$). The notation $\gamma = (x,T)$ is ambiguous since the choice of $x$ is not determined, so from now on we choose a special point ${\rm pt}_\gamma$ in the geometric image of every closed orbit $\gamma$ of $\{\phi_t\}$ and assume that $x(0) = \pt_\gamma$. Orbits with the same geometric image share the same special point, by convention.

\subsubsection{Conley-Zehnder indices} \label{trivializations}

Assume at first, for simplicity, that $H_1(N,\Z)$ is torsion free, and choose a set of generators $\{C_i\}$, $i=1,\dots,l$. The $C_i$ can be represented by 1-dimensional submanifolds $(\dim N\geq3)$ still denoted $C_i$, and we choose $\omega$-symplectic trivializations of $\xi|_{C_i}$. Any $\gamma = (x,T) \in \P(\H)$ can be seen as a singular 1-cycle, which induces a homology class $[\gamma] \in H_1(N,\Z)$. There are unique $n_i \in \Z$ satisfying $[\gamma] = \sum_i n_i C_i$. A 2-chain realizing a homology between $\gamma$ and $\sum_i n_iC_i$ can be used to single out a homotopy class of $\omega$-symplectic trivializations of $(x_T)^*\xi$. A trivialization in this class represents the linearized dynamics of $X$ along $\gamma$ restricted to $\xi$ as a path in $Sp(2n-2)$ starting at the identity. If $\gamma$ is nondegenerate then this path ends in the complement of the Maslov cycle, and has a well-defined Conley-Zehnder index as in~\cite{salamon}, denoted by $\cz(\gamma)\in\Z$. This is independent of the choice of the 2-chain since we assumed that $c_1(\xi)$ vanishes. The degree of $\gamma$ is defined by
\begin{equation}\label{degree}
|\gamma| := \cz(\gamma) + n-3.
\end{equation}

In the case $N$ is a unit cotangent bundle over an orientable base it is not necessary to assume $H_1(N,\Z)$ is torsion free. In fact, we can (symplectically) trivialize $\xi$ along closed Reeb orbits in such a way that the vertical Lagrangian distribution is sent to a fixed Lagrangian subspace in $(\R^{2n-2},\omega_0)$. The Conley-Zehnder index is then defined with respect to these trivializations, and the degree is as in~\eqref{degree}. The crucial properties we need are the following. Firstly, the obtained trivializations along iterated orbits are homotopic to iterated trivializations. Secondly, Fredholm indices of holomorphic curves are given by the usual formulas.


\subsubsection{Good orbits}

Let $\gamma = (x,T) \in \P(\H)$ be simply covered. According to~\cite{EGH}, if the number of eigenvalues of $d\phi_T:\xi_{x(0)}\to\xi_{x(T)=x(0)}$ in $(-1,0)$ is odd (counted with multiplicities) then the even multiples $\gamma^{2k}$ are called bad orbits. An orbit is called good if it is not bad, and we define
\begin{equation*}\label{}
\begin{aligned}
&\P_0(\H) := \{ \gamma \in \P(\H) : \gamma \text{ is good} \} \\
&\P_0(K,\H) = \P(K,\H) \cap \P_0(\H).
\end{aligned}
\end{equation*}
In the case $\H$ is induced by a contact form $\alpha$ we write $\P_0(\alpha)$ and $\P_0(K,\alpha)$ accordingly.

\subsection{Pseudo-holomorphic curves}

We take a moment to review the basic definitions from pseudo-holomorphic curve theory.

\subsubsection{Cylindrical almost complex structures}\label{alm_cpx_strs}

Let $V$ be a compact $(2n-1)$-manifold. In $\R\times V$ there is a natural $\R$-action induced by the maps
\begin{equation}\label{translation}
\tau_c : (a,p) \mapsto (a+c,p), \ c\in \R.
\end{equation}
In the language of~\cite{sftcomp}, an almost complex structure $J$ on $\R\times V$ is cylindrical if $\tau_c^*J = J, \ \forall c\in\R$, and the vector field $R := J \partial_a$ is horizontal, {\it i.e.}, it is tangent to $\{a\} \times V$, $\forall a\in\R$. Since $J$ is $\R$-invariant, the formula $\Xi := TV \cap J(TV)$ defines a $(2n-2)$-dimensional $J$-invariant distribution in $V$ and $R$, seen as a vector field on $V$, is everywhere transverse to $\Xi$. Note that $J$ is a complex structure on $\Xi$. $J$ is called symmetric if the $1$-form $\lambda$ on $V$ defined by $i_R\lambda=1$, $\Xi = \ker \lambda$ satisfies $\mathcal L_R\lambda = 0$.

Let $\Omega$ be a closed $2$-form on $V$ of maximal rank, that is, $\dim \ker \Omega = 1$. Then a cylindrical $J$ as above is said to be adjusted to $\Omega$ if the restriction $\Omega|_\Xi$ turns $\Xi$ into a symplectic vector bundle, $J|_\Xi$ is $\Omega|_\Xi$-compatible, {\it i.e.}, $\Omega(\cdot,J\cdot)$ defines a metric on $\Xi$, and $i_R\Omega=0$. Note that if $J$ is cylindrical, symmetric and adjusted to $\Omega$ then $\H = (\Xi,R,\Omega)$ is a stable Hamiltonian structure.

Conversely, a stable Hamiltonian structure $\H = (\Xi,R,\Omega)$ induces symmetric cylindrical almost complex structures on $\R\times N$ adjusted to $\Omega$. In fact, choose some $\Omega$-compatible complex structure $\widehat J$ on $\Xi$. Then we have a unique $\R$-invariant almost complex structure $J$ on $\R\times N$ defined by requiring $J \partial_a = R$ and $J|_\Xi = \widehat J$, which is the desired almost complex structure.

The set of such $J$ will be denoted by $\J(\H)$. When $\H$ is induced by a contact form $\alpha$ we may write $\J(\alpha)$ instead of $\J(\H)$.


\subsubsection{Almost complex structures in non-cylindrical cobordisms}\label{acs_cobordisms}

Consider stable Hamiltonian structures $\H^\pm$ on $N$, and $J^\pm \in \J(\H^\pm)$. For a given $L>0$ we denote by $\J_L(J^-,J^+)$ the space of almost complex structures $\bar J$ on $\R\times N$ such that $\bar J|_{[L,+\infty)} \equiv J^+$ and $\bar J|_{(-\infty,-L]} \equiv J^-$. We write $\J(J^-,J^+) = \cup_{L>0} \J_L(J^-,J^+)$. When $\H^\pm$, $J^\pm \in \J(\H^\pm)$ and $L>0$ are fixed we may consider smooth 1-parameter families $\{\bar J^\tau\}_{\tau\in[0,1]} \subset \J_{L}(J^{-},J^{+})$. We denote the spaces of such families by $\J_{\tau,L}(J^{-},J^{+})$ and $\J_\tau(J^{-},J^{+}) = \cup_{L>0} \J_{\tau,L}(J^{-},J^{+})$. 


We need to consider almost complex structures in splitting cobordisms. Fix $\H$, $J \in \J(\H)$ and numbers $0<L<R$. We denote by $\J_{L<R}(J)$ the set of almost complex structures which coincide with $J$ on $(\R\setminus [-L-R,L+R])\times N$, and are cylindrical on the neck $[L-R,R-L]\times N$. Clearly $\J_{L<R}(J) \subset \J_{L+R}(J,J)$. Also, we consider the set $\J_{\tau,L<R}(J)$ of smooth families $\{\jtil^\tau\}_{\tau\in[0,1]} \subset \J_{L<R}(J)$. Note that we {\bf do not assume} that the elements of $\J_{L<R}(J)$ are symmetric on the neck $[L-R,R-L]\times N$.


\subsubsection{Finite-energy curves}\label{fecurves}

Let $(S,j)$ be a closed Riemann surface and $Z \subset S$ be a finite set. Fix $\H = (\xi,X,\omega)$ and $J \in \J(\H)$. A smooth map $$ F = (a,f) : S \setminus Z \to \R\times N $$ is $J$-holomorphic, or pseudo-holomorphic, if it satisfies the Cauchy-Riemann equation $$\bar\partial_J (F) = \frac{1}{2} (dF + J(F) \circ dF \circ j) = 0. $$ Consider the set $\Lambda = \{\phi:\R\to[0,1] \mid \phi' \geq 0\}$. We can view an element of $\Lambda$ as a real function on $\R\times N$ depending only on the first coordinate. Similarly, we view the form $\lambda$ in~\eqref{1form} as an $\R$-invariant $1$-form on $\R\times N$. Following~\cite{sftcomp}, one defines the $\omega$-energy $E_\omega(F)$ and the energy $E(F)$ as
\begin{equation}\label{energy_cylindrical}
  E_\omega(F) = \int_{S\setminus Z} f^*\omega, \ \ \ \ \ \ E(F) = E_\omega(F) + \sup_{\phi \in \Lambda} \int_{S\setminus Z} F^*(d\phi \wedge\lambda).
\end{equation}
All these integrals have non-negative integrands. If $0< E(F)< \infty$ then $F$ is said to be a finite-energy curve. The elements of $Z$ are called punctures of $F$, and a puncture $z\in Z$ is called removable when $F$ is bounded near $z$. In this case, an application of Gromov's Removable Singularity Theorem shows that $F$ can be smoothly continued across $z$. 

Let $\H^\pm = (\xi^\pm,X^\pm,\omega^\pm)$ be stable Hamiltonian structures on $N$, fix $J^\pm \in \J(\H^\pm)$, $L>0$ and $\bar J \in \J_L(J^-,J^+)$. Assume also that there exists a symplectic form $\Omega$ on $[-L,L]\times N$ that agrees with $\omega^\pm$ on $T(\{\pm L\}\times N) \simeq TN$, up to positive constants, and tames $\bar J|_{[-L,L]\times N}$. Consider a smooth map $F : S \setminus Z \to \R\times N$ which is $\bar J$-holomorphic. Following~\cite{sftcomp} we define
\begin{equation}\label{energy_cobordism}
  \begin{aligned}
    E(F) = &\int_{F^{-1}([-L,L]\times N)} F^*\Omega + \sup_{\phi \in \Lambda} \int_{F^{-1}([L,+\infty)\times N)} F^*(d\phi\wedge\lambda^+ + \omega^+) \\
    &+ \sup_{\phi \in \Lambda} \int_{F^{-1}((-\infty,-L]\times N)} F^*(d\phi\wedge\lambda^- + \omega^-)
  \end{aligned}
\end{equation}
where $\lambda^\pm$ are the 1-forms associated to $\H^\pm$ as in~\eqref{1form}. All integrands above are non-negative. Moreover $F$ is constant if, and only if, $E(F)=0$. This definition of the energy differs slightly from that given in~\cite{sftcomp}, but yields the same finite-energy curves. $F$ is called a finite-energy curve when $0<E(F)<\infty$. As before, the points of $Z$ are called punctures, and a puncture is removable if, and only if, $F$ is bounded around it.

Finally, we need to define finite-energy $\jtil$-holomorphic curves for $\jtil \in \J_{L<R}(J)$, where $0<L<R$, $\H=(\xi,X,\omega)$ and $J \in \J(\H)$ are fixed. The correct taming conditions are as follows. On the neck  $[L-R,R-L]\times N$ we ask $\jtil$ to be adjusted\footnote{Note that $\jtil$ is cylindrical on the neck.} to some closed 2-form $\bar\omega\in\Omega^2(N)$ of maximal rank, as discussed in \S~\ref{alm_cpx_strs}, and assume also that $\jtil$ is tamed by symplectic forms $\Omega^\pm$ on $(\pm[R+L,R-L])\times N$ such that:
\begin{itemize}
\item $\Omega^+$ coincides with $\omega$ on $T(\{R+L\}\times N)$ and with $\bar\omega$ on $T(\{R-L\}\times N)$ up to positive constants,
\item $\Omega^-$ coincides with $\omega$ on $T(\{-R-L\}\times N)$ and with $\bar\omega$ on $T(\{-R+L\}\times N)$ up to positive constants.
\end{itemize}
Any $\jtil \in \J_{L<R}(J)$ induces a 1-form $\bar\lambda$ on $N$ by $i_{\bar R}\bar\lambda=1$, $\bar\xi = \ker\bar\lambda$, where $\bar\xi$ is the maximal complex subbundle of $TN$ induced by the cylindrical piece $\bar J|_{[L-R,R-L]\times N}$ and $\bar R = \bar J|_{[L-R,R-L]\times N} \cdot \partial_a$ (here $a$ is the $\R$-coordinate). Note that $(\bar\xi,\bar R,\bar\omega)$ need {\bf not} be a stable Hamiltonian structure since $\bar J$ is not necessarily symmetric on the neck $[L-R,R-L]\times N$. The energy $E(F)$ of the $\jtil$-holomorphic map $F = (a,f) : S \setminus Z \to \R\times N$ is
\begin{equation}\label{energy_split_cobordism}
\begin{aligned}
E(F) = &\sup_{\phi\in\Lambda} \int_{F^{-1}([L+R,+\infty)\times N)} F^*(d\phi\wedge \lambda +\omega) \\
&+ \sup_{\phi\in\Lambda} \int_{F^{-1}((-\infty,-L-R]\times N)} F^*(d\phi\wedge \lambda +\omega) \\
&+ \sup_{\phi\in\Lambda} \int_{F^{-1}([L-R,R-L]\times N)} F^*(d\phi\wedge \bar\lambda + \bar\omega) \\
&+ \int_{F^{-1}([-R-L,L-R]\times N)} F^*\Omega^- \\
&+ \int_{F^{-1}([R-L,R+L]\times N)} F^*\Omega^+.
\end{aligned}
\end{equation}
Punctures behave exactly as in the other cases.

%

\subsubsection{Asymptotic behavior}\label{asymp_behavior}

Let $F : (S\setminus Z,j) \to \R\times N$ be a finite-energy $J$-holomorphic curve, where $J \in \J(\H)$. The behavior of $F$ near a non-removable puncture $z\in Z$ is studied in~\cite{props1}, see also the Appendix of~\cite{sftcomp}.  

\begin{proposition}\label{partial_asymptotic_behavior}
Let $\psi : (B_1(0),0) \to (V,z)$ be a holomorphic chart of $(S,j)$ and write $F(s,t) = (a(s,t),f(s,t)) = F\circ \psi(e^{-2\pi(s+it)})$ for $(s,t) \in \R^+\times \R/\Z$. When $X$ is nondegenerate one finds $\gamma = (x,T) \in \P(\H)$, $\epsilon = \pm1$, $c,d \in \R$ such that the loops $t\mapsto f(s,t)$ converge to $t\mapsto x(\epsilon Tt+c)$ in $C^\infty(S^1,N)$, and all partial derivatives of $a(s,t) - \epsilon Ts - d$ tend to $0$ uniformly in $t$ as $s\to+\infty$.
\end{proposition}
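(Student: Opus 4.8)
The statement is Hofer's asymptotic analysis at a non-removable puncture, carried out for symplectizations of contact forms in \cite{props1} and extended to stable Hamiltonian structures in the Appendix of \cite{sftcomp}; here is the plan I would follow. After composing with the chart $\psi(e^{-2\pi(s+it)})$ I would regard $F=(a,f)$ as a $J$-holomorphic map $\R^+\times S^1\to\R\times N$ with $E_\omega(F)<\infty$ and $\sup_{\phi\in\Lambda}\int F^*(d\phi\wedge\lambda)<\infty$; non-removability of $z$ then means $F$ is unbounded on this end, so that along some $s_k\to\infty$ one has $|a(s_k,\cdot)|\to\infty$.

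The first and, I expect, hardest step is a uniform gradient bound $\sup_{[s_0,\infty)\times S^1}|dF|<\infty$. The mechanism is $\varepsilon$-regularity for $J$-holomorphic curves together with the fact that, since $E_\omega(F)<\infty$, the $\omega$-energy $\int_{[s,\infty)\times S^1}f^*\omega$ tends to $0$ as $s\to\infty$, while translation invariance of $J$ along the $\R$-factor keeps the contribution of the $a$-direction under control. Were the gradient unbounded, a rescaling argument would bubble off a non-constant finite-energy plane; but such a plane has vanishing $\omega$-energy and is therefore constant, a contradiction. With the bound in place one gets $a(s,t)=O(s)$ and $f$ confined to a compact subset of $N$.

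Next I would extract the asymptotic orbit. The translated maps $F_s:=\bigl(a(s+\cdot,\cdot)-a(s,0),\,f(s+\cdot,\cdot)\bigr)$ are, by the gradient bound and elliptic bootstrapping, precompact in $C^\infty_{\mathrm{loc}}(\R\times S^1,\R\times N)$, and any limit is an entire finite-energy $J$-holomorphic cylinder whose $\omega$-energy vanishes because the tail $\omega$-energy of $F$ does. A short computation with the Cauchy--Riemann equation then shows such a cylinder must be a trivial cylinder $(\sigma,t)\mapsto(\epsilon T\sigma+d_0,\,x(\epsilon Tt+c_0))$ over some $\gamma=(x,T)\in\P(\H)$, with $\epsilon=\pm1$ according to whether $a\to+\infty$ or $a\to-\infty$, and $T>0$ the period of the limiting orbit. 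This produces $\gamma$ and shows that the loops $f(s,\cdot)$ accumulate, in $C^\infty(S^1,N)$, on the circle of reparametrizations of $x$.

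Finally I would promote accumulation to genuine convergence and to smoothness, and this is the point where non-degeneracy of $\H$ is used essentially: it makes the geometric image of $\gamma$ isolated among periodic orbits of period near $T$, so the (connected, compact) limit set of $\{f(s,\cdot)\}$ is a single circle of reparametrizations of $x$. Writing $F$ near this circle in normal coordinates, $\bar\partial_J F=0$ linearizes to $\partial_s\zeta+J_0\partial_t\zeta+S(s,t)\zeta=0$ with $S(s,t)\to S_\infty$, whose asymptotic operator $A_\infty=-(J_0\partial_t+S_\infty)$ is the self-adjoint operator computing $\cz(\gamma)$ and is invertible precisely because $\gamma$ is non-degenerate; since $0\notin\mathrm{spec}(A_\infty)$, the standard spectral-gap estimate forces $\zeta$ and all its derivatives to decay to $0$ (in fact exponentially), which fixes the parameters $c,d$ and yields both that $f(s,\cdot)$ converges to $t\mapsto x(\epsilon Tt+c)$ in $C^\infty(S^1,N)$ and that $a(s,t)-\epsilon Ts-d\to0$ with all partial derivatives, as required. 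Besides the gradient bound, the one other genuinely delicate point is this uniqueness of the asymptotic limit, which really does rely on non-degeneracy.
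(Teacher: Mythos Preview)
The paper does not give its own proof of this proposition: it is stated as background and attributed to~\cite{props1} and the Appendix of~\cite{sftcomp}. Your sketch is precisely the standard argument from those references (gradient bound via bubbling-off, limiting trivial cylinder by translating and using $E_\omega\to 0$, then the asymptotic operator analysis exploiting non-degeneracy to upgrade subsequential limits to genuine $C^\infty$ convergence with fixed $c,d$), so your approach is correct and is exactly the one the paper defers to.
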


A non-removable puncture $z$ is positive if $\epsilon = +1$, and negative if $\epsilon = -1$. In any case one says that $F$ is asymptotic to $\gamma$ at $z$, and $\gamma$ is the asymptotic limit of $F$ at $z$. These definitions are independent of the choice of $\psi$. If $S=S^2$ and $\# Z=2$ then $(S\setminus Z,j)\simeq (\R\times\R/\Z,i)$, and we loosely refer to the ``ends'' $\{\pm\infty\}\times S^1$ as punctures.

Under the assumption that the Hamiltonian vector fields are nondegenerate, the asymptotic behavior of finite-energy curves in non-cylindrical cobordisms is analogous and we will not describe it here. The reader can easily guess the precise statements.


\section{Isolating neighborhoods of isolated orbits}\label{isolating_nbds_section}

In this section we discuss the necessary geometric set-up for defining local contact homology of an isolated orbit. Let $\H = (\xi,X,\omega)$ be a stable Hamiltonian structure on the $(2n-1)$-manifold $N$, and $x:\R \to N$ be a $T$-periodic trajectory of the vector field $X$, where $T>0$. Assume that $\gamma=(x,T)$ is isolated, {\it i.e.}, $t\mapsto x(Tt)$ defines an isolated point of the set of closed $X$-orbits in $C^\infty(S^1,N)/S^1$.

\begin{defn}\label{isol_nbd_def}
An isolating neighborhood for $\gamma$ is a compact connected neighborhood $K$ of $x(\R)$ with smooth boundary satisfying:
\begin{itemize}
\item $\gamma$ is the only closed $X$-orbit in $K$ in the free homotopy class of $\gamma$ (of loops in $K$),
\item $H^2(K,\R) = 0$, $H^1(K,\R) = \R$ and there is a nontrivial class $[\theta] \in H^1(K,\R)$ represented by a closed 1-form $\theta$ satisfying $\inf_K i_X\theta > 0$.
\end{itemize}
\end{defn}

It follows that $K$ contains no closed $X$-orbits which are contractible in $K$.

\begin{lemma}
Every isolated $\gamma$ has an isolating neighborhood.
\end{lemma}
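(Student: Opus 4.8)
The plan is to build $K$ as a small tubular-type neighborhood of the geometric image $x(\R)$, shrunk so that the isolatedness of $\gamma$ rules out extra orbits, and then to arrange the cohomological conditions by an appropriate choice of the ``angular'' coordinate around $\gamma$. First I would take a small tubular neighborhood $U$ of the embedded (or immersed, in the case $\gamma$ is simply covered but $x$ meets itself — but since $\gamma$ is a periodic orbit of a flow its geometric image is embedded) circle $x(\R) \subset N$; topologically $U$ is a disc bundle over $S^1$, hence homotopy equivalent to $S^1$, so $H^1(U,\R) = \R$ and $H^2(U,\R) = 0$, and a generator of $H^1$ is represented by a closed $1$-form $\theta$ which is (pulled back from) the angular form $d\vartheta$ on the base circle. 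By construction $\theta$ can be chosen so that $i_X\theta = 1$ along $x(\R)$ (the base-circle direction is exactly the flow direction there), hence $i_X\theta > 0$ on a possibly smaller neighborhood; intersecting with that neighborhood and rounding corners gives a compact connected $K$ with smooth boundary satisfying the second bullet, with $\inf_K i_X\theta > 0$ after one further shrinking and rescaling $\theta$.

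Next I would handle the first bullet: $K$ must contain no closed $X$-orbit in the free homotopy class of $\gamma$ (as loops in $K$) other than $\gamma$ itself. Here I use that $\gamma$ is isolated in $C^\infty(S^1,N)/S^1$. Suppose, for contradiction, that every neighborhood of $x(\R)$ contains a closed orbit freely homotopic in it to $\gamma$ but distinct from $\gamma$; shrinking the neighborhoods to $x(\R)$ and invoking the standard Arzel\`a--Ascoli/elliptic-bootstrapping compactness for solutions of the ODE $\dot y = X(y)$ with uniformly bounded periods, one extracts a limiting closed orbit whose geometric image lies in $x(\R)$. The period bound is the crucial input: a loop in a thin tube freely homotopic to the core circle must wind around once, so its period is close to $T$ (and in particular bounded), which is what feeds compactness. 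The limit is then a periodic orbit supported on $x(\R)$, hence an iterate of the simple orbit underlying $\gamma$; since the free homotopy class pins down the winding number, the limit is $\gamma$ itself, contradicting that $\gamma$ is an \emph{isolated} point of $C^\infty(S^1,N)/S^1$. Therefore some neighborhood $K$ in the tube works, and shrinking once more we keep all properties from the previous paragraph. Finally, the parenthetical remark after the definition — that $K$ contains no contractible closed $X$-orbit — is immediate: if $y$ were a closed $X$-orbit contractible in $K$, then since $i_X\theta > 0$ on $K$ we would have $\oint_y \theta > 0$, yet $[\theta]$ evaluates to $0$ on a contractible loop, a contradiction.

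The main obstacle I anticipate is making the compactness argument for the first bullet fully rigorous: one must be careful that ``freely homotopic to $\gamma$ as a loop in the shrinking neighborhood'' genuinely forces the nearby orbits to have periods bounded above (and bounded away from $0$), so that limits exist and are genuine nonconstant periodic orbits rather than, say, longer and longer orbits escaping to infinite period or degenerating to the constant loop. This is where the tube structure is used essentially: in a thin solid-torus neighborhood of $x(\R)$, an $X$-orbit that stays inside and is homotopic to the core must project to a degree-one loop on the base $S^1$, and since $X$ is transverse to the disc fibers near $x(\R)$ with speed bounded below, the time to traverse it is comparable to $T$. Once this period bound is in hand the rest is a routine application of continuous dependence on initial conditions. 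A secondary (purely technical) point is smoothing the boundary of the intersection $U \cap \{i_X\theta > 0\}$ while preserving connectedness and the cohomology — standard, and I would only remark on it rather than carry it out.
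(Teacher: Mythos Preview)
Your proposal is correct and follows essentially the same route as the paper's proof: build $K$ as a solid torus $S^1\times\overline B$ around the geometric image (giving $H^1=\R$, $H^2=0$), take the angular form $\theta=dt$ so that $i_X\theta>0$ after shrinking, and derive the period bound from the homotopy-invariance of $\int_{\gamma_k}\theta$ together with $\inf_K i_X\theta>0$ to force $\gamma_k\to\gamma$, contradicting isolatedness. One small slip: a loop in $K$ freely homotopic to $\gamma$ winds $m$ times around the core (where $m$ is the multiplicity of $\gamma$), not once; this does not affect the argument, since what matters is that the winding number---and hence $\int_{\gamma_k}\theta$---is fixed.
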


\begin{proof}
Let $T_0$ be the minimal positive period of $x$. Take a neighborhood $K \simeq S^1 \times \overline B$ of $x(\R)$, where $B\subset \R^{2n-2}$ is an open ball around the origin, equipped with coordinates $(t,z)$, such that $x(t) \simeq (t/T_0,0)$ and $\xi_{x(t)} \simeq 0\times \R^{2n-2}$. Thus $\lambda|_{S^1\times 0} = T_0dt$ where $\lambda$ is the $1$-form~\eqref{1form}. After shrinking $B$, $X$ becomes transverse to $\{t\}\times \overline B \ \forall t$. Hence $dt$ is a closed 1-form generating $H^1(K,\R)\simeq \R$ such that $i_X dt > 0$. Suppose $\gamma_k = (x_k,T_k) \neq \gamma$ are closed $X$-orbits homotopic to $\gamma$ in $K$ such that $\limsup_{k\to \infty} \sup_{s \in \R} |\pi_{\R^{2n-2}} \circ x_k(s)| = 0$, where $\pi_{\R^{2n-2}}$ denotes projection onto the second coordinate. We can assume $x_k(0) \in \{0\}\times \overline B$, so that $x_k \to x$ in $C^\infty_{\rm loc}$. If $T_k \to \infty$ then $\int_\gamma dt = \int_{\gamma_k}dt \to +\infty$, an absurd. Thus we get a bound for the periods $T_k = \int_{\gamma_k} \lambda$ so that $\gamma_k \to \gamma$, contradicting the hypothesis that $\gamma$ is isolated. This contradiction shows that, possibly after further shrinking $\overline B$, we get an isolating neighborhood $K$ as in Definition~\ref{isol_nbd_def} since $H^2(K,\R) = 0$.
\end{proof}

\begin{lemma}\label{lemma_nearby_orbits}
Let $K$ be an isolating neighborhood for the isolated orbit $\gamma$. For every neighborhood $\V$ of $\gamma$ in $C^\infty(S^1,N)/S^1$ there exists a $C^\infty$-neighborhood $\mathcal O$ of $\H$ in the space of stable Hamiltonian structures such that the following holds: if $\H'= (\xi',X',\omega') \in \mathcal O$ then all closed $X'$-orbits in $K$ homotopic to $\gamma$ through loops in $K$ lie in $\V$.
\end{lemma}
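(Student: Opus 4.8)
The plan is to argue by contradiction, using the compactness of $K$ and the isolatedness of $\gamma$. Suppose the statement fails for some neighborhood $\V$ of $\gamma$ in $C^\infty(S^1,N)/S^1$. Then there is a sequence of stable Hamiltonian structures $\H_k = (\xi_k, X_k, \omega_k) \to \H$ and, for each $k$, a closed $X_k$-orbit $\gamma_k = (x_k, T_k)$ with geometric image in $K$, homotopic to $\gamma$ through loops in $K$, but with $\gamma_k \notin \V$. The first step is to obtain a uniform bound on the periods $T_k$: since the $C_i$-type argument of Definition~\ref{isol_nbd_def} gives a closed $1$-form $\theta$ on $K$ with $\inf_K i_X\theta > 0$, and since $i_{X_k}\theta \to i_X\theta$ uniformly on the compact set $K$, for $k$ large we have $\inf_K i_{X_k}\theta \geq c > 0$. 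Then $T_k \, c \leq \int_0^{T_k} (i_{X_k}\theta)(x_k(t))\,dt = \int_{\gamma_k}\theta = \langle [\theta], [\gamma_k]\rangle = \langle [\theta], [\gamma]\rangle$, where the last equality holds because $\gamma_k$ is homotopic to $\gamma$ in $K$ and $\theta$ is closed. This bounds $T_k$ from above; a bound away from zero follows similarly since the integral $\langle[\theta],[\gamma]\rangle$ is a fixed nonzero number and $i_{X_k}\theta$ is also bounded above on $K$.

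With $0 < t_0 \leq T_k \leq t_1 < \infty$, I would pass to a subsequence so that $T_k \to T_\infty \in [t_0,t_1]$. The maps $t \mapsto x_k(T_k t)$ all lie in the compact set $K$; reparametrizing by fixing $x_k(0)$ appropriately, the ODE $\dot x_k = X_k(x_k)$ with coefficients $X_k \to X$ in $C^\infty_{\mathrm{loc}}$ together with the uniform period bound yields, via Arzelà–Ascoli applied to $x_k$ and all its derivatives (obtained by differentiating the ODE), $C^\infty$-convergence of a subsequence $x_k(T_k\,\cdot) \to y(T_\infty\,\cdot)$, where $y$ is a $T_\infty$-periodic trajectory of $X$ with image in $K$. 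The limit orbit $\gamma_\infty = (y, T_\infty)$ is homotopic to $\gamma$ in $K$: homotopy class in $K$ is locally constant in the $C^\infty$ topology on loops, so for $k$ large $\gamma_k$ and $\gamma_\infty$ lie in the same class, and each $\gamma_k$ is in the class of $\gamma$. By the first bullet of Definition~\ref{isol_nbd_def}, $\gamma$ is the \emph{only} closed $X$-orbit in $K$ in this free homotopy class, so $\gamma_\infty = \gamma$. Hence $\gamma_k \to \gamma$ in $C^\infty(S^1,N)/S^1$, so $\gamma_k \in \V$ for $k$ large, contradicting the choice of the $\gamma_k$.

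The main obstacle is the compactness step producing the limit orbit $\gamma_\infty$: one must be careful that the sequence of \emph{periods} $T_k$ does not degenerate (neither escape to $\infty$ nor collapse to $0$) and that the reparametrization used to anchor $x_k(0)$ is compatible across $k$ — this is precisely where the cohomological data $[\theta] \in H^1(K,\R)$ with $\inf_K i_X\theta > 0$ from the definition of isolating neighborhood does the essential work, exactly as in the proof of the previous lemma. A secondary technical point is that elements of a neighborhood $\mathcal O$ in the space of stable Hamiltonian structures are close in a $C^\infty$ sense on all of $N$, hence in particular uniformly with all derivatives on the compact set $K$; this is what licenses the $C^\infty_{\mathrm{loc}}$ convergence $X_k \to X$ and the continuity of $i_{X_k}\theta$ in $k$. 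Everything else — the homotopy-class argument, the index/period estimate — is routine once these two points are in place.
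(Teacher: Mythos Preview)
Your argument is correct. The paper actually omits the proof of this lemma entirely, stating only that ``the statement above is easy and left with no proof.'' Your contradiction argument---bounding the periods via the closed $1$-form $\theta$ from Definition~\ref{isol_nbd_def}, extracting a $C^\infty$-convergent subsequence by Arzel\`a--Ascoli, and invoking the uniqueness of $\gamma$ in its free homotopy class---is exactly the natural approach and mirrors the proof of the preceding existence lemma in the paper.
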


The statement above is easy and left with no proof.

\subsection{Finite-energy cylinders in isolating neighborhoods}

\begin{lemma}\label{lemma_nearby_cylinders}
Let $\H=(\xi,X,\omega)$, $J \in \J(\H)$, $L>0$ and an isolated closed $X$-orbit $\gamma$ be given. Let also $K$ be an isolating neighborhood of $\gamma$, and $\Omega$ be a symplectic form on $[-L,L]\times K$ which tames $J$ and agrees with $\omega$ on $T(\{\pm L\}\times K)$ up to positive constants. Then there are $C^\infty$-neighborhoods $\mathcal O$ of $\H$ and $\mathcal D$ of $\Omega$, a neighborhood $\U$ of $J$ in the $C^\infty$-strong topology, and a constant $C>0$ such that the following holds. If 
\begin{itemize}
\item $\H^\pm = (\xi^\pm,X^\pm,\omega^\pm) \in \mathcal O$ and all closed $X^\pm$-orbits in $K$ homotopic to $\gamma$ in $K$ are nondegenerate,
\item $J^\pm \in \J(\H^\pm) \cap \U$, $\jtil \in \J_L(J^-,J^+) \cap \U$, 
\item $\widetilde \Omega \in \mathcal D$ is a symplectic form coinciding with $\omega^\pm$ on $T(\{\pm L\}\times K)$ up to positive constants
\end{itemize}
then $\widetilde\Omega$ tames $\jtil$ on $[-L,L]\times K$, and every finite-energy $\jtil$-holomorphic map $F = (a,f) : \R \times S^1 \to \R\times N$ satisfying
\begin{itemize}
\item[i)] $f(\R\times S^1) \subset K$ and the loops $t \mapsto f(s,t)$ are homotopic to $\gamma$ in $K$;
\item[ii)] $\{+\infty\} \times S^1$ is a positive puncture, $\{-\infty\} \times S^1$ is a negative puncture;
\end{itemize}
must also satisfy $E(F)\leq C$ and $\overline {f(\R\times S^1)} \subset {\rm int}(K)$. Here we used $\widetilde \Omega$ to define $E(F)$ according to the discussion in \S~\ref{fecurves}.
\end{lemma}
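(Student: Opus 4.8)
The plan is to prove both claims (the taming persistence and the uniform energy bound together with the interior confinement) by a combination of an open-condition argument and a compactness-type argument, exploiting heavily the isolating neighborhood structure from Definition~\ref{isol_nbd_def}.

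For assertion (1), taming is an open condition: if $\Omega$ tames $J$ pointwise on the compact set $[-L,L]\times K$, then any $\tilde\Omega$ sufficiently $C^0$-close to $\Omega$ tames any $\jtil$ sufficiently close to $J$ on the same compact set, because the taming inequality $\tilde\Omega(v,\jtil v)>0$ for $v\neq 0$ is preserved under small perturbations uniformly on the unit sphere bundle over the compact set. So I would just shrink $\mathcal{D}$ and $\U$ to arrange this; nothing delicate here.

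For assertion (2), I would first establish the \emph{interior confinement} $\overline{f(\R\times S^1)}\subset\mathrm{int}(K)$, and then derive the energy bound from it. The key geometric input is the closed $1$-form $\theta$ with $\inf_K i_X\theta>0$ provided by the second bullet of Definition~\ref{isol_nbd_def}. I would argue by contradiction and compactness: suppose there is a sequence $\H^\pm_k\to\H$, $J^\pm_k, \jtil_k\to J$, and finite-energy cylinders $F_k$ satisfying (i) and (ii) but whose images touch $\partial K$ (or have unbounded energy). First I need an \emph{a priori} energy bound. Using Stokes' theorem and the fact that $F_k$ is a cylinder with one positive and one negative puncture, asymptotic to periodic orbits homotopic to $\gamma$ in $K$, the $\omega^\pm_k$-energy (resp.\ the full energy) is controlled by the difference of the periods of the two asymptotic orbits, which in turn — via the $1$-form $\theta$ pulled back, since $\int_{\gamma'}\theta$ is a homotopy invariant in $K$ and $\gamma'\htp\gamma$ — equals $\int_\gamma\theta$, a fixed number. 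More precisely, the period of any closed $X^\pm_k$-orbit in $K$ homotopic to $\gamma$ is bounded above by $(\int_\gamma\theta)/\inf_K i_{X}\theta$ up to a small error as $k\to\infty$ (using that $i_{X^\pm_k}\theta$ is $C^0$-close to $i_X\theta$ after shrinking $\mathcal{O}$), hence lies in a fixed compact interval; this yields the uniform energy bound $E(F_k)\le C$. Then apply SFT compactness (as in \cite{sftcomp}) to extract a limit holomorphic building; by Lemma~\ref{lemma_nearby_orbits} all asymptotic orbits of all levels lie in a small neighborhood $\V$ of $\gamma$ and are homotopic to $\gamma$ in $K$, and since $\gamma$ is isolated the only such orbit is $\gamma$ itself. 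Finally, the limit building has image in $K$ but, since $\gamma$ and all curves in the building have images converging to the geometric image of $\gamma$ which lies in $\mathrm{int}(K)$, a neighborhood of this image stays in $\mathrm{int}(K)$; this contradicts the assumption that $F_k$ touched $\partial K$. The same compactness argument simultaneously rules out energy blow-up. Having confined the images to a fixed compact subset of $\mathrm{int}(K)$, the energy estimate $E(F)\le C$ follows cleanly from Stokes applied to the exact pieces and the bound on the asymptotic periods.

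The main obstacle I expect is the \emph{a priori} energy bound — making rigorous that the energy of a finite-energy cylinder asymptotic to orbits homotopic to $\gamma$ in $K$ is controlled by a fixed constant, uniformly over the perturbed data $\H^\pm_k, \jtil_k$. This requires carefully combining: (a) Stokes' theorem for the $d\phi\wedge\lambda^\pm$ and $\omega^\pm$ pieces, where one must track that $\omega^\pm_k$ is only closed (not exact) but cohomologically trivial on $K$ since $H^2(K,\R)=0$, so $\omega^\pm_k=d\beta^\pm_k$ with $\beta^\pm_k$ in a fixed $C^0$-ball; (b) the fact that the taming form $\tilde\Omega$ on the neck $[-L,L]\times K$ is $C^0$-close to the fixed $\Omega$, so its integral over $F^{-1}([-L,L]\times N)$ is bounded once the image is confined; and (c) controlling the asymptotic periods via $\theta$ as above. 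A secondary subtlety is that $(\bar\xi,\bar R,\bar\omega)$ need \emph{not} be a stable Hamiltonian structure on a neck (as emphasized in \ref{fecurves}), but here $\jtil\in\J_L(J^-,J^+)$ rather than $\J_{L<R}(J)$, so the simpler energy formula \eqref{energy_cobordism} with the single taming form $\tilde\Omega$ applies and this issue does not arise.
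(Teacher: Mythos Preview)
Your argument for assertion (1) and your energy bound via Stokes' theorem using primitives of $\omega^\pm_k$ on $K$ (available since $H^2(K,\R)=0$) and the closed form $\theta$ are essentially correct and match the paper's approach. However, there is a genuine gap in your compactness step.

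You invoke SFT compactness from~\cite{sftcomp} to extract a limiting holomorphic building. This does not apply: the limiting stable Hamiltonian structure $\H$ may be \emph{degenerate} (indeed $\gamma$ is merely assumed isolated, not non-degenerate), and the SFT compactness theorem requires non-degeneracy or at least a Morse--Bott condition on the asymptotic orbits. This is precisely why the paper develops Appendix~\ref{exist_orbits}: as stated there, ``$X$ may be very degenerate, so the results from~\cite{sftcomp} are not available.'' Without a well-defined limiting building, your claim that ``all asymptotic orbits of all levels lie in a small neighborhood $\V$ of $\gamma$'' and the subsequent confinement argument have no foundation.

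The paper circumvents this by a direct hands-on argument. After obtaining the energy bound, one picks points $(s_n,t_n)$ with $f_n(s_n,t_n)\in\partial K$, translates $F_n$ in domain and target, and extracts a $C^\infty_{\rm loc}$-limit cylinder $\util=(b,u)$ (ruling out gradient blow-up via Lemmas~\ref{bubb} and~\ref{hofer}, since bubbles would produce contractible closed $X$-orbits in $K$). The crucial observation is that $E_\omega(\util)=0$: by Lemma~\ref{hofer} one finds sequences $s^\pm_j\to\pm\infty$ along which $u(s^\pm_j,\cdot)$ converges to closed $X$-orbits in $K$ homotopic to $\gamma$, which by isolation must both be $\gamma$ itself; Stokes then forces $\int u^*\omega=0$. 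Lemma~\ref{cylinder} now identifies $\util$ as a trivial cylinder over a closed orbit homotopic to $\gamma$, hence over $\gamma$; but $u(0,0)\in\partial K$, contradicting $\gamma\subset{\rm int}(K)$. Your sentence ``all curves in the building have images converging to the geometric image of $\gamma$'' is a placeholder for exactly this $E_\omega=0$ argument, which requires the appendix lemmas rather than SFT compactness.
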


The proof below makes use of the results from Appendix~\ref{exist_orbits}.

\begin{proof}
First we address the issue of obtaining the desired energy bounds. We claim the existence of $C>0$ with the following property. Consider arbitrary sequences
\begin{itemize}
\item $\H_n^\pm = (\xi^\pm_n,X^\pm_n,\omega^\pm_n) \to \H$ in $C^\infty$,
\item $J_n^\pm \in \J(\H_n^\pm)$, $\jtil_n \in \J_L(J_n^-,J_n^+)$ such that $J_n^+,J_n^-,\jtil_n \to J \ \text{ in } \ C^\infty$-strong,
\item $\Omega_n$ symplectic forms on $[-L,L]\times K$ coinciding with $\omega^\pm_n$ on $T(\{\pm L\}\times K)$ up to positive constants such that $\Omega_n \to \Omega$ in $C^\infty$,
\item $F_n = (a_n,f_n)$ finite-energy $\jtil_n$-holomorphic cylinders satisfying i) and~ii).
\end{itemize}
Then $E(F_n)\leq C$ if $n$ is large enough.

Obviously, $\Omega_n$ tames $\jtil_n$ on $[-L,L]\times K$ if $n$ is large enough since $\Omega$ tames $J$. By condition ii) the $F_n$ are non-constant maps.

To prove the existence of $C$ we proceed indirectly and assume, by contradiction, the existence of $\H_n^\pm$, $J_n^\pm$, $\jtil_n$, $\Omega_n$ and $F_n$ meeting the above requirements and
\begin{equation}\label{energy_contradiction}
E(F_n)\to+\infty.
\end{equation}

Since $H_2(K,\R)$ vanishes, there exists a primitive $\alpha$ for $\Omega$ on $[-L,L]\times K$ and, using the Mayer-Vietoris principle, we find primitives $\alpha_n$ of $\Omega_n$ on $[-L,L]\times K$ such that $\alpha_n \to \alpha$ in $C^\infty$. Consider inclusions $i_a:K \simeq \{a\}\times K \hookrightarrow \R\times K$ and 1-forms defined by $\alpha^\pm_n = (i_{\pm L})^*\alpha_n, \ \alpha^\pm = (i_{\pm L})^*\alpha$. Then, by the properties of $\Omega_n,\Omega,\alpha_n,\alpha$ we have $\alpha^\pm_n \to \alpha^\pm$, $d\alpha^\pm_n = c_n^\pm\omega^\pm_n$, $d\alpha^\pm = c^\pm\omega$, where $c^\pm_n,c^\pm>0$ satisfy $c^\pm_n\to c^\pm$. Let $\theta$ be the closed 1-form as in Definition~\ref{isol_nbd_def}. By adding $A\theta$ to $\alpha_n$ and $\alpha$, with $A\gg 1$, we may assume, without loss of generality, that $\liminf_n \inf_K \alpha^\pm_n(X^\pm_n) >0$.

For each $n$ let $x_n^\pm$ be regular values of $a_n(s,t)$ satisfying $-L<x_n^- < x_n^+ < L$ and $x_n^\pm \to \pm L$ as $n\to\infty$. We compute
\[
\begin{aligned}
\mathcal I_n := & \int_{a_n\geq L} f_n^*\omega^+_n + \int_{-L\leq a_n\leq L} F_n^*\Omega_n + \int_{a_n\leq -L} f_n^*\omega^-_n \\
&\leq c' \left( \int_{a_n\geq L}  f_n^*(c_n^+\omega^+_n) + \int_{-L\leq a_n\leq L} F_n^*\Omega_n + \int_{a_n\leq -L} f_n^*(c_n^-\omega^-_n) \right) \\
&= c' \left( \int_{a_n \geq x_n^+} f_n^*(c_n^+\omega^+_n) + \int_{x_n^-\leq a_n \leq x_n^+} F_n^*\Omega_n + \int_{a_n\leq x_n^-} f_n^*(c_n^-\omega^-_n) \right) \\
&+ c' \left( \int_{x_n^+\leq a_n\leq L} F_n^*\Omega_n - f_n^*(c_n^+\omega^+_n) + \int_{-L\leq a_n \leq x_n^-} F_n^*\Omega_n -  f_n^*(c_n^-\omega^-_n) \right) \\
& = (T'_n) + (T''_n).
\end{aligned}
\]
Here $c'$ depends only on $c^\pm$, which in turn depend only on $\Omega$.

We extend $\Omega,\Omega_n$ smoothly and arbitrarily to $\R\times K$, and let $h_n^\pm:\R\times S^1 \to \R$ be defined by $F_n^*\Omega_n - c_n^\pm f_n^*\omega_n^\pm = h^\pm_n(s,t)ds\wedge dt$. Consider the compact measurable sets $E^\pm_n = \{(s,t)\mid a_n(s,t)=\pm L\}$. For this discussion $n$ is fixed. Since $\Omega_n - c_n^\pm\omega_n^\pm = 0$ on $T(\{\pm L\}\times N)$, every point in $A^\pm_n := \{(s,t)\in E^\pm_n \mid h^\pm_n(s,t)\neq 0\}$ is a point where $F_n$ hits $\{\pm L\}\times N$ transversely, in particular, the (Lebesgue) measure of $A^\pm_n$ is zero. Setting $B^\pm_n := \{(s,t)\in E^\pm_n \mid h^\pm_n(s,t)=0\}$ then $h^\pm_n$ integrates to zero (with respect to Lebesgue) in both $A^\pm_n$ and $B^\pm_n$. Hence
\[
\int_{\{a_n = \pm L\}} F_n^*\Omega_n - c_n^\pm f_n^*\omega_n^\pm = 0. 
\]
Consequently, by the dominated convergence theorem, we could have chosen $x_n^\pm$ close enough to $\pm L$ so that, say,
\[
(T''_n) \leq 1.
\]

Consider diffeomorphisms $\Psi_n$ of $\R\times K$ satisfying $\Psi_n\to id$ in $C^\infty$-strong, $\Psi_n([x_n^-,x_n^+]\times K) = [-L,L]\times K$ and $\Psi_n(x_n^\pm,p) = (\pm L,p) \ \forall p\in K$. Then $\Psi_n^*\Omega_n \to \Omega$ in $C^\infty$ on $[-L,L]\times K$, in particular, $\Psi_n^*\Omega_n$ tames $\jtil_n$ on $[-L,L]\times K$ when $n\gg1$. Note that $\Psi_n^*\alpha_n$ is a primitive of $\Psi_n^*\Omega_n$ on $[-L,L]\times K$, and $(i_{x_n^\pm})^*\Psi_n^*\alpha_n = (i_{\pm L})^*\alpha_n = \alpha_n^\pm$ since $\Psi_n\circ i_{x_n^\pm}=i_{\pm L}$. With Stokes theorem we can estimate
\[
\begin{aligned}
(T_1) &\leq 2c' \left( \int_{a_n \geq x_n^+} f_n^*(c_n^+\omega^+_n) + \int_{x_n^-\leq a_n \leq x_n^+} F_n^*\Psi_n^*\Omega_n + \int_{a_n\leq x_n^-} f_n^*(c_n^-\omega^-_n) \right) \\
&= 2c' \left( \int_{\gamma_n^+} \alpha_n^+ - \int_{\gamma_n^-} \alpha_n^- \right) \leq c''
\end{aligned}
\]
for some constant $c''>0$ independent of $n$. Here $\gamma^\pm_n$ are the asymptotic limits of $F_n$ at $\{\pm\infty\}\times S^1$ oriented by the Hamiltonian vector fields. By Lemma~\ref{lemma_nearby_orbits}, $\gamma^\pm_n \to \gamma$, so that we get the uniform bound for the last line. Moreover, we assume that $x_n^\pm \to \pm L$ and $\Psi_n\to id$ fast enough.

Thus we bounded $\mathcal I_n$ by some constant independent of $n$. The other terms of the energy defined in \S~\ref{fecurves} are estimated analogously. This is in contradiction to~\eqref{energy_contradiction}.

Let again $\H_n^\pm$, $J_n^\pm$, $\jtil_n$, $\Omega_n$ and $F_n$ be sequences as above. We have already proved that $\sup_nE(F_n)<\infty$. We proceed indirectly and suppose, by contradiction, that $\overline{f_n(\R\times S^1)} \cap \partial K \neq \emptyset$ for all $n$. By Lemma~\ref{lemma_nearby_orbits}, $\gamma^\pm_n \subset{\rm int}(K)$ and, consequently, we find $(s_n,t_n)$ such that $f_n(s_n,t_n) \in \partial K$. We claim that $dF_n$ is $C^0_{\rm loc}$-bounded. If not we use Lemma~\ref{bubb} to get a non-constant finite-energy $J$-holomorphic plane with image in $\R\times K$. Lemma~\ref{hofer} applied to the end of this plane gives us a periodic orbit of $X$ contractible inside $K$, a contradiction. Let $d_n = a_n(s_n,t_n)$ and define 
\[
\util_n(s,t) = (a_n(s+s_n,t+t_n)-d_n,f_n(s+s_n,t+t_n)).
\]
By the above discussion we have $C^1_{\rm loc}$-bounds for $\util_n$ and, consequently, also $C^\infty_{\rm loc}$-bounds by elliptic boot-strapping arguments. Up to a subsequence, we can assume there exists a finite-energy $J$-holomorphic cylinder $\util = (b,u)$ such that $\util_n \to \util$ in $C^\infty_{\rm loc}$. The map $\util$ is non-constant since for any given $s\in\R$ the loop $t\mapsto u(s,t)$ is homotopic to $\gamma$, which is not contractible in $K$ by Definition~\ref{isol_nbd_def}. A quick look at the proof of Lemma~\ref{hofer} will show that
\[
\lim_{s\to-\infty} \int_{S^1} u(s,\cdot)^*\alpha > 0 
\]
since, otherwise, $u(s,t)$ would converge to a point in $N$ as $s\to-\infty$, a contradiction since $\gamma$ is not contractible in $K$. Thus an application of Lemma~\ref{hofer} gives sequences $s_n^\pm \to \pm\infty$ such that $u(s_n^\pm,t) \to \gamma(t+t_0^\pm)$ when $n\to\infty$. Hence
\[
0 \leq \int_C u^*\omega  \leq \lim_j \int_{[s^-_j,s^+_j]\times S^1} u^*\omega = \int_\gamma \eta - \int_\gamma \eta = 0
\]
where $\eta$ is any primitive of $\omega$ on $K$. Thus $E_\omega(\util)=0$ and we can apply Lemma~\ref{cylinder} to $\util$ and conclude that $X$ has a periodic orbit in $K$, homotopic to $\gamma$ in $K$, touching $\partial K$ ($\util(0,0) \in \R\times \partial K$). This contradiction completes the proof that $\overline{f_n(\R\times S^1)} \subset {\rm int}(K)$ when $n$ is large enough.
\end{proof}

We need a version of the above statement for almost complex structures as we stretch the neck. The proof is similar and omitted.

\begin{lemma}\label{lemma_nearby_cyls_neck}
Suppose we have a stable Hamiltonian structure $\H = (\xi,X,\omega)$ on $N$, and let $\gamma$ be an isolated closed $X$-orbit. Consider an isolating neighborhood $K$ for $\gamma$, $J \in \J(\H)$, $L>0$, and symplectic form $\Omega$ on $[-L,L]\times K$ taming $J$, which coincides with $\omega$ on $T(\{\pm L\}\times K)$, up to positive constants. Then there are $C^\infty$-neighborhoods $\mathcal O$ of $\H$, $\mathcal D$ of $\Omega$, $\mathcal W$ of $\omega$, a $C^\infty$-strong neighborhood $\mathcal U$ of $J$ and a constant $C>0$ such that if 
\begin{itemize}
\item $\H' = (\xi',X',\omega')\in\mathcal O$ and all closed $X'$-orbits in $K$ homotopic to $\gamma$ in $K$ are nondegenerate,
\item $\bar\omega \in \mathcal W$, 
\item $R>L$, $J' \in\J(\H')\cap\mathcal U$, $\jtil \in \J_{L<R}(J')\cap \mathcal U$ 
\item $\Omega'_\pm \in \mathcal D$ are symplectic forms on $[-L,L]\times N$ such that $\Omega'_+$ coincides with $\omega'$ on $T(\{L\}\times N)$ and with $\bar\omega$ on $T(\{-L\}\times N)$, up to positive constants; $\Omega'_-$ coincides with $\bar\omega$ on $T(\{L\}\times N)$ and with $\omega'$ on $T(\{-L\}\times N)$, up to positive constants,
\item $\jtil$ is adjusted to $\bar\omega$ on the neck $[L-R,R-L]\times K$ (see \S~\ref{acs_cobordisms}), 
\end{itemize}
then $(\tau_{\mp R})^*\Omega'_\pm$ tames $\jtil$ on $(\pm[R-L,R+L])\times K$, and every $\jtil$-holomorphic finite-energy map $F = (a,f) : \R \times S^1 \to \R\times N$ satisfying
\begin{enumerate}
\item[i)] $f(\R\times S^1) \subset K$ and the loops $t \mapsto f(s,t)$ are homotopic to $\gamma$ in $K$;
\item[ii)] $\{+\infty\} \times S^1$ is a positive puncture; $\{-\infty\} \times S^1$ is a negative puncture;
\end{enumerate}
must also satisfy $E(F)\leq C$ and $\overline{f(\R\times S^1)} \subset {\rm int}(K)$. Above we used the symplectic forms $(\tau_{\mp R})^*\Omega'_\pm$ on $(\pm[R-L,R+L])\times K$ and $\bar\omega$ on the neck $[L-R,R-L]\times K$ to define $E(F)$ as described in \S~\ref{fecurves}.
\end{lemma}

\subsection{Special stable Hamiltonian structures}\label{sp_strs_section}

\begin{defn}\label{special_strs}
We call $\H = (\xi,X,\omega)$ special near $\gamma$ if there exists a small closed smooth tubular neighborhood $K$ of $\gamma$ such that one of the following mutually excluding conditions holds:
\begin{itemize}
\item[i)] $\H$ is induced by some contact form, {\it i.e.}, $\xi = \ker \alpha$, $X = X_\alpha$ and $\omega = Cd\alpha$ where $\alpha$ is a contact form defined near $K$ and $C>0$ is a constant.
\item[ii)] The 1-form $\lambda$ given as in~\eqref{1form} is closed on $K$.
\end{itemize}
If some $K$ is given for which i) or ii) applies then we say $\H$ is special for $\gamma$ and $K$. 
\end{defn}

In order to define local contact homology of the pair $(\H,\gamma)$ one needs to slightly perturb $\H$ near $K$, within the class of stable hamiltonian structures, to make closed orbits inside $K$ and homotopic to $\gamma$ (in $K$) nondegenerate. This may not be possible in general. However, when $\H$ is special near $\gamma$ this perturbation can always be performed. This is well-known in case i). Assume that $\H$ falls into case ii). Let $\alpha$ be a primitive for $\omega$ on $K$, which exists since $H^2(K,\R)$ vanishes when $K$ is small enough. Possibly after replacing $\alpha$ by $C\lambda + \alpha$, with $C\gg 1$, we can assume $\inf_K i_X\alpha > 0$ and $\alpha$ is a contact form on $K$. The Reeb vector field $X_\alpha$ is a pointwise positive multiple of $X$. Let $\alpha'$ be a $C^\infty$-small perturbation of $\alpha$ near $K$ so that all closed $\alpha'$-Reeb orbits inside $K$ homotopic to $\gamma$ (in $K$) are nondegenerate. Then $\omega'=d\alpha'$ is a small perturbation of $\omega$ and $\H' = (\xi = \ker \lambda,X',\omega' = d\alpha')$ is a stable Hamiltonian structure $C^\infty$-close to $\H$, where the vector field $X'$ is given by $i_{X'}\omega'=0$ and $i_{X'}\lambda=1$ (recall that $d\lambda=0$ by assumption). Moreover, since $\R X' = \R X_{\alpha'}$, closed $X'$-orbits inside $K$ which are homotopic to $\gamma$ (in $K$) are nondegenerate.

\begin{remark}
As an example, consider $K = S^1\times \overline B$ and a smooth Hamiltonian $H:K \to \R$ satisfying $dH_t(0)=0, \forall t$. Assume $0$ is an isolated 1-periodic orbit of the Hamiltonian vector field $X_{H_t}$ characterized by $dH_t = i_{X_{H_t}}\omega_0$. The typical example of special stable hamiltonian structure satisfying ii) in Defintion~\ref{special_strs} is $\H = (\ker dt,\widetilde X_H,\omega_H)$, where $\widetilde X_H = \partial_t + X_{H_t}$ and $\omega_H = dH_t \wedge dt + \omega_0$. Then $x_0(t) = (t,0)$ is an isolated 1-periodic orbit of $\widetilde X_H$. By the above discussion we can always perturb the Hamiltonian $H$ to obtain a nondegenerate perturbation of $\H$.
\end{remark}

\section{Local contact homology}\label{lch_section}

Contact Homology was originally introduced in~\cite{EGH} inside the bigger framework of Symplectic Field Theory. Following Floer~\cite{Fl1}, we define a suitable version of what we call the local contact homology of an isolated orbit, see~Definition~\ref{defn_lch}.

\subsection{Defining local contact homology}\label{local_cch}

\subsubsection{Local chain complexes}\label{local_complex}

Throughout Section~\ref{lch_section} we fix $\H = (\xi,X,\omega)$, an isolated closed $X$-orbit $\gamma=(x,T)$, and assume $\H$ is special for $\gamma$ as in Definition~\ref{special_strs}. Since $\gamma$ is isolated, we will fix an isolating neighborhood $K$ of $\gamma$. Moreover, as explained in \S~\ref{sp_strs_section}, perhaps after shrinking $K$, we can always perturb $\H$ to an arbitrarily $C^\infty$-close $\H'=(\xi',X',\omega')$ so that all closed $X'$-orbits in $K$ homotopic to $\gamma$ (in $K$) are nondegenerate. We refer to $\H'$ as a {\bf nondegenerate perturbation of $\H$}. This notion depends on $K$ and $\gamma$.

We denote by $\P(\H',K,\gamma)$ and $\P_0(\H',K,\gamma)$ the sets of closed $X'$-orbits and good closed $X'$-orbits in $K$ which are homotopic to $\gamma$ in $K$, respectively. By Lemma~\ref{lemma_nearby_orbits} every $\gamma' \in \P(\H',K,\gamma)$ can be assumed arbitrarily $C^\infty$-close to $\gamma$ if $\H'$ is close enough to $\H$, in particular, they all lie in the interior of $K$. Also, $\P(\H',K,\gamma)$ is finite if $\H'$ is close enough to $\H$ since there are automatic period bounds for these orbits.

We fix a homotopy class of $\omega$-symplectic trivializations of $(x_T)^*\xi \to \R/\Z$, where $x_T: \R/\Z \to N$ is the map $t \mapsto x(Tt)$. It distinguishes homotopy classes of $\omega'$-symplectic trivializations of $\xi'$ along every $\gamma' \in \P(\H',K,\gamma)$, which are used to compute Conley-Zehnder indices $\cz(\gamma')$. Let $C_*(\H',K,\gamma)$ be the vector space over $\Q$ freely generated by $\mathcal P_0(\H',K,\gamma)$ and graded by $|\gamma'| = \cz(\gamma') + n-3$.

We choose $J \in \J(\H)$ and assume we can find $J' \in \J(\H')$ arbitrarily close to~$J$, $C^\infty$-strong or equivalently $C^\infty$-weak, which is regular for the data $(\H',K,\gamma)$ in the following sense. Consider the collection $\mathcal Z(J',K,\gamma)$ of $J'$-holomorphic maps $F = (a,f) : \R\times S^1 \to \R\times N$ with finite energy, satisfying $a(s,t) \to \pm\infty$ as $s\to\pm\infty$, with image in $\R\times K$, and asymptotic to orbits in $\P(\H',K,\gamma)$. 
Then we call $J'$ regular for the data $(\H',K,\gamma)$ if the linearized Cauchy-Riemann equation at every $F\in \mathcal Z(J',K,\gamma)$ determines a surjective Fredholm operator in a standard functional analytical set-up. 

\begin{remark}
The existence of $J'$ which are regular for the data $(\H',K,\gamma)$ is not guaranteed and, consequently, becomes part of our hypotheses. 
\end{remark}

The definition of a differential on $C_*(\H',K,\gamma)$ follows a standard construction. Let $\gamma_+,\gamma_- \in \P(\H',K,\gamma)$ and consider the collection $\F_{K,J'}(\gamma_+;\gamma_-)$ of triples $$ (t_+,t_-,F) \in S^1\times S^1 \times \mathcal Z(J',K,\gamma) $$ such that $F$ is asymptotic to $\gamma_\pm$ at $s\to\pm\infty$, respectively, and if we write $F=(a,f)$ then $f(s,t_\pm) \to \pt_{\gamma_\pm}$ as $s\to\pm\infty$.

By our regularity assumption $\F_{K,J'}(\gamma_+;\gamma_-)$ is a smooth manifold. It is nontrivial, but standard, that the topology that $\F_{K,J'}(\gamma_+;\gamma_-)$ inherits as a subset of a certain Banach manifold of maps coincides with the $C^\infty_{\rm loc}$-topology. 
The reparametrization group $\R\times S^1$ acts on $\F_{K,J'}(\gamma_+;\gamma_-)$ by
\begin{equation}\label{rep_action}
(s_0,t_0) * (t_+,t_-,F) = (t_+-t_0,t_--t_0,F(s+s_0,t+t_0)).
\end{equation}
The moduli space of interest for us is
\begin{equation}\label{moduli_space_d}
\M_{K,J'}(\gamma_+;\gamma_-) := \F_{K,J'}(\gamma_+;\gamma_-) / \R\times S^1.
\end{equation}
Since the $\R\times S^1$-action is free, this moduli space is a smooth manifold. 
Its dimension is $|\gamma_+|-|\gamma_-|$. When $\gamma_+\neq\gamma_-$ the space~\eqref{moduli_space_d} is equipped with the free $\R$-action induced by~\eqref{translation}, hence in this case we get
\begin{equation}\label{est_dim_transv}
\M_{K,J'}(\gamma_+;\gamma_-) \neq \emptyset \Rightarrow |\gamma_+|-|\gamma_-|\geq 1.
\end{equation}

\begin{lemma}\label{lemma_finite_d}
If $|\gamma_+|-|\gamma_-|=1$ then $\M_{K,J'}(\gamma_+;\gamma_-)/\R$ is finite.
\end{lemma}

\begin{proof}
Consider a sequence $C_n \in \M_{K,J'}(\gamma_+;\gamma_-)$ representing distinct elements in $\M_{K,J'}(\gamma_+;\gamma_-)/\R$. Energy bounds are automatically guaranteed by Lemma~\ref{lemma_nearby_cylinders} since $(\H',J')$ is a small perturbation of $(\H,J)$. The limiting behavior of the sequence $C_n$ is described by the SFT-Compactness Theorem, in fact, $C_n$ converges in the SFT-sense to a so-called holomorphic building. Every curve in this building lies in $\R\times K$. No curve in this building is a holomorphic sphere since $J'$ is tamed by exact symplectic forms. Moreover, no curve in this building is a plane since there are no contractible periodic orbits of the vector field $X$ inside $K$. Hence every level consists of a single $J'$-holomorphic cylinder. Now, using the assumed transversality and~\eqref{est_dim_transv}, a simple calculation shows that the building consists of a single cylinder which is an element of $\M_{K,J'}(\gamma_+;\gamma_-)/\R$. Consequently this moduli space is $0$-dimensional and compact, hence finite.
\end{proof}

Under the above assumptions on $(\H',J')$ we follow~\cite{be} and consider a system of coherent orientations on the various spaces $\M_{K,J'}(\gamma_+;\gamma_-)$.
These orientations are well-defined even when $\gamma_+$ or $\gamma_-$ is a bad orbit. One associates to every $C\in \M_{K,J'}(\gamma_+;\gamma_-)$ a sign $\epsilon(C) = \pm1$ by comparing the coherent orientations with the orientation given by the infinitesimal $\R$-action. Following~\cite{EGH}, we set
\begin{equation}\label{signed_sum}
n(\gamma_+,\gamma_-) = \sum_{C \in \\ \M_{K,J'}(\gamma_+;\gamma_-)/\R} \epsilon(C)
\end{equation}
for every pair $\gamma_+,\gamma_- \in \P(\H,'K,\gamma)$ of orbits satisfying $|\gamma_+|-|\gamma_-|=1$. Here we still wrote $C$ to indicate a class in $\M_{K,J'}(\gamma_+;\gamma_-)/\R$. By Lemma~\ref{lemma_finite_d} the above sum is finite. Set $n(\gamma_+,\gamma_-)=0$ if $|\gamma_+|-|\gamma_-|\neq1$.

There are operators $\rho_\pm$ on $\M_{K,J'}(\gamma_+;\gamma_-)$ given by rotating asymptotic markers:
\begin{equation}
\begin{aligned} 
&\rho_+([t_+,t_-,F]) = [t_++1/m_+,t_-,F] \\
&\rho_-([t_+,t_-,F]) = [t_+,t_-+1/m_-,F]
\end{aligned}
\end{equation}
where $m_\pm$ are the multiplicities of $\gamma_\pm$. Assume that $\gamma_+,\gamma_- \in \P(\H',K,\gamma)$ satisfy $|\gamma_+|-|\gamma_-|=1$. Then
\begin{equation}
\begin{aligned}
& \gamma_+ \text{ is good} \Rightarrow \epsilon(\rho_+(C))=\epsilon(C) \\
& \gamma_- \text{ is bad} \Rightarrow \epsilon(\rho_+(C))=-\epsilon(C).
\end{aligned}
\end{equation}
For a proof see~\cite{be}. In particular, $n(\gamma_+,\gamma_-)=0$ when $\gamma_+$ or $\gamma_-$ is bad; note here that a bad orbit has even multiplicity.
Finally we define
\begin{equation}\label{local_diff}
d : C_*(\H',K,\gamma) \to C_{*-1}(\H',K,\gamma) \ \ \text{ by } \ \ d\gamma' = \sum_{\gamma'' \in \P_0(\H',K,\gamma)} \frac{n(\gamma',\gamma'')}{m_{\gamma''}} \gamma'' 
\end{equation}
on generators $\gamma' \in \P_0(\H',K,\gamma)$.


\begin{lemma}\label{d2=0}
If $\H'$ is a sufficiently $C^\infty$-small nondegenerate perturbation of $\H$, $J' \in \J(\H')$ is sufficiently $C^\infty$-close to $J$ and regular for the data $(\H',K,\gamma)$, then the square of the map~\eqref{local_diff} vanishes.
\end{lemma}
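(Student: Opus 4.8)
The plan is to follow the classical SFT/Floer scheme for establishing $d^2 = 0$, adapting the compactness results from Lemmas~\ref{lemma_nearby_cylinders} and \ref{lemma_nearby_cyls_neck} to the local setting. Fix $\gamma',\gamma'' \in \P_0(\H',K,\gamma)$ with $|\gamma'| - |\gamma''| = 2$. We want to show that the coefficient of $\gamma''$ in $d^2\gamma'$ vanishes, i.e. that $\sum_{\gamma'''} \frac{n(\gamma',\gamma''')\, n(\gamma''',\gamma'')}{m_{\gamma'''}}$ is zero, where the sum is over $\gamma''' \in \P_0(\H',K,\gamma)$ with $|\gamma'''| = |\gamma''|+1$. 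The standard strategy is to interpret this sum as counting the ends of the one-dimensional moduli space $\M_{K,J'}(\gamma';\gamma'')/\R$, which by the regularity assumption on $J'$ is a smooth one-dimensional orbifold, and whose count of boundary points (with coherent-orientation signs) must be zero.

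First I would verify compactness: the one-dimensional component of $\M_{K,J'}(\gamma';\gamma'')/\R$ admits a compactification whose boundary consists of two-level broken cylinders $(F_1,F_2)$, with $F_1$ asymptotic to $\gamma'$ and $\gamma'''$ and $F_2$ asymptotic to $\gamma'''$ and $\gamma''$ for some intermediate good orbit $\gamma'''$. This is where Lemma~\ref{lemma_nearby_cylinders} (or \ref{lemma_nearby_cyls_neck}) enters decisively: it guarantees that all curves involved stay in $\R \times \mathrm{int}(K)$ with uniformly bounded energy, so no escape of the image to $\partial K$ occurs and no finite-energy planes bubble off. Combined with the SFT compactness theorem of \cite{sftcomp}, the only degeneration is the breaking into two nontrivial cylinders; in particular there are no side-bubbles or multiply-covered components to worry about beyond the usual ones. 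I must also observe that broken configurations in which an intermediate orbit is \emph{bad} do not contribute: the orientation/multiplicity bookkeeping following \cite{EGH,be} forces the signed count through a bad orbit to cancel, which is precisely why the complex is generated only by $\P_0$.

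Next I would handle the gluing: conversely, every two-level broken cylinder $(F_1,F_2)$ through a good orbit $\gamma'''$ arises as the limit of a unique (up to the relevant finite ambiguity) end of the one-dimensional moduli space, and the gluing parameter identifies a half-open interval of honest cylinders with each such broken pair. This is again standard under the transversality hypothesis on $J'$, using the gluing analysis in the functional-analytic setup of \cite{wendl}; the markers condition \eqref{markers} and the passage to $\M_{K,J'}(\gamma';\gamma'')/\R$ are compatible with gluing in the usual way, and the asymptotic markers at $\gamma'''$ match up correctly on the two levels. Keeping careful track of the multiplicity $m_{\gamma'''}$ — the factor $1/m_{\gamma'''}$ in \eqref{def_local_diff} is exactly what is needed to make the gluing count come out right, because a cylinder limiting onto $\gamma'''$ at a marked point can break with $m_{\gamma'''}$ choices of where the marker sits — I would match $\sum_{\gamma'''} \frac{n(\gamma',\gamma''')n(\gamma''',\gamma'')}{m_{\gamma'''}}$ with the signed count of boundary points of a compact oriented one-manifold.

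Finally, invoking the coherent orientations of \cite{be}, the algebraic count of the boundary of this compact oriented one-dimensional orbifold is zero, which yields the identity above and hence $d^2 = 0$. The main obstacle is really the compactness step: one must rule out, using the local nature of $K$, all the exotic degenerations that SFT compactness a priori allows (buildings with many levels, connecting curves with extra punctures, bubbling of planes), and reduce to the single clean breaking phenomenon. Lemmas~\ref{lemma_nearby_cylinders} and \ref{lemma_nearby_cyls_neck}, together with Lemma~\ref{hofer} ruling out contractible orbits in $K$ and the absence of finite-energy planes in $\R \times K$, are exactly what makes this reduction go through; the rest is the routine Floer-theoretic gluing-and-orientation argument.
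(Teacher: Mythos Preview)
Your proposal is correct and follows essentially the same route as the paper: apply SFT compactness with the energy bounds and confinement in ${\rm int}(K)$ from Lemma~\ref{lemma_nearby_cylinders} to rule out spheres and planes (via Lemma~\ref{hofer}), reduce to two-level broken cylinders, glue back using regularity and the fact that projections stay in ${\rm int}(K)$, and conclude by the coherent-orientation count with bad intermediate orbits cancelling. One small remark: Lemma~\ref{lemma_nearby_cyls_neck} is not needed here---it concerns neck-stretching almost complex structures used later for invariance---so Lemma~\ref{lemma_nearby_cylinders} alone suffices for the $d^2=0$ argument.
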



\begin{proof}[Proof of Lemma~\ref{d2=0}]
Assuming that $\gamma',\gamma''' \in \P_0(\H',K,\gamma)$ satisfy $|\gamma'|=|\gamma'''|+2$, we wish to show that $m_{\gamma'''}\left<d^2(\gamma'),\gamma'''\right>$ is the algebraic count of ends of the manifold $\M_{K,J'}(\gamma';\gamma''')/\R$ which is 1-dimensional. Already here we need to note that all cylinders representing elements of this space project compactly inside ${\rm int}(K)$. This is so in view of Lemma~\ref{lemma_nearby_cylinders} since we take $(\H',J')$ is a very small perturbation of $(\H,J)$. Hence $\M_{K,J'}(\gamma';\gamma''')/\R$ is indeed a manifold without a genuine boundary (boundaries only exist in the sense of SFT). Let us denote $\ell = |\gamma'|-1$.

As explained in the proof of Lemma~\ref{lemma_finite_d}, any sequence $C_n \in \M_{K,J'}(\gamma';\gamma''')/\R$ can only SFT-converge to a building having precisely one cylinder in each of its levels. In fact, all curves in such a limiting building lie in $\R\times K$ since $K$ is compact, moreover, no spheres appear since $J'$ is tamed by some exact symplectic form on $\R\times K$, and no planes appear since $X'$ has no contractible closed trajectories inside $K$. Thus the building is cylindrical. The regularity assumption now allows for two possibilities: either the building consists of a single cylinder $C\in \M_{K,J'}(\gamma';\gamma''')/\R$ and $C_n\to C$ in $\M_{K,J'}(\gamma';\gamma''')/\R$, or it has two levels, the top cylinder represents elements in $\M_{K,J'}(\gamma';\gamma'')$ and the bottom cylinder represents elements in $\M_{K,J'}(\gamma'';\gamma''')$ for some orbit $\gamma''\in\P(\H',K,\gamma)$ satisfying $|\gamma''|=\ell$. Here $\gamma''$ does not need to be good. Note the freedom in the positive asymptotic marker at the bottom cylinder, and in the negative asymptotic marker at the top cylinder.

Conversely, with an orbit $\gamma''\in\P(\H',K,\gamma)$ satisfying $|\gamma''|=\ell$ fixed, cylinders $C_+ \in \M_{K,J'}(\gamma';\gamma'')/\R$, $C_- \in \M_{K,J'}(\gamma'';\gamma''')/\R$ project compactly inside ${\rm int}(K)$ again by Lemma~\ref{lemma_nearby_cylinders} since $(\H',J')\sim(\H,J)$. Hence, using the assumed regularity, they can be glued to a $1$-parameter family of cylinders in $\M_{K,J'}(\gamma';\gamma''')/\R$ which will break to the two-level cylindrical building composed of $C_+$ and $C_-$.

We conclude that the SFT-boundary of $\M_{K,J'}(\gamma';\gamma''')/\R$ can be represented  precisely by broken cylinders with two levels, the top level being a cylinder in $\M_{K,J'}(\gamma';\gamma'')/\R$ and the bottom level being a cylinder in $\M_{K,J'}(\gamma'';\gamma''')/\R$, where the orbit $\gamma''\in\P(\H',K,\gamma)$ satisfying $|\gamma''|=\ell$ can be good or bad. However, the correspondence between ends of $\M_{K,J'}(\gamma';\gamma''')$ and such broken cylinders consisting of pairs $C_+\in\M_{K,J'}(\gamma';\gamma'')/\R$ and $C_-\in\M_{K,J'}(\gamma'';\gamma''')/\R$, for orbits $\gamma''$ as above, is not 1-1: each end is represented by $m_{\gamma''}$ pairs.

We proceed and compute
\begin{equation}\label{term_d2}
\begin{aligned}
& m_{\gamma'''} \left<d^2(\gamma'),\gamma'''\right> \\
& = \sum_{\gamma'' \text{ is good,}|\gamma''|=\ell} \frac{n(\gamma',\gamma'') n(\gamma'',\gamma''')}{m_{\gamma''}} = \sum_{|\gamma''|=\ell} \frac{n(\gamma',\gamma'') n(\gamma'',\gamma''')}{m_{\gamma''}} \\
& = \sum_{|\gamma''|=\ell} \sum_{\begin{array}{c} C_1 \in \M_{K,J'}(\gamma';\gamma'')/\R \\ C_2 \in \M_{K,J'}(\gamma'';\gamma''')/\R \end{array}} \frac{\epsilon(C_1) \epsilon(C_2)}{m_{\gamma''}} \\
\end{aligned}
\end{equation}
The factor $1/m_{\gamma''}$ takes into account the ambiguity in the representation of the ends of $\M_{K,J'}(\gamma';\gamma''')/\R$ by pairs $(C_1,C_2)$, so that the above sum can be seen as a signed count of ends, with each end counted once.

Now we prove that~\eqref{term_d2} vanishes. 
Consider (possibly bad) orbits $\gamma''_0,\gamma''_1\in \P(\H',K,\gamma)$ satisfying $|\gamma''_i|=\ell$ and let $E^i_+ \subset\M_{K,J'}(\gamma';\gamma_i'')$, $i=0,1$, be connected compact subsets, that is, compact intervals of $\R$-orbits. 
Similarly, consider also $E^i_- \subset\M_{K,J'}(\gamma_i'';\gamma''')$, $i=0,1$, compact and connected. Assuming $E^i_\pm$ are nonempty, the assumed regularity and the glueing construction give a local diffeomorphism 
\begin{equation}\label{glueing_map_d2=0}
\#_R : E^0_+ \times E^0_- \cup E^1_+ \times E^1_- \to \M_{K,J'}(\gamma';\gamma''')
\end{equation}
where $R>R_0\gg1$ is fixed large enough (depending on $E^0_+,E^1_+,E^0_-,E^1_-$); compare with~\cite[Theorem~3 in page 69]{schwarz}. Assume that there exists a connected component $Y\subset\M_{K,J'}(\gamma';\gamma''')$ such that $\#_R(E^0_+ \times E^0_- \cup E^1_+ \times E^1_-) \subset Y$, and suppose that pairs in $E^0_+\times E^0_-$ are not $\R$-translations of pairs in $E^1_+ \times E^1_-$. This means that a point in $E^0_+\times E^0_-$ and a point in $E^1_+\times E^1_-$ represent both boundary components of $Y/\R$ in the sense of SFT. Denote by $\chi^i_\pm$ the vector fields on $E^i_\pm$ given by infinitesimal $\R$-action $(i=0,1)$ and by $\left<\chi^i_\pm\right>$ the induced orientations. Let $\Pi : Y \to Y/\R$ be the quotient projection and fix an orientation preserving diffeomorphism $\phi:Y/\R\to(-1,1)$ where $Y/\R$ is equipped with the coherent orientation, 
and $(-1,1)$ is oriented by the vector field~$1$. If $(C^0_+,C^0_-) \in E^0_+\times E^0_-$, $(C^1_+,C^1_-) \in E^1_+\times E^1_-$ are chosen arbitrarily, it is crucial to observe that 
\[
d\#_R|_{(C^i_+,C^i_-)} \cdot (\chi^i_+,\chi^i_-) \in \ker d\Pi \ \ (i=0,1)
\]
and that if we let $R$ be large enough then the nonvanishing vectors 
\begin{equation}\label{nonvanish_vectors_special}
d(\phi \circ \Pi \circ \#_R)|_{(C^0_+,C^0_-)} \cdot (\chi^0_+,-\chi^0_-), \ d(\phi \circ \Pi \circ \#_R)|_{(C^1_+,C^1_-)} \cdot (\chi^1_+,-\chi^1_-)
\end{equation}
point in opposite directions as vectors in $T(-1,1)$. This is carefully explained by Schwarz~\cite{schwarz} in the finite-dimensional case, the proof carries over to our situation since glueing maps and orientations used here share analogous properties (see Lemma~4.3 in page 137 of~\cite{schwarz}). The picture is clear: let $\tau$ be a positive real number and fix a large glueing parameter $R$, then 
\[
\begin{array}{ccc}
\Pi \circ \#_R(\tau * C^0_+ , -\tau * C^0_-) & \text{and} & \Pi \circ \#_R(\tau * C^1_+ , -\tau * C^1_-)
\end{array}
\]
move towards different ends of $Y/\R$ as $\tau$ increases (here $*$ denotes the $\R$-action), implying that the vectors in~\eqref{nonvanish_vectors_special} point in opposite directions. Now we are position to complete the proof. The relevant signs in the terms of~\eqref{term_d2} corresponding to the SFT-boundary of $Y/\R$ are $\left<\chi^i_\pm\right> = \epsilon(C^i_\pm) \sigma_{C^i_\pm}$, $i=0,1$, where $\sigma$ denotes coherent orientation. Now, the differential of the glueing map $d\#_R$ induces at the level of orientations the same map as the linear glueing map. Thus, it respects coherent orientations: $d\#_R \cdot (\sigma_{C^i_+} \oplus \sigma_{C^i_-}) = \sigma_{\#_R(C^i_+,C^i_-)}$ $(i=0,1)$. 
Hence
\[
d\#_R \cdot (\left<\chi^i_+\right> \oplus \left<-\chi^i_-\right>) = -\epsilon(C^i_+)\epsilon(C^i_-) \sigma_{\#_R(C^i_+,C^i_-)} \ \ \ \ (i=0,1)
\]
Since the vectors~\eqref{nonvanish_vectors_special} point toward opposite ends of $(-1,1)$ we get
\[
\epsilon(C^0_+)\epsilon(C^0_-) + \epsilon(C^1_+)\epsilon(C^1_-) = 0.
\]
This holds for every noncompact connected component $Y/\R$ of $\M_{K,J'}(\gamma';\gamma''')/\R$, hence~\eqref{term_d2} vanishes.
\end{proof}

\begin{definition}[Local Contact Homology]\label{defn_lch}
Let $\gamma$ be an isolated closed orbit for the special stable Hamiltonian structure $\H = (\xi,X,\omega)$, and take a small isolating neighborhood $K$ for $\gamma$. Let $\H'$ be a small non-degenerate perturbation of $\H$, and $J' \in \J(\H')$ be a small perturbation of $J$ in the strong $C^\infty$-topology which is regular for the data $(\H',K,\gamma)$. The local contact homology $HC(\H,\gamma)$ of the pair $(\H,\gamma)$ is defined as the homology of the complex $(C_*(\H',K,\gamma),d)$.
\end{definition}

The remaining of Section~\ref{lch_section} is devoted to showing that this definition does not depend on the choice of $K$ and of the small perturbation $(\H',J')$ of $(\H,J)$.

\subsubsection{Chain maps}\label{chain_maps}


We consider $\H' = (\xi',X',\omega')$, $\H'' = (\xi'',X'',\omega'')$ nondegenerate $C^\infty$-small perturbations of $\H$ as explained in \S~\ref{local_complex}. Consider also $J' \in \J(\H')$ and $J'' \in \J(\H'')$ $C^\infty$-close to $J$ and regular for the data $(\H',K,\gamma)$ and $(\H'',K,\gamma)$, respectively. As before, regularity may not be achieved.

We assumed that $\H$ is special for $\gamma$. Consequently, according to Definition~\ref{special_strs}, $\H$ is either induced by some contact form $\alpha$, or the 1-form $\lambda$~\eqref{1form} is closed. In the first case consider $\Omega_0 = d(e^a\alpha)$, in the second case consider $\Omega_0 = d(Ae^a\lambda+\alpha)$ where $\alpha$ is some primitive of $\omega$ near $K$ and $A\gg1$. Here $a$ is the $\R$-coordinate. In both cases $J$ is $\Omega_0$-compatible. 

\begin{lemma}\label{lemma_symp_form_perturb}
Fix $L>0$. In both cases of Definition~\ref{special_strs} we can find a $C^\infty$-small exact perturbation $\Omega$ of $\Omega_0$ on $[-L,L]\times K$, which agrees with a positive multiple of $\omega'$ on $T(\{L\}\times K)$ and with a positive multiple of $\omega''$ on $T(\{-L\}\times K)$.
\end{lemma}
 
\begin{proof}
Let us prove this in the second case, the first case being trivial. Since $H_2(K;\R)$ vanishes and $\omega',\omega''$ are $C^\infty$-close to $\omega$, it follows from the Mayer-Vietoris principle that we can find primitives $\alpha',\alpha''$ of $\omega',\omega''$ near $K$, respectively, which are $C^\infty$-close to $\alpha$. Hence, there exists a smooth family $\{\alpha_s\}_{s\in\R}$ of $1$-forms defined near $K$ such that $\alpha_s=\alpha'$ for $s\geq L$, $\alpha_s=\alpha''$ for $s\leq -L$ and $\partial_s\alpha_s$ is uniformly $C^\infty$-small. Then take $\Omega = d(Ae^a\lambda + \alpha_a)$ on $[-L,L]\times N$ (note here that $\lambda$ is closed and $A$ is large).
\end{proof}

For any fixed $L>0$ we may find $\jbar \in \J_L(J'',J')$. Assuming that $J',J''$ are sufficiently $C^\infty$-close to $J$, we can also assume that $\jbar$ is arbitrarily close to $J$ in $C^\infty$-strong topology. Then $\jbar$ will be $\Omega$-tamed when $\Omega$ is the small perturbation of $\Omega_0$ given by Lemma~\ref{lemma_symp_form_perturb}. We can use $\Omega$ to define energies of $\jbar$-holomorphic maps.

Consider the space $\mathcal Z(\jbar,K,\gamma)$ of finite-energy $\jbar$-holomor\-phic maps $$ F = (a,f):\R\times S^1 \to \R\times K $$ such that the loops $t\mapsto f(s,t)$ are homotopic to $\gamma$ in $K$, with a positive (negative) puncture at $+\infty\times S^1$ ($-\infty\times S^1$). We suppose regularity can be achieved for such cylinders by arbitrarily small perturbations of $\jbar$ inside $\J_L(J'',J')$.
Thus, we could have assumed that $\jbar$ is regular and has the above listed geometric properties. This means that, after such perturbation, 
the linearized Cauchy-Riemann equation at every $F\in\mathcal Z(\jbar,K,\gamma)$ is a surjective Fredholm operator. In this case we call $\jbar$ regular for the data $((\H',J'),(\H'',J''),K,\gamma)$.


Similarly as in \S~\ref{local_complex}, given any $\gamma' \in \P(\H',K,\gamma)$ and $\gamma'' \in \P(\H'',K,\gamma)$ we consider the set $\F_{K,\jbar}(\gamma';\gamma'')$ of triples $(t_+,t_-,F) \in S^1 \times S^1\times \mathcal Z(\jbar,K,\gamma)$ such that $F$ satisfies $\partial_s F + \jbar(F) \partial_tF = 0$, is asymptotic to $\gamma',\gamma''$ as $s\to+\infty,-\infty$, respectively, and if we write $F=(a,f)$ then
\[
\begin{array}{ccc} f(s,t_+) \to \pt_{\gamma'} \ \text{as} \ s\to+\infty, & & f(s,t_-)\to \pt_{\gamma''} \ \text{as} \ s\to-\infty \end{array}
\]
and $a(s,t) \to \pm\infty$ as $s\to\pm\infty$. By the assumed regularity $\F_{K,\jbar}(\gamma';\gamma'')$ is a manifold of dimension $|\gamma'|-|\gamma''|+2$. It carries a free $\R\times S^1$-action defined as in~\eqref{rep_action}, and the orbit space of this action
\begin{equation}
\M_{K,\jbar}(\gamma';\gamma'') := \F_{K,\jbar}(\gamma';\gamma'')/\R\times S^1
\end{equation}
becomes a manifold of dimension $|\gamma'|-|\gamma''|$.

\begin{lemma}\label{lemma_finite_chain}
If $|\gamma'|=|\gamma''|$ then $\M_{K,\jbar}(\gamma';\gamma'')$ is finite.
\end{lemma}

The above statement is proved almost identically as Lemma~\ref{lemma_finite_d}, only note that the levels of certain relevant cylindrical buildings may satisfy the Cauchy-Riemann equation with respect to different almost complex structures. Again one relies on Lemma~\ref{lemma_nearby_cylinders} for this argument.

The moduli spaces $\M_{K,\jbar}(\gamma';\gamma'')$ for the various $\gamma',\gamma''$ as above are orientable and can be assigned a system of orientations which is coherent under glueing, see~\cite{be}. 
When $|\gamma'|=|\gamma''|$ one can associate signs $\epsilon(C)$ to elements $C\in \M_{K,\jbar}(\gamma';\gamma'')$: these moduli spaces are $0$-dimensional and one can compare the coherent orientations with the canonical orientation of the trivial vector space to obtain the above mentioned signs. There is no need to assume $\gamma'$ or $\gamma''$ is good.



Analogously to~\cite{EGH} we set
\begin{equation}\label{signed_sum_chain_map}
\bar n(\gamma',\gamma'') = \sum_{C\in \M_{K,\jbar}(\gamma';\gamma'')} \epsilon(C)
\end{equation}
if $|\gamma'|-|\gamma''|=0$, or $n(\gamma',\gamma'') = 0$ if $|\gamma'|-|\gamma''|\neq0$. As in \S~\ref{local_complex} rotating the asymptotic markers define operators on $\M_{K,\jbar}(\gamma';\gamma'')$ by
\begin{equation}
\begin{aligned}
& \bar\rho_+([t_+,t_-,F]) = [t_++1/m',t_-,F] \\
& \bar\rho_-([t_+,t_-,F]) = [t_+,t_-+1/m'',F]
\end{aligned}
\end{equation}
where $m',m''$ are the multiplicities of $\gamma',\gamma''$ respectively. Results from~\cite{be} tell us that $\rho_+$ preserves orientation if, and only if, $\gamma'$ is good. Similarly, $\rho_-$ preserves orientation if, and only if, $\gamma''$ is good. Since bad orbits have even multiplicity we conclude that $\bar n(\gamma',\gamma'')=0$ if $\gamma'$ or $\gamma''$ is bad. Finally one defines
\begin{equation}\label{continuation_map}
\Phi : C_*(\H',K,\gamma) \to C_*(\H'',K,\gamma) \ \ \text{by} \ \ \gamma' \mapsto  \sum_{\gamma'' \in \P_{0}(\H'',K,\gamma)} \frac{\bar n(\gamma',\gamma'')}{m_{\gamma''}}\gamma''.
\end{equation}

\begin{lemma}\label{cont_chain_map}
If $(\H',J')$, $(\H'',J'')$ as above are sufficiently close to $(\H,J)$ and $\jbar$ is regular and sufficiently $C^\infty$-strong close to $J$, then~\eqref{continuation_map} is a chain map with respect to the differentials defined in~\eqref{local_diff}.
\end{lemma}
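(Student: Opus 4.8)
The plan is to run the standard cobordism-counting argument proving invariance of Floer-type homologies, with the SFT-Compactness Theorem of \cite{sftcomp} and the automatic estimates of Lemma~\ref{lemma_nearby_cylinders} playing the role of the usual compactness input, in the same spirit as the proofs of Lemmas~\ref{lemma_finite_d} and~\ref{d2=0}. First I would fix orbits $\gamma' \in \P(\H',K,\gamma)$ and $\gamma'' \in \P(\H'',K,\gamma)$ with $|\gamma'| - |\gamma''| = 1$ and consider the one-dimensional orbifold $\M_{K,\bar J}(\gamma';\gamma'')$. Applying Lemma~\ref{lemma_nearby_cylinders} with $\H^{\pm}$ equal to $\H'$, $\H''$ and $\widetilde\Omega = \Omega$ (legitimate, since $(\H',J')$, $(\H'',J'')$ and $\bar J$ are $C^\infty$-close to $(\H,J)$ and $\Omega$ is a small exact perturbation of $\Omega_0$), every cylinder in this moduli space has uniformly bounded energy and image with closure in $\R\times{\rm int}(K)$. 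Hence the SFT-Compactness Theorem applies and all limiting holomorphic buildings lie inside $\R\times K$. An exact taming form on $\R\times K$ constructed from a primitive of $\omega$ and nearby primitives of $\omega'$, $\omega''$ rules out sphere components, and since $X'$ and $X''$ have no contractible orbits in $K$, Lemmas~\ref{bubb} and~\ref{hofer} rule out plane components. As $\bar J \in \J_L(J'',J')$ involves a single cobordism level, the only possible degeneration of a sequence in $\M_{K,\bar J}(\gamma';\gamma'')$ is breaking into a two-level building.

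Next I would carry out the index and gluing bookkeeping. By additivity of the Fredholm index and the regularity of $J'$, $J''$ and $\bar J$, each such two-level building is either a rigid $J'$-cylinder in $\M_{K,J'}(\gamma';\eta)/\R$ (so $|\eta| = |\gamma''|$) sitting above a rigid $\bar J$-cylinder in $\M_{K,\bar J}(\eta;\gamma'')$, or a rigid $\bar J$-cylinder in $\M_{K,\bar J}(\gamma';\eta)$ (so $|\eta| = |\gamma'|$) sitting above a rigid $J''$-cylinder in $\M_{K,J''}(\eta;\gamma'')/\R$. Conversely, each such pair is glued to exactly one end of $\M_{K,\bar J}(\gamma';\gamma'')$; the gluing remains inside $\R\times K$ because, once more by Lemma~\ref{lemma_nearby_cylinders}, the images of all cylinders involved have closure in $\R\times{\rm int}(K)$, so neither the pre-glued nor the glued curve can escape $K$. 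Orienting the moduli spaces coherently as in \cite{be} (the signs being defined even when $\eta$, $\gamma'$ or $\gamma''$ is bad, and two-level configurations through a bad orbit cancelling in pairs, exactly as in the proof of Lemma~\ref{d2=0}), the signed count of the boundary of the compactified one-dimensional moduli space vanishes. After the usual normalization by the covering multiplicities $m_{\eta}$, this identity is precisely the equality of the $\gamma''$-coefficients of $\Phi\circ d'$ and $d''\circ\Phi$; since $\gamma'$, $\gamma''$ were arbitrary, $\Phi d' = d'' \Phi$.

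I expect the main obstacle to be analytic rather than algebraic: one must control compactness and gluing for curves in the non-cylindrical cobordism $\R\times N$ while keeping those curves inside $\R\times K$. This is exactly what Lemma~\ref{lemma_nearby_cylinders} supplies---uniform energy bounds together with the confinement $\overline{f(\R\times S^1)}\subset{\rm int}(K)$---and it is the reason the hypothesis of the present lemma requires that $(\H',J')$, $(\H'',J'')$ and $\bar J$ be sufficiently $C^\infty$-close to $(\H,J)$. Once this input is available, the rest is a routine transcription to the local setting of the classical argument, as carried out e.g. in \cite{Bo} and \cite{EGH}.
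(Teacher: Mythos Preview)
Your proposal is correct and follows essentially the same route as the paper's own proof: use Lemma~\ref{lemma_nearby_cylinders} for uniform energy bounds and confinement in ${\rm int}(K)$, rule out sphere and plane components via exactness and the absence of contractible orbits, apply SFT-compactness to identify the two possible height-2 cylindrical degenerations, and then glue (staying inside $\R\times K$) with the coherent orientations of~\cite{be} to obtain $\Phi d' = d''\Phi$. The paper's argument is more terse but contains exactly these ingredients.
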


\begin{proof}
The proof is analogous to that of Lemma~\ref{d2=0}. Let $\gamma_+ \in \P_0(\H',K,\gamma)$ and $\gamma_- \in \P_0(\H'',K,\gamma)$ satisfy $\ell := |\gamma_+|-1 = |\gamma_-|$. We wish to show that $\left< \Phi \circ d'(\gamma_+) - d'' \circ \Phi (\gamma_+),\gamma_- \right> = 0$, where $d'$ and $d''$ are the differentials defined using $J'$ and $J''$, respectively, in the local chain complexes generated by good closed orbits of $X'$ and $X''$ near $\gamma$. The signed sums~\eqref{signed_sum} associated to $J'$ and $J''$ will be denoted by $n'(\cdot,\cdot)$ and $n''(\cdot,\cdot)$, respectively. Abbreviate $\P'_0=\P_0(\H',K,\gamma)$, $\P'=\P(\H',K,\gamma)$.
We compute
\begin{equation}\label{first_terms_chain}
\begin{aligned}
& m_{\gamma_-} \left< \Phi \circ d'(\gamma_+),\gamma_- \right> = \sum_{\tilde\gamma \in \P_0,|\tilde\gamma|=\ell} \frac{n'(\gamma_+,\tilde\gamma) \bar n(\tilde\gamma,\gamma_-)}{m_{\tilde\gamma}} \\
& = \sum_{\tilde\gamma \in \P,|\tilde\gamma|=\ell} \frac{n'(\gamma_+,\tilde\gamma) \bar n(\tilde\gamma,\gamma_-)}{m_{\tilde\gamma}} \\
& = \sum_{\tilde\gamma \in \P,|\tilde\gamma|=\ell} \sum_{\begin{array}{c} C_1 \in \M_{K,J'}(\gamma_+;\tilde\gamma)/\R \\ C_2 \in \M_{K,\jbar}(\tilde\gamma;\gamma_-) \end{array}} \frac{\epsilon(C_1)\epsilon(C_2)}{m_{\tilde\gamma}} \\
\end{aligned}
\end{equation}
In the second equality we replaced a sum over the good orbits by a sum over all orbits since the corresponding coefficients for bad orbits vanish, as already observed before. 
Analogously
\begin{equation}\label{second_terms_chain}
\begin{aligned}
& m_{\gamma_-} \left< d'' \circ \Phi (\gamma_+),\gamma_- \right> \\ 
&= \sum_{\tilde\gamma\in\P,|\tilde\gamma|=\ell+1} \sum_{\begin{array}{c} C_1 \in \M_{K,\jbar}(\gamma_+;\tilde\gamma) \\ C_2 \in \M_{K,J''} (\tilde\gamma;\gamma_-)/\R \end{array}} \frac{\epsilon(C_1)\epsilon(C_2)}{m_{\tilde\gamma}}
\end{aligned}
\end{equation}
where here $\P''$ stands for $\P(\H'',K,\gamma)$. 

A glueing/compactness analysis, similar to the one described in the proof of Lemma~\ref{d2=0}, shows that to each term appearing in the sums~\eqref{first_terms_chain},\eqref{second_terms_chain} there corresponds an end of the 1-dimensional space $\M_{K,\jbar}(\gamma_+;\gamma_-)$. In fact, since $K$ has no finite-energy sphere of planes which are holomorphic with respect to $J'$, $J''$ or $\jbar$, sequences on $\M_{K,\jbar}(\gamma_+;\gamma_-)$ SFT-converge to cylindrical buildings. The assumed regularity implies that these have precisely two levels, one of which corresponds to a $\jbar$-holomorphic cylinder. Conversely, consider a two-level building consisting of a pair $(C_1,C_2) \in \M_{K,J'}(\gamma_+;\tilde\gamma)/\R \times \M_{K,\jbar}(\tilde\gamma;\gamma_-)$ or of a pair $(C_1,C_2) \in \M_{K,\jbar}(\gamma_+;\tilde\gamma) \times \M_{K,J''}(\tilde\gamma;\gamma_-)/\R$. The important observation is that by Lemma~\ref{lemma_nearby_cylinders} the maps representing $C_1,C_2$ project compactly inside ${\rm int}(K)$. Hence, using the assumed regularity, these can be glued to obtain $1$-parameter family of cylinders in $\M_{K,\jbar}(\gamma_+;\gamma_-)$. Hence, as usual, this glueing/compactness argument shows that these pairs $(C_1,C_2)$ form the SFT-boundary of $\M_{K,\jbar}(\gamma_+;\gamma_-)$.

However, the correspondence between ends of $\M_{K,\jbar}(\gamma_+;\gamma_-)$ and terms of~\eqref{first_terms_chain}, \eqref{second_terms_chain} is not 1-1. Each end associated to a 2-level broken cylinder corresponds precisely to $m_{\tilde\gamma}$ terms, where $\tilde\gamma$ is the asymptotic orbit between the levels. 
Hence, after dividing by the corresponding factors $m_{\tilde\gamma}$, adding~\eqref{first_terms_chain} with~\eqref{second_terms_chain} gives a signed count of ends of $\M_{K,\jbar}(\gamma_+;\gamma_-)$ with each end counted precisely once.

Let $Y$ be a connected component of $\M_{K,\jbar}(\gamma_+;\gamma_-)$. There are three possibilities:
\begin{itemize}
\item[I)] Both ends of $Y$ correspond to terms in~\eqref{first_terms_chain}.
\item[II)] Both ends of $Y$ correspond to terms in~\eqref{second_terms_chain}.
\item[III)] One end of $Y$ corresponds to a term in~\eqref{first_terms_chain} and the other end corresponds to a term in~\eqref{second_terms_chain}. \\
\end{itemize}

\noindent {\bf Cases I and II:} Let us handle I, case II is similar and left to the reader. There exists $\tilde\gamma_i \in \P(\H',K,\gamma)$ satisfying $|\tilde\gamma_i|=|\gamma_+|-1$, connected components $E^i_+$ of $\M_{K,J'}(\gamma_+;\tilde\gamma_i)$, and $C^i_-\in \M_{K,\jbar}(\tilde\gamma_i;\gamma_-)$ $(i=0,1)$ such that $(E^0_+/\R,C^0_-)$ and $(E^1_+/\R,C^1_-)$ correspond to the ends of $Y$ in the sense of SFT. 
Let $J^i \subset E^i_+$ be nontrivial compact connected subsets (compact intervals of $\R$-translations). Glueing yields a local diffeomorphism
\[
\#_R : J^0 \times \{C^0_-\} \cup J^1 \times \{C^1_-\} \to Y.
\]
Regularity is crucial to get this map well-defined; please see the end of~\cite[page 97]{schwarz} and note that the glueing map is a local embedding when the glueing parameter is ``frozen'' at a very large value. We insist on writing the domain of $\#_R$ as (a union of) products, similarly as in~\eqref{glueing_map_d2=0}, since we need to deal with orientations: the point-spaces $\{C^i_-\}$ are oriented by the coherent orientation (denoted by $\sigma$) and the signs $\epsilon(C^i_-)$ are given by\footnote{The tangent space of the point $\{C^i_-\}$ is the trivial vector space which has exterior algebra equal to $\R \ni 1$.} $\left< 1 \right> = \epsilon(C^i_-) \sigma_{C^i_-}$. For any given points $C^i_+ \in J^i$, the signs $\epsilon(C^i_+)$ are given by $\left<\chi^i_+\right> = \epsilon(C^i_+)\sigma_{C^i_+}$, where $\chi^i_+$ denotes the infinitesimal $\R$-action. Let us fix an orientation preserving diffeomorphism $\phi : Y \to (-1,1)$, where $(-1,1)$ is given its standard orientation and $Y$ is given the coherent orientation. Standard glueing analysis tells us that the vectors
\[
\begin{array}{ccc} d(\phi \circ \#_R)|_{(C^0_+,C^0_-)} \cdot (\chi^0_+,0) & \text{and} & d(\phi \circ \#_R)|_{(C^1_+,C^1_-)} \cdot (\chi^1_+,0) \end{array}
\]
point in opposite directions as vectors in $(-1,1)$, see Lemma~4.5 in page 141 of~\cite{schwarz} for the Morse theoretical case. At the level of orientations $d\#_R$ induces the same map as the linear-glueing described in~\cite{be}. Thus $d\#_R$ respects coherent orientations. We get
\[
d\#_R \cdot ( \left<\chi^i_+\right> \oplus \left< 1 \right>) = \epsilon(C^i_+)\epsilon(C^i_-)\sigma_{\#_R(C^i_+,C^i_-)} \ \ \ \ \ (i=0,1).
\]
Consequently $\epsilon(C^0_+)\epsilon(C^0_-) = -\epsilon(C^1_+)\epsilon(C^1_-)$, completing the proof that the contribution of all terms associated to the ends of $Y$ to 
$\eqref{first_terms_chain} - \eqref{second_terms_chain}$ is zero. Note here that the terms just analyzed correspond to terms only in~\eqref{first_terms_chain}. \\

\noindent {\bf Case III:} There exist $\tilde\gamma_0 \in \P(\H',K,\gamma)$, $\tilde\gamma_1 \in \P(\H'',K,\gamma)$ satisfying $|\tilde\gamma_1|=|\gamma_+|=|\tilde\gamma_0|+1$, a connected component $E^0_+$ of $\M_{K,J'}(\gamma_+;\tilde\gamma_0)$, a connected component $E^1_-$ of $\M_{K,J''}(\tilde\gamma_1;\gamma_-)$, $C^0_- \in \M_{K,\jbar}(\tilde\gamma_0;\gamma_-)$, $C^1_+ \in \M_{K,\jbar}(\gamma_+;\tilde\gamma_1)$, for $i=0,1$, such that $(E^0_+/\R,C^0_-)$ and $(C^1_+,E^1_-/\R)$ correspond to the ends of $Y$ in the sense of SFT. 
Let $J^0 \subset E^0_+$, $J^1 \subset E^1_-$ be nontrivial compact connected subsets (compact intervals of $\R$-translations). Glueing yields a local diffeomorphism
\[
\#_R : J^0 \times \{C^0_-\} \cup \{C^1_+\} \times J^1 \to Y
\]
where $R$ is a large glueing parameter. Regularity is crucial to get this map well-defined. As in cases I and II, the point-spaces $\{C^0_-\}$, $\{C_+^1\}$ are oriented by the coherent orientation (denoted by $\sigma$) and the signs $\epsilon(C^0_-)$, $\epsilon(C^1_+)$ are given by $\left< 1 \right> = \epsilon(C^0_-) \sigma_{C^0_-}$, $\left< 1 \right> = \epsilon(C^1_+) \sigma_{C^1_+}$. For any given points $C^0_+ \in J^0$ and $C^1_- \in J^1$, the signs $\epsilon(C^0_+)$, $\epsilon(C^1_-)$ are given by $\left<\chi^0_+\right> = \epsilon(C^0_+)\sigma_{C^0_+}$, $\left<\chi^1_-\right> = \epsilon(C^1_-)\sigma_{C^1_-}$ where $\chi^0_+$,$\chi^1_-$ denote the infinitesimal $\R$-actions. Let us fix an orientation preserving diffeomorphism $\phi : Y \to (-1,1)$, where $(-1,1)$ is given its standard orientation and $Y$ is given the coherent orientation. The drastic difference with cases I and II is that the vectors
\[
\begin{array}{ccc} d(\phi \circ \#_R)|_{(C^0_+,C^0_-)} \cdot (\chi^0_+,0) & \text{and} & d(\phi \circ \#_R)|_{(C^1_+,C^1_-)} \cdot (0,\chi^1_-) \end{array}
\]
point in the {\bf same direction} when seen as vectors in $(-1,1)$, see Lemma~4.5 in page 141 of~\cite{schwarz} for the Morse theoretical case. The reader acquainted with the glueing construction will notice that the geometrical picture is clear. Since $d\#_R$ induces the same map as the linear-glueing described in~\cite{be} at the level of orientations, $d\#_R$ respects coherent orientations. We get
\[
\begin{array}{c}
d\#_R \cdot ( \left<\chi^0_+\right> \oplus \left< 1 \right>) = \epsilon(C^0_+)\epsilon(C^0_-)\sigma_{\#_R(C^0_+,C^0_-)} \\
d\#_R \cdot (\left< 1 \right> \oplus \left<\chi^1_-\right>) = \epsilon(C^1_+)\epsilon(C^1_-)\sigma_{\#_R(C^1_+,C^1_-)}
\end{array}
\]
Differently from cases I and II, we arrive at $\epsilon(C^0_+)\epsilon(C^0_-) = \epsilon(C^1_+)\epsilon(C^1_-)$. Thus the terms associated to the ends of $Y$ contribute with zero to the number
\[
m_{\gamma_-} \left< \Phi \circ d'(\gamma_+) - d'' \circ \Phi(\gamma_+),\gamma_- \right> = \eqref{first_terms_chain} - \eqref{second_terms_chain}.
\]
\end{proof}

\subsubsection{Homotopies}\label{homotopies}

Two chain maps as in~\eqref{continuation_map} turn out to be chain homotopic. To see this, consider $\H',\H''$ small non-degenerate perturbations of $\H$. Consider also $J'\in\J(\H')$, $J''\in\J(\H'')$ small perturbations of $J$ which are regular for the data $(\H',K,\gamma)$ and $(\H'',K,\gamma)$, respectively, and to which the conclusions of Lemma~\ref{lemma_nearby_cylinders} apply.

Consider choices $\jbar_0,\jbar_1 \in \J_L(J'',J')$ which are $C^\infty$-strong close to $J$ and regular for the data $((\H',J'),(\H'',J''),K,\gamma)$, with the properties required in \S~\ref{chain_maps}. 
Note that $J',J''$ are allowed to be taken arbitrarily $C^\infty$-strong close to $J$. Then we may find $\jbar_0,\jbar_1$ and the homotopy $\{\jbar_\tau\} \in \J_{\tau,L}(J'',J')$ connecting $\jbar_0$ to $\jbar_1$ lying on a arbitrarily given small $C^\infty$-strong neighborhood of $J$ (uniformly in $\tau\in[0,1]$). Moreover, with the help of Lemma~\ref{lemma_symp_form_perturb} one finds an exact symplectic form $\Omega$ on $[-L,L]\times K$ $C^\infty$-close to $\Omega_0$ as described in \S~\ref{chain_maps}, which tames all $\jbar_\tau$, equals $\omega''$ on $T(\{-L\}\times N)$ up to a positive constant, and equals $\omega'$ on $T(\{L\}\times N)$ up to a positive constant. Such a symplectic form can be used to define the energy of $\jbar_\tau$-holomorphic maps, for all $\tau\in[0,1]$.

We need the path $\{\jbar_\tau\}$ to be regular for the data $((\H',J'),(\H'',J''),K,\gamma)$ in the following sense. Let $\mathcal Z(\{\jbar_\tau\},K,\gamma)$ denote the set of pairs $(z,F)$, where $z\in[0,1]$, and $F:\R\times S^1 \to \R\times K$ is a finite-energy $\jbar_z$-holomorphic cylinder with a positive/negative puncture at $+\infty\times S^1$/$-\infty\times S^1$, asymptotic to an orbit in $\P(\H',K,\gamma)$ at the positive puncture and to an orbit in $\P(\H'',K,\gamma)$ at the negative puncture. Regularity of $\{\jbar_\tau\}$ means that, in a standard functional analytical set-up, the linearization of the ($\tau$-dependent) Cauchy-Riemann equation at every $(z,F) \in \mathcal Z(\{\jbar_\tau\},K,\gamma)$ is surjective. We proceed assuming that any path in $\J_{\tau,L}(J'',J')$ can be slightly and uniformly (in $\tau$) $C^\infty$-strong perturbed to a regular path keeping the endpoints $\jbar_0,\jbar_1$ fixed.



For $\gamma' \in \P(\H',K,\gamma)$ and $\gamma'' \in \P(\H'',K,\gamma)$ we consider the set $\F_{K,\{\jbar_\tau\}}(\gamma';\gamma'')$ consisting of triples $(t_+,t_-,(z,F)) \in S^1\times S^1 \times \mathcal Z(\{\jbar_\tau\},K,\gamma)$ where $F:\R\times S^1\to \R\times K$ is a finite-energy solution of $\partial_s F+ \jbar_z(F) \partial_t F = 0$, asymptotic to the orbits $\gamma'$/$\gamma''$ at $+\infty\times S^1$/$-\infty\times S^1$. Moreover, writing $F=(a,f)$ then $a(s,t) \to \pm\infty$ as $s\to\pm\infty$, $f(s,t_+) \to \pt_{\gamma'}$ as $s\to+\infty$ and $f(s,t_-) \to \pt_{\gamma''}$ as $s\to-\infty$. The energy is defined using the taming symplectic forms mentioned above. In view of regularity $\F_{K,\{\jbar_{\tau,t}\}}(\gamma';\gamma'')$ is a smooth manifold of dimension $|\gamma'|-|\gamma''|+3$. As before, $\R\times S^1$ acts freely on $\F_{K,\{\jbar_\tau\}}(\gamma';\gamma'')$, so that the quotient space 
\[
\M_{K,\{\bar J_\tau\}}(\gamma';\gamma'') = \F_{K,\{\jbar_\tau\}}(\gamma';\gamma'') / \R\times S^1
\]
is a manifold of dimension $|\gamma'|-|\gamma''|+1$.


\begin{lemma}\label{lemma_finite_homotopy}
$\M_{K,\{\jbar_\tau\}}(\gamma';\gamma'')$ is finite when $|\gamma'|-|\gamma''|=-1$.
\end{lemma}

The proof is entirely analogous to that of Lemma~\ref{lemma_finite_d} and will be omitted. We need automatic energy bounds for elements in $\F(\{\jbar_\tau\},K,\gamma)$, which is guaranteed by Lemma~\ref{lemma_nearby_cylinders} in view of the special form of our small perturbations of the data $(\H,J)$ and by the properties of $\Omega$.

When $|\gamma'|-|\gamma''|+1=0$ there are signs $\epsilon([t_+,t_-,(z,F)])$ associated to each element $[t_+,t_-,(z,F)]$ of $\M_{K,\{\jbar_\tau\}}(\gamma';\gamma'')$ induced by a system of orientations which is coherent with the glueing operation; see~\cite{be}. 
We set
\begin{equation}\label{signed_sum_degree_+1}
\widetilde n(\gamma',\gamma'') = \sum_{C \in \M_{K,\{\jbar_\tau\}}(\gamma',\gamma'')} \epsilon(C)
\end{equation}
when $|\gamma'|-|\gamma''| = -1$, or $\widetilde n(\gamma',\gamma'') = 0$ otherwise. One defines a degree +1 map
\begin{equation}
Q : C_*(\H',K,\gamma) \to C_{*+1}(\H'',K,\gamma) \ \text{ by } \ Q\gamma' = \sum_{\gamma'' \in \P_0(K,\H'',\gamma)} \frac{\widetilde n(\gamma',\gamma'')}{m_{\gamma''}} \gamma''
\end{equation}
on generators.

\begin{lemma}\label{lemma_homotopies}
Let $\Phi_0,\Phi_1:C_*(\H',K,\gamma) \to C_*(\H'',K,\gamma)$ be the chain maps~\eqref{continuation_map} induced by $\jbar_0,\jbar_1$, respectively. Then $\Phi_1 - \Phi_0 = Q\circ d' - d''\circ Q$.
\end{lemma}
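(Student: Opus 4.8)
The plan is to establish the chain homotopy identity $\Phi_0 - \Phi_1 = \pm(T \circ d - d \circ T)$ by analysing the boundary of the one-dimensional components of the moduli space $\M_{K,\{\bar J_t\}}(\gamma';\gamma'')$, exactly as in the standard Floer-theoretic proof but with the a priori compactness input coming from Lemma~\ref{lemma_nearby_cylinders} (and its neck-stretching variant Lemma~\ref{lemma_nearby_cyls_neck} is not needed here since we stay at fixed neck length $L$). First I would fix orbits $\gamma' \in \P(\H',K,\gamma)$ and $\gamma'' \in \P(\H'',K,\gamma)$ with $|\gamma'| - |\gamma''| = 0$, so that $\M := \M_{K,\{\bar J_t\}}(\gamma';\gamma'')$ is a $1$-dimensional orbifold with boundary; by the finiteness lemma just stated (applied to the various orbits of appropriate index) and the energy bounds of Lemma~\ref{lemma_nearby_cylinders}, its closure $\overline\M$ is a compact $1$-manifold with boundary, hence has an even number of boundary points counted with the coherent orientations from~\cite{be}.

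Next I would enumerate the contributions to $\partial \overline\M$. The boundary decomposes into three types: (i) the ``$t=0$'' endpoints, which by definition are the elements of $\M_{K,\bar J_0}(\gamma';\gamma'')$ counted by $\Phi_0$; (ii) the ``$t=1$'' endpoints, counted by $\Phi_1$; and (iii) the broken configurations arising as SFT-limits where the cylinder splits into a two-level building at some interior $t \in (0,1)$. For (iii), the key point — and this is where Lemma~\ref{lemma_nearby_cylinders} does the heavy lifting — is that since the data $(\H',J')$, $(\H'',J'')$, $\{\bar J_t\}$ and $\Omega$ are all taken arbitrarily $C^\infty$-close to $(\H,J)$, every holomorphic cylinder involved has image with closure in ${\rm int}(K)$, there are no finite-energy planes or spheres in $\R\times K$ (no contractible $X'$- or $X''$-orbits in $K$, cf.\ Lemma~\ref{hofer} and Lemma~\ref{bubb}), and the energy is uniformly bounded; hence the only breakings are into exactly two cylindrical levels. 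A Fredholm index count shows the two levels have index $0$ and $1$: either the upper level is a $\bar J_t$-cylinder of index $0$ (from $\M_{K,\bar J_t}(\gamma';\eta)$ with $|\eta|=|\gamma''|$) glued below a $d$-type cylinder of index $1$ in $\M_{K,J''}(\eta;\gamma'')/\R$, contributing to $(d \circ T)$; or it is a $d$-type index $1$ cylinder in $\M_{K,J'}(\gamma';\eta)/\R$ with $|\eta|=|\gamma'|+1$, sitting above a $\bar J_t$-cylinder of index $0$, contributing to $(T \circ d)$. Conversely, a gluing argument (again using that glued cylinders stay in ${\rm int}(K)$, by Lemma~\ref{lemma_nearby_cylinders}) shows every such broken configuration genuinely bounds a one-parameter family in $\M$, so these configurations are precisely the ends of type (iii).

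Finally I would assemble the count: $0 = \#\partial \overline\M = \pm\big(\langle \Phi_0 \gamma', \gamma''\rangle - \langle \Phi_1 \gamma', \gamma''\rangle\big) \mp \big(\langle d T \gamma', \gamma''\rangle + \langle T d \gamma', \gamma''\rangle\big)$ with signs dictated by the coherent orientations and the orientation of the infinitesimal $\R$-action used to trivialise the reparametrisation freedom on the index-$1$ levels; rearranging and summing over $\gamma''$ gives $\Phi_0 - \Phi_1 = \pm(T\circ d - d\circ T)$ on generators, and since all structure maps are $\Q$-linear this extends to the whole complex. I expect the main obstacle to be purely bookkeeping: verifying that the coherent orientation conventions of~\cite{be} produce exactly the relative sign between the $dT$ and $Td$ terms (including the subtle point that bad orbits appearing as intermediate asymptotic limits $\eta$ contribute consistently, so their factors $m_\eta$ cancel against the $1/m_\eta$ in the definitions of $d$, $\Phi$ and $T$), rather than any new analytic difficulty — the compactness and gluing are already isolated in Lemma~\ref{lemma_nearby_cylinders} and the transversality assumptions on $\{\bar J_t\}$.
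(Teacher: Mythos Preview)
Your approach is exactly the standard one and matches the paper's (which just says the argument is analogous to Lemmas~\ref{d2=0} and~\ref{cont_chain_map}, with genuine boundary at $t=0,1$, and again invokes Lemma~\ref{lemma_nearby_cylinders} to keep glued cylinders inside $\R\times K$). The analytic ingredients you list are the right ones.

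There is, however, a concrete bookkeeping slip in your description of the broken configurations. Recall that $T$ counts elements of the parametrised moduli space $\M_{K,\{\bar J_t\}}(\gamma';\gamma'')$ with $|\gamma'|-|\gamma''|+1=0$, so the $\bar J_t$-cylinders counted by $T$ go from degree $k$ at the positive end to degree $k+1$ at the negative end. Hence in the $d\circ T$ configuration (upper level a $\bar J_t$-cylinder $\gamma'\to\eta$, lower level a $J''$-cylinder $\eta\to\gamma''$) the intermediate orbit satisfies $|\eta|=|\gamma'|+1=|\gamma''|+1$, not $|\eta|=|\gamma''|$ as you wrote; and in the $T\circ d$ configuration (upper level a $J'$-cylinder $\gamma'\to\eta$, lower level a $\bar J_t$-cylinder $\eta\to\gamma''$) one has $|\eta|=|\gamma'|-1$, not $|\eta|=|\gamma'|+1$. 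With your stated indices the $J'$- and $J''$-levels would have Fredholm index $0$ and hence, modulo the $\R$-action, be $(-1)$-dimensional --- only trivial cylinders --- so nothing would be counted. Once the indices are corrected the rest of your outline goes through verbatim.
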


The argument is analogous to the ones given to prove Lemma~\ref{d2=0} and Lemma~\ref{cont_chain_map}, only note here that moduli spaces $\M_{K,\{\jbar_\tau\}}(\gamma_+;\gamma_-)$ with $\gamma_+ \in \P(\H',K,\gamma)$ and $\gamma_- \in \P(\H'',K,\gamma)$ satisfying $|\gamma_+|=|\gamma_-|$ do have ``genuine'' boundary points, corresponding to $\M_{K,\jbar_0}(\gamma_+;\gamma_-)$ and $\M_{K,\jbar_1}(\gamma_+;\gamma_-)$. As a final remark, these arguments strongly rely on Lemma~\ref{lemma_nearby_cylinders} which ensures that all relevant cylinders project compactly inside ${\rm int}(K)$. The proofs of lemmas~\ref{lemma_finite_homotopy} and~\ref{lemma_homotopies} will be omitted since they are very similar to the proofs of lemmas~\ref{lemma_finite_d},~\ref{d2=0}, and~\ref{cont_chain_map}.


\subsubsection{Stability of local contact homology}\label{stability_lch}

Here we study the chain maps~\eqref{continuation_map} more closely and show that they induce isomorphisms at the homology level. Consider, as before, pairs $(\H',J')$, $(\H'',J'')$, where $\H'=(\xi',X',\omega')$ and $\H''=(\xi'',X'',\omega'')$ are special nondegenerate small perturbations of $\H$, and $J'\in\J(\H')$, $J''\in\J(\H'')$ are small perturbations of $J$ which are regular for the data $(\H',K,\gamma)$ and $(\H'',K,\gamma)$, respectively. 

We can assume that the conclusions of Lemma~\ref{lemma_nearby_orbits} hold for $\H',\H''$, and those of Lemma~\ref{lemma_nearby_cylinders} hold for $J',J''$. Fix $L>0$ and choose $\jbar^+ \in \J_L(J'',J')$, $\jbar^- \in \J_L(J',J'')$ sufficiently $C^\infty$-strong small perturbations of $J$, to which the conclusions of Lemma~\ref{lemma_nearby_cylinders} also apply. This can be achieved since we have the freedom of choosing $J',J''$ as $C^\infty$-strong close to $J$ as we want. We also assume that $\jbar^\pm$ are regular, as explained in \S~\ref{chain_maps}. 
The energy of $\jbar^\pm$-holomorphic maps are defined using certain symplectic forms on $[-L,L]\times K$ as constructed in \S~\ref{chain_maps}. For any $R>L$ we consider $\jbar^R \in \J_{L<R}(J')$ defined by
\begin{equation}
\jbar^R = \left\{ \begin{aligned} & (\tau_{-R})^*\jbar^+ \text{ on } [0,+\infty) \times N \\ & (\tau_R)^*\jbar^- \text{ on } (-\infty,0]\times N\end{aligned} \right.
\end{equation}
which is smooth since $R>L$; see \S~\ref{acs_cobordisms} for the definition of $\J_{L<R}(J')$. 
Note that if $J',J''$, $\jbar^\pm$ are chosen sufficiently close to $J$ then $\jbar^R$ is forced to lie on any given arbitrarily small neighborhood of $J$ in the $C^\infty$-strong topology, uniformly for all $R>L$.

\begin{lemma}
The chain maps $\Phi_\pm$~\eqref{continuation_map} induced by $\jbar^\pm$ satisfy $\Phi_- \circ \Phi_+ = id$ in homology.
\end{lemma}

\begin{proof}
If $\gamma'_\pm \in \P(\H',K,\gamma)$ satisfy $|\gamma'_+|=|\gamma'_-|=:\ell$ then
\begin{equation}\label{sum_iso}
\begin{aligned}
& m_{\gamma'_-} \left< \Phi_- \circ \Phi_+ (\gamma'_+),\gamma'_- \right> \\
& = \sum_{\begin{array}{c} \gamma'' \in \P(\H'',K,\gamma) \\ |\gamma''|=\ell \end{array}} \sum_{\begin{array}{c} C_1 \in \M_{K,\jbar^+}(\gamma'_+;\gamma'') \\ C_2 \in \M_{K,\jbar^-}(\gamma'';\gamma'_-) \end{array}} \frac{\epsilon(C_1)\epsilon(C_2)}{m_{\gamma''}} \\
\end{aligned}
\end{equation}
Note that if $\gamma''$ is bad the corresponding inner-sum vanishes because rotating asymptotic markers at a puncture asymptotic to a bad orbit reverses orientations.


Now when $R$ is large enough, using regularity of all almost complex structures involved, we can glue a given pair
\begin{equation}\label{iso_pair_to_be_glued}
(C_1,C_2) \in \M_{K,\jbar^+}(\gamma'_+;\gamma'') \times \M_{K,\jbar^-}(\gamma'';\gamma'_-)
\end{equation}
to obtain an element of $\M_{K,\jbar^R}(\gamma'_+;\gamma'_-)$. In fact, using the assumed regularity glueing can be performed, but {\it a priori} it could not yield a cylinder with image in $\R\times K$. However, the important fact is that by Lemma~\ref{lemma_nearby_cylinders} the maps representing $C_1,C_2$ project compactly inside ${\rm int}(K)$. Thus the glued cylinder indeed represents an element of $\M_{K,\jbar^R}(\gamma'_+;\gamma'_-)$.


By a compactness argument one shows that every element of $\M_{K,\jbar^R}(\gamma'_+;\gamma'_-)$ arises this way when $R$ is fixed large enough. However, as in previous proofs in this section, the correspondence between terms of~\eqref{sum_iso} and elements of $\M_{K,\jbar^R}(\gamma'_+;\gamma'_-)$, for large and fixed $R$, is not 1-1. In fact, glueing can be performed in $m_{\gamma''}$ different ways and this is the reason for the coefficient $1/m_{\gamma''}$ in each term of~\eqref{sum_iso}. Hence~\eqref{sum_iso} is precisely a signed count of the elements of $\M_{K,\jbar^R}(\gamma'_+;\gamma'_-)$ with each element counted once. Consequently, in view of the coherence between glueing and the chosen orientations, we conclude that
\[
\eqref{sum_iso} = \sum_{C \in \M_{K,\jbar^R}(\gamma'_+;\gamma'_-)} \epsilon(C) = m_{\gamma'_-} \left< \Phi_R(\gamma'_+),\gamma'_- \right>
\]
where $\Phi_R$ denotes the map~\eqref{continuation_map} associated to $\jbar^R$. By standard glueing analysis, using the assumed regularity of all almost complex structures involved, it follows that $\jbar^R$ is also regular when $R$ is large enough. This shows that $$ \Phi_- \circ \Phi_+ = \Phi_R $$ when $R\gg1$.

%
%

Now we consider a regular homotopy between $\jbar^R$ and the $\R$-invariant $J'$. This last almost complex structure is taken regular, so that the associated chain map~\eqref{continuation_map} is necessarily the identity at the chain level. Thus, by Lemma~\ref{lemma_homotopies}, $\Phi_R$ induces the identity map at the level of homology.
\end{proof}

\begin{corollary}\label{local_independence}
Suppose that $\H',\H''$ are sufficiently small special nondegenerate $C^\infty$-perturbations of $\H$, and also that $J' \in \J(\H')$ and $J'' \in \J(\H'')$ are $C^\infty$-strong close to $J$ and regular for the data $(\H',K,\gamma)$ and $(\H'',K,\gamma)$, respectively. Then the homologies of the chain complexes $(C_*(\H',K,\gamma),d')$ and $(C_*(\H'',K,\gamma),d'')$ defined as in \S~\ref{local_complex} by the data $(\H',J')$ and $(\H'',J'')$, respectively, are isomorphic.
\end{corollary}

It follows from our discussion that there are well-defined graded vector spaces $HC_*(\H,K,\gamma,J)$ given by the homology of the chain complex $(C_*(\H',K,\gamma),d')$ where $K$ is a small tubular neighborhood of $\gamma$ and the data $(\H',J') \simeq (\H,J)$ is carefully chosen as in the above discussion. We still need to address the independence of $HC_*(\H,K,\gamma,J)$ on $J$ and $K$.

\subsection{Invariance of local contact homology}

\begin{lemma}\label{invariance_theorem}
Let $\{\H^s = (\xi^s,X^s,\omega^s)\}_{s\in[0,1]}$ be a smooth family of stable hamiltonian structures on a manifold $N$, and $J^s \in \J(\H^s)$ be a smooth 1-parameter family of $\R$-invariant almost complex structures. Let $\gamma$ be a closed $X^0$-orbit and let $K$ be a small compact tubular neighborhood of (the geometric image of) $\gamma$ such that for every $s\in[0,1]$ the following hold:
\begin{itemize}
\item[(a)] The vector field $X^s$ is a pointwise positive multiple of $X^0$ on the geometric image of $\gamma$.
\item[(b)] $\gamma$ is the only closed orbit of $X^s$ contained in $K$ in its free homotopy class (of loops in $K$).
\item[(c)] $X^s$ has no closed orbit contained in $K$ which is contractible in $K$.
\item[(d)] Either $\H^s$ is induced by some contact form on $K$, or the 1-form $\lambda^s$ associated to $\H^s$ as in~\eqref{1form} is closed on $K$ (see Definition~\ref{special_strs}).
\end{itemize}
Then $HC_*(\H^0,K,\gamma,J^0) \simeq HC_*(\H^1,K,\gamma,J^1)$.
\end{lemma}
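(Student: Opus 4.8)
The plan is to prove the invariance by a compactness-and-continuation argument along the family $\{(\H^s,J^s)\}_{s\in[0,1]}$, reducing the global statement to the local stability already established in Lemma~\ref{local_independence}. First I would observe that by the hypotheses (a)--(d), each $\H^s$ is special for $\gamma$ (after possibly shrinking $K$), so the local contact homology $HC_*(\H^s,K,\gamma,J^s)$ is defined for every $s$, and by Lemma~\ref{local_independence} it does not depend on the choice of small non-degenerate perturbation or of the regular almost complex structure close to $J^s$. The key point is therefore to show that $s \mapsto HC_*(\H^s,K,\gamma,J^s)$ is locally constant in $s$, which, since $[0,1]$ is connected, yields the result. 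By compactness of $[0,1]$ it suffices to prove that for each $s_0$ there is an $\epsilon>0$ such that $HC_*(\H^s,K,\gamma,J^s) \simeq HC_*(\H^{s_0},K,\gamma,J^{s_0})$ for all $s$ with $|s-s_0|<\epsilon$.

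To establish this local statement I would fix $s_0$ and invoke the neighborhood $\mathcal{O}$ of $\H^{s_0}$ and $\mathcal{U}$ of $J^{s_0}$ provided by Lemma~\ref{lemma_nearby_cylinders} (and Lemma~\ref{lemma_nearby_orbits}); by continuity of the family there is $\epsilon>0$ so that $(\H^s,J^s) \in \mathcal{O}\times\mathcal{U}$ for $|s-s_0|<\epsilon$. For such $s$, a small non-degenerate perturbation $\H^s{}'$ of $\H^s$ is simultaneously a small non-degenerate perturbation of $\H^{s_0}$ in the sense required for the chain complex $C_*(\H^{s_0},K,\gamma)$, and likewise a regular $J^s{}' \in \J(\H^s{}')$ close to $J^s$ is close to $J^{s_0}$ and regular for the data $(\H^s{}',K,\gamma)$. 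Hence the very same chain complex $(C_*(\H^s{}',K,\gamma),d)$ computes both $HC_*(\H^{s_0},K,\gamma,J^{s_0})$ and $HC_*(\H^s,K,\gamma,J^s)$ by definition and by Lemma~\ref{local_independence}. This gives the desired isomorphism for $|s-s_0|<\epsilon$, and the standard open-cover/connectedness argument on $[0,1]$ finishes the proof.

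The step I expect to be the main obstacle is verifying that the window of perturbations and almost complex structures that are admissible for $\H^{s_0}$ genuinely overlaps, in a non-trivial open set, with those admissible for nearby $\H^s$ --- in other words, that the various smallness requirements (non-degeneracy of orbits in $K$ homotopic to $\gamma$, regularity of $J'$, the taming conditions on the auxiliary symplectic forms, and the a priori energy bounds and confinement $\overline{f(\R\times S^1)}\subset\mathrm{int}(K)$ from Lemma~\ref{lemma_nearby_cylinders}) can all be met simultaneously by a single perturbed datum $(\H^s{}',J^s{}')$. This is where the uniformity built into Lemmas~\ref{lemma_nearby_orbits} and~\ref{lemma_nearby_cylinders} --- the neighborhoods $\mathcal{O}$, $\mathcal{D}$, $\mathcal{U}$ depending only on $(\H^{s_0},J^{s_0},K)$ and not on the particular nearby structure --- does the essential work, and the proof amounts to bookkeeping these dependencies carefully. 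A secondary subtlety is that condition (d) may switch between case i) and case ii) of Definition~\ref{special_strs} along the family; but since these perturbation constructions are local near $K$ and produce $C^\infty$-close stable Hamiltonian structures in either case, the overlap argument is unaffected, and one may if desired subdivide $[0,1]$ further so that a single case applies on each subinterval.
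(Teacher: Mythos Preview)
Your proposal is correct and follows essentially the same approach as the paper: show that $s\mapsto HC_*(\H^s,K,\gamma,J^s)$ is locally constant by observing that a sufficiently small non-degenerate perturbation of $(\H^s,J^s)$ for $s$ near $s_0$ is simultaneously an admissible perturbation of $(\H^{s_0},J^{s_0})$, so Lemma~\ref{local_independence} forces the homologies to agree; then conclude by compactness of $[0,1]$. The paper phrases the local-constancy step as a proof by contradiction (extracting a sequence $s_n\to s_0$ and perturbations $(\H'_n,J'_n)\to(\H^{s_0},J^{s_0})$), whereas you argue directly via the neighborhoods $\mathcal O,\mathcal U$ from Lemmas~\ref{lemma_nearby_orbits} and~\ref{lemma_nearby_cylinders}, but this is a purely cosmetic difference.
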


In (b) above we abuse the notation and see $\gamma$ as a closed $X^s$-orbit. This is possible in view of (a).

\begin{proof}
It is an immediate consequence of Corollary~\ref{local_independence} that for every $s_0\in [0,1]$ there exists $\epsilon>0$ such that $HC_*(\H^s,K,\gamma,J^s) = HC_*(\H^{s_0},K,\gamma,J^{s_0})$ for all $s\in[0,1]$ satisfying $|s-s_0|<\epsilon$. In fact, if not, we find a sequence $s_n \to s_0$ such that $HC_*(\H^{s_n},K,\gamma,J^{s_n}) \neq HC_*(\H^{s_0},K,\gamma,J^{s_0})$, $\forall n$. There are very small $C^\infty$-perturbations $(\H'_n,J'_n)$ of $(\H^{s_n},J^{s_n})$ such that $\H'_n$ is nondegenerate, $J'_n$ is regular for the data $(\H'_n,K,\gamma)$, $(\H'_n,J'_n) \to (\H^{s_0},J^{s_0})$ in $C^\infty$ as $n\to\infty$, and the conclusions of Lemma~\ref{lemma_nearby_cylinders} hold for all $J'_n$. Moreover, we can assume that the homology of the chain complex $(C_*(\H'_n,K,\gamma),d)$, where $d$ is defined using $J'_n$, is $HC_*(\H^{s_n},K,\gamma,J^{s_n})$. However, Corollary~\ref{local_independence} says that these homology groups are also equal $HC_*(\H^{s_0},K,\gamma,J^{s_0})$ when $n$ is large, a contradiction. The conclusion now follows from compactness of $[0,1]$.
\end{proof}

As a consequence we can drop the dependence on $J$ of the local contact homology of the data $(\H,K,\gamma,J)$. It is easy to see that it is also independent of the small tubular neighborhood $K$ where $\gamma$ is the only closed Hamiltonian orbit in its free homotopy class (of loops in $K$). We will write $HC(\H,\gamma)$ for simplicity.

\section{Local contact homology of isolated prime Reeb orbits} \label{lch:isolated}

In this section we establish the relation between local contact homology of an isolated prime Reeb orbit and the associated Poincar\'e return map to a local cross section.

\begin{proposition}\label{comp_prop}
Let $\alpha$ be a contact form on a manifold $N$, and $\gamma$ be an isolated prime Reeb orbit. Let $\Sigma \subset N$ be an embedded hypersurface transverse to $\gamma$ at a point $p \in \gamma$, so that the local first return map $\varphi:(U,p) \to (\Sigma,p)$ is well-defined on a small neighborhood $U$ of $p$ in $\Sigma$. Then $HC(\alpha,\gamma)$ and $HF(\varphi,p)$ are isomorphic.
\end{proposition}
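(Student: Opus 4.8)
The plan is to relate the two local homologies by showing that a non-degenerate perturbation of the Reeb flow near $\gamma$ corresponds, via a Poincar\'e section, to a non-degenerate perturbation of the return map near the fixed point $p$, and then to identify the two chain complexes. The first step is to set up a Hamiltonian picture for the return map. Using the tubular neighborhood coordinates of the form $S^1 \times \overline{B}$ from the proof that isolating neighborhoods exist, I would trivialize the contact form so that $\alpha|_{S^1\times 0} = T_0\,dt$ and $\xi = 0 \times \R^{2n-2}$ along $\gamma$. Standard symplectic geometry near a closed Reeb orbit (a Moser-type normal form) lets me write $\alpha$, up to a conformal factor, as coming from a $t$-dependent Hamiltonian $H_t$ on $\overline{B}\subset\R^{2n-2}$ with $dH_t(0)=0$, so that the first return map $\varphi$ is the time-one map of the Hamiltonian isotopy generated by $H_t$, i.e.\ $\varphi = \psi^1$ with $\psi^0=\id$, and $p$ corresponds to the isolated $1$-periodic orbit $0$. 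This puts us exactly in the situation of Remark~\ref{typical_sp_str}: the stable Hamiltonian structure $\H = (\ker dt, \widetilde X_H, \omega_H)$ is special of type ii), and by the invariance Lemma~\ref{invariance_theorem} (interpolating through the conformal factor, which only rescales $X$ pointwise) its local contact homology is $HC(\alpha,\gamma)$.

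The second step is the chain-level comparison. Perturb $H_t$ to $H'_t$ (supported near $0$, $C^\infty$-small) so that all $1$-periodic orbits of $X_{H'_t}$ inside $B$ are non-degenerate; this is simultaneously a non-degenerate perturbation of $\H$ in the sense of Section~\ref{lch_section} and an admissible perturbation for local Floer homology $HF(\varphi,p)$. The closed $\widetilde X_{H'}$-orbits in $S^1\times B$ homotopic to $\gamma$ are precisely the graphs $t\mapsto (t, y(t))$ of the $1$-periodic orbits $y$ of $X_{H'_t}$, and one checks that such an orbit is \emph{good} (never an even iterate of a bad simple orbit) automatically in this setting — because a simply covered $1$-periodic Hamiltonian orbit, viewed in the $S^1\times B$ picture, is always simply covered as a Reeb orbit, so the bad-orbit phenomenon does not occur; hence $\P_0(\H',K,\gamma)$ is just the fixed-point set of $\varphi'$. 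The Conley--Zehnder index computed with the trivialization coming from $0\times\R^{2n-2}$ agrees (after the degree shift built into \eqref{degree}) with the grading of local Floer homology, which is the standard fact $|\gamma'| = \cz(\gamma')+n-3$ versus the Floer grading by the Conley--Zehnder index of the corresponding fixed point. So the two chain groups coincide as graded $\Q$-vector spaces.

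The third step, and the main obstacle, is matching the differentials: the $\bar\partial_J$-holomorphic cylinders in $\R\times(S^1\times B)$ counted by the local contact differential must be identified with the Floer trajectories of $H'_t$ counted by the local Floer differential. For a symmetric cylindrical $J\in\J(\H)$ adapted to the product structure $\ker dt$ one expects the standard correspondence (going back to Floer, and used e.g.\ in the comparison of symplectic field theory with Hamiltonian Floer homology): a finite-energy $J$-holomorphic cylinder $F=(a,f):\R\times S^1\to\R\times(S^1\times B)$ asymptotic to orbits over $y_\pm$, with the $S^1$-coordinate of $f$ being degree-one in $t$, projects to a Floer cylinder $u:\R\times S^1\to B$ solving $\partial_s u + J_0(\partial_t u - X_{H'_t}(u))=0$, and conversely; moreover this identification is index-preserving and, with the coherent orientations of \cite{be} versus the Floer orientations, sign-preserving, and is a bijection on the relevant $0$-dimensional moduli spaces modulo $\R$. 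The subtle points to be checked here are: (i) confinement of the cylinders to $\R\times K$ with closure in the interior — but this is exactly the content of Lemma~\ref{lemma_nearby_cylinders}, which also gives the automatic energy bounds; (ii) that the $a$-component and the $S^1$-component of $F$ are determined (up to the $\R$-action and the marker data in \eqref{markers}) by the $B$-component, which follows from the product form of $J$ and an elementary argument showing $f$ has winding number one over $S^1$ (using that both asymptotics are simply covered over $S^1$); and (iii) transversality: a $J'$ regular for the local contact data corresponds to a $J_0$-family regular for local Floer theory, and vice versa. Granting these, the chain complexes $(C_*(\H',K,\gamma),d)$ and the local Floer complex of $(\varphi',p)$ are literally equal, hence $HC(\alpha,\gamma)\cong HF(\varphi,p)$.
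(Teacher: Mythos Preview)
Your proposal is correct and follows essentially the same route as the paper: a normal form near the prime orbit (the paper's Lemma~\ref{local_coord} gives the explicit model $\alpha = H\,dt + \lambda_0$, which plays the role of your Moser-type normal form), followed by an interpolation of stable Hamiltonian structures and an appeal to Lemma~\ref{invariance_theorem} to reduce to the product structure $\H_1 = (\ker dt,\widetilde X_H,\omega_H)$, where the local contact complex is identified with the local Floer complex. The paper's interpolation is the explicit family $\alpha_s = (1-s)\alpha + s\,dt$ (with $d\alpha_s = (1-s)\omega_H$, so all Reeb fields are proportional), which is slightly cleaner than your ``conformal factor'' description; conversely, your steps~2--3 spell out in detail the chain-level identification that the paper compresses into the single sentence ``it is easy to check that $HC_*(\H_1,\gamma)$ coincides with the local Floer homology''.
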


In the above statement we denote by $HF(\varphi,p)$ the local Floer homology at the isolated fixed point $p$ of the germ of symplectic diffeomorphism $\varphi$ of the symplectic manifold $(\Sigma,d\alpha|_\Sigma)$. The isomorphism in Proposition~\ref{comp_prop} is defined only up to an even shift in the grading, since the grading of local Floer homology of a germ of Hamiltonian diffeomorpohism near an isolated fixed point is only defined up to an even shift, see~\cite{GG}.

\subsection{Local models}

\begin{lemma}\label{local_coord}
Let $\alpha$ be a contact form on a $(2n-1)$-dimensional manifold, and $\gamma = (x,T)$ be a prime closed $\alpha$-Reeb orbit. Then there exists a tubular neighborhood $K\simeq \R/\Z\times \overline B$ of $x(\R)$, where $B\subset \R^{2n-2}$ is a small open ball centered at the origin, with coordinates $(t,q_1,\dots,q_{n-1},p_1,\dots,p_{n-1})$, such that $x(\R) \simeq \R/\Z\times 0$, $\alpha \simeq Hdt + \lambda_0$, where $H:K\to \R$ satisfies $H_t(0) = T$, $dH_t(0) = 0$, and $\lambda_0 = \frac{1}{2} \sum_{k=1}^{n-1} q_kdp_k - p_kdq_k$.
\end{lemma}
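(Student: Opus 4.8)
The plan is to realize $\alpha$ near $\gamma$ in the shape ``(function)$\cdot dt$ plus a $t$-dependent $1$-form on the transverse ball'', using a tubular neighborhood adapted to $\xi|_\gamma$, and then to straighten that transverse $1$-form to $\lambda_0$ by a parametrized relative Darboux theorem, pushing any leftover $t$-dependence into the $dt$-coefficient. Concretely, along $\gamma$ one has $TN|_\gamma=\R R\oplus\xi|_\gamma$ since $R$ is transverse to $\xi$, and $\xi|_\gamma$ is a symplectic vector bundle over $S^1=\R/\Z$, hence trivial. Choosing a tubular neighborhood $K\simeq\R/\Z\times\overline B$, with $B\subset\R^{2n-2}$ a small ball, adapted to this splitting --- so that $x(\R)\simeq\R/\Z\times 0$ and the coordinate fields $\partial_{q_k},\partial_{p_k}$ along $\R/\Z\times 0$ span $\xi$ --- one can write, with $z=(q_1,\dots,p_1,\dots)$,
\begin{equation*}
\alpha=a(t,z)\,dt+\beta_t(z),
\end{equation*}
where each $\beta_t$ is a $1$-form on $\overline B$ with no $dt$-component, $a>0$ near $\R/\Z\times 0$ (as $R$ is a positive multiple of $\partial_t$ there), and $\beta_t(0)=0$ for every $t$ (by the choice of splitting). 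Reparametrizing $t$ by $\tau(t)=T^{-1}\int_0^t a(s,0)\,ds$ makes the $dt$-coefficient along $\R/\Z\times 0$ identically $T$; since $\int_0^1 a(s,0)\,ds=\int_\gamma\alpha=T$, this $\tau$ descends to a self-diffeomorphism of $\R/\Z$, so I may assume $a(t,0)\equiv T$.

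Next I would straighten $\beta_t$. For each $t$, $\beta_t$ vanishes at $0$ and $d\beta_t$ is nondegenerate near $0$, its value at $0$ being the restriction of $d\alpha$ to $\xi$ (via the splitting above), which is symplectic. By the relative Darboux theorem in its parametrized form, there is a smooth family $\{\psi_t\}_{t\in\R/\Z}$ of germs of diffeomorphisms of $(\R^{2n-2},0)$ fixing the origin with $\psi_t^*\beta_t=\lambda_0$; after shrinking $B$ all $\psi_t$ are defined on a common ball. The diffeomorphism $\Psi\colon(t,z)\mapsto(t,\psi_t(z))$ of $\R/\Z\times\overline B$ then satisfies $\Psi^*\alpha=H\,dt+\lambda_0$: the summand of $\Psi^*\beta_t$ free of $dt$ is exactly $\psi_t^*\beta_t=\lambda_0$, which carries no $t$-dependence, while the $t$-derivatives of $\psi_t$ only alter the $dt$-coefficient, producing a function $H(t,z)=a(t,\psi_t(z))+(\text{a term built from }\beta_t\text{ and }\partial_t\psi_t)$.

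It remains to check the normalization of $H$ along $z=0$. Since $\psi_t(0)=0$ and $\beta_t(0)=0$, the correction term vanishes there, so $H(t,0)=a(t,0)=T$, that is $H_t(0)=T$. For $dH_t(0)=0$, observe that $\R/\Z\times 0$ is the image of $\gamma$ in the new coordinates, hence an orbit of the Reeb vector field of $\Psi^*\alpha=H\,dt+\lambda_0$; a direct computation gives this Reeb vector field as $\bigl(H+\lambda_0(X_{H_t})\bigr)^{-1}(\partial_t+X_{H_t})$, where $X_{H_t}$ is the Hamiltonian vector field of $H_t$ with respect to $\omega_0=d\lambda_0=\sum_k dq_k\wedge dp_k$, and this is a multiple of $\partial_t$ along $z=0$ precisely when $X_{H_t}(0)=0$, i.e.\ $dH_t(0)=0$.

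The step I expect to be the only real obstacle is the parametrized relative Darboux theorem: since $\beta_t$ and $\lambda_0$ need not agree to first order at $0$, the Moser deformation $s\mapsto(1-s)\lambda_0+s\beta_t$ cannot be integrated by the naive ``primitive of a Hamiltonian'' recipe; one must solve, at each $s$, an auxiliary linear equation for a generating function, which is invertible in a neighborhood of $0$ because its principal part is the identity. This is classical and depends smoothly on the parameter $t$, and everything else in the argument is bookkeeping with the coordinate changes.
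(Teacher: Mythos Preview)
Your key step---the ``parametrized relative Darboux theorem'' asserting that a germ of $1$-form $\beta$ on $\R^{2n-2}$ with $\beta(0)=0$ and $d\beta|_0$ nondegenerate is locally diffeomorphic to $\lambda_0$---is false. Take $n=2$ and $\beta=q\,dp$: then $\beta(0)=0$ and $d\beta=\omega_0$, yet no germ of diffeomorphism $\psi$ fixing $0$ satisfies $\psi^*\beta=\lambda_0$. Indeed, such a $\psi$ would conjugate the Liouville vector field $Z_\beta$ of $\beta$ (defined by $i_{Z_\beta}d\beta=\beta$) to the Liouville vector field $Z_{\lambda_0}$ of $\lambda_0$; but $Z_\beta=q\,\partial_q$ has linearization with eigenvalues $\{1,0\}$ at the origin, whereas $Z_{\lambda_0}=\tfrac12(q\,\partial_q+p\,\partial_p)$ has linearization $\tfrac12 I$, and these are not conjugate. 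Equivalently, the $1$-jet of $\beta$ at $0$ is a bilinear form whose symmetric part is a diffeomorphism invariant (up to congruence), and it need not vanish as it does for $\lambda_0$. This example is realized by the contact form $\alpha=T\,dt+q\,dp$ on $\R/\Z\times\R^2$, which satisfies every hypothesis in your setup, so the gap is not an artifact of ignoring the contact condition.

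The paper's proof circumvents this by \emph{not} restricting to diffeomorphisms of the form $(t,z)\mapsto(t,\psi_t(z))$. It first arranges $d\alpha|_{T(\{t\}\times\overline B)}=\omega_0$ via a parametrized symplectic Darboux, then uses the Reeb flow to identify a Hamiltonian $H$ with $d\alpha=dH\wedge dt+\omega_0$, so that $\alpha-(H\,dt+\lambda_0)=df$ is exact with $f|_{\R/\Z\times 0}=0$. The final Moser isotopy uses the vector field $Y_s=f\,X_{\alpha_s}$, which has a nontrivial $\partial_t$-component; it is precisely this freedom to move in the $t$-direction that absorbs the exact defect $df$, and your fiberwise ansatz throws it away.
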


\begin{proof}
First, it is simple to get a tubular neighborhood $K\simeq \R/\Z\times \overline B$ such that $x(\R)=\R/\Z\times\{0\}$, $\alpha|_{\R/\Z\times 0} = Tdt$ and $d\alpha$ restricted to $0\times \R^{2n-2} \subset T_{(t,0)}(\R/\Z\times B)$ coincides with $\omega_0$, $\forall t\in\R/\Z$. By a parame\-trized version of Darboux's theorem for symplectic forms, we can change coordinates to obtain $d\alpha|_{T(t\times \overline B)} = \omega_0$, $\forall t$. 

Now, let the $\alpha$-Reeb flow be denoted by $\phi_t$. On a small neighborhood $U$ of $0\in \R^{2n-2}$ we find a smooth function $\tau : [0,1]\times U \to \R$ such that $\phi_{\tau(t,z)}(0,z) \in t\times \overline B$. The maps $\varphi_t$ defined by $(t,\varphi_t(z)) = \phi_{\tau(t,z)}(0,z)$ on $U$ are symplectic embeddings fixing the origin. Hence, we can find a smooth Hamitonian $H_t$ defined near $0$ such that $\varphi_t$ is its Hamiltonian flow. Moreover, $H_t$ can be arranged to be 1-periodic on $t$ since so is $\dot\varphi_t\circ\varphi_t^{-1}$ and, consequently, $H_t$ defines a smooth function near $\R/\Z\times 0$. There is no loss of generality to assume that $H_t(0) = T$. It must satisfy $dH_t(0) = 0$ since $0$ is left fixed. Consider the vector field $\widetilde X_H = \partial_t + X_{H_t}$, where $dH_t = i_{X_{H_t}}\omega_0$. By the definition of $\varphi_t$ we get $i_{\widetilde X_H}d\alpha = 0$. But our coordinates obtained so far guarantee that $d\alpha = \beta_t\wedge dt + \omega_0$, for some 1-periodic smooth family of 1-forms $\beta_t$ defined near $0\in\R^{2n-2}$. Consequently $$ 0 = i_{\widetilde X_H}d\alpha = (i_{X_{H_t}}\beta_t) dt - \beta_t + i_{X_{H_t}}\omega_0 = (i_{X_{H_t}}\beta_t) dt - \beta_t + dH_t $$ proving that $\beta_t = dH_t$. In other words, $d\alpha = dH_t \wedge dt + \omega_0$.

Let $\alpha_1 = Hdt + \lambda_0$, so that $d\alpha=d\alpha_1$. Moreover, $\int_{\R/\Z\times 0} \alpha-\alpha_1 = 0$ and, consequently, we find a smooth function $f$ defined near $\R/\Z\times 0$ such that $df = \alpha-\alpha_1$. After subtracting a constant we can assume $f=0$ on $\R/\Z\times 0$. Consider $\alpha_s = (1-s)\alpha + s\alpha_1$ and the vector field $Y_s = fX_{\alpha_s}$ where, for each $s\in[0,1]$, $X_{\alpha_s}$ is the Reeb vector of the contact form $\alpha_s$ (it is easy to see that $\alpha_s$ are contact forms near $\R/\Z\times0$). Denoting by $\psi_s$ the flow of $Y_s$ we get $$ \frac{d}{ds} \psi_s^*\alpha_s = \psi_s^*(i_{Y_s}d\alpha_s + d(i_{Y_s}\alpha_s) + \alpha_1-\alpha) = \psi_s^*(df + \alpha_1-\alpha) = 0. $$ Moreover, $\R/\Z\times 0$ is left fixed by $\psi_s$. Using $\psi_1$ we obtain the desired coordinates.
\end{proof}

\subsection{Proof of Proposition~\ref{comp_prop}}

Let $\gamma = (x,T)$ be a prime closed isolated Reeb orbit for a contact form $\alpha$, as in the statement of Proposition~\ref{comp_prop}. In view of Lemma~\ref{local_coord} we work on $K = \R/\Z\times \overline B$ with coordinates $(t,q_1,\dots,p_1,\dots)$ and assume that $\alpha = H_tdt+\lambda_0$, $H_t(0) = T$, $dH_t(0) = 0$, and $x(t) = (t/T,0)$. Also, we assume that $\gamma$ is the only closed $\alpha$-Reeb orbit which goes once around the tube. Thus we can take $(\Sigma,d\alpha) = (0\times B,\omega_0)$.

The 1-forms $\alpha_s = (1-s)\alpha + sdt$, $s\in[0,1]$, are contact forms on $K$ for $s<1$ if $B$ is small enough, and $d\alpha_s = (1-s)d\alpha = (1-s)\omega_H$, where $\omega_H = dH_t\wedge dt + \omega_0$. Consequently the Reeb vector fields $X_{\alpha_s}$, $s\in[0,1)$, are all positive multiples of each other, proving that $x(\R)$ is the only closed $\alpha_s$-Reeb orbit going once around the tube. Consider the family $$ \H_s = (\xi_s = \ker\alpha_s,X_{\alpha_s},d\alpha=\omega_H), \ \ s\in[0,1) $$ of stable Hamiltonian structures. It can be smoothly continued to $[0,1]$ by setting $$ \H_1 = (\xi_1=\ker dt, \widetilde X_H,\omega_H) $$ where $\widetilde X_H = \partial_t + X_{H_t}$. Since the 2-form is independent of $s$, the conditions of Lemma~\ref{invariance_theorem} are fulfilled, so that $HC_*(\H_s,\gamma)$ does not depend on $s\in[0,1]$. It is easy to check that $HC_*(\H_1,\gamma)$ coincides with the local Floer homology of the isolated 1-periodic orbit $0$ of the Hamiltonian $H_t$, up to an even shift in the grading since the homotopy class of $d\alpha$-symplectic trivializations along $\gamma$ induced by the choice of coordinates given by Lemma~\ref{local_coord} was not specified. This concludes the argument.

\section{Estimating local contact homology} \label{lch:iterated}

In this section we prove the following statement.

\begin{prop}\label{est_lch_prop}
Let $\alpha$ be a contact form on a manifold $N$ and $\gamma = (x,T=mT_0)$ be an isolated $\alpha$-Reeb orbit with multiplicity $m$ and minimal period $T_0>0$. Let $\Sigma \subset N$ be an embedded hypersurface transverse to $\gamma$ at $p_0 = x(0)$, so that the local first return map $\psi:(U,p_0) \to (\Sigma,p_0)$ is well-defined on a small neighborhood $U$ of $p_0$ in $\Sigma$. Then $\dim HC_*(\alpha,\gamma) \leq \dim HF_*(\psi^m,p_0)$, for every $* \in \Z$.
\end{prop}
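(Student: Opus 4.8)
The plan is to reduce, by a change of coordinates and an invariance argument, to the mapping-torus model of the prime orbit underneath $\gamma$, and then to recognize the resulting local contact homology as the invariant part of a local Floer complex. Write $\bar\gamma=(x,T_0)$, so that $\gamma=\bar\gamma^m$. First I would note that $\bar\gamma$ is itself isolated: a sequence of distinct closed Reeb orbits $\bar\eta_k\to\bar\gamma$ would give iterates $\bar\eta_k^{\,m}\to\gamma$ whose geometric images are eventually distinct from that of $\gamma$, contradicting the hypothesis; likewise $p$ is an isolated fixed point of $\psi^m$, since a nearby fixed point of $\psi^m$ is carried by a closed Reeb orbit near $\gamma$, so $HF_*(\psi^m,p)$ is well defined. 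Next, apply Lemma~\ref{local_coord} to $\bar\gamma$ to obtain coordinates $K\simeq\R/\Z\times\overline B$ in which $\alpha=H_tdt+\lambda_0$, $H_t(0)=T_0$, $dH_t(0)=0$, $x(\R)\simeq\R/\Z\times 0$; here $\gamma$ is the closed orbit that winds $m$ times around the tube, the first return map to any transversal agrees, up to symplectic conjugation, with the time-$1$ flow $\phi_H^1$ of $X_{H_t}$ on $0\times B$ (and local Floer homology is conjugation invariant), so we may take $\psi=\phi_H^1$ and $\psi^m=\phi_H^m$.

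Exactly as in the proof of Proposition~\ref{comp_prop}, I would interpolate $\alpha_s=(1-s)\alpha+s\,dt$, set $\H_s=(\ker\alpha_s,X_{\alpha_s},d\alpha=\omega_H)$ for $s\in[0,1)$, and continue by $\H_1=(\ker dt,\widetilde X_H,\omega_H)$ with $\widetilde X_H=\partial_t+X_{H_t}$. The hypotheses of Lemma~\ref{invariance_theorem} hold on a small enough $K$: the $2$-form is $s$-independent, the vector field of $\H_s$ is a positive multiple of that of $\H_0$ along $\gamma$, $i_{\widetilde X_H}dt>0$ excludes contractible closed orbits in $K$, and since $\bar\gamma$ is isolated the automatic period bounds force $\gamma$ to be the only closed orbit of $\H_s$ in $K$ winding $m$ times. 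Hence $HC_*(\alpha,\gamma)=HC_*(\H_1,\gamma)$. Now pick a $C^\infty$-small perturbation $H'$ of $H$ supported near $\R/\Z\times 0$, making every $m$-periodic orbit of $\phi_{H'}^1$ near $0$ nondegenerate; these are exactly the $m$-winding closed orbits of $\widetilde X_{H'}$ in $K$, so $\H_1'=(\ker dt,\widetilde X_{H'},\omega_{H'})$ is a nondegenerate perturbation of $\H_1$. For $J$ of split mapping-torus type, the local confinement and compactness results already at our disposal (Lemmas~\ref{lemma_nearby_cylinders}, \ref{lemma_nearby_cyls_neck}, \ref{bubb}, \ref{hofer}, \ref{cylinder}), used as in Section~\ref{lch_section}, ensure that the finite-energy $J$-holomorphic cylinders in $\R\times K$ asymptotic to $m$-winding orbits stay away from $\partial K$ and that their SFT-limits are broken cylinders in $\R\times K$; under $\R\times K\simeq\R\times\R/\Z\times\overline B$ these are precisely the Floer cylinders of $H'$ joining the corresponding $m$-periodic orbits.

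The local Floer homology $HF_*(\psi^m,p)$ is computed by the complex generated by the $m$-periodic orbits of $\phi_{H'}^1$ near $0$, on which the deck group $\Z_m$ of the $m$-fold cover $\R/m\Z\to\R/\Z$ acts. The standard dictionary relating the cylindrical contact homology of an iterate to the Floer homology of the iterated return map then identifies the local contact complex $(C_*(\H_1',K,\gamma),d)$ --- built from the good $m$-winding orbits with the $1/m_{\gamma''}$ normalization of the differential~\eqref{def_local_diff} --- with the $\Z_m$-invariant subcomplex of this Floer complex, up to the same even grading shift as in Proposition~\ref{comp_prop}. Since we work over $\Q$ and $\Z_m$ is finite, the $\Z_m$-invariant subcomplex is a differential-graded direct summand, so $HF_*(\psi^m,p)$ contains $HC_*(\alpha,\gamma)$ as a graded direct summand (absorbing the even shift into the grading ambiguity of local Floer homology), whence $\dim HC_*(\alpha,\gamma)\leq\dim HF_*(\psi^m,p)$ for every $*\in\Z$. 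I expect the main obstacle to be this last identification: carrying out the \emph{local} Floer--SFT correspondence for cylinders confined to $\R\times K$ --- precisely the reason the neck-stretching and compactness lemmas of Sections~\ref{isolating_nbds_section}--\ref{lch_section} were developed --- and then matching the good-orbit, multiplicity-weighted contact complex, together with its coherent orientations, to the $\Z_m$-invariant subcomplex of the Floer complex. This is exactly the step where the good/bad dichotomy intervenes and where the dimension can strictly drop.
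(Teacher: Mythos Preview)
Your plan matches the paper's approach, and you correctly isolate the obstacle. The paper resolves it by passing to the $m$-fold cover $\Pi:\widetilde K=\R/m\Z\times\overline B\to K$: there $\Pi^{-1}\gamma$ is prime, so by Proposition~\ref{comp_prop} the upstairs complex $(\widetilde C_*,\widetilde d)$ (generated by all lifts, which are simple and hence automatically SFT-good) computes $HF_*(\psi^m,p)$. The $\Z_m$-action is implemented by a \emph{twisted} generator $E$: cyclic permutation of the lifts $\varphi,\sigma\varphi,\dots,\sigma^{p-1}\varphi$ of $\bar\varphi$, with an extra sign $\delta_{\bar\varphi}=-1$ inserted when the cycle closes up over a bad orbit. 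The two technical steps are (i) $E\widetilde d=\widetilde dE$, proved by lifting each downstairs cylinder $[F]$ to all its upstairs representatives and using that rotation of an asymptotic marker at a bad end reverses the coherent orientation; and (ii) the projection $\Pi_*:\widetilde C_*\to C_*$ (zero on lifts of bad orbits) is a chain map, proved by a good/good vs.\ bad/good case analysis comparing $\langle\Pi_*\widetilde d\,\cdot,\cdot\rangle$ with $\langle d\Pi_*\,\cdot,\cdot\rangle$ via the count $m_{\bar\varphi}/w[F]$ of lifts of a fixed cylinder (this count is even when $\bar\varphi$ is bad, forcing the relevant row-sum to vanish). Finally the averaging operator $A=\frac{1}{m}\sum_{k=0}^{m-1}E^k$ splits $\widetilde C_*=\mathrm{im}\,A\oplus\ker A$ with $\ker\Pi_*=\ker A$, so $\Pi_*$ restricts to a chain isomorphism $(\mathrm{im}\,A,\widetilde d)\to(C_*,d)$; this is exactly the ``$\Z_m$-invariant subcomplex'' you anticipate, the twist by $\delta_{\bar\varphi}$ being what makes bad orbits average to zero.
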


The gradings in $HC_*(\alpha,\gamma)$ and in $HF_*(\psi^m,p_0)$ are given by the Conley-Zehnder indices computed with respect to homotopy classes of symplectic trivializations induced by a common homotopy class of $d\alpha$-symplectic trivializations of $\xi = \ker\alpha$ along $\gamma$, which we fix from now on.

\subsection{Geometric set-up and notation}

Let $n$ be defined by $\dim N = 2n-1$, denote the Reeb vector field of $\alpha$ by $R$ and fix $J \in \J(\alpha)$. Let $K \simeq \R/\Z\times \overline B$ be an isolating neighborhood for $\gamma$ equipped with coordinates $(t,z)$, $z=(q_1,\dots,p_1,\dots)$, such that $x(t) = (t/T_0,0)$, $\alpha$ coincides with $dt$ on $\R/\Z\times 0$, $d\alpha|_\xi$ coincides with $\omega_0 = \sum_i dq_i\wedge dp_i$ along $\R/\Z\times 0$, and $\inf_K i_Rdt > 0$. Here $B\subset \R^{2n-2}$ is a ball centered at the origin. This choice of coordinates induces a $d\alpha$-symplectic trivialization of $\xi$ along $\gamma$, which is assumed to be in the homotopy class previously chosen.

Consider a small nondegenerate perturbation $\alpha'$ of $\alpha$ on $K$, and $J' \in \J(\alpha')$ a small perturbation of $J$ which is regular for the data $(\alpha',K,\gamma)$ as explained in \S~\ref{local_complex}. We denote by $\P$ the set of closed  $\alpha'$-Reeb orbits in $K$ homotopic to $\gamma$, and by $\P_0\subset \P$ those which are good. Let $C_* = C_*(\alpha',K,\gamma)$ be the $\Q$-vector space freely generated by $\P_0$ graded by $|\cdot|=\mu_{CZ}+n-3$. Then $J'$ can be used to define a differential $d$ on $C_*$ and, by Corollary~\ref{local_independence}, if $(\alpha',J')$ is sufficiently close to $(\alpha,J)$ the homology of $(C_*,d)$ is the local contact homology $HC(\alpha,\gamma)$.

The natural $m:1$ covering
\begin{equation}\label{cov_proj}
\Pi : \widetilde K := \R/m\Z \times \overline B \to K = \R/\Z\times\overline B
\end{equation}
can be used to lift all the geometric data. $\Pi^*\alpha$ is a contact form on $\widetilde K$ and $\Pi^{-1}\gamma$ is an isolated $\Pi^*\alpha$-Reeb orbit, $\widetilde K$ is an isolating neighborhood for $\Pi^{-1}\gamma$, $\Pi^*\alpha'$ is a small nondegenerate perturbation of $\Pi^*\alpha$ and $(id_\R\times \Pi)^*J' \in \J(\Pi^*\alpha')$ is regular for the data $(\Pi^*\alpha',\widetilde K,\Pi^{-1}\gamma)$ and close to $(id_\R\times \Pi)^*J$. The covering group $\Z_m = \Z/m\Z$ of $\Pi$ acts on $\widetilde K$ with generator
\begin{equation}\label{Zm_action_space}
\begin{array}{ccc}
\sigma : \widetilde K \to \widetilde K, & & (t,z) \mapsto (t+1,z).
\end{array}
\end{equation}
The data $(\Pi^*\alpha',(id\times \Pi)^*J')$ is invariant under this action. The lifts of closed $\alpha'$-Reeb orbits homotopic to $\gamma$ are precisely the closed $\Pi^*\alpha'$-orbits which go once around the tube $\widetilde K$ and, consequently, they are all good. Moreover, their Conley-Zehnder indices coincide with the Conley-Zehnder indices of their projections.

Let $\widetilde\P$ be the set of closed $\Pi^*\alpha'$-Reeb orbits in $\widetilde K$ going once around the tube, which coincides precisely with the set of lifts of orbits in $\P$. The elements of $\widetilde P$ freely generate a $\Q$-vector space $\widetilde C_*$ graded by the Conley-Zehnder indices. Since $J'$ is assumed very close to $J$ in the $C^\infty$-strong topoloy, $(id\times \Pi)^*J'$ determines in the standard way described in Section~\ref{lch_section} a differential $\widetilde d$ on $\widetilde C^*$. According to Proposition~\ref{comp_prop}, the homology of $(\widetilde C_*,\widetilde d)$ coincides with the local Floer homology $HF_*(\psi^m,p)$.

Orbits in $\P$ have possibly many lifts to $\widetilde \P$, and the natural projection is still denoted $\Pi:\widetilde \P \to \P$. The generator $\sigma$ of the $\Z_m$-action~\eqref{Zm_action_space} induces an obvious action on $\widetilde \P$, and we choose a preferred lift for every element of $\P$. Our notation will be the following: if we write $\bar\varphi$ to denote an element in $\P$ then the chosen preferred lift is $\varphi$. Every orbit $\bar \varphi \in \P$ comes with a marked point $\pt_{\bar\varphi}$ assumed to lie on $0\times \overline B$. Its multiplicity $m_{\bar\varphi}$ divides $m$ and $\bar\varphi$ has precisely $p = m/m_{\bar\varphi}$ lifts which are orbits in $$ \mathcal O_{\bar\varphi} := \Pi^{-1}(\bar\varphi) = \{\varphi,\sigma\varphi,\dots,\sigma^{p-1}\varphi\}. $$ Note that $\sigma^{i+p}\varphi = \sigma^i\varphi$, $\forall i$. The marked point $\pt_{\varphi}$ is chosen in $0\times \overline B$ and we set $\pt_{\sigma^j\varphi} = \sigma^j(\pt_{\varphi})$, so that $\Pi(\pt_{\sigma^j\varphi}) = \pt_{\bar\varphi}$, for $j=0,\dots,p-1$. The elements of $\mathcal O_\varphi$ are simultaneously called good/bad if $\bar\varphi$ is good/bad. This terminology might be troublesome since all elements of $\widetilde \P$ are SFT-good (all such orbits are simple), but we will proceed without fear of ambiguity. The map $\Pi:\widetilde \P \to \P$ induces a linear map 
\begin{equation}\label{proj_chain_map}
\Pi_* : \widetilde C_* \to C_*
\end{equation}
by setting $\Pi_* = \Pi$ on good generators and $\Pi_* = 0$ on bad generators. Finally set
\begin{equation}
\delta_{\bar\varphi} = \left\{ \begin{aligned} & +1 \text{ if } \bar\varphi \text{ is good,} \\ & -1 \text{ if } \bar\varphi \text{ is bad.} \end{aligned} \right.
\end{equation}

\subsection{Finite-energy cylinders and their lifts}\label{cyl_lifts}

Given $\eta,\zeta \in \P$ we denote by $\M(\eta,\zeta)$ the moduli spaces of finite-energy $J'$-holomorphic cylinders in $\R\times K$ with a positive and a negative puncture, asymptotic to $\eta$ at its positive puncture and to $\zeta$ at its negative puncture, with asymptotic markers. Namely, an element is an equivalence class of triples $(t^+,t^-,F)$, where $t^\pm \in S^1$, $F = (a,f) : \R\times S^1 \to \R\times K$ is a non-constant finite-energy $J'$-holomorphic map with a positive puncture at $+\infty\times S^1$ where it is asymptotic to $\eta$, with a negative puncture at $-\infty\times S^1$ where it is asymptotic to $\zeta$, and satisfying $\lim_{s\to+\infty}f(s,t^+)=\pt_\eta$, $\lim_{s\to-\infty}f(s,t^-) \to \pt_\zeta$. The triple $(\theta^+,\theta^-,G)$ is equivalent to $(t^+,t^-,F)$ if one finds $c,\Delta s \in \R$ and $\Delta t \in S^1$ satisfying $F(s,t) = \tau_c \circ G(s+\Delta s, t+\Delta t)$ and $t^\pm +\Delta t = \theta^\pm$. Differently from the notation in Section~\ref{lch_section}, here we do quotient out by the $\R$-action on the target. The equivalence class of $(t^+,t^-,F)$ is denoted $[t^+,t^-,F]$. Moduli spaces $\M_0(\eta,\zeta)$ of cylinders in $\R\times K$ without asymptotic markers are defined as a set of equivalence classes of maps as above, where two maps $F,G$ are equivalent if there exist $c,\Delta s\in \R$ and $\Delta t \in S^1$ such that $F(s,t) = \tau_c\circ G(s+\Delta s,t+\Delta t)$. The class of such $F$ is denoted by $[F]$, and there is a natural surjective map 
\begin{equation}\label{forgetful_map}
\begin{array}{ccc} \Delta : \M(\eta,\zeta) \to \M_0(\eta,\zeta), & & [t^+,t^-,F] \mapsto [F] \end{array}.
\end{equation}

Let $F$ represent a given class $[F] \in \M_0(\eta,\zeta)$ where $\eta\neq\varphi$. Then the group ${\rm Iso}(F)$ of holomorphic self-diffeomorphisms $h$ of $\R\times S^1$ fixing the ends $\pm\infty\times S^1$ and satisfying $F\circ h = F$ can be identified with a subgroup of $S^1$ since such $h$ must have the form $h(s,t) = (s,t+\Delta t)$. Although ${\rm Iso}(F)$ depends on the representative $F$, its order $w[F] = \#{\rm Iso}(F)$ depends only on $[F]$. The choice of $F$ determines a subset of $S^1$ with $m_\eta$ elements, which are possible locations of asymptotic markers at the positive puncture. The group ${\rm Iso}(F)$ acts freely on this set and, consequently, $w[F]$ divides $m_\eta$. Analogously, $w[F]$ divides $m_\zeta$. Note that $\Z_{m_\eta}$ and $\Z_{m_\zeta}$ act on $\M(\eta,\zeta)$ by rotation of asymptotic markers, the generators are: $[t^+,t^-,F] \mapsto [t^+ + 1/m_{\eta},t^-,F]$ and $[t^+,t^-,F] \mapsto [t^+,t^- + 1/m_{\zeta},F]$. The set $\Delta^{-1}[F]$ has precisely $m_{\eta}m_{\zeta}/w[F]$ elements, $\forall [F] \in \M_0(\eta,\zeta)$. Note that both $m_\eta$ and $m_\varphi$ divide $\#\Delta^{-1}[F]$ since $w[F]$ is a common divisor of $m_\eta$ and $m_\varphi$.

There is a coherent system of orientations of the spaces $\M(\eta,\zeta)$, for all choices $\eta,\zeta \in \P$, compatible with glueing\footnote{The reader should note that, in view of Lemma~\ref{lemma_nearby_cylinders}, if $F=(a,f)$ is a cylinder representing an element of $\M(\eta,\zeta)$ for $\eta,\zeta\in\P$ then $\overline{f(\R\times S^1)} \subset {\rm int}(K)$ because $J'$ is assumed very close to some $J\in\J(\alpha)$. Hence, assuming regularity, such cylinders can be glued to obtain cylinders which again project into a compact subset of ${\rm int}(F)$.}. These are defined as in~\cite{be} even when $\eta$ or $\zeta$ is bad, and determine signs $\epsilon[t^+,t^-,F] = \pm 1$ when $|\eta|-|\zeta| = 1$. One has
\begin{equation}
\begin{aligned} & \epsilon[t^++1/m_{\eta},t^-,F] = \delta_{\eta} \epsilon[t^+,t^-,F] \\ & \epsilon[t^+,t^-+1/m_{\zeta},F] = \delta_{\zeta} \epsilon[t^+,t^-,F]. \end{aligned}
\end{equation}
In other words, the action of $\Z_{m_{\eta}}$ in $\M(\eta,\zeta)$ by rotating the asymptotic marker at the positive puncture is orientation preserving/reversing when $\eta$ is good/bad. The analogous statement holds for the action of $\Z_{m_{\zeta}}$ by rotations of the asymptotic marker at the negative puncture. Hence this signs descend to signs $\epsilon[F]$ on $\M_0(\eta,\zeta)$ only when both $\eta$ and $\zeta$ are good.

Moduli spaces of finite-energy $(id\times \Pi)^*J'$-holomorphic cylinders in $\R\times \widetilde K$ asymptotic to orbits in $\widetilde \P$ with or without marked points are defined in the same way. However, note that all such cylinders are somewhere injective and there are no non-trivial reparametrization groups.

Choose any $\bar\varphi,\bar\eta \in \P$ and set $p = m/m_{\bar\varphi}$, $q=m/m_{\bar\eta}$. There are well-defined projections 
\begin{equation}\label{proj_cyls}
\begin{array}{ccc} \Pi_*:\M(\sigma^i\varphi,\sigma^j\eta) \to \M(\bar\varphi,\bar\eta) & & [t^+,t^-,F] \mapsto [t^+,t^-,(id_\R\times\Pi)\circ F] \end{array}
\end{equation}
where $i=0,\dots,p-1$, $j=0,\dots,q-1$. For any $\zeta \in \widetilde \P$ denote $$ \M(\mathcal O_{\bar\varphi},\zeta) = \bigcup_{i=0}^{p-1} \M(\sigma^i\varphi,\zeta) \ \text{ and } \ \M(\zeta,\mathcal O_{\bar\eta}) = \bigcup_{j=0}^{q-1} \M(\zeta,\sigma^j\eta). $$ We define the space $\M(\mathcal O_{\bar\varphi},\mathcal O_{\bar\eta})$ analogously.

Any finite-energy $J'$-holomorphic cylinder $F=(a,f)$ representing an element in $\M_0(\bar\varphi,\bar\eta)$ can be lifted to (possibly many) finite-energy $(id\times \Pi)^*J'$-holomorphic cylinders, since the loops $t\mapsto f(s,t)$ go $m$ times around the tube $K$ and the projection~\eqref{cov_proj} is pseudo-holomorphic with respect to $J'$ and $(id\times \Pi)^*J'$. To be more precise, recall the forgetful map $\Delta$~\eqref{forgetful_map}, and for a fixed $[F]\in \M_0(\bar\varphi,\bar\eta)$ consider the bijection 
\begin{equation}
\begin{aligned} \left\{ t_0^+ + \frac{k}{m_{\bar\varphi}} : k = 0,\dots,\frac{m_{\bar\varphi}}{w[F]}-1 \right\} &\times \left\{ t_0^- + \frac{k}{m_{\bar\eta}} : k = 0,\dots,m_{\bar\eta}-1 \right\} \to \Delta^{-1}[F] \\ (t^+,t^-) &\mapsto [t^+,t^-,F] \end{aligned}
\end{equation}
For each fixed $i\in \{0,\dots,p-1\}$, a given choice of asymptotic marker $t^+$ at $+\infty\times S^1$ uniquely determines a lift $\widetilde F=(\widetilde a,\widetilde f)$ of $F$ to $\R\times \widetilde K$ asymptotic to the orbit $\sigma^i\varphi$ at the positive puncture and satisfying $\widetilde f(s,t^+) \to \pt_{\sigma^i\varphi}$ as $s\to+\infty$. After this is done there is no control at the negative puncture: the asymptotic limit $\sigma^j\eta$ is forced on us, together with the unique location of the asymptotic marker $t^-$ which satisfies $\widetilde f(s,t^-) \to \pt_{\sigma^j\eta}$ as $s\to-\infty$. One must have
\[
\lim_{s\to+\infty} \widetilde f\left(s,t^+ + \frac{k}{m_{\bar\varphi}}\right) = \sigma^{kp}(\pt_{\sigma^i\varphi}), \ \forall k\in\Z.
\]
Let us agree to say that $[t^+,t^-,F] \in \M(\bar\varphi,\bar\eta)$ lifts to $\M(\sigma^i\varphi,\mathcal O_{\bar\eta})$ when the unique lift $\widetilde F=(\widetilde a,\widetilde f)$ of $F$ satisfying $\lim_{s\to+\infty} \widetilde f(s,t^+) = \pt_{\sigma^i\varphi}$ also satisfies $\lim_{} \widetilde f(s,t^-) = \pt_{\sigma^j\eta}$ for the uniquely determined $\sigma^j\eta \in \mathcal O_{\bar\eta}$ that $\widetilde F$ is asymptotic to at the negative puncture. Hence, out of the $m_{\bar\varphi}m_{\bar\eta}/w[F]$ elements of $\Delta^{-1}[F]\subset \M(\bar\varphi,\bar\eta)$ only $m_{\bar\varphi}/w[F]$ of them lift to $\M(\sigma^i\varphi,\mathcal O_{\bar\eta})$, for any fixed choice of $i$. Let us denote by $\M^F$ the subset of $\M(\mathcal O_{\bar\varphi},\mathcal O_{\bar\eta})$ consisting of the $[t^+,t^-,\widetilde F]$ obtained lifting $F$ as above. We concluded that
\begin{equation}\label{exp_card}
\# \left( \M^F \cap \M(\sigma^i\varphi,\mathcal O_{\bar\eta}) \right) = \frac{m_{\bar\varphi}}{w[F]}, \ \forall i \in \{0,\dots,p-1\}.
\end{equation}
Obviously, all cylinders in $\M(\mathcal O_{\bar\varphi},\mathcal O_{\bar\eta})$ are obtained by this lifting procedure from some cylinder in $\M(\bar\varphi,\bar\eta)$.

The projection $\Pi$ can be used to pull-back the system of coherent orientations of moduli spaces of curves in $\R\times K$ to a system of coherent orientations on moduli spaces of curves in $\R\times \widetilde K$: $$ \epsilon[t^+,t^-,F] = \epsilon[t^+,t^-,(id_\R\times \Pi)\circ F]. $$ These are clearly compatible with glueing of curves on $\R\times \widetilde K$. The generator $\sigma$ of the covering group determines a bijection (again denoted by $\sigma$):
\begin{equation}\label{action_cyl}
\begin{aligned}
\sigma : \M(\sigma^i\varphi,\sigma^j\eta) &\to \M(\sigma^{i+1}\varphi,\sigma^{j+1}\eta) \\ [t^+,t^-,F] &\mapsto \left[t^+ - \frac{\delta_+(i)}{m_{\bar\varphi}},t^- -\frac{\delta_-(j)}{m_{\bar\eta}},(id_\R\times \sigma)\circ F \right] 
\end{aligned}
\end{equation}
where
\begin{equation}\label{delta_+-}
\begin{array}{ccc}
\delta_+(i) = \left\{ \begin{aligned} & 1 \text{ if } i+1 = p \\ & 0 \text{ if } i+1 < p \end{aligned} \right. & \text{and} & \delta_-(j) = \left\{ \begin{aligned} & 1 \text{ if } j+1 = q \\ & 0 \text{ if } j+1 < q \end{aligned} \right. .
\end{array}
\end{equation}
Here $(i,j) \in \{0,\dots,p-1\}\times\{0,\dots,q-1\}$. It follows that
\begin{equation}\label{sign_action_cyl}
\epsilon(\sigma[t^+,t^-,F]) = (\delta_{\bar\varphi})^{\delta_+(i)} (\delta_{\bar\eta})^{\delta_-(j)} \epsilon[t^+,t^-,F]
\end{equation}
for all $[t^+,t^-,F] \in \M(\sigma^i\varphi,\sigma^j\eta)$.

\begin{remark}\label{orbit_rmk}
Let $[F] \in \M_0(\bar\varphi,\bar\eta)$ be fixed, and let $[t^+,t^-,\widetilde F] \in \M(\sigma^i\varphi,\mathcal O_{\bar\eta})$ satisfying $(id_\R\times\Pi) \circ\widetilde F = F$ be fixed. Then
\[
\begin{array}{cc}
\M(\sigma^i\varphi,\mathcal O_{\bar\eta}) \cap \M^F = \{ \sigma^{kp}[t^+,t^-,\widetilde F] : k\geq 0 \}, & \M^F = \{ \sigma^{k}[t^+,t^-,\widetilde F] : k\geq 0 \}
\end{array}
\]
for all $i\in\{0,\dots,p-1\}$.
\end{remark}

\subsection{A $\Z_m$-action on $( \widetilde C_*,\widetilde d )$ by chain maps}\label{action_chain_maps_section}

Since $m_{\bar\varphi}$ is even when $\bar\varphi$ is bad, we can consider a linear $\Z_m$-action on $\widetilde C_*$ with generator $E$ defined by
\begin{equation}\label{map_E}
E(\varphi) = \sigma\varphi,\ E(\sigma\varphi) = \sigma^2\varphi, \ \dots, \ E(\sigma^{p-1}\varphi) = \delta_{\bar\varphi}\varphi
\end{equation}
on the generators of $\widetilde C_*$. Our goal here is to show

\begin{lemma}\label{action_chain_maps}
The map $E$ induces a $\Z_m$-action on $\widetilde C_*$ by chain maps.
\end{lemma}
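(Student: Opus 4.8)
The plan is to check two things: first, that $E$ genuinely generates a $\Z_m$-action on the vector space $\widetilde C_*$, and second — the substance of the lemma — that it commutes with $\widetilde d$. For the first point, observe that $E$ preserves the decomposition of $\widetilde C_*$ into the subspaces spanned by the sets $\mathcal O_{\bar\varphi}$, and on such a block $E^{p}$ (with $p=m/m_{\bar\varphi}$) is multiplication by $\delta_{\bar\varphi}$ by~\eqref{map_E}; hence $E^m$ restricted to this block is $(\delta_{\bar\varphi})^{m_{\bar\varphi}}$, which equals $1$ since $\delta_{\bar\varphi}=1$ for good $\bar\varphi$ and $m_{\bar\varphi}$ is even for bad $\bar\varphi$. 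Thus $E^m=\mathrm{id}$.

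For the chain-map property I would fix $\bar\varphi,\bar\eta\in\P$ with $|\bar\eta|=|\bar\varphi|-1$ and write $p=m/m_{\bar\varphi}$, $q=m/m_{\bar\eta}$. Since every orbit in $\widetilde\P$ is simple, the differential is the plain signed count $\widetilde d(\sigma^i\varphi)=\sum_{\bar\eta}\sum_{j=0}^{q-1}n(\sigma^i\varphi,\sigma^j\eta)\,\sigma^j\eta$ with $n(\sigma^i\varphi,\sigma^j\eta)=\sum_{[t^+,t^-,F]\in\M(\sigma^i\varphi,\sigma^j\eta)}\epsilon[t^+,t^-,F]$, the relevant moduli spaces in $\R\times\widetilde K$ being finite and zero-dimensional by Lemma~\ref{lemma_finite_d} (applied in $\R\times\widetilde K$). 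The key step is to feed the covering-group symmetry into these counts: by~\eqref{action_cyl} the deck transformation $\sigma$ gives bijections $\M(\sigma^i\varphi,\sigma^j\eta)\to\M(\sigma^{i+1}\varphi,\sigma^{j+1}\eta)$ (indices mod $p$ and mod $q$), and by~\eqref{sign_action_cyl} it multiplies the sign $\epsilon$ by $(\delta_{\bar\varphi})^{\delta_+(i)}(\delta_{\bar\eta})^{\delta_-(j)}$. Summing over the moduli space yields the recursion $n(\sigma^{i+1}\varphi,\sigma^{j+1}\eta)=(\delta_{\bar\varphi})^{\delta_+(i)}(\delta_{\bar\eta})^{\delta_-(j)}\,n(\sigma^i\varphi,\sigma^j\eta)$.

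From here the argument is a bookkeeping of these signs, and the one identity worth isolating is $(\delta_{\bar\eta})^{\delta_-(j)}\,\sigma^{j+1}\eta=E(\sigma^j\eta)$ for every $j\in\{0,\dots,q-1\}$, immediate from~\eqref{map_E} and~\eqref{delta_+-} (both sides are $\sigma^{j+1}\eta$ when $j<q-1$ and $\delta_{\bar\eta}\,\eta$ when $j=q-1$). I would then compare $E\widetilde d$ and $\widetilde d E$ on a generator $\sigma^i\varphi$, splitting into the cases $i<p-1$ (so $\delta_+(i)=0$ and $E(\sigma^i\varphi)=\sigma^{i+1}\varphi$) and $i=p-1$ (so $\sigma^p\varphi=\varphi$, $\delta_+(p-1)=1$, $E(\sigma^{p-1}\varphi)=\delta_{\bar\varphi}\varphi$); in each case one reindexes the sum defining $\widetilde d$ over the target block, substitutes the recursion, and absorbs the $(\delta_{\bar\eta})^{\delta_-(j)}$-factors into $E$ using the isolated identity, while the $\delta_{\bar\varphi}$ appearing in the recursion at the wrap-around $i=p-1$ is precisely compensated by the twist $\delta_{\bar\varphi}$ built into the definition of $E$ in~\eqref{map_E}. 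I expect the main (and essentially only) difficulty to be keeping all the wrap-around signs aligned; the orientation input that makes this work — coherence of the pulled-back orientations under the covering-group action, encoded in~\eqref{sign_action_cyl} — has already been established, so no further analysis is needed.
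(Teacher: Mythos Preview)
Your proposal is correct and follows essentially the same approach as the paper: both reduce to the sign recursion coming from the bijection~\eqref{action_cyl} and the formula~\eqref{sign_action_cyl}, and both amount to verifying that the twist $\delta_{\bar\varphi}$ in the definition of $E$ exactly cancels the wrap-around sign. The only cosmetic difference is that the paper first decomposes the counts as $n(\sigma^i\varphi,\sigma^j\eta)=\sum_{[F]\in\M_0(\bar\varphi,\bar\eta)}\widetilde d^F_{ij}$ and proves the recursion~\eqref{miracle1} for each $[F]$ separately, whereas you work directly with the full count; the per-$[F]$ refinement is not needed for this lemma but is reused in the proof of Lemma~\ref{lemma_pi_chain_map}.
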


The lemma can be restated by saying that
\begin{equation}\label{dE=Ed}
E\widetilde d = \widetilde d E
\end{equation}
so that we need to understand the differential $\widetilde d$ qualitatively. Of course, it suffices to prove~\eqref{dE=Ed} on the generators $\widetilde \P$.

Fix $\bar\varphi,\bar\eta \in \P$ satisfying $|\bar\varphi|-|\bar\eta|=1$, and denote $p = m/m_{\bar\varphi}$ and $q=m/m_{\bar\eta}$. Each cylinder $[F] \in \M_0(\bar\varphi,\bar\eta)$ reveals a distinct set 
\begin{equation}\label{coef_ij}
\widetilde d^F_{ij} \in \Z, \ \ \ i\in\{0,\dots,p-1\},\ j\in\{0,\dots,q-1\}
\end{equation}
of coefficients which, loosely speaking, is the contribution of the lifts of $F$ to cylinders in $\R\times \widetilde K$ (with all possible choices of asymptotic markers) connecting $\sigma^i\varphi$ to $\sigma^j\eta$ to the differential $\widetilde d$. To be more precise, recall the set $\M^F \subset \M(\mathcal O_{\bar\varphi},\mathcal O_{\bar\eta})$ discussed in~\ref{cyl_lifts} obtained by the lifts of $F$. We write
\begin{equation}
\M^F_{ij} = \M^F \cap \M(\sigma^i\varphi,\sigma^j\eta).
\end{equation}
The coefficients~\eqref{coef_ij} are defined as
\begin{equation}\label{exp_d_ij}
\widetilde d^F_{ij} = \sum_{[t^+,t^-,\widetilde F] \in \M^F_{ij}} \epsilon[t^+,t^-,\widetilde F].
\end{equation}
We get the formula
\begin{equation}\label{exp_tilde_d}
\widetilde d\sigma^i\varphi = \sum_{\{\bar\eta \in \P :|\bar\eta| = |\bar\varphi|-1\}} \sum_{[F]\in\M_0(\bar\varphi,\bar\eta)} \sum_{j=0}^{q-1} \widetilde d^F_{ij} \sigma^j\eta
\end{equation}
which implies
\begin{equation}\label{formula_d_qualit_upstairs}
\left< \widetilde d\sigma^i\varphi, \sigma^j\eta \right> = \sum_{[F]\in\M_0(\bar\varphi,\bar\eta)} \widetilde d^F_{ij}
\end{equation}
for all $(i,j) \in \{0,\dots,p-1\} \times \{0,\dots,q-1\}$.

From now on we view the indices $i,j$ as periodic: $i \in \Z_p$ and $j\in\Z_q$. Recall the functions $\delta_+:\Z_p \to \{0,1\}$, $\delta_-:\Z_q\to\{0,1\}$ from~\eqref{delta_+-}. With these agreements the map $E$~\eqref{map_E} acts on $\mathcal O_\varphi$ and $\mathcal O_\eta$ as $$ \begin{array}{ccc} E\sigma^i\varphi = (\delta_{\bar\varphi})^{\delta_+(i)} \sigma^{i+1}\varphi, & & E\sigma^j\eta = (\delta_{\bar\eta})^{\delta_-(j)} \sigma^{j+1}\eta. \end{array} $$ We have
\[
\left< E\widetilde d \sigma^i\varphi, \sigma^{j+1}\eta \right> = \sum_{[F]\in\M_0(\bar\varphi,\bar\eta)} \widetilde d^F_{ij} (\delta_{\bar\eta})^{\delta_-(j)}
\]
and
\[
\left< \widetilde d E\sigma^i\varphi, \sigma^{j+1}\eta \right> = \sum_{[F]\in\M_0(\bar\varphi,\bar\eta)} \widetilde d^F_{(i+1)(j+1)} (\delta_{\bar\varphi})^{\delta_+(i)}
\]
so to prove~\eqref{dE=Ed} it suffices to show that for any $[F] \in\M_0(\bar\varphi,\bar\eta)$ the following identity holds
\begin{equation}\label{miracle1}
\widetilde d^F_{ij} (\delta_{\bar\eta})^{\delta_-(j)} = \widetilde d^F_{(i+1)(j+1)} (\delta_{\bar\varphi})^{\delta_+(i)}.
\end{equation}
In fact, the map~\eqref{action_cyl} maps $\M^F_{ij}$ bijectively onto $\M^F_{(i+1)(j+1)}$. Thus
\[
\begin{aligned}
\widetilde d^F_{(i+1)(j+1)} & = \sum_{[\theta^+,\theta^-,G] \in \M^F_{(i+1)(j+1)}} \epsilon[\theta^+,\theta^-,G] \\
&= \sum_{[t^+,t^-,\widetilde F] \in \M^F_{ij}} \epsilon(\sigma[t^+,t^-,\widetilde F]) \\
&= \sum_{[t^+,t^-,\widetilde F] \in \M^F_{ij}} (\delta_{\bar\varphi})^{\delta_+(i)} (\delta_{\bar\eta})^{\delta_-(j)} \epsilon[t^+,t^-,\widetilde F] \\
&= (\delta_{\bar\varphi})^{\delta_+(i)} (\delta_{\bar\eta})^{\delta_-(j)} \widetilde d^F_{ij}
\end{aligned}
\]
which is another way of writing~\eqref{miracle1}. In the third equality we used~\eqref{sign_action_cyl}. This concludes the proof of Lemma~\ref{action_chain_maps}.

\subsection{$\Pi_*$ is a chain map}

\begin{lemma}\label{lemma_pi_chain_map}
The map $\Pi_*$ in~\eqref{proj_chain_map} satisfies $\Pi_* \widetilde d = d\Pi_*$.
\end{lemma}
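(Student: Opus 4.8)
The plan is to compare the matrix coefficients $\langle \Pi_*\widetilde d\, \zeta, \bar\eta\rangle$ and $\langle d\,\Pi_*\zeta, \bar\eta\rangle$ for every generator $\zeta\in\widetilde\P$ and every good $\bar\eta\in\P$ with $|\zeta| = |\bar\eta|+1$. Write $\zeta = \sigma^i\varphi$ for $\bar\varphi = \Pi(\zeta)\in\P$, so $\Pi_*\zeta = \delta_{\bar\varphi}^{\,?}\bar\varphi$ — more precisely $\Pi_*\zeta = \bar\varphi$ if $\bar\varphi$ is good and $\Pi_*\zeta = 0$ if $\bar\varphi$ is bad. I will treat the two sides for each fixed pair $\bar\varphi,\bar\eta$, using the ``one cylinder at a time'' decomposition from Section~\ref{cyl_lifts}: every $[F]\in\M_0(\bar\varphi,\bar\eta)$ contributes $n^F(\bar\varphi,\bar\eta)$ to the upstairs count and $\widetilde d^F_{ij}$ to the downstairs counts, and both are built from the same set $\M^F$ of lifts.

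First I would record the downstairs coefficient. By the definition of $d$ on $C_*$ together with the marker-counting normalization in~\eqref{def_local_diff}, for good $\bar\varphi,\bar\eta$ one has $\langle d\bar\varphi,\bar\eta\rangle = n(\bar\varphi,\bar\eta)/m_{\bar\eta} = \sum_{[F]\in\M_0(\bar\varphi,\bar\eta)} \epsilon[F]\cdot(m_{\bar\varphi}m_{\bar\eta}/w[F])/m_{\bar\eta}\cdot\tfrac1{m_{\bar\eta}}$, which after cancelling the marker redundancies equals $\sum_{[F]} \epsilon[F]\, m_{\bar\varphi}/w[F]$ (I will fix the precise constant by reconciling it with the integer-coefficient assertion after~\eqref{def_local_diff}). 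If $\bar\varphi$ is bad then $\Pi_*\zeta = 0$ so $\langle d\Pi_*\zeta,\bar\eta\rangle = 0$ and I must separately check the upstairs side vanishes too. The upstairs coefficient is $\langle\widetilde d\,\sigma^i\varphi,\sigma^j\eta\rangle = \sum_{[F]\in\M_0(\bar\varphi,\bar\eta)}\widetilde d^F_{ij}$ by~\eqref{exp_tilde_d}; applying $\Pi_*$ to the target sums this over the $q = m/m_{\bar\eta}$ lifts in $\mathcal O_{\bar\eta}$ when $\bar\eta$ is good, giving $\langle\Pi_*\widetilde d\,\sigma^i\varphi,\bar\eta\rangle = \sum_{[F]}\sum_{j=0}^{q-1}\widetilde d^F_{ij}$. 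So for each fixed $[F]$ the whole lemma reduces to the identity
\[
\sum_{j=0}^{q-1}\widetilde d^F_{ij} \;=\; \delta_{\bar\varphi}^{\,[\bar\varphi\text{ bad}]}\cdot\epsilon[F]\cdot\frac{m_{\bar\varphi}}{w[F]},
\]
with the left side understood to vanish when $\bar\varphi$ is bad (no good lift to land $\Pi_*$ on, but also the signs cancel), and this is to hold for every $i$.

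The core computation, then, is to evaluate $\sum_j \widetilde d^F_{ij}$ using Remark~\ref{orbit_rmk}: $\M^F\cap\M(\sigma^i\varphi,\mathcal O_{\bar\eta}) = \{\sigma^{kp}[t^+,t^-,\widetilde F]: k\ge 0\}$ has $m_{\bar\varphi}/w[F]$ elements by~\eqref{exp_card}, and summing $\widetilde d^F_{ij}$ over $j$ just means summing the signs $\epsilon[\,\cdot\,]$ over this whole set. By the transformation rule~\eqref{sign_action_cyl}, iterating $\sigma$ past the ``wrap-around'' index $p$ multiplies the sign by $\delta_{\bar\varphi}$ (and similarly $\delta_{\bar\eta}$ at the $\bar\eta$-end, but those $\delta_{\bar\eta}$ factors get absorbed because we are summing over the full $\mathcal O_{\bar\eta}$-orbit of markers). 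When $\bar\varphi$ is good, $\delta_{\bar\varphi}=+1$ and all $m_{\bar\varphi}/w[F]$ terms carry the same sign $\epsilon[F]$, yielding exactly $\epsilon[F]\cdot m_{\bar\varphi}/w[F]$; when $\bar\varphi$ is bad, $\delta_{\bar\varphi}=-1$ and $m_{\bar\varphi}$ — hence also $m_{\bar\varphi}/w[F]$ (since $w[F]\mid m_{\bar\eta}$ and $w[F]$ cannot be an even multiple forcing parity change — I will verify $m_{\bar\varphi}/w[F]$ is even here) — is even, so the alternating signs cancel in pairs and the sum is $0$, matching $\Pi_*=0$. This bookkeeping with the $\sigma$-orbit structure and the interplay of the two $\delta$-factors is the main obstacle: one must carefully track that the sum over $j\in\Z_q$ of the $\bar\eta$-marker contributions is precisely what kills the $\delta_{\bar\eta}$ dependence, leaving a clean sum governed only by $\delta_{\bar\varphi}$, so that the good/bad dichotomy on the source orbit is exactly reflected by the definition of $\Pi_*$.
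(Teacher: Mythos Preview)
Your approach is essentially identical to the paper's: reduce to a coefficient comparison, decompose cylinder-by-cylinder over $\M_0(\bar\varphi,\bar\eta)$, and for each $[F]$ show that $\sum_j \widetilde d^F_{ij}$ equals $\epsilon[F]\cdot m_{\bar\varphi}/w[F]$ when $\bar\varphi$ is good and vanishes when $\bar\varphi$ is bad, using the cardinality~\eqref{exp_card} and the sign rule~\eqref{sign_action_cyl}. Note also that you only ever need to pair against good $\bar\eta$ (these are the generators of $C_*$), so $\delta_{\bar\eta}=+1$ throughout and there is nothing to ``absorb'' at the negative end.

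The one point to tighten is your justification that $m_{\bar\varphi}/w[F]$ is even when $\bar\varphi$ is bad. Your parenthetical (``$m_{\bar\varphi}$ even hence $m_{\bar\varphi}/w[F]$ even'') does not work as stated: divisibility of $m_{\bar\varphi}$ by $2$ does not by itself prevent $w[F]$ from absorbing that factor. The correct argument, which the paper gives, is that the $\Z_{m_{\bar\varphi}}$-action on $\M(\bar\varphi,\bar\eta)$ by rotating the positive marker is orientation-reversing when $\bar\varphi$ is bad; since after $m_{\bar\varphi}/w[F]$ such rotations one returns to the same marked element (with the same sign), one must have $(-1)^{m_{\bar\varphi}/w[F]}=+1$, i.e.\ $m_{\bar\varphi}/w[F]$ is even.
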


To prove the above statement we first fix arbitrary orbits $\bar\varphi,\bar\eta \in \P$, denote $p = m/m_{\bar\varphi}$, $q=m/m_{\bar\eta}$ and split the argument in two cases.

\subsubsection{Case 1: $\bar\varphi,\bar\eta$ are good.}

Then for any $i\in\{0,\dots,p-1\}$ we have
\begin{equation}\label{good/good}
\begin{aligned}
\left< \Pi_*\widetilde d\sigma^i\varphi,\bar\eta \right> &= \left< \Pi_* \left( \sum_{[F]\in\M_0(\bar\varphi,\bar\eta)} \sum_{j=0}^{q-1} \widetilde d^F_{ij} \sigma^j\eta \right),\bar\eta \right> \\
&= \sum_{[F]\in\M_0(\bar\varphi,\bar\eta)} \sum_{j=0}^{q-1} \widetilde d^F_{ij} \\
&= \sum_{[F]\in\M_0(\bar\varphi,\bar\eta)} \sum_{j=0}^{q-1} \sum_{[t^+,t^-,\widetilde F] \in \M^F_{ij}} \epsilon[t^+,t^-,\widetilde F] \\
&= \sum_{[F]\in\M_0(\bar\varphi,\bar\eta)} \left( \sum_{[t^+,t^-,\widetilde F] \in \M^F \cap \M(\sigma^i\varphi,\mathcal O_{\bar\eta})} \epsilon[F] \right) \\
&= m_{\bar\varphi} \sum_{[F]\in\M_0(\bar\varphi,\bar\eta)} \frac{\epsilon[F]}{w[F]} \\
&= \frac{1}{m_{\bar\eta}} \sum_{[F]\in\M_0(\bar\varphi,\bar\eta)} \epsilon[F] \#\Delta^{-1}[F] \\
&= \frac{1}{m_{\bar\eta}} \sum_{[t^+,t^-,F]\in\M(\bar\varphi,\bar\eta)} \epsilon[t^+,t^-,F] \\
&= \left< d\bar\varphi,\bar\eta\right> = \left< d\Pi_*\sigma^i\varphi,\bar\eta\right>.
\end{aligned}
\end{equation}
In the second equality we used~\eqref{exp_tilde_d}, in the third equality we used~\eqref{exp_d_ij}, in the fourth equality we used that $\bar\varphi,\bar\eta$ are good, in the fifth equality we used~\eqref{exp_card}, in the seventh equality we used that $\bar\varphi,\bar\eta$ are good and that $\Delta$ is surjective.

\subsubsection{Case 2: $\bar\varphi$ is bad, $\bar\eta$ is good.}

Fix $i_0 \in \{0,\dots,p-1\}$. In this case clearly $$ \left< d\Pi_*\sigma^{i_0}\varphi,\bar\eta \right> = 0 $$ by the definition of $\Pi_*$ (since $\bar\varphi$ is bad). So the work reduces to showing 
\begin{equation}\label{goal_bad/good}
\left< \Pi_*\widetilde d\sigma^{i_0}\varphi,\bar\eta \right> = 0.
\end{equation}
For any $F$ representing some $[F] \in \M_0(\bar\varphi,\bar\eta)$ we have the $p\times q$ matrix of coefficients $\widetilde d^F = (\widetilde d^F_{ij})$ and, according to~\eqref{formula_d_qualit_upstairs},
\begin{equation}\label{ident_lines}
\left< \Pi_*\widetilde d\sigma^{i_0}\varphi,\bar\eta \right> = \sum_{[F]\in\M_0(\bar\varphi,\bar\eta)} \sum_{j=0}^{q-1} \widetilde d^F_{i_0j}
\end{equation}
is the sum over all possible such matrices of the sum of the elements of the $i_0$-th line. Fixing $F$, let $\Phi_0 = [t^+,t^-,\widetilde F] \in \M^F_{i_0j_0}$ be some reference element (as explained before, the lift $\widetilde F = (\widetilde a,\widetilde f)$ of $F$ is uniquely determined by asking $\lim_{s\to+\infty} \widetilde f(s,t^+) = \pt_{\sigma^{i_0}\varphi}$, the value of $j_0$ is forced on us). Recalling the action $\sigma$~\eqref{action_cyl} we have, by Remark~\ref{orbit_rmk}, that $\M^F = \{\sigma^k\Phi_0:k\in\Z\}$. From now we consider the variables $i$ and $j$ as periodic: $i\in \Z_p$, $j\in\Z_q$. Analogously, the indices of the matrix $\widetilde d^F$ will also be seen as periodic. Note that $\sigma^k\Phi_0 \in \M(\sigma^{i_0+k}\varphi,\sigma^{j_0+k}\eta)$, and each element $\sigma^k\Phi_0 \in \M^F$ contributes with $\epsilon(\sigma^k\Phi_0) = \pm1$ to the coefficient $d^F_{(i_0+k)(j_0+k)}$. Obviously
\[
\min\{k\geq 1 : \sigma^k\Phi_0 \in \M^F_{i_0j_0}\} = {\rm lcm}(p,q).
\]
However the set $\M^F$ might be larger than $\{\Phi_0,\sigma\Phi_0,\dots,\sigma^{{\rm lcm}(p,q)-1}\Phi_0\}$ since $\Phi_0$ need not be equal to $\sigma^{{\rm lcm}(p,q)}\Phi_0$. In fact, we have
\[
\M^F = \{\Phi_0,\dots,\sigma^{x{\rm lcm}(p,q)-1}\Phi_0\}
\]
where 
\[
x = \min \{k\in\{1,2,\dots\} \mid \sigma^{k{\rm lcm}(p,q)}\Phi_0 = \Phi_0 \}.
\]
By~\eqref{action_cyl}, each walk $$ \{\sigma^{k{\rm lcm}(p,q)}\Phi_0,\sigma^{k{\rm lcm}(p,q)+1}\Phi_0,\dots,\sigma^{(k+1){\rm lcm}(p,q)-1}\Phi_0\} $$ 
corresponds to ${\rm lcm}(p,q)/p$ rotations at the positive puncture, and to ${\rm lcm}(p,q)/q$ rotations of the negative puncture. In view of the definition of $x$ we have
\begin{equation}
[t^+,t^-,\widetilde F] = \left[ t^+ - \frac{x{\rm lcm}(p,q)}{pm_{\bar\varphi}}, t^- - \frac{x{\rm lcm}(p,q)}{qm_{\bar\eta}}, (id_\R\times \sigma^{x{\rm lcm}(p,q)})\circ \widetilde F \right]
\end{equation}
so after applying the projection~\eqref{proj_cyls} we conclude that
\[
[t^+,t^-,F] = \left[ t^+ - \frac{x{\rm lcm}(p,q)}{pm_{\bar\varphi}}, t^- - \frac{x{\rm lcm}(p,q)}{qm_{\bar\eta}}, F \right].
\]
Thus, $x$ can be computed by 
\[
x = \min \left\{ k\in\{1,2,\dots\} \text{ such that } \frac{x{\rm lcm}(p,q)}{pm_{\bar\varphi}} = \frac{x{\rm lcm}(p,q)}{qm_{\bar\eta}} \in {\rm Iso}(F) \right\}
\]
or, alternatively, by saying that $x$ is the minimal positive integer for which $\exists y\in\{1,2,\dots\}$ such that 
\[
\left\{ \begin{aligned} & x\frac{{\rm lcm}(p,q)}{p} = y \frac{m_{\bar\varphi}}{w[F]} \\ & x\frac{{\rm lcm}(p,q)}{q} = y \frac{m_{\bar\eta}}{w[F]} \end{aligned} \right. .
\]
Substituting $y=1$ above we would obtain $$ x = \frac{m}{{\rm lcm}(p,q)w[F]} $$ since $pm_{\bar\varphi} = qm_{\bar\eta} = m$. Note that $x$ given by this formula is an integer since $m/w[F]$ is a common multiple of $p$ and $q$ because $w[F]$ is a common divisor of $m_{\bar\varphi}$ and $m_{\bar\eta}$. Hence $x$ is actually given by this formula.

Observe that the path $\{\Phi_0,\dots,\sigma^{x{\rm lcm}(p,q)-1}\Phi_0\}$ visits the space $\M(\sigma^{i_0}\varphi,\mathcal O_{\bar\eta})$ exactly $x{\rm lcm}(p,q)/p = m_{\bar\varphi}/w[F]$ times, and each visit contributes with alternating signs to the $i_0$-th line of $\widetilde d^F$ because $\bar\varphi$ is bad. This follows from the formula~\eqref{action_cyl} for the action $\sigma$. Thus, in order to prove 
\begin{equation}\label{line_bad/good}
\sum_{j=0}^{q-1} \widetilde d^F_{i_0j} = 0
\end{equation}
it suffices to show that $m_{\bar\varphi}/w[F]$ is even. This follows easily from the fact that the $\Z_{m_{\bar\varphi}}$-action on $\M(\bar\varphi,\bar\eta)$ given by rotations of asymptotic markers at the positive puncture is orientation reversing. Thus~\eqref{goal_bad/good} follows from~\eqref{ident_lines} and~\eqref{line_bad/good}.

This concludes {\it Case 2} and the proof of Lemma~\ref{lemma_pi_chain_map}.

\subsection{Conclusion of the proof of Proposition~\ref{est_lch_prop}}

Consider the average operator
\begin{equation}
\begin{array}{ccc} A : \widetilde C_* \to \widetilde C_* & & A = \frac{1}{m}(I+E+\dots+E^{m-1}). \end{array}
\end{equation}
Since $E^m-I=0$ we have a decomposition
\[
\widetilde C_* = \ker (E-I) \oplus \ker A = {\rm im}A \oplus \ker A.
\]
By Lemma~\eqref{action_chain_maps} we have a subcomplex $({\rm im} A,\widetilde d) \subset (\widetilde C_*,\widetilde d)$. Let $Q:C_* \to \widetilde C_*$ be defined on generators $\bar\varphi \in \P_0$ by
\[
Q\bar\varphi = A(\text{some element in }\mathcal O_{\bar\varphi}).
\]
Then clearly $Q$ is injective and
\[
Q\Pi_* = A.
\]
Here we used that $A(\sigma^i\varphi) = 0$ whenever $\bar\varphi$ is a bad orbit, and that $C_*$ is generated by the good orbits. It follows this and the injectivity of $Q$ that $$ \ker \Pi_* = \ker A. $$ Here injectivity of $Q$ was used. Thus we get that
\[
\Pi_*: ({\rm im}A,\widetilde d) \to (C_*,d)
\]
is a linear isomorphism and a chain map. Consequently, the homology of $(C_*,d)$ is equal to the homology of the subcomplex $({\rm im}A,\widetilde d)$. To conclude the proof of Proposition~\ref{est_lch_prop} we must finally show that the inclusion $({\rm im}A,\widetilde d)\hookrightarrow(\widetilde C,\widetilde d)$, which is obviously a chain map, induces an injective map on the level of homology. In fact, let $\lambda \in {\rm im}A$ be a closed chain, and assume that there exists $\beta \in \widetilde C_*$ satisfying $\widetilde d\beta=\lambda$. Applying the chain map $A$ to this equation we get $\lambda = A\lambda = A\widetilde d\beta = \widetilde d A\beta$ with $A\beta \in {\rm im}A$, as desired.

\subsection{Proof of Theorem~\ref{main}} \label{proof:main}

Let $\gamma = (x,T)$ be an isolated periodic orbit with multiplicity $m$. Let $\Sigma \subset N$ be an embedded hypersurface transverse to $\gamma$ at $\pt = x(0)$, so that the local first return map $\psi:(U,\pt) \to (\Sigma,\pt)$ is well-defined on a small neighborhood $U$ of $\pt$ in $\Sigma$.  Following \cite{GG}, we say that a positive integer $j$ is admissible for $\gamma$ if $\lambda^j \neq 1$ for all eigenvalues $\lambda \neq 1$ of $d\psi^{m}(\pt)$. It follows from Proposition~\ref{est_lch_prop} and \cite[Theorem 1.1]{GG} that the total rank of $HC_*(\alpha,\gamma^j)$ is less or equal than the total rank of $HC_*(\psi^m,\pt)$ for every admissible $j$.

Now, suppose that $\gamma$ is simple and every iterate of $\gamma$ is isolated. In order to prove Theorem~\ref{main} it remains only to show that we can write the set of natural numbers as a finite union of admissible integers of iterates of $\gamma$. This is the content of the lemma below which is extracted from \cite[Lemma 2]{GM}. For the reader's convenience, we will reproduce its proof.

\begin{lemma}
There are positive integers $m_1,...,m_s$ and sequences $j^i_k$ of natural numbers with $i \in \N$ and $k=1,...,s$ such that the numbers $j^i_km_k$ are mutually distinct, $\cup_{i,k} \{j^i_km_k\} = \N$ and $j^i_k$ is admissible for $\gamma^{m_k}$ for every $i \in \N$ and $k=1,...,s$.
\end{lemma}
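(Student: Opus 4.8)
The key observation is that admissibility of $j$ for $\gamma^{m_k}$ is a condition on the eigenvalues of $d\psi^{m_k}(\pt)$ that lie on the unit circle. Let $\lambda_1,\dots,\lambda_r$ be the eigenvalues of $d\psi(\pt)$ (the linearized return map of the simple orbit $\gamma$, so $T_0$-return) that lie on the unit circle, writing $\lambda_a = e^{2\pi i\theta_a}$. For a given $k$, the eigenvalues of $d\psi^{m_k}(\pt)$ on the circle are $\lambda_a^{m_k} = e^{2\pi i m_k\theta_a}$, and $j$ is admissible for $\gamma^{m_k}$ precisely when $e^{2\pi i j m_k\theta_a} \neq 1$ for every $a$ with $\lambda_a^{m_k}\neq 1$; equivalently, whenever $m_k\theta_a\notin\Z$ we need $jm_k\theta_a\notin\Z$. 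Partition the irrational $\theta_a$'s together with the rational ones according to denominators, and use this to choose the $m_k$.

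First I would separate the eigenvalues on the circle into two groups: those $\lambda_a$ that are roots of unity, say $\lambda_a$ of exact order $d_a$, and those $\lambda_a = e^{2\pi i\theta_a}$ with $\theta_a$ irrational. Let $L$ be the least common multiple of all the $d_a$ for the roots-of-unity group, and set $m_k := kL$ for $k=1,\dots$, but we only need finitely many; more efficiently, take $s$ and the $m_k$ as in \cite[Lemma 2]{GM}. The point is: for the roots-of-unity eigenvalues, once $L\mid m_k$ we have $\lambda_a^{m_k}=1$, so those eigenvalues impose no constraint at all on admissibility of any $j$ for $\gamma^{m_k}$. For the irrational eigenvalues, $m_k\theta_a$ is never an integer, so admissibility of $j$ for $\gamma^{m_k}$ requires $jm_k\theta_a\notin\Z$ for all such $a$, which holds for all $j\in\N$ automatically since $jm_k\theta_a$ is irrational. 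Hence with this choice, every $j\in\N$ is admissible for $\gamma^{m_k}$ for every $k$, and the combinatorial task is reduced to writing $\N$ as a finite disjoint union $\bigcup_{i,k}\{j^i_k m_k\}$ with the $j^i_k m_k$ mutually distinct — which is a purely number-theoretic statement about covering $\N$ by finitely many (not-necessarily-disjoint-a-priori but made disjoint by a greedy/sieve argument) arithmetic-type subsets, exactly \cite[Lemma 2]{GM}.

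Concretely, following \cite{GM}, I would take $m_1 = 1$ and proceed inductively: having chosen $m_1,\dots,m_{k-1}$, if $\N\setminus\bigcup_{l<k}\{j m_l : j\}$ is nonempty let $m_k$ be its smallest element; the greatest-common-divisor structure forces this process to terminate, and disjointness is arranged by removing, at stage $k$, from $\{jm_k\}$ those multiples already used. One checks that finitely many steps suffice because at each step the "density" of uncovered integers strictly drops in a controlled way (the standard Gromoll–Meyer counting). The only subtlety to verify carefully is that the $m_k$ produced this way are all multiples of $L$, or more precisely that the roots-of-unity eigenvalue constraints are vacuous for each $m_k$ chosen — this can be guaranteed by restricting the induction to multiples of $L$ from the outset, i.e. replacing $\N$ by $L\N$ in the covering lemma and then handling the residues $\{1,\dots,L-1\}$ plus the rest, or simply by noting that admissibility is needed only for the specific iterates $\gamma^{m_k}$ and choosing the $m_k$ among multiples of $L$.

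I expect the main obstacle to be purely bookkeeping: making the selection of $m_1,\dots,m_s$ simultaneously (i) respect the divisibility by $L$ needed to kill the root-of-unity eigenvalue constraints and (ii) still cover all of $\N$ (not just $L\N$) by the disjoint union $\bigcup_{i,k}\{j^i_k m_k\}$. The clean way around this is to observe that if $j$ is admissible for $\gamma^{m}$ then it is in particular admissible whenever the circle-eigenvalues of $d\psi^m(\pt)$ are all equal to $1$, and that by choosing the finitely many $m_k$ to run over a suitable set of divisors and multiples one can arrange coverage of all residues; the reference \cite[Lemma 2]{GM} carries this out, and reproducing that argument verbatim — adapted so that each $m_k$ is taken large/divisible enough that $d\psi^{m_k}(\pt)$ has no circle-eigenvalue other than $1$ except genuinely irrational ones (which never obstruct) — completes the proof.
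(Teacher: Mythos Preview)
Your observation that irrational-angle eigenvalues never obstruct admissibility is correct and matches the paper's implicit reduction to roots of unity. The gap is in the combinatorics that follows.

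Your plan makes every $j\in\N$ admissible for $\gamma^{m_k}$ by forcing $L\mid m_k$, where $L$ is the lcm of the orders of the root-of-unity eigenvalues. But then every product $j^i_k m_k$ is a multiple of $L$, so $\bigcup_{i,k}\{j^i_k m_k\}\subset L\N$, which cannot equal $\N$ unless $L=1$. You flag this as ``bookkeeping'', but it is not: there is no way to cover the residues coprime to $L$ using only $m_k$'s divisible by $L$. Your alternative of taking $m_1=1$ and proceeding greedily has the opposite problem: with $m_1=1$ not every $j$ is admissible for $\gamma$ (any $j$ equal to the order of a root-of-unity eigenvalue fails), so you cannot use all of $\{j\cdot 1:j\in\N\}$, and you give no argument that the remaining greedy steps terminate or cover what is left.

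The paper's (i.e.\ Gromoll--Meyer's) construction resolves this by \emph{not} insisting that every $j$ be admissible for each $m_k$. Let $Q$ be the set of orders of the root-of-unity eigenvalues of $d\psi(\pt)$, and take $\{m_1,\dots,m_s\}=\{\mathrm{lcm}(A):\emptyset\neq A\subset Q\}\cup\{1\}$. For each $m_k$ set $Q_k=\{q\in Q:q\nmid m_k\}$ and let the $j^i_k$ enumerate $\{j\in\N:q\nmid jm_k\ \forall q\in Q_k\}$; these $j$ are admissible for $\gamma^{m_k}$ since the eigenvalues with $q\mid m_k$ are already $1$ and the others stay off $1$. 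Coverage of $\N$ is then immediate: given $n\in\N$, let $A=\{q\in Q:q\mid n\}$ and $m_k=\mathrm{lcm}(A)$ (or $m_k=1$ if $A=\emptyset$); then $m_k\mid n$, and $j=n/m_k$ satisfies $q\nmid jm_k=n$ for every $q\in Q_k$ by the definition of $A$. This is precisely the idea your proposal is missing: some $m_k$'s must be proper divisors of $L$, with the set of allowed $j$'s correspondingly restricted.
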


\begin{proof}
Assume $d\psi(\pt)$ has eigenvalues of the form $e^{i2\pi r}$, $r\in\Q$. Write the rational eigenvalues in the circle of $d\psi(\pt)$ in the form $p/q$ with $p$ and $q$ relatively prime (to be more precise, the corresponding eigenvalue is $e^{i2\pi p/q}$), and denote by $Q$ the set of denominators of these eigenvalues. For $\emptyset \neq A \subset Q$ let $m(A)$ denote the least common multiple of all elements in $A$. Choose distinct numbers $m_1,...,m_s$ such that $\{m_1,...,m_s\} = \{m(A);\ \emptyset \neq A \subset Q\} \cup \{1\}$. For each $k \in \{1,...,s\}$ consider the set $Q_k = \{q\in Q;\ q \text{ does not divide } m_k\}$. We list the elements of the set $$ \{j\in\N;\ q \text{ does not divide } jm_k, \ \forall q\in Q_k\} $$ in strictly increasing order $j_{k,1}<j_{k,2}<\dots$. Let us prove that $j_{k,i}$ is admissible for $\gamma^{m_k}$, $\forall i$. The eigenvalues in the circle for $d\psi^{m_k}(\pt)$ are of the form $\lambda^{m_k}\simeq m_kp/q$, for some eigenvalue $\lambda\simeq p/q$ for $d\psi(\pt)$ which lies in the circle. The conclusion follows since $\lambda^{m_k} \neq 1$ is equivalent to the condition that $q$ does not divide $m_k$, and the condition $\lambda^{j_{k,i}m_k} \neq 1$ is equivalent to the condition that  $q$ does not divide $j^i_km_k$. It remains only to show that $\cup_{i,k} \{j_{k,i}m_k\} = \N$ but this is easy and left to the reader.
\end{proof}

\section{Morse inequalities} \label{section:mi}

Let $(N^{2n-1},\xi)$ be a closed co-oriented contact manifold such that $c_1(\xi)$ vanishes. Let $\alpha$ be a fixed contact form for $\xi$ inducing the given co-orientation, and fix a homotopy class of augmentations $[\ep]$. The action spectrum of $\alpha$ is 
\[
\Sigma(\alpha) = \{ A(\gamma); \gamma\text{ is a periodic orbit of }\alpha\}
\]
where $A(\gamma) = \int_\gamma \alpha$ is the action of $\gamma$. The main goal of this section is to prove Proposition~\ref{mi}.

\subsection{Filtered linearized contact homology} \label{flch}

We quickly review a few definitions, see~\cite{Bo} or the original~\cite{EGH} for more details.

Let $\alpha'$ be a nondegenerate defining contact form for $\xi$, and assume the existence of $J' \in \J(\alpha')$ generic enough in order to get a well-defined DGA $(\A(\alpha'),d')$ whose homology is the full contact homology of $\xi$ with $\Q$-coefficients. As explained in~\cite{EGH}, $\A(\alpha')$ is the supercommutative unital algebra generated by the good closed $\alpha'$-Reeb orbits, with $\Q$-coefficients, graded by $|\cdot| = \cz(\cdot) + n-3$, and $d'$ is defined by the (algebraic) count of rigid punctured finite-energy spheres with one positive puncture in $(\R\times N,J')$. We can use $\Q$-coefficients since we assume $c_1(\xi)$ vanishes.
 
Let $\epsilon'$ be an augmentation for $(\A(\alpha'),d')$, {\it i.e.}, it is an algebra homomorphism $\epsilon' : \A(\alpha') \to \Q$ satisfying $\epsilon'(1)=1$, $\epsilon' \circ d' = 0$, and $\epsilon'(\gamma)=0$ if $|\gamma|\not=0$. Now let $C(\alpha')$ be the $\Q$-vector space freely generated by the good closed $\alpha'$-Reeb orbits, graded by $|\cdot|$. The algebra $\A(\alpha')$ can be decomposed according to word length $\A(\alpha') = \A_0 \oplus \A_1 \oplus \A_2 \oplus \dots$ where $\A_0=\Q$. If $\pi_1$ is projection onto $\A_1$ and $S^{\epsilon'} : \A(\alpha') \to \A(\alpha')$ is the algebra homomorphism determined by $S^{\epsilon'}(1)=1$ and $S^{\epsilon'}(\gamma) = \gamma + \epsilon'(\gamma)$, then the linearized differential is defined by
\begin{equation*}
\begin{array}{ccc}
d'_{\epsilon'} : C_*(\alpha') \to C_{*-1}(\alpha') & & d'_{\epsilon'} = \pi_1 \circ S^{\epsilon'} \circ d'.
\end{array}
\end{equation*}
Then $(C(\alpha'),d'_{\epsilon'})$ is a chain complex and its homology is the so-called linearized contact homology $HC^{[\epsilon']}(\xi)$, which turns out to depend only on $\xi$ and on the homotopy class\footnote{The definition of homotopy class of augmentations for the contact structure was given in the introduction.} of $\epsilon'$.

Since $d'$ decreases action it is trivial to see that so does $d'_{\epsilon'}$. Thus, if for a given $a \in [0,+\infty) \setminus \Sigma(\alpha')$ we define $C^a(\alpha')$ to be the subspace generated by the orbits with action strictly less than $a$ then $(C^a(\alpha')/C^b(\alpha'),d'_{\epsilon'})$ is a chain complex when $b<a$ and $a,b \in [0,+\infty) \setminus \Sigma(\alpha')$. Its homology is, by definition, linearized contact homology of $\alpha'$ and $\epsilon'$ filtered by the interval $[b,a)$ and will be denoted by $HC^{[b,a),\epsilon'}(\alpha')$. When $b=0$ we may simply write $HC^{a,\epsilon'}(\alpha')$. These vector spaces depend on $J'$, but this will not be explicit in the notation.

\begin{remark}
In the case $b=0$ one could alternatively define $HC^{a,\epsilon'}(\alpha')$ as follows. Let $\A^a(\alpha')$ be the subalgebra generated by the good orbits with action strictly less than $a$. Then $d'(\A^a(\alpha')) \subset \A^a(\alpha')$ and $\epsilon'$ is also an augmentation for $(\A^a(\alpha'),d')$. Linearizing $d'|_{\A^a(\alpha')}$ we get again the chain complex $(C^a(\alpha'),d'_{\ep'})$.
\end{remark}

\begin{proposition}\label{lch-ch}
Suppose that $\alpha$ has finitely many simple periodic orbits. For any given $a \in \Sigma(\alpha)$ and $J \in \J(\alpha)$ there exists $\delta > 0$, a $C^\infty$-neighborhood $\V$ of $\alpha$ and a $C^\infty$-strong neighborhood $\U$ of $J$ in the space of almost complex structures on $\R \times N$ such that the following holds: if $\alpha' \in \V$ is a nondegenerate contact form defining $\xi$, $J' \in \J(\alpha') \cap \U$ is regular so that the data $(\alpha',J')$ defines a DGA $(\A(\alpha'),d')$ whose homology is the full contact homology of $\xi$ with $\Q$-coefficients, then $$ HC_*^{[a-\delta,a+\delta),\ep'}(\alpha') \simeq \oplus_{\{\gamma : A(\gamma) = a\}} HC_*(\alpha,\gamma) $$ for every augmentation $\ep'$ of $(\A(\alpha'),d')$.
\end{proposition}

\begin{proof}
First we find $\sigma>0$ and a $C^\infty$-neighborhood $\V_0$ of $\alpha$ such that $\Sigma(\alpha') \subset [2\sigma,+\infty)$ for every contact form $\alpha'\in \V_0$ defining $\xi$. Consider $\delta>0$ such that $\delta < \sigma$ and $\delta < {\rm dist}(\{a\},\Sigma(\alpha)\setminus\{a\})$.

Let $\gamma_1,\dots,\gamma_m$ be the (not necessarily prime) $\alpha$-Reeb closed orbits with $\alpha$-action equal to $a$ and let $K_i$ be a small smooth compact tubular neighborhood of $\gamma_i$, $\forall i=1,\dots,m$. Define $K = \cup_i K_i$. We find $\V_1 \subset \V_0$ such that if $\alpha' \in \V_1$ defines $\xi$ then $a\pm\delta \not\in \Sigma(\alpha')$, all closed $\alpha'$-Reeb orbits with $\alpha'$-action in $[a-\delta,a+\delta]$ lie in the interior of $K$ and, moreover, those lying in ${\rm int}(K_i)$ are homotopic to $\gamma_i$ in $K_i$. In fact, all the closed $\alpha'$-Reeb orbits with action in $[a-\delta,a+\delta]$ are $C^\infty$-close to one of the $\gamma_i$ when $\alpha'$ is a sufficiently $C^\infty$-small perturbation of $\alpha$.

By the results of Section~\ref{lch_section} we find $\V \subset \V_1$ and a $C^\infty$-strong neighborhood $\U$ of $J$ in the space of almost complex structures on $\R \times N$ such that if $\alpha' \in \V$ is nondegenerate and $J' \in \J(\alpha') \cap \U$ is regular then the data $(\alpha',J')$ defines chain complexes $(C_*(\alpha',K_i),d_i)$ whose homologies are the local contact homologies $HC(\alpha,\gamma_i)$, for every $i=1,\dots,m$. The $\Q$-vector space $C_*(\alpha',K_i)$ is freely generated by the good closed $\alpha'$-Reeb orbits in $K_i$ which satisfy $\mu_{CZ} + n-3 = *$ and are homotopic to $\gamma_i$ in $K_i$, and the local differentials $d_i$ are defined by counting rigid finite-energy $J'$-holomorphic cylinders in $\R \times K_i$.

Let us assume that the data $(\alpha',J')$ defines a DGA $(\A(\alpha'),d')$ whose homology is the full contact homology of $\xi$ (with $\Q$-coefficients). For every $r>0$ denote by $C^r_*(\alpha')$ the $\Q$-vector space freely generated by all the good closed $\alpha'$-Reeb orbits with $\alpha'$-action less than $r$ and degree $*$. Write $C^{[r,s)}_*(\alpha')$ for $C^s_*(\alpha')/C^r_*(\alpha')$ when $r<s$. By construction we clearly have $C^{[a-\delta,a+\delta)}_*(\alpha') = \oplus_i C_*(\alpha',K_i)$.

The differential $d'$ can be linearized by $\epsilon'$ in order to define a differential $d'_{\ep'}$ on $C^{[a-\delta,a+\delta)}_*(\alpha')$. Let $F$ be a finite-energy $J'$-holomorphic sphere with one positive puncture where it is asymptotic to some closed $\alpha'$-Reeb orbit with $\alpha'$-action less than $a+\delta$. If $F$ has two or more negative punctures then its asymptotic limits at the negative punctures are closed $\alpha'$-Reeb orbits with $\alpha'$-action strictly less than $a-\delta$, in fact, they all must have $\alpha'$-action less than $a+\delta - 2\sigma < a+\delta - 2\delta = a-\delta$. Thus $d'_{\ep'} : C^{[a-\delta,a+\delta)}_*(\alpha') \to C^{[a-\delta,a+\delta)}_{*-1}(\alpha')$ is defined by counting cylinders, in particular, it does not depend on $\ep'$. However, one could imagine that such cylinders connect an orbit in $K_i$ with an orbit in $K_j$ with $i\neq j$. But an easy compactness argument using the results in Appendix~\ref{exist_orbits} shows that this does not happen. Now, the cylinders connecting orbits in a given $K_i$ must all lie in ${\rm int}(K_i)$ by an application of an immediate modification of the argument used to prove Lemma~\ref{lemma_nearby_cylinders}. It follows that $d'_{\ep'} = \oplus_i d_i$ on $C^{[a-\delta,a+\delta)}_*(\alpha') = \oplus_i C_*(\alpha',K_i)$.
\end{proof}

\subsection{Small cobordisms}\label{small_cobordisms}

Our goal is to prove Proposition~\ref{prop_inv_small} below. We start by collecting some geometric constructions necessary for its proof.

Let $L>0$, a compact set $\Lambda \subset [0,+\infty) \setminus \Sigma(\alpha)$ and $J \in \J(\alpha)$ be fixed arbitrarily. We also fix any $C^\infty$-strong neighborhood $\U_0$ of $J$ in the space of almost complex structures of $\R\times N$. We claim that
\begin{itemize}
\item [{\bf (S)}] There exists a $C^\infty$-neighborhood of $\V$ of $\alpha$ in the space of defining contact forms for $\xi$ with its co-orientation, a $C^\infty$-strong neighborhood $\U_1\subset \U_0$ of~$J$ and numbers $\delta_0,\delta_1>0$ such that
\begin{itemize}
\item[(i)] $\alpha',\alpha'' \in \V \Rightarrow \alpha'/\alpha'' \leq 1+\delta_1 \text{ holds pointwise}$;
\item[(ii)] $\alpha' \in \V \Rightarrow {\rm dist}(\Lambda,\Sigma(\alpha')) \geq \delta_0$;
\item[(iii)] $c\in\Lambda \Rightarrow c(1+\delta_1)^6 < c+\delta_0$;
\item[(iv)] for every $\alpha',\alpha'' \in \V$, $J'\in\J(\alpha') \cap \U_1$ and $J'' \in \J(\alpha'') \cap \U_1$ there exists $\jbar \in \J_L(J'',J') \cap \U_0$, and an exact symplectic form $\Omega$ on $[-L,L]\times N$ such that $\Omega$ tames $\jbar$ on $[-L,L]\times N$ and admits a primitive $\eta$ 
$$ \begin{array}{ccc} \eta|_{T(\{L\}\times N)} = (1+\delta_1)\alpha', & & \eta|_{T(\{-L\}\times N)} = (1+\delta_1)^{-1}\alpha''. \end{array} $$
\end{itemize}
Moreover, the numbers $\delta_0,\delta_1$ and the neighborhoods $\V$, $\U_1$ can be taken arbitrarily small.
\end{itemize}

In (i) we denoted by $\alpha'/\alpha''$ the unique function satisfying $\alpha' = (\alpha'/\alpha'')\alpha''$. In~(iv) we identified $T(\{\pm L\}\times N) \simeq TN$. The proof of the existence of $\V,\U_1,\delta_0,\delta_1$ is standard and not complicated. The construction of exact symplectic forms as in (iv) can be done explicitly.

Before continuing we introduce some notation: given $\alpha',\alpha'',\alpha'''$ defining contact forms for $\xi$, co-orientations considered, numbers $R>L>0$ and almost complex structures $J'\in\J(\alpha')$, $J'' \in\J(\alpha'')$, $J'''\in\J(\alpha''')$, $\jbar_0 \in \J_L(J'',J')$ and $\jbar_1 \in \J_L(J''',J'')$, we define $\jbar_0 \odot_R \jbar_1 \in \J_{R+L}(J''',J')$ by
\begin{equation}
\jbar_0 \odot_R \jbar_1 = \left\{ \begin{aligned} & (\tau_{-R})^*\jbar_0 \ \text{ on } \ [0,+\infty) \times N \\ & (\tau_R)^*\jbar_1 \ \text{ on } \ (-\infty,0] \times N \end{aligned} \right.
\end{equation}
where $\{\tau_c\}_{c\in\R}$ denotes the $(\R,+)$-action on $\R\times N$ by translations in the first coordinate. This clearly is a smooth almost complex structure since $R>L$.

Perhaps after making $\V$ and $\U_1$ smaller, we also claim that we can achieve the following property.
\begin{itemize}
\item[{\bf (H)}] Let $\alpha',\alpha'',\alpha''' \in \V$, $J'\in\J(\alpha') \cap \U_1$, $J'' \in \J(\alpha'') \cap \U_1$, $J''' \in \J(\alpha''') \cap \U_1$ be fixed. Consider $\bar J_0 \in \J_L(J'',J') \cap \U_0$, $\jbar_1 \in \J_L(J''',J'') \cap \U_0$ and $\bar J_2 \in \J_L(J''',J') \cap \U_0$ with associated exact symplectic forms on $[-L,L] \times N$ as in item~(iv) of claim~(S). Then $\forall R>L$ there exists a smooth family $\{\jtil_\tau\}_{\tau\in[0,1]} \subset \J_{R+L}(J''',J') \cap \U_0$ such that $\jtil_0 = \jbar_0 \odot_R \jbar_1$ and $\jtil_1 = \jbar_2$. Moreover, there is an exact symplectic form $\Omega$ on $[-R-L,R+L]\times N$ taming $\jtil_\tau$ on $[-R-L,R+L]\times N$ $\forall \tau$, that admits a primitive $\eta$ satisfying
\begin{equation}\label{nice_forms_splitting}
\begin{array}{ccc} 
\eta|_{T(\{R+L\}\times N)} = (1+\delta_1)^3\alpha' & \text{and} & \eta|_{T(\{-R-L\}\times N)} = (1+\delta_1)^{-3}\alpha'''. \end{array}
\end{equation}
\end{itemize}

Let us derive some consequences. With $L$, $\Lambda$, $J$ and $\U_0$ fixed as above, consider $\V$, $\U_1$, $\delta_0$, $\delta_1$ obtained from claim (S). Assume that $\alpha',\alpha''\in\V$ are nondegenerate and $J'\in\J(\alpha') \cap \U_1$, $J''\in\J(\alpha'')\cap\U_1$ are generic enough in order to have well-defined DGAs $(\A(\alpha'),d')$, $(\A(\alpha''),d'')$ whose homologies are equal to the full contact homology of~$\xi$. Let $\jbar_0 \in \J_L(J'',J') \cap \U_0$ be as in~(iv) of claim~(S) above, which we assume generic enough to define a chain map $$ \Psi : (\A(\alpha'),d') \to (\A(\alpha''),d'') $$ which is also an algebra homomorphism inducing an isomorphism in homology.

\begin{lemma}\label{lemma_action_1}
If $c\in\Lambda$ and $\A^c(\alpha') \subset \A(\alpha')$, $\A^c(\alpha'') \subset \A(\alpha'')$ denote the subDGAs generated by the orbits with action less than $c$, then $\Psi$ satisfies $\Psi(\A^c(\alpha')) \subset \A^c(\alpha'')$.
\end{lemma}

\begin{proof}
The map $\Psi$ is defined by counting rigid finite-energy spheres with one positive puncture in $(\R\times N,\jbar_0)$, where the energy is defined with the help of the exact symplectic form $\Omega$ on $[-L,L]\times N$ satisfying the properties listed in item (iv) of claim (S). Let $F = (a,f)$ be such a finite-energy sphere with $m$ negative punctures, asymptotic to the closed $\alpha'$-Reeb $\gamma$ at its positive puncture and to the closed $\alpha''$-Reeb orbits $\gamma_1,\dots,\gamma_m$ at its negative punctures. Using Stokes theorem we get
\[
\begin{aligned}
& (1+\delta_1) \int_\gamma \alpha' - (1+\delta_1)^{-1} \sum_{i=1}^m \int_{\gamma_i} \alpha'' \\
& = \int_{\{a\geq L\}} (1+\delta_1)f^*d\alpha' + \int_{\{-L\leq a\leq L\}} F^*\Omega + \int_{\{a\leq-L\}} (1+\delta_1)^{-1}f^*d\alpha'' \geq 0.
\end{aligned}
\]
Thus, assuming that the $\alpha'$-action of $\gamma$ is less than $c$ we obtain
\[
\sum_{i=1}^m \int_{\gamma_i} \alpha'' \leq (1+\delta_1)^2 \int_\gamma \alpha' < (1+\delta_1)^2c \leq c+\delta_0
\]
in view of (iii). Using (ii) we conclude that $$ \sum_{i=1}^m \int_{\gamma_i} \alpha'' < c, $$ in particular, each $\gamma_i$ has $\alpha''$-action less than $c$, as was to be proved.
\end{proof}

The next step is

\begin{lemma}\label{lemma_action_2}
If $\V$ and $\U_1$ are small enough and $\epsilon'':\A(\alpha'') \to \Q$ is any augmentation then, defining $\epsilon' := \Psi^*\epsilon''$, the linearized chain map $$ \Psi_{\epsilon''} : (C(\alpha'),d'_{\epsilon'}) \to (C(\alpha''),d''_{\epsilon''}) $$ satisfies $\Psi_{\epsilon''}(C^a_*(\alpha')) \subset C^a_*(\alpha'')$ and induces an isomorphism between filtered linearized homologies
\begin{equation*}
\Psi_{\epsilon''} : HC^{a,\epsilon'}_*(\alpha') \stackrel{\sim}{\to} HC^{a,\epsilon''}_*(\alpha'')
\end{equation*}
for every $a\in\Lambda \cup \{0,+\infty\}$.
\end{lemma}

\begin{proof}
We know from~\cite{EGH,Bo} that $\Psi_{\epsilon''}$ induces an isomorphism between the non-filtered linearized homologies. Consider also $\jbar_1 \in \J_L(J',J'') \cap \U_0$ as in~(iv) of claim~(S) assumed generic enough to give a chain map $\Phi : (\A(\alpha''),d'') \to (\A(\alpha'),d')$ which is also an algebra homomorphism inducing an isomorphism in homology. Then by Lemma~\ref{lemma_action_1} we have $\Psi(\A^c(\alpha')) \subset \A^c(\alpha'')$ and $\Phi(\A^c(\alpha'')) \subset \A^c(\alpha')$ for every $c\in\Lambda$. 

Using all the assumed regularity, we know from standard glueing-compactness analysis that if $R\gg L$ is large enough then the map $\Phi \circ \Psi$ is equal to the chain map $\A(\alpha') \to \A(\alpha')$ induced by $\jbar_0 \odot_R \jbar_1 \in \J_{R+L}(J',J')$. This chain map will be denoted as $\Phi \odot_R \Psi$. Moreover, perhaps after shrinking $\V$ and $\U_1$, we can assume claim~(H), {\it i.e.}, we find a homotopy $\{\jtil_\tau\}_{\tau\in[0,1]} \subset \J_{R+L}(J',J')$ from $\jtil_0 = \jbar_0 \odot_R \jbar_1$ to $\jtil_1 = J'$ which is uniformly tamed on $[-R-L,R+L]\times N$ by an exact symplectic form $\Omega$ satisfying~\eqref{nice_forms_splitting}. If we assume that $\{\jtil_\tau\}$ is generic enough then it will induce a degree $+1$ derivation $K$ on $\A(\alpha')$ given by counting index $-1$ finite-energy $\jtil_\tau$-holomorphic spheres on $\R\times N$ with one positive puncture, for all $\tau \in [0,1]$. As is explained in~\cite{Bo}, this derivation establishes a chain homotopy between $\Phi \odot_R \Psi$ and the identity map.

We claim that $K(\A^c(\alpha')) \subset \A^c(\alpha')$, $\forall c\in\Lambda$. Fixing $c\in\Lambda$, consider $\tau\in[0,1]$ and a finite-energy $\jtil_\tau$-holomorphic index $-1$ cylinder $F=(a,f)$ asymptotic to $\gamma$ at the positive puncture and to $\gamma_1,\dots,\gamma_m$ at the negative punctures. As in Lemma~\ref{lemma_action_1} we use Stokes theorem to estimate
\[
\begin{aligned}
& (1+\delta_1)^3 \int_\gamma \alpha' - (1+\delta_1)^{-3} \sum_{i=1}^m \int_{\gamma_i} \alpha' = \int_{\{a\geq R+L\}} (1+\delta_1)^3f^*d\alpha' + \\
& + \int_{\{-R-L\leq a\leq R+L\}} F^*\Omega + \int_{\{a\leq-R-L\}} (1+\delta_1)^{-3}f^*d\alpha' \geq 0.
\end{aligned}
\]
Thus, assuming that the $\alpha'$-action of $\gamma$ is less than $c$ we obtain
\[
\sum_{i=1}^m \int_{\gamma_i} \alpha' \leq (1+\delta_1)^6 \int_\gamma \alpha' < (1+\delta_1)^6c \leq c+\delta_0
\]
in view of (iii). We conclude that each $\gamma_i$ has $\alpha'$-action not larger than $c+\delta_0$ and, using (ii), that their $\alpha'$-actions must be less than $c$.

In other words, we proved that $K$ is a derivation on the subDGA $\A^c(\alpha')$, thus inducing a chain homotopy between $(\Phi \odot_R \Psi)|_{\A^c(\alpha')}$ and the identity on $\A^c(\alpha')$. Since $\Phi \odot_R \Psi = \Phi \circ \Psi$ at the chain level when $R$ is large, we have that $(\Phi \circ \Psi)|_{\A^c(\alpha')}$ induces the identity on the level of homology of $\A^c(\alpha')$. Now note that $\epsilon'=\Psi^*\epsilon''$ and $\epsilon''$ are also augmentations for the subDGAs $(\A^c(\alpha'),d')$ and $(\A^c(\alpha''),d'')$, respectively. It follows as in the non-filtered case that $\Psi_{\epsilon''}|_{C^c(\alpha')}$ induces an isomorphism $HC^{c,\epsilon'}(\alpha') \simeq HC^{c,\epsilon''}(\alpha'')$ for every $c\in\Lambda$, see~\cite{Bo}.
\end{proof}

\begin{lemma}\label{lemma_action_3}
If $\V,\U_1$ are as in Lemma~\ref{lemma_action_2} then for every $a,b\in\Lambda \cup \{0,+\infty\}$ satisfying $b<a$ the map $\Psi_{\epsilon''}$ induces an isomorphism $HC^{[b,a),\epsilon'}(\alpha') \simeq HC^{[b,a),\epsilon''}(\alpha'')$, where $\epsilon'=\Psi^*\epsilon''$.
\end{lemma}

\begin{proof}
Using Lemma~\ref{lemma_action_2} there is a commutative diagram
\begin{equation*}
\xymatrix{
0 \ar[r] & C^b(\alpha') \ar[d]^{\Psi_{\epsilon''}} \ar[r] & C^a(\alpha') \ar[d]^{\Psi_{\epsilon''}} \ar[r] & C^a(\alpha')/C^b(\alpha') \ar[d]^{\Psi_{\epsilon''}} \ar[r] & 0 \\
0 \ar[r] & C^b(\alpha'') \ar[r] & C^a(\alpha'') \ar[r] & C^a(\alpha'')/C^b(\alpha'') \ar[r] & 0
}
\end{equation*}
where the rows are obviously exact (projection followed by inclusion). Passing to long exact sequences in homology we obtain
\begin{equation*}
\xymatrixcolsep{1pc}\xymatrix{
HC^{b,\epsilon'}_*(\alpha') \ar[d]^{\tilde\Psi_{\epsilon''}} \ar[r] & HC^{a,\epsilon'}_*(\alpha') \ar[d]^{\tilde\Psi_{\epsilon''}} \ar[r] & HC^{[b,a),\epsilon'}_*(\alpha') \ar[d]^{\tilde\Psi_{\epsilon''}} \ar[r] & HC^{b,\epsilon'}_{*-1}(\alpha') \ar[d]^{\tilde\Psi_{\epsilon''}} \ar[r] & HC^{a,\epsilon'}_{*-1}(\alpha') \ar[d]^{\tilde\Psi_{\epsilon''}} \\
HC^{b,\epsilon''}_*(\alpha'') \ar[r] & HC^{a,\epsilon''}_*(\alpha'') \ar[r] & HC^{[b,a),\epsilon''}_*(\alpha'') \ar[r] & HC^{b,\epsilon''}_{*-1}(\alpha'') \ar[r] & HC^{a,\epsilon''}_{*-1}(\alpha'')
}
\end{equation*}
where $\tilde\Psi_{\epsilon'}$ denote the induced maps in homology. By Lemma~\ref{lemma_action_2} the first, second, fourth and fifth maps are isomorphisms. The five-lemma applies and we conclude that the third map is also an isomorphism.
\end{proof}

Summarizing this discussion, we have proved

\begin{proposition}\label{prop_inv_small}
Fix a compact set $\Lambda \subset [0,+\infty) \setminus \Sigma(\alpha)$, $L>0$, $J \in \J(\alpha)$ and any neighborhood $\U_0$ of $J$ in the space of almost complex structures of $\R \times N$ with respect to the $C^\infty$-strong topology. Assume that $\U_1\subseteq \U_0$, $\V$ and $\delta_0,\delta_1>0$ are so that claim~(S) holds, and assume, possibly after shrinking $\V$ and $\U_1$, that claim~(H) also holds. If 
\begin{itemize}
\item $\alpha',\alpha'' \in \V$ are nondegenerate, 
\item $J'\in\J(\alpha')\cap\U_1$ and $J''\in\J(\alpha'')\cap\U_1$ are generic enough to define DGAs $(\A(\alpha'),d')$, $(\A(\alpha''),d'')$ whose homologies coincide with the full contact homology of $\xi$ with $\Q$-coefficients, 
\item $\jbar \in \J_L(J'',J') \cap \U_0$ is tamed by an exact symplectic form as in (iv) of claim~(S), and is generic enough to define a chain map $$ \Psi:(\A(\alpha'),d') \to (\A(\alpha''),d''), $$ 
\item $\epsilon'' : (\A(\alpha''),d'') \to \Q$ is any augmentation,
\end{itemize}
then the linearized map $\Psi_{\epsilon''}$ induces an isomorphism of filtered linearized homologies $$ HC^{[b,a),\Psi^*\epsilon''}(\alpha')\simeq HC^{[b,a),\epsilon''}(\alpha'') $$ for every pair of numbers $b<a$ in $\Lambda \cup \{0,+\infty\}$.
\end{proposition}

\subsection{Morse inequalities}

Suppose now that $\alpha$ has finitely many simple periodic orbits $\gamma_1, \gamma_2,\dots, \gamma_r$. In particular, its action spectrum is a discrete subset. Let $b^{[\ep]}_i = \dim HC_i^{[\ep]}(\xi)$ denote the Betti numbers and
$$ c_i = \sum_{k=1}^r \sum_j \dim HC_i(\alpha,\gamma_k^j) $$
the Morse type numbers. The main result in this section provides versions of weak and strong Morse inequalities for contact homology suitable for our applications. Given $a \notin \Sigma(\alpha)$ the relative Morse type numbers
\begin{equation}\label{rel_morse-type-numbers}
c^a_i = \sum_{k=1}^r \sum_{j; A(\gamma_k^j) < a} \dim HC_i(\alpha,\gamma_k^j)
\end{equation}
will play an important role in the arguments.

\begin{proposition}\label{mi}
Under the assumption that $\alpha$ has finitely many simple periodic orbits, $b^{[\ep]}_i$ and $c_i$ are finite for every $i \geq 2n-3$ and satisfy the inequalities
\begin{equation}\label{mi:weak}
b^{[\ep]}_i \leq c_i
\end{equation}
\begin{equation}\label{mi:strong}
b^{[\ep]}_i - b^{[\ep]}_{i-1} + ... \pm b^{[\ep]}_{2n-3} - C \leq c_i - c_{i-1} + ... \pm c_{2n-3},
\end{equation}
for every $i \geq 2n-3$. Here $C \geq 0$ is a constant that does not depend on $i$.
\end{proposition}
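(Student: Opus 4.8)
The plan is to represent everything by a single auxiliary chain complex and then run the classical Morse estimate with respect to the action filtration, letting the action bound go to infinity at the end. Fix a non-degenerate contact form $\alpha'$ that is $C^\infty$-close to $\alpha$, a regular $J'\in\J(\alpha')$, and the augmentation $\ep'=(\Psi^{\alpha'}_{\alpha_0})^*\ep_0$ coming from the cobordism $W^{\alpha}_{\alpha_0}$, so that (by the construction in Section~\ref{flch}) the complex $(C_*(\alpha'),\partial'_{\ep'})$ has homology $HC^{[\ep]}_*(\xi)$ and, for every $T\notin\Sigma(\alpha')$, its subcomplex $C^T_*(\alpha')$ spanned by orbits of action $<T$ has homology $HC^{T,\ep_0}_*(\alpha)$. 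Since $\alpha$ has only finitely many simple orbits $\gamma_1,\dots,\gamma_r$, the spectrum $\Sigma(\alpha)$ is discrete and, choosing $\alpha'$ close enough, the closed $\alpha'$-orbits split into disjoint ``action clusters'', one over each $a\in\Sigma(\alpha)$; this makes $C^T_*(\alpha')$ a \emph{finite} filtered complex (finitely many clusters below $T$, and finite-dimensional in each degree since $\alpha$ has finitely many orbits of action $<T$). By Proposition~\ref{lch-ch} the homology of the cluster over $a$ is $\bigoplus_{A(\gamma)=a}HC_*(\alpha,\gamma)$, so the first page of the associated spectral sequence has total dimension in degree $i$ equal to $m_i(T):=\sum_{a\in\Sigma(\alpha),\,a<T}\sum_{A(\gamma)=a}\dim HC_i(\alpha,\gamma)$.

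I would first settle the finiteness assertions. By Proposition~\ref{est_lch_prop} together with the support property of local Floer homology \cite{GG}, the group $HC_*(\alpha,\gamma)$ vanishes outside the degree window $[\Delta(\gamma)-2,\Delta(\gamma)+2n-4]$ (here $\dim N=2n-1$, so $|\cz(\cdot)-\Delta(\cdot)|\le n-1$ and $|\cdot|=\cz(\cdot)+n-3$). If $\Delta(\gamma_k)\le 0$ then every iterate $\gamma_k^j$ has $\Delta(\gamma_k^j)=j\Delta(\gamma_k)\le 0$, hence contributes only in degrees $\le 2n-4$ and never to degree $i\ge 2n-3$; if $\Delta(\gamma_k)>0$ then a nonzero contribution of $\gamma_k^j$ to degree $i$ forces $j\Delta(\gamma_k)\le i+2$, bounding $j$. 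Thus only finitely many orbits contribute to $HC_i(\alpha,\,\cdot\,)$ for a fixed $i\ge 2n-3$, and as each $HC_i(\alpha,\gamma)$ is finite-dimensional we get $c_i<\infty$. The same window bound shows that $C_*(\alpha')$ is finite-dimensional in every degree $\ge 2n-3$, so $b^{[\ep]}_i=\dim HC^{[\ep]}_i(\xi)<\infty$ for $i\ge 2n-3$ as well.

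Next I would apply the standard strong Morse inequality to the finite filtered complex $C^T_*(\alpha')$. On each page of a spectral sequence passing to homology cannot increase dimension, which gives at once $\dim HC^{T,\ep_0}_i(\alpha)\le m_i(T)$; and summing the elementary identity $\sum_{j=L}^{i}(-1)^{i-j}(\dim E^r_j-\dim E^{r+1}_j)=\mathrm{rank}\,d^r_{i+1}+(-1)^{i-L}\mathrm{rank}\,d^r_L$ over all pages $r$ (using that over a field $E^\infty$ has the same total dimension as $HC^{T,\ep_0}_*(\alpha)$) yields, for every $i$ and the fixed truncation level $L=2n-3$,
\[
\sum_{j=L}^{i}(-1)^{i-j}\dim HC^{T,\ep_0}_j(\alpha)\ \le\ \sum_{j=L}^{i}(-1)^{i-j}m_j(T)\ +\ m_L(T),
\]
since the total correction $\sum_r\mathrm{rank}\,d^r_L$ is at most $m_L(T)$. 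Finally I would let $T\to\infty$. For $i\ge 2n-3$ the orbits of $\alpha'$ in degrees $\ge 2n-3$ have action bounded independently of $T$, so $C^T_*(\alpha')$ together with $\partial'_{\ep'}$ is eventually constant in those degrees; hence $HC^{T,\ep_0}_i(\alpha)=HC^{[\ep]}_i(\xi)$, $m_i(T)=c_i$ and $m_L(T)=c_{2n-3}$ once $T$ is large. Passing to the limit in the two inequalities above gives exactly \eqref{mi:weak} and \eqref{mi:strong} with the $i$-independent constant $C:=c_{2n-3}\ge 0$.

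The hard part will be the bookkeeping at the two places where non-compactness enters: the action filtration is infinite and $HC^{[\ep]}_*(\xi)$ may be infinite-dimensional below degree $2n-3$, so one must verify carefully that in the degree range $\ge 2n-3$ which actually appears in the inequalities everything stays finite-dimensional and stabilises as $T\to\infty$ --- this is precisely what the mean-index window bound of the second step buys us --- and that the correction term in the truncated strong inequality may be taken to be the $i$-independent number $c_{2n-3}$. The remaining ingredients (identification of the first page of the spectral sequence via Proposition~\ref{lch-ch}, disjointness of the action clusters for $\alpha'$ close to $\alpha$, and the elementary spectral-sequence inequalities) are routine.
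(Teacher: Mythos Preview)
Your outline is sound and close in spirit to the paper's argument, but the packaging is different and there is one genuine slip you should repair.

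\textbf{Comparison with the paper.} The paper does not set up a spectral sequence. It works directly with the filtered groups $HC_*^{(a',a''),\ep_0}(\alpha)$, finds a degree $m<2n-3$ (depending on the action cut $a$) at which these groups vanish for all $0\le a'<a''\le a$, and uses subadditivity of the truncated alternating sum $\chi_l(a',a'')$ coming from the long exact sequence of a triple to obtain the weak inequality. For the strong inequality the paper introduces a \emph{lower} action truncation at some $\tilde a$, chosen so that $HC_{2n-3}^{(a',a''),\ep_0}(\alpha)=0$ whenever $\tilde a\le a'<a''$; subadditivity then runs from degree $2n-3$, and the correction $C$ arises from the finitely many degrees below $\Delta_{\tilde a}+2n-4$ where the $(\tilde a,a)$-homology need not agree with the full one. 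Your spectral-sequence identity is an equivalent bookkeeping of the same long exact sequences, and it has the pleasant feature of producing the explicit $i$-independent constant $C=c_{2n-3}$ without the second action cut $\tilde a$. So the two arguments are essentially the same Morse-theoretic mechanism, with your version trading the lower truncation for the bound $\sum_r\mathrm{rank}\,d^r_{2n-3}\le m_{2n-3}$.

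\textbf{The gap.} You fix a single non-degenerate $\alpha'$ at the outset and then let $T\to\infty$. Two of the facts you use are not available for one fixed $\alpha'$ and all $T$: (i) the identification of the $E^1$-page with $\bigoplus_{A(\gamma)=a}HC_*(\alpha,\gamma)$ via Proposition~\ref{lch-ch} requires $\alpha'$ close enough to $\alpha$ relative to the action window, hence relative to $T$; and (ii) your claim that $C_j^T(\alpha')$ stabilises for $j\ge 2n-3$ because ``$\alpha$ has finitely many orbits of action $<T$'' uses the correspondence between $\alpha'$-orbits and $\alpha$-orbits, which again is only valid below a $T$-dependent closeness threshold --- a fixed non-degenerate $\alpha'$ may well have infinitely many simple orbits. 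The fix is exactly what the paper does implicitly: for each fixed $i\ge 2n-3$ choose first $T$ large (so that all $\alpha$-orbits contributing in degrees $[2n-3,i+1]$ have action $<T$; this is the mean-index window bound you already proved) and then $\alpha'$ close enough for that $T$. Your spectral-sequence inequality then holds with $m_{2n-3}(T)\le c_{2n-3}$, and since $c_{2n-3}$ is intrinsic the constant $C=c_{2n-3}$ is indeed independent of $i$. The paper's Lemma~\ref{stabilization} is the precise stabilisation statement you are invoking; you should cite or reprove it rather than appeal to stabilisation of $C^T_*(\alpha')$ for a single $\alpha'$.
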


As a first step we will prove the following statement.


\begin{lemma}\label{stabilization}
For every $i \geq 2n-3$ there exists $a \notin \Sigma(\alpha)$, a $C^\infty$-neighborhood $\V$ of $\alpha$ and a $C^\infty$-strong neighborhood $\U$ of $J$ such that if $\alpha_1 \in \V$ is a nondegenerate contact form defining $\xi$, and $J_1 \in \J(\alpha_1) \cap \U$ is regular so that the data $(\alpha_1,J_1)$ defines a DGA $(\A(\alpha_1),d_1)$ whose homology is the full contact homology of $\xi$ with $\Q$-coefficients, then $$ \begin{array}{ccc} HC_l^{a,\epsilon_1}(\alpha_1) \simeq HC_l^{[\epsilon_1]}(\xi) & \text{and} & c^a_l = c_l \end{array} $$ for every augmentation $\epsilon_1 : (\A(\alpha_1),d_1) \to \Q$ and every $2n-3 \leq l \leq i$.
\end{lemma}

\begin{proof}
Define $a \in (0,+\infty) \setminus \Sigma(\alpha)$ by requiring that if $\gamma$ is a closed $\alpha$-Reeb orbit with action $\geq a$ then, after a $C^\infty$-small nondegenerate perturbation of contact form, $\gamma$ splits into closed Reeb orbits with degree $\leq 2n-4$ or with degree $\geq i+2$.

One can argue the existence of such $a$ as follows. Let $\gamma$ be any closed $\alpha$-Reeb orbit with mean index $h$. Then, as is well-known, under a small nondegenerate perturbation of the contact form $\gamma$ splits into closed Reeb orbits satisfying
\begin{equation}\label{CZ_split}
\mu_{CZ} \in [h-n+1,h+n-1].
\end{equation}
Consider $\gamma_0$ the underlying simple orbit, so that $\gamma = \gamma_0^j$ for some $j\geq 1$. Let $h$ be the mean index of $\gamma_0$. Either $h\leq0$, in which case $\gamma$ splits into closed Reeb orbits with degree $\leq 2n-4$ under small perturbations of the contact form, or $h>0$, in which case $\gamma$ splits into closed Reeb orbits with degree $\geq jh - 2 \geq jh_{\rm min}-2$. Here we denoted by $h_{\rm min}$ the minimum among the positive mean indices of the simple closed $\alpha$-Reeb orbits. In the latter case, taking $a$ large forces $j$ to be large and, in particular, $jh_{\rm min}-2 \geq i+2$. Thus $c^a_l = c_l$ for every $l=2n-3,\dots,i+1$.

Before proceeding we introduce some terminology. Given $D>0$ and $L>0$, defining contact forms $\alpha',\alpha''$ for $\xi$ satisfying $\alpha''/\alpha' < e^D$ pointwise, and almost complex structures $J'\in\J(\alpha')$, $J'' \in\J(\alpha'')$, we say that $\jbar_0 \in \J_L(J'',J')$ is $D$-small if there exists an exact symplectic form $\Omega$ on $[-L,L]\times N$ taming $\jbar_0$ on $[-L,L]\times N$ which admits a primitive that coincides with $e^D\alpha'$ on $T(\{L\}\times N)$, and with $e^{-D}\alpha''$ on $T(\{-L\}\times N)$. We identified $TN\simeq T(\{\pm L\}\times N)$, as usual.

Fix $L>0$, $J \in \J(\alpha)$ and a small $C^\infty$-strong neighborhood $\U_0$ of $J$. Applying Proposition~\ref{prop_inv_small} with $\Lambda = \{a\}$, we find a $C^\infty$-neighborhood $\V$ of $\alpha$ and $\U_1 \subseteq \U_0$ such that the following holds.
\begin{itemize}
\item[a)] Let $\alpha',\alpha'' \in \V$ be nondegenerate defining contact forms for $\xi$, and let $J' \in \J(\alpha') \cap \U_1$, $J'' \in \J(\alpha'') \cap \U_1$ be regular so that $(\alpha',J')$ and $(\alpha'',J'')$ define DGAs $(\A(\alpha'),d')$ and $(\A(\alpha''),d'')$, respectively, whose homologies coincide with the full contact homology of $\xi$ (with $\Q$-coefficients). There exists $\jbar_0 \in \J_L(J'',J') \cap \U_0$ with the following property: if $\jbar_0$ is assumed regular enough to define a chain map $\Psi:(\A(\alpha'),d') \to (\A(\alpha''),d'')$ and $\epsilon'':(\A(\alpha''),d'') \to \Q$ is any augmentation then $\Psi(\A^c(\alpha')) \subset \A^c(\alpha'')$ and the linearized map $\Psi_{\ep''} : (C^{[c,d)}(\alpha'),d'_{\Psi^*\ep''}) \to (C^{[c,d)}(\alpha''),d''_{\ep''})$ induces an isomorphism in homology 
\begin{equation}\label{isom_local_eq}
\widetilde \Psi_{\ep''} : HC^{[c,d),\Psi^*\ep''}_*(\alpha') \simeq HC^{[c,d),\ep''}_*(\alpha'')
\end{equation}
for every $c<d$ in $\{0,a,+\infty\}$. Moreover, since such $\jbar_0$ is obtained by using claims~(S) and~(H) from \S~\ref{small_cobordisms}, $\jbar_0$ as above can be taken $D$-small for some constant $D<\log 2$.
\end{itemize}
By Stokes theorem the map~\eqref{isom_local_eq} $\Psi_{\ep''}$ induces maps
\begin{equation}\label{map_twice_action}
\widetilde \Psi_{\ep''} : HC^{[0,\sigma),\Psi^*\ep''}_*(\alpha') \to HC^{[0,4\sigma),\ep''}_*(\alpha'')
\end{equation}
for every $\sigma\in\R$.

After shrinking $\V$ and $\U_1$ we can use~(H) to further assume the following.

\begin{itemize}
\item[b)] If $\jbar_1 \in \J_L(J',J'') \cap \U_0$ is assumed regular enough to define a chain map $\Phi:(\A(\alpha''),d'') \to (\A(\alpha'),d')$ with the same properties of $\Psi$ as in a), then $\forall R>L$ there is a homotopy $\{\widetilde J_\tau\}_{\tau\in[0,1]} \subset \J_{R+L}(J'',J'') \cap \U_0$ from $\widetilde J_0 = \jbar_1 \odot_R \jbar_0$ to $J''$ uniformly tamed by a suitable symplectic form on $[-R-L,R+L]\times N$ which can be used to estimate actions, and to prove that when $\{\widetilde J_\tau\}$ is regular enough it induces a degree $+1$ map $K$ on $(\A(\alpha''),d'')$ satisfying $K(\A^c(\alpha'')) \subset \A^c(\alpha'')$, $\forall c \in \{0,a,+\infty\}$. 
\end{itemize}

It follows as in Lemma~\ref{lemma_action_2} that $K$ defines a chain homotopy between $\Psi \circ \Phi$ and $id$ restricted to the subDGA $(\A^c(\alpha''),d'')$, $\forall c\in\{0,a,+\infty\}$. This is so since, when $R$ is large enough, $\Psi\circ\Phi$ coincides at the chain level with the map induced by $\jbar_1 \odot_R \jbar_0$. Then an augmentation $\epsilon'' : (\A(\alpha''),d'') \to \Q$ is homotopic to $(\Psi \circ \Phi)^*\epsilon''$ as augmentations on the subDGA $(\A^c(\alpha''),d'')$. Hence for every $c\in \{0,a,+\infty\}$ there are isomorphisms
\begin{equation}\label{isom_def}
\theta : HC^{[0,c),\epsilon''}_*(\alpha'') \stackrel{\sim}{\to} HC^{[0,c),(\Psi\circ\Phi)^*\epsilon''}_*(\alpha'')
\end{equation}
satisfying $(\widetilde{\Psi\circ\Phi})_{\ep''} \circ \theta = id$.

Moreover, all $\jtil_\tau$ can be taken $D$-small for some $0<D\ll1$ so that we have maps
\begin{equation}\label{hom_local_eq}
\theta : HC^{\sigma,\ep''}(\alpha'') \to HC^{4\sigma,(\Psi\circ\Phi)^*\epsilon''}(\alpha'')
\end{equation}
making the following diagram
\begin{equation}\label{action_diagram}
\xymatrix{
HC^{[0,\sigma),\ep''}(\alpha'') \ar[r]^{\theta} \ar[d]^{\iota} & HC^{[0,4\sigma),(\Psi\circ\Phi)^*\ep''}(\alpha'') \ar[d]^{\widetilde\Phi_{\Psi^*\ep''}} \\
HC^{[0,64\sigma),\ep''}(\alpha'') & \ar[l]^{\widetilde \Psi_{\ep''}} HC^{[0,16\sigma),\Psi^*\ep''}(\alpha')
}
\end{equation}
commutative, for every $\sigma>0$. Here $\iota$ is induced by the inclusion.

Consider increasing sequences $\{\sigma_k\}_{k\geq1}$, $\{b_k\}_{k\geq0}$ such that $$ \begin{array}{cccc} b_0=a, & \sigma_k > 4b_{k-1} & \text{and} & b_k > 16 \sigma_k, \ \forall k\geq 1. \end{array} $$ By the properties of the number $a$ and of the neighborhood $\V$, there are nondegenerate defining contact forms $\alpha_k \in \V$ for $\xi$, $\alpha_k\to\alpha$ in $C^\infty$, such that there are no closed $\alpha_k$-Reeb orbits with action in $[a,b_k]$ and degree in $[2n-3,i+1]$. Choose $J_k \in \J(\alpha_k) \cap \U_1$ regular enough so that each $(\alpha_k,J_k)$ defines a DGA $(\A(\alpha_k),d_k)$ whose homology is the full contact homology with $\Q$-coefficients. Consider $\jbar^+_k \in \J_L(J_1,J_k) \cap \U_0$, $\jbar^-_k \in \J_L(J_k,J_1) \cap \U_0$ satisfying the properties described above. If the $\jbar^\pm_k$ are assumed regular enough then they define chain maps $$ \begin{array}{ccc} \Psi_k : (\A(\alpha_k),d_k) \to (\A(\alpha_1),d_1), & & \Phi_k : (\A(\alpha_1),d_1) \to (\A(\alpha_k),d_k) \end{array} $$ preserving the subDGAs filtered by any $c \in \{0,a,+\infty\}$. Fix an augmentation $\ep_1:(\A(\alpha_1),d_1) \to \Q$, define $\epsilon_k := \Psi_k^*\ep_1$ and denote $$ \begin{array}{ccc} H^k_* = HC^{[0,\sigma_k),\epsilon_1}_*(\alpha_1), & & \widehat H^k_* = HC^{[0,b_k),\epsilon_k}_*(\alpha_k). \end{array} $$ Consider the maps $\theta_k : H^k_*  \to HC^{[0,4\sigma_k),\Phi_k^*\ep_k}_*(\alpha_1)$ as in~\eqref{hom_local_eq}, and the maps induced by inclusions
\[
\begin{array}{ccc} \iota_k : HC^{[0,4b_k),\ep_1}_*(\alpha_1) \to H^{k+1}_*, & & \widehat{\iota}_k : HC^{[0,16\sigma_k),\ep_k}_*(\alpha_k) \to \widehat H^k_*. \end{array}
\]
Now define $f_k : H^k_* \to \widehat H^k_*$ by $f_k = \widehat \iota_k \circ (\Phi_k)_{\ep_k} \circ \theta_k$, and $g_k : \widehat H^k_* \to H^{k+1}_*$ by $\iota_k \circ (\Psi_k)_{\ep_1}$. By the diagram~\eqref{action_diagram}, $g_k \circ f_k$ is the same map $H^{k}_* \to H^{k+1}_*$ induced by the inclusion $C^{[0,\sigma_k)}_*(\alpha_1) \to C^{[0,\sigma_{k+1})}_*(\alpha_1)$.

We claim that $f_{k+1} \circ g_k : \widehat H^k_l \to \widehat H^{k+1}_l$ is an isomorphism for each $k\geq1$ and $l\in[2n-3,i+1]$. In fact, since $\alpha_k$ has no closed Reeb orbits with action in $[a,b_k]$ and degree $l\in[2n-3,i+1]$, the inclusion map $HC^{[0,a),\ep_k}_l(\alpha_k) \to \widehat H^k_l$ is an isomorphism when $2n-3 \leq l \leq i+1$. After these identifications we have that $f_{k+1}\circ g_k$ induces the map
\[
\begin{aligned} & HC^{[0,a),\ep_k}_l(\alpha_k) \stackrel{(\Psi_k)_{\ep_1}}{\longrightarrow} HC^{[0,a),\ep_1}_l(\alpha_1) \stackrel{\theta_k}{\to} \\ & \stackrel{\theta_k}{\to} HC^{[0,a),\Phi_{k+1}^*\ep_{k+1}}_l(\alpha_1) \stackrel{(\Phi_{k+1})_{\ep_{k+1}}}{\longrightarrow} HC^{[0,a),\ep_{k+1}}_l(\alpha_{k+1}) \end{aligned}
\]
which is an isomorphism when $2n-3\leq l \leq i+1$ in view of~\eqref{isom_local_eq},~\eqref{isom_def}.

The vector spaces $\{H^k_*\}_k$ and the linear maps $F_k = g_k \circ f_k : H^k_* \to H^{k+1}_*$ determine a direct system. Similarly, $\{\widehat H^k_*\}_k$ and $G_k = f_{k+1} \circ g_k : \widehat H^k_* \to \widehat H^{k+1}_*$ determine another direct system. The maps $f_k$ determine a map $\lim_k H^k_* \to \lim_k \widehat H^k_*$, and the $g_k$ determine a map $\lim_k \widehat H^k_* \to \lim_k H^k_*$. Since the $G_k$ are isomorphisms in degrees $l\in[2n-3,i+1]$, these maps determine an isomorphism $\lim_k H^k_l \sim \lim_k \widehat H^k_l$ for $l\in[2n-3,i+1]$. But $\lim_k H^k_l = HC^{[\ep]}_l(\xi)$ as is well-known, and $\lim_k \widehat H^k_l = HC^{[0,a),\ep_1}_l(\alpha_1)$ for $l\in[2n-3,i+1]$ since the $G_k$ are isomorphisms in these degrees.
\end{proof}

\begin{remark}\label{rmk_a_large}
The number $a$ in Lemma~\ref{stabilization} can be taken arbitrarily large.
\end{remark}

\begin{proof}[Proof of Proposition~\ref{mi}]
Let $J \in \J(\alpha)$ and $a \in (0,+\infty) \setminus \Sigma(\alpha)$ be arbitrary. Also, let $m < 0$ be an integer such that every periodic $\alpha$-Reeb orbit with mean index in $[m-2n+4,m+2]$ has action bigger than $a$. The existence of $m$ depending on $a$ follows from the assumption that there are finitely many simple periodic orbits. If $\alpha_1$ is a nondegenerate defining contact form for $\xi$ sufficiently $C^\infty$-close to $\alpha$, $J_1 \in \J(\alpha_1)$ is $C^\infty$-strong close to $J$ and regular enough to get the DGA $(\A(\alpha_1),d_1)$ of full contact homology with $\Q$-coefficients well-defined, and $\ep_1 : (\A(\alpha_1),d_1) \to \Q$ is any augmentation, then it follows from~\eqref{CZ_split} that $HC_m^{[a',a''),\ep_1}(\alpha_1) = 0$ for all numbers $0 \leq a^\prime < a^{\prime\prime} \leq a$ not in $\Sigma(\alpha_1)$. From the long exact sequence for filtered contact homology we get that the function
$$ \chi_l(a',a'') := \dim HC_l^{[a',a''),\ep_1}(\alpha_1) - \dim HC_{l-1}^{[a',a''),\ep_1}(\alpha_1) + ... \pm \dim HC_{m}^{[a',a''),\ep_1}(\alpha_1) $$
is subadditive for every $l \geq m$, {\it i.e.}, $\chi_l(a',a''') \leq  \chi_l(a',a'') + \chi_l(a'',a''')$ whenever $0 \leq a' < a'' < a''' \leq a$ do not belong to $\Sigma(\alpha_1)$. This implies in a standard way the strong Morse inequality
\begin{equation}\label{str_morse_filtered_1}
b^{a,\ep_1}_l(\alpha_1) - b^{a,\ep_1}_{l-1}(\alpha_1) + ... \pm b^{a,\ep_1}_{m}(\alpha_1) \leq c^a_l - c^a_{l-1} + ... \pm c^a_{m}
\end{equation}
where $b^{a,\ep_1}_*(\alpha_1) := \dim HC^{[0,a),\ep_1}_*(\alpha_1)$. Proposition~\ref{lch-ch} was used. Inequality~\eqref{str_morse_filtered_1} holds for every $l>m$. Notice that $b^{a,\ep_1}_l$ and $c^a_l$ are finite for every $l \in \Z$; this is trivial in view of action bounds since $\alpha_1$ is $C^\infty$-close to $\alpha$. By~\eqref{str_morse_filtered_1} with $l=j\geq0$ and $l=j+1$ we get
\begin{equation}\label{str_morse_filtered_2}
b^{a,\ep_1}_j(\alpha_1) \leq c_j^a, \ \ \forall j\geq0.
\end{equation}


Now fix any $i\geq 2n-3$ and consider $\V$, $\U$ and $a$ given by Lemma~\ref{stabilization}. Choose nondegenerate perturbation $\alpha_1 \in \V$ of $\alpha$, $J_1 \in \J(\alpha_1) \cap \U$ regular enough to get the DGA $(\A(\alpha_1),d_1)$ well-defined, and let $\ep_1 : (\A(\alpha_1),d_1) \to \Q$ be any augmentation in the class $[\ep]$. By our arguments so far we can assume that $(\alpha_1,J_1)$ is close enough to $(\alpha_,J)$ so that~\eqref{str_morse_filtered_2} holds for all $j\geq0$. In particular, taking $j=i$ we get
$$ b^{[\ep]}_i = b^{a,\ep_1}_i(\alpha_1) \leq c^a_i = c_i $$ from Lemma~\ref{stabilization}.

%
%

Inequality~\eqref{mi:strong} does not follow directly from the previous discussion due to the fact that $m$ depends on $a$. To circumvent this problem, the idea is to truncate the action filtration from below in a suitable way. The price that we have to pay is to add a correction term in the inequalities which in turn does not depend on $a$.

Take $\tilde a \notin \Sigma(\alpha)$, $\tilde a>0$, such that every periodic $\alpha$-Reeb orbit with positive mean index and action bigger than $\tilde a$ has mean index greater than or equal to $2n$. To find $\tilde a$ we need the assumption that there are finitely many simple periodic $\alpha$-Reeb orbits. Let $\Delta_{\tilde a} := \max \ \{ \Delta(\gamma): \int_\gamma \alpha \leq \tilde a\} \cup \{0\}$ and take $i>\Delta_{\tilde a}+2n$. In view of Remark~\ref{rmk_a_large} we find $a>\tilde a$ such that the conclusions of Lemma~\ref{stabilization} hold. Moreover, looking at the proof of Lemma~\ref{stabilization} we can take $a$ such that $c^a_l=c_l$, $\forall l\in[2n-3,i+1]$. By~\eqref{CZ_split} we conclude that $$ HC_{2n-3}^{[a',a''),\ep_1}(\alpha_1) = 0 \ \ \forall \tilde a \leq a' < a'' \leq a $$ when $\alpha_1 \in \V$ is a nondegenerate defining contact form for $\xi$ sufficiently $C^\infty$-close to $\alpha$ (choices of a regular $J_1 \in \J(\alpha_1)$ and of $\ep_1$ are implicit here). Hence
$$ \chi_l(a',a'') := \dim HC_l^{[a',a''),\ep_1}(\alpha_1) - \dim HC_{l-1}^{[a',a''),\ep_1}(\alpha_1) + ... \pm \dim HC_{2n-3}^{[a',a''),\ep_1}(\alpha_1) $$
is subadditive for every $l \geq 2n-3$, that is, given $\tilde a < a' < a'' < a^{\prime\prime\prime} \leq a$ then $\chi_l(a',a^{\prime\prime\prime}) \leq  \chi_l(a',a'') + \chi_l(a'',a^{\prime\prime\prime})$. This implies the inequality
\begin{equation}
\label{tr-mi}
b^{[\tilde a,a),\ep_1}_l - b^{[\tilde a,a),\ep_1}_{l-1} + ... \pm b^{[\tilde a,a),\ep_1}_{2n-3} \leq c^{[\tilde a,a)}_l - c^{[\tilde a,a)}_{l-1} + ... \pm c^{[\tilde a,a)}_{2n-3},
\end{equation}
where $b^{[\tilde a,a),\ep_1}_l$ denotes the rank of $HC_l^{[\tilde a,a),\ep_1}(\alpha)$ and
$$ c^{[\tilde a,a)}_l := \sum_{k=1}^r \sum_{j; \tilde a \leq A(\gamma_k^j) < a} \dim HC_l(\alpha,\gamma_k^j). $$
Here Proposition~\ref{lch-ch} is used.

A closed $\alpha_1$-Reeb orbit with degree in $[\Delta_{\tilde a} + 2n-3,i]$ and action $\leq a$ must also have action 
$\geq\tilde a$ in view of~\eqref{CZ_split} and the definition of $\Delta_{\tilde a}$. 
Thus 
$HC_l^{[\tilde a,a),\ep_1} (\alpha_1) \simeq HC_l^{[0,a),\ep_1} (\alpha_1) \simeq HC^{[\ep]}_l(\xi)$ for every $l \in [\Delta_{\tilde a} + 2n-3,i]$. Here Lemma~\ref{stabilization} was used again. Now it is easy to find $C \geq 0$ independent of $i$ such that
$$ b^{[\ep]}_i - b^{[\ep]}_{i-1} + ... \pm b^{[\ep]}_{2n-3} - C \leq c_i - c_{i-1} + ... \pm c_{2n-3} $$
for every $i \geq 2n-3$.
\end{proof}

\begin{remark}\label{mi-filtered}
One easily concludes the Morse inequalities for filtered contact homology from the beginning of the proof of Proposition~\ref{mi}. More precisely, fixing an arbitrary $a \in (0,+\infty) \setminus \Sigma(\alpha)$. In the notation of the above proof, 
\begin{equation}
b^{a,\ep_0}_i \leq c^a_i
\end{equation}
and
\begin{equation}
b^{a,\ep_0}_i - b^{a,\ep_0}_{i-1} + ... \pm b^{a,\ep_0}_{2n-3} - C \leq c^a_i - c^a_{i-1} + ... \pm c^a_{2n-3},
\end{equation}
for every $a \notin \Sigma(\alpha)$ and a constant $C\geq 0$ that does not depend on $i$, $a$ and $\ep_0$.
\end{remark}

\section{Proofs of the applications}\label{proof_appls}

\subsection{Proof of Theorem~\ref{inf_orbits}}
\label{inf_orbs}

An important ingredient in the proof of our applications is the following lemma that gives uniform bounds for the Morse type numbers of periodic orbits with mean index different from zero and follows easily from Theorem~\ref{main}.

\begin{lemma}\label{unif_bound}
Let $\gamma$ be an isolated periodic orbit of the Reeb flow of $\alpha$ with mean index different from zero such that $\gamma^j$ is isolated for every $j \in \N$. There exists a constant $B>0$ such that
$$ \sum_j \dim HC_i(\alpha,\gamma^j) < B $$
for every $i \in \Z$ .
\end{lemma}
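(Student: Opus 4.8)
The plan is to combine the uniform total-rank bound of Theorem~\ref{main} with the fact that the local contact homology of $\gamma^j$ is concentrated in degrees close to the mean index $\Delta(\gamma^j)$. By Theorem~\ref{main} there is a constant $B_0>0$ with $\dim HC_*(\alpha,\gamma^j)<B_0$ for every $j\in\N$, hence $\dim HC_i(\alpha,\gamma^j)<B_0$ for all $i\in\Z$ and $j\in\N$. So it suffices to bound, uniformly in $i$, the number of integers $j$ for which $HC_i(\alpha,\gamma^j)$ is nonzero.

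Recall that $HC_*(\alpha,\gamma^j)$ is computed by a chain complex generated by the good closed orbits of a small non-degenerate perturbation of $\alpha$ that are homotopic to $\gamma^j$ inside an isolating neighborhood; in particular each generator $\psi$ is one of the non-degenerate orbits into which $\gamma^j$ splits under such a perturbation. By the index estimate~\eqref{meanindex} (see \cite[page 331]{GG}) one has $|\cz(\psi)-\Delta(\gamma^j)|\le n-1$, and therefore its degree $|\psi|=\cz(\psi)+n-3$ lies in $[\Delta(\gamma^j)-2,\ \Delta(\gamma^j)+2n-4]$. Using the homogeneity of the mean index, $\Delta(\gamma^j)=j\,\Delta(\gamma)$, we conclude that $HC_i(\alpha,\gamma^j)=0$ unless $j\,\Delta(\gamma)\in[\,i-2n+4,\ i+2\,]$.

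Since $\Delta(\gamma)\neq 0$, the integers $j$ meeting this constraint fill an interval of length at most $(2n-2)/|\Delta(\gamma)|$, so there are at most $N:=\lfloor (2n-2)/|\Delta(\gamma)|\rfloor+1$ of them, and $N$ is independent of $i$. Hence $\sum_j\dim HC_i(\alpha,\gamma^j)\le N\,B_0=:B$ for every $i\in\Z$, as claimed. I do not expect a genuine obstacle here: the only points needing care are that the estimate~\eqref{meanindex} applies verbatim to the generators of the local chain complex of $\gamma^j$ (immediate, since those generators are among the orbits produced by a non-degenerate perturbation of $\gamma^j$) and that the iteration formula $\Delta(\gamma^j)=j\Delta(\gamma)$ holds with respect to the compatible trivializations fixed throughout the paper. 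The hypothesis $\Delta(\gamma)\neq 0$ enters precisely in this last step, ensuring that the admissible window of exponents $j$ stays bounded uniformly in $i$.
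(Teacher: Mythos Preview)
Your proof is correct and follows essentially the same approach as the paper: both use Theorem~\ref{main} to uniformly bound each $\dim HC_i(\alpha,\gamma^j)$, then invoke the support constraint from~\eqref{meanindex} to see that $HC_i(\alpha,\gamma^j)=0$ unless $i\in[j\Delta(\gamma)-2,\,j\Delta(\gamma)+2n-4]$, and finally use $\Delta(\gamma)\neq 0$ to bound the number of contributing $j$'s uniformly in $i$. The paper's proof is simply terser where you have spelled out the counting of admissible $j$'s explicitly.
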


\begin{proof}
Since $\gamma^j$ is isolated for every $j \in \N$, we conclude from Theorem~\ref{main} that there exists a constant $C>0$ such that
$$ \dim HC_i(\alpha,\gamma^j) < C $$
for every $i \in \Z$ and $j \in \N$. By~\eqref{CZ_split} we have that $HC_*(\alpha,\gamma^j)$ is supported in the interval $[j\Delta(\gamma) - 2,j\Delta(\gamma) + 2n - 4]$, {\it i.e.}, $HC_i(\alpha,\gamma^j) = 0$ if $i<j\Delta(\gamma) - 2$ or $i>j\Delta(\gamma) + 2n - 4$. The result follows.
\end{proof}

\begin{proof}[Proof of Theorem~\ref{inf_orbits}]
We will prove the result in the case that there exists a positive sequence of integers $l_i \to \infty$ such that $b_{l_i}^{[\ep]}(\xi) \to \infty$ since the negative case is analogous. Suppose that there exists a contact form for $\xi$ with finitely many simple closed orbits. By inequality~\eqref{CZ_split} only periodic orbits with positive mean index can contribute to $c_i$ for $i \geq 2n-3$. By Lemma~\ref{unif_bound} there exists a constant $B>0$ such that $c_i < B$ for every $i \geq 2n-3$. Hence by our assumption and inequality \eqref{mi:weak} we obtain a contradiction.
\end{proof}

\subsection{Invariance of the growth rate}
\label{inv:grate}

We will reproduce the argument of Seidel in \cite[Section 4a]{Se} that shows the invariance of the growth rate for symplectic cohomology under Liouville isomorphisms. However, the argument has to be adapted to our context, where we have to deal with augmentations. Let $\alpha_0$ and $\alpha_1$ be two non-degenerate contact forms for $\xi$ and choose an asymptotically cylindrical exact symplectic cobordism from the symplectization of $\alpha_0$ to that of $\alpha_1$. In other words, we have constants $b,L>0$, almost complex structures $J_0 \in \J(\alpha_0)$, $J_1 \in \J(\alpha_1)$, $\jbar \in \J_L(J_1,J_0)$ and an exact symplectic form on the neck $[-L,L] \times N$ taming $\jbar$ on this neck, which coincides with $e^b d\alpha_0$ on $T(\{L\} \times N) \simeq TN$, and with $e^{-b} d\alpha_1$ on $T(\{-L\} \times N) \simeq TN$. We assume regularity in the sense that $J_0$, $J_1$ define DGAs $(\A(\alpha_0),d_0)$, $(\A(\alpha_1),d_1)$ for the full contact homology of $\xi$ with rational coefficients, and $\jbar$ defines chain map $\Psi: (\A(\alpha_0),d_0) \to (\A(\alpha_1),d_1)$ between these DGAs. An augmentation $\ep$ for $(\A(\alpha_1),d_1)$ yields an augmentation $\Psi^*\ep$ for $(\A(\alpha_0),d_0)$ and $\Psi$ induces an isomorphism $\tilde\Psi_\ep: HC^{\Psi^*\ep}(\alpha_0) \to HC^{\ep}(\alpha_1)$. As in~\ref{flch}, given $a>0$ and a nondegenerate defining contact form $\alpha'$ for $\xi$, let $\A^a(\alpha')$ be the subalgebra generated by the good periodic $\alpha'$-Reeb orbits with action less than $a$. It turns out that there exists a constant $D_1>0$ depending on the geometric data of the cobordism such that $\Psi(\A^a(\alpha_0)) \subset \A^{D_1a}(\alpha_1)$ for every $a>0$. In fact, by an application of Stokes theorem, we can take $D_1 = e^{2b}$.

Exchanging the roles of $\alpha_0$ and $\alpha_1$ we get, as in the above discussion, a constant $D_2>0$ and a chain map $\Phi: (\A(\alpha_1),d_1) \to (\A(\alpha_0),d_0)$ between DGAs satisfying $\Phi(\A^a(\alpha_1)) \subset \A^{D_2a}(\alpha_0)$ for every $a>0$. The augmentations $\Phi^*\Psi^*\ep$ and $\ep$ are homotopic: there exists a degree $+1$ derivation $K$ such that $\Phi^*\Psi^*\ep = \ep\circ e^{d_1\circ K + K\circ d_1}$. In fact, the map $K$ is obtained by counting rigid holomorphic spheres with one positive puncture in a regular homotopy of cobordisms. One can check that such a family of cobordisms can be taken uniformly tamed (on a large neck) by a suitable exact symplectic form which provides a constant $D_3>0$ such that $e^{d_1\circ K + K\circ d}$ sends $\A^a(\alpha_1)$ to $\A^{D_3a}(\alpha_1)$, for every $a>0$.

Let $D = \max_i D_i$. It turns out that the induced maps on the homology fit into the commutative diagram
\begin{equation*}
\xymatrix @-1.1pc{
\cdots && \cdots \\
& HC_*^{D^8a,\Phi^*\Psi^*\ep}(\alpha_1) \ar[ul]
\\
\ar[uu]
HC_*^{D^6a,\Psi^*\ep}(\alpha_0) \ar[rr] && HC_*^{D^7a,\ep}(\alpha_1) \ar[ul] \ar[uu]
\\
& HC_*^{D^5a,\Phi^*\Psi^*\ep}(\alpha_1) \ar[ul]
\\
\ar[uu]
HC_*^{D^3a,\Psi^*\ep}(\alpha_0) \ar[rr] && HC_*^{D^4a,\ep}(\alpha_1) \ar[ul] \ar[uu]
\\
& HC_*^{D^2a,\Phi^*\Psi^*\ep}(\alpha_1) \ar[ul]
\\
\ar[uu]
HC_*^{a,\Psi^*\ep}(\alpha_0) \ar[rr] && HC_*^{Da,\ep}(\alpha_1) \ar[ul] \ar[uu]
}
\end{equation*}
where the maps in the vertical arrows are those induced by the inclusion.

Now, suppose that $HC^{\Psi^*\ep}(\alpha_0) \simeq HC^{\ep}(\alpha_1)$ is infinite-dimensional, since, otherwise, we would have $\Gamma^{\Psi^*\ep}(\alpha_0)  = \Gamma^{\ep}(\alpha_1) = 0$. Then,
\begin{align*}
\Gamma^{\Psi^*\ep}(\alpha_0)^{-1} & = \liminf_{a\to\infty} \frac{\log a}{\log r(\Psi^*\ep,\alpha_0,a)} = \liminf_{a\to\infty} \frac{\log Da}{\log r(\Psi^*\ep,\alpha_0,a)} \\
& \geq \liminf_{a\to\infty} \frac{\log Da}{\log r(\ep,\alpha_1,a)} = \Gamma^{\ep}(\alpha_1)^{-1},
\end{align*}
where $r(\ep,\alpha,a)$ is the rank of $\iota(HC_*^{a,\ep}(\alpha))$. Inverting the roles of $\alpha_0$ and $\alpha_1$ we conclude that the collection of all growth rates $\{\Gamma^{\ep}(\alpha)\}$ associated to triples $(\alpha_1,J_1,\ep)$ as above is an invariant of the contact structure.

\subsection{Proof of Theorem~\ref{grate}}\label{proof:grate}

Suppose that there is a (possibly degenerate) contact form $\alpha$ defining $\xi$ with finitely many simple periodic Reeb orbits $\ga_1,...,\ga_r$.

Let $\alpha'$ be an arbitrary nondegenerate contact form defining $\xi$, and assume the existence of $J' \in \J(\alpha)$ which is regular enough to get the DGA $(\A(\alpha'),d')$ well-defined. Fix $J \in \J(\alpha)$ and $L>0$ arbitrarily. We can find a $C^\infty$-neighborhood $\V$ of~$\alpha$, a $C^\infty_{\rm strong}$-neighborhood $\U$ of $J$ in the set of almost complex structures on $\R \times N$ and a constant $b>0$ such that the following holds:
\begin{itemize}
\item[{\bf (C)}] For every contact form $\alpha'' \in \V$ defining $\xi$ and every $J'' \in \J(\alpha'') \cap \U$ there exist almost complex structures $\jbar^+ \in \J_L(J'',J')$, $\jbar^- \in \J_L(J',J'')$ and exact symplectic forms $\Omega^\pm$ on $[-L,L] \times N$ such that $\Omega^\pm$ tames $\jbar^\pm$ on $[-L,L] \times N$, $$ \begin{array}{cc} \Omega^+|_{T(\{L\} \times N)} = e^bd\alpha', & \Omega^+|_{T(\{-L\} \times N)} = e^{-b}d\alpha'' \end{array} $$ and $$ \begin{array}{cc} \Omega^-|_{T(\{L\} \times N)} = e^bd\alpha'', & \Omega^-|_{T(\{-L\} \times N)} = e^{-b}d\alpha'. \end{array} $$ Moreover, $\forall R>L$ one finds a smooth family $\{\jtil_\tau\}_{\tau\in[0,1]}\subset \J_{R+L}(J',J')$ and an exact symplectic form $\Omega_R$ on $[-R-L,R+L] \times N$ such that $\Omega_R$ tames $\jtil_\tau$ on $[-R-L,R+L] \times N, \ \forall \tau \in [0,1]$, $$ \begin{array}{cccc} \jtil_0 = \jbar^+ \odot_R \jbar^-, & \jtil_1 = J' & \text{and} & \Omega_R|_{T(\{\pm L\} \times N)} = e^{\pm 2b}d\alpha'. \end{array} $$
\end{itemize}

The proof of~(C) is easy. Set $D = e^{4b}$ and choose $a_n \to +\infty$, $a_n \not\in \Sigma(\alpha)$ satisfying $a_{n+1} > D^3a_n, \ \forall n$.

\begin{lemma}\label{lemma_approx_proc}
There exists a sequence of nondegenerate contact forms $\alpha''_n$ defining $\xi$ and satisfying $\alpha''_n \to \alpha$ in $C^\infty$ such that the following holds for every $n$: if $J''_n \in \J(\alpha''_n)$ is regular enough to get the associated DGA $(\A(\alpha''_n),d''_n)$ of full contact homology well-defined and $\ep_n : \A(\alpha''_n) \to \Q$ is any augmentation then $\dim HC^{a,\ep_n}_l(\alpha''_n) \leq c^{a}_l$ holds for every $0\leq a < a_n$ and $l\in\Z$.
\end{lemma}

Here $c^a_l$ are the Morse-type numbers~\eqref{rel_morse-type-numbers}.

\begin{proof}
If $\delta_n>0$ is small enough then $[c-\delta_n,c+\delta_n] \cap \Sigma(\alpha) = \{c\}$ for every $c\in \Sigma(\alpha) \cap [0,a_n)$ and, in view of Proposition~\ref{lch-ch}, $$ HC^{[c-\delta_n,c+\delta_n),\ep_n}_*(\alpha''_n) \simeq \bigoplus_{\{\gamma : \int_\gamma\alpha = c\}} HC_*(\gamma,\alpha) $$ whenever $\alpha''_n$ is a nondegenerate defining contact form for $\xi$ sufficiently $C^\infty$-close to $\alpha$ and $J''_n,\ep_n$ are as in the statement. The direct sum above is taken over the closed $\alpha$-Reeb orbits with $\alpha$-action equal to~$c$. Clearly, perhaps after taking $\alpha''_n$ closer to $\alpha$, we can assume that if $c_0 < c_1$ belong to $\Sigma(\alpha) \cap [0,a_n)$ and satisfy $(c_0,c_1) \cap \Sigma(\alpha) = \emptyset$ then there are no closed $\alpha''_n$-Reeb orbits with $\alpha''_n$-action in $[c_0+\delta_n,c_1-\delta_n]$. The conclusion follows from the long exact sequence induced by the action filtration. 
\end{proof}

With $\V$ and $\U$ given by~(C), we can assume all the $\alpha''_n$ given as in Lemma~\ref{lemma_approx_proc} lie in $\V$, and we also assume that each $J''_n \in \J(\alpha''_n) \cap \U$ is regular enough to get the DGA $(\A(\alpha''_n),d''_n)$ of full contact homology with $\Q$-coefficients well-defined. According to~(C) we find $\jbar^+_n \in \J_L(J''_n,J')$, $\jbar^-_n \in \J_L(J',J''_n)$ tamed by suitable exact symplectic forms on the neck $[-L,L] \times N$ as described above. Hence, if $\jbar^\pm_n$ are regular enough they can be used to define chain maps $$ \begin{array}{cc} \Psi_n : (\A(\alpha''_n),d''_n) \to (\A(\alpha'),d'), & \Phi_n : (\A(\alpha'),d') \to (\A(\alpha''_n),d''_n) \end{array} $$ which are algebra homomorphisms given by the count of rigid holomorphic spheres with one positive puncture. By Stokes theorem these taming symplectic forms can be used to show that $\Psi_n(\A^c(\alpha''_n)) \subset \A^{Dc}(\alpha')$ and $\Phi_n(\A^c\alpha')) \subset \A^{Dc}(\alpha''_n)$ for every $c\in\R$. By the same token, if the homotopy $\{\jtil_\tau\}$ as given by~(C) is assumed regular enough then it defines a degree $+1$ derivation $K$ on $\A(\alpha')$ satisfying $K(\A^c(\alpha')) \subset \A^{Dc}(\alpha'), \ \forall c \in \R$. When $R$ is large enough $K$ determines a chain homotopy between $\Psi_n \circ \Phi_n$ and the identity. Thus, choosing any augmentation $\ep' : A(\alpha') \to \Q$, we have a commutative diagram
\begin{equation}\label{action_diagram_2}
\xymatrix{
HC^{[0,a_{n-1}),\ep'}(\alpha') \ar[r] \ar[d]^{\iota} & HC^{[0,Da_{n-1}),(\Psi_n\circ\Phi_n)^*\ep'}(\alpha') \ar[d]^{(\Phi_n)_{\Psi_n^*\ep'}} \\
HC^{[0,D^3a_{n-1}),\ep'}(\alpha') \ar[d]^{\iota} & \ar[l]^{(\Psi_n)_{\ep''}} HC^{[0,D^2a_{n-1}),\Psi_n^*\ep'}(\alpha''_n) \\
HC^{\ep'}(\alpha')
}
\end{equation}
as in~\eqref{action_diagram}, for every $c \in \R$. Here $\iota$ denotes maps on homology induced by inclusions of chain subcomplexes. Since $D^2a_{n-1} \leq a_n$ we can apply~\eqref{action_diagram_2} and Lemma~\ref{lemma_approx_proc} to estimate
\[
\dim \iota(HC^{[0,a_{n-1}),\ep'}_l(\alpha')) \leq \dim HC^{[0,D^2a_{n-1}),\Psi_n^*\ep'}_l(\alpha''_n) \leq c^{a_n}_l
\]
for every $n$ and $l$. By Theorem~\ref{main} we find $C>0$ such that $c^{a_n}_l \leq Ca_n$ for all $n$ and $l$, implying $\Gamma^{\ep'}(\alpha') \leq 1$. Theorem~\ref{grate} follows.


\subsection{Resonance relations}\label{mec}

\begin{proof}[Proof of Theorem~\ref{thm:mec}]
We will prove the result for the positive mean Euler characteristic since the negative case is analogous. Let $\alpha$ be a contact form with finitely many simple periodic orbits and denote by $\gamma_1, \gamma_2, ..., \gamma_r$ those with positive mean index. Let $B_m$ and $C_m$ be the left and right sides of~\eqref{mi:strong} respectively for $i=m$. Using \eqref{mi:strong} for $m+1$ and $m$ and Lemma~\ref{unif_bound} we conclude that there exist constants $B$ and $C$ such that
$$ |B_{m} - C_{m}| \leq C + c_{m+1} \leq C + B $$
for every $m\geq 2n-3$. Consequently,
\begin{equation}\label{global_mec_eq}
\begin{aligned}
\chi_+^{[\ep]}(\xi) & = \lim_{m\to\infty}\frac{(-1)^m}{m} B_m = \lim_{m\to\infty}\frac{(-1)^m}{m} C_m \\ 
& = \sum_{s=1}^r \lim_{m\to\infty}\frac{1}{m}\sum_j\sum_{i=2n-3}^m (-1)^i\dim HC_i(\alpha,\gamma_s^j) \\
& = \sum_{s=1}^r \lim_{m\to\infty}\frac{1}{m}\sum_j\sum_{i=0}^m (-1)^i\dim HC_i(\alpha,\gamma_s^j).
\end{aligned}
\end{equation}
For any periodic $\alpha$-Reeb orbit $\gamma$ with mean index $\Delta(\gamma)$ write  $l_-(\gamma) = \Delta(\gamma)-2$ and $l_+(\gamma) = \Delta(\gamma) + 2n - 4$. We know from~\eqref{CZ_split} that $HC_i(\alpha,\gamma) = 0$ if $i\not\in [l_-(\gamma),l_+(\gamma)]$. Consider $N(k,\gamma) = \# \{ j \in \Z \mid j \geq k+1, \ l_-(\gamma^j) \leq l_+(\gamma^k) \}$, $\Delta_{\rm min} = \min_s\Delta(\gamma_s)> 0$ and note that $N(k,\gamma_s) \leq 2(n-1)/\Delta_{\rm min}$ for every $k\geq1$ and $s=1,\dots,r$. Therefore,
\begin{align*}
& \sum_{s=1}^r \frac{\hat\chi_+(\alpha,\gamma_s)}{\Delta(\gamma_s)} \\
& = \sum_{s=1}^r \lim_{m\to\infty} \frac{1}{m\Delta(\gamma_s)} \sum_{j=1}^m \sum_{i\geq0} (-1)^i\dim HC_i(\alpha,\gamma_s^j) \\
& = \sum_{s=1}^r \lim_{m\to\infty} \frac{1}{m\Delta(\gamma_s)} \sum_{j=1}^m \sum_{i=0}^{\lf l_+(\gamma_s^m) \rf} (-1)^i\dim HC_i(\alpha,\gamma_s^j) \\
& \leq \sum_{s=1}^r \lim_{m\to\infty} \frac{1}{m\Delta(\gamma_s)} \left[ \left( \sum_j \sum_{i=0}^{\lf l_+(\gamma_s^m) \rf} (-1)^i\dim HC_i(\alpha,\gamma_s^j) \right) + N(m,\gamma_s)B \right] \\
& = \sum_{s=1}^r \lim_{m\to\infty} \frac{1}{\lf l_+(\gamma_s^m) \rf} \sum_j \sum_{i=0}^{\lf l_+(\gamma_s^m) \rf} (-1)^i\dim HC_i(\alpha,\gamma_s^j) = \chi^{[\ep]}_+(\xi),
\end{align*}
where the last identity follows from~\eqref{global_mec_eq} since $\lf l_+(\gamma_s^m) \rf/m\Delta(\gamma_s) \to 1$ as $m\to\infty$, $\forall s$. The other inequality leading to the identity in Theorem~\ref{thm:mec} for the positive mean Euler characteristic is proved analogously.
\end{proof}

\begin{proof}[Proof of Corollary~\ref{cor:mec}]
When $\gamma^j$ is non-degenerate for every $j$ it is easy to see that
$$ \chi_\pm(\alpha,\gamma) = \begin{cases}
(-1)^{|\gamma|} \ \text{ if } \gamma \text{ is good} \\
(-1)^{|\gamma|} /2 \ \text{ if } \gamma \text{ is bad} \\
\end{cases}. $$
\end{proof}

\subsection{Non-hyperbolic periodic orbits}
\label{hyperbolic}

Theorem~\ref{thm: hyperbolic1} and Theorem~\ref{thm: hyperbolic2} follow from the two theorems below.

\begin{theorem}\label{index zero}
Suppose that there are finitely many simple closed orbits, all of them hyperbolic. If $\dim HC_{n-3}(\xi) < \infty$ then there is no closed orbit with Conley-Zehnder index equal to zero.
\end{theorem}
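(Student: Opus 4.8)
The plan is to argue by contradiction. Suppose $\alpha$ is a contact form for $\xi$ with finitely many simple closed orbits, all hyperbolic, and suppose there is a closed orbit $\gamma$ with $\cz(\gamma) = 0$, i.e. $|\gamma| = n-3$. The first step is to understand the local contact homology of a hyperbolic orbit and of its iterates. If $\gamma$ is hyperbolic then its linearized return map has no eigenvalue on the unit circle, so $\gamma$ (and each iterate $\gamma^j$) is already non-degenerate; hence $HC_*(\alpha,\gamma^j)$ is concentrated in a single degree $|\gamma^j| = \cz(\gamma^j) + n-3$ and has dimension $1$ if $\gamma^j$ is good and $0$ (or contributes trivially) if $\gamma^j$ is bad — this is exactly the computation recorded in the proof of Corollary~\ref{cor:mec}. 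In particular, among all the simple hyperbolic orbits $\gamma_1,\dots,\gamma_r$ and all their iterates, the degree $n-3$ is hit only by good orbits $\gamma_k^{j}$ with $\cz(\gamma_k^{j}) = 0$, each contributing a one-dimensional space to the Morse-type number $c_{n-3}$.

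The key point is now to analyze which iterates can have Conley–Zehnder index zero. For a hyperbolic orbit the mean index $\Delta(\gamma_k)$ equals $\cz(\gamma_k)$ (the path lies in a contractible stratum and the rotation numbers are integers/zero), and more precisely $\cz(\gamma_k^{j}) = j\,\cz(\gamma_k)$ for all $j$, with the Krein signature forcing the index to grow linearly. Therefore $\cz(\gamma_k^{j}) = 0$ happens for infinitely many $j$ only if $\cz(\gamma_k) = 0$, and if $\cz(\gamma_k) \neq 0$ it happens for at most one $j$. Consequently, if \emph{some} orbit $\gamma = \gamma_k^{j_0}$ has $\cz(\gamma) = 0$, then either the underlying simple orbit $\gamma_k$ already has $\cz(\gamma_k) = 0$, in which case \emph{all} iterates $\gamma_k^{j}$ have index $0$ and each good one contributes $1$ to $HC_{n-3}$; or $\cz(\gamma_k) = \pm$ something nonzero, in which case only finitely many iterates contribute. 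The whole game is to show that in the first case we get infinitely many good iterates, and hence, via the Morse inequalities, that $\dim HC_{n-3}(\xi)$ is infinite, contradicting the hypothesis. If $\gamma_k$ is hyperbolic with $\cz(\gamma_k)=0$, the condition for $\gamma_k^{2l}$ to be bad is that the simple orbit has an odd number of real eigenvalues in $(-\infty,-1)$; but then the odd iterates $\gamma_k^{2l+1}$ are all good and all have index $0$, so they still furnish infinitely many generators in degree $n-3$. Either way one produces infinitely many good orbits of index $n-3$, so $c_{n-3} = \infty$, and then the weak Morse inequality \eqref{mi:weak}, $b_{n-3}^{[\ep]} \le c_{n-3}$, is consistent — so the contradiction must instead be extracted the other way: from $\dim HC_{n-3}(\xi) < \infty$ together with the \emph{strong} Morse inequalities one bounds the alternating sums, and the linear growth of $c_{n-3}$ along iterates is incompatible with finiteness of all $b_i^{[\ep]}$ near degree $n-3$ unless these generators cancel against neighboring degrees — but neighboring degrees $n-4$ and $n-2$ receive no contribution at all from index-$0$ iterates of a hyperbolic orbit (their indices are exactly $0$), so no cancellation is possible in the complex. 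Hence $HC_{n-3}(\xi)$ would be infinite-dimensional, a contradiction.

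I expect the main obstacle to be the bookkeeping around bad orbits and the precise index behavior of iterated hyperbolic orbits: one must verify carefully that for a hyperbolic simple orbit with vanishing index, infinitely many iterates are simultaneously good \emph{and} of degree exactly $n-3$, and that the differential in the (filtered) contact complex cannot kill these classes because there are simply no generators in the adjacent degrees $n-2$, $n-4$ coming from such orbits (and the finitely many iterates of the \emph{other} simple orbits contribute only boundedly, by Lemma~\ref{unif_bound}). Once this is in place, Proposition~\ref{mi} turns the unbounded $c_{n-3}$ into unbounded $b_{n-3}^{[\ep]}$, contradicting the hypothesis $\dim HC_{n-3}^{[\ep]}(\xi) < \infty$; and since by Remark~\ref{mi:negative} the same analysis applies symmetrically, the argument is robust under the $\pm$ conventions. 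The reduction of Theorems~\ref{thm: hyperbolic1} and~\ref{thm: hyperbolic2} to this statement is then immediate: if all closed orbits were hyperbolic, Theorem~\ref{index zero} forbids index-$0$ orbits, but the mean Euler characteristic hypotheses in those theorems force such an orbit to exist.
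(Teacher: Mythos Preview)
Your core strategy matches the paper's: argue by contradiction, use that $\cz(\gamma_k^j)=j\,\cz(\gamma_k)$ for hyperbolic orbits so that an index-zero closed orbit forces the underlying simple orbit to have $\cz=0$, whence \emph{all} its iterates have index zero (and infinitely many, e.g.\ the odd ones, are good); meanwhile the same formula shows only finitely many closed orbits have index $\pm 1$. Thus in the linearized chain complex $\dim C_{n-4}$ and $\dim C_{n-2}$ are finite while $\dim C_{n-3}=\infty$, and one concludes $\dim HC_{n-3}^{[\ep]}(\xi)=\infty$.

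Where your write-up goes wrong is in the repeated appeal to Proposition~\ref{mi}. You already observe that the weak inequality $b_{n-3}^{[\ep]}\le c_{n-3}$ points the wrong way; but in addition the inequalities in Proposition~\ref{mi} are only asserted for degrees $i\ge 2n-3$ (or $i\le -3$ via Remark~\ref{mi:negative}), so they are not even stated at degree $n-3$. Your concluding sentence ``Proposition~\ref{mi} turns the unbounded $c_{n-3}$ into unbounded $b_{n-3}^{[\ep]}$'' is therefore false on two counts and should be removed. The argument has to be made directly on the chain complex, and this is exactly what the paper does: writing $C_{n-4}\cong\Q^N$, any $N+1$ of the vectors $\partial\gamma^{k_1},\dots,\partial\gamma^{k_{N+1}}$ are linearly dependent, which produces a nontrivial cycle $\sum_i p_i\gamma^{k_i}$ in degree $n-3$; this cycle is not a boundary once the $k_i$ are large, because the finitely many generators of $C_{n-2}$ have bounded action, the $\gamma^{k_i}$ have arbitrarily large action, and the linearized differential does not increase action. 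Letting the $k_i$ grow yields infinitely many independent classes. Your phrase ``no cancellation is possible in the complex'' is the right intuition --- just replace the Morse-inequality detour by this explicit linear-dependence-plus-action step.
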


\begin{proof}
Arguing indirectly, let $\gamma$ be a closed orbit of index zero. It is well known that the index of a hyperbolic periodic orbit $\psi$ satisfies
\begin{equation}
\label{indhyp}
\cz(\psi^k) = k\cz(\psi)
\end{equation}
for every $k \in \N$. In particular, we conclude that $\cz(\gamma^k) = k\cz(\gamma) = 0$ for every $k \in \N$. By equality \eqref{indhyp} and our assumption that there are finitely many simple closed orbits, we also conclude that there are finitely many periodic orbits of index $-1$ and $1$. Hence, since the differential decreases the action, we have that a chain generated by orbits of index zero and action big enough cannot be exact. In particular, there exists $k_0 > 0$ such that every chain given by a finite sum of the form $\sum_i a_i\gamma^{k_i}$ cannot be exact as long as $k_i>k_0$ for every $i$.

Let $\psi_1,\dots,\psi_N$ be the periodic orbits of index $-1$ (these are not necessarily simple). The set of chains of degree $n-4$ can be naturally identified with $\Q^N$:
$$ \sum_{i=1}^N a_i\psi_i \longleftrightarrow (a_1,\dots,a_N) \in \Q^N. $$
Therefore, given $k \in \N$ we can identify $\partial\gamma^k$ with a vector $v_k := (a^k_1,\dots,a^k_N)$ determined by the relation $\partial\gamma^k = \sum_{i=1}^N a^k_i\psi_i$. Consequently, given $k_0 < k_1 < \cdots < k_{N+1}$ we have that $v_{k_1}, \dots, v_{k_{N+1}}$ must be linearly dependent, that is, there exist rational numbers $p_1,\dots,p_{N+1}$ such that
$$ p_1v_{k_1} + \dots + p_{N+1}v_{k_{N+1}} = 0. $$
Thus the chain $p_1\gamma^{k_1} + \dots + p_{N+1}\gamma^{k_{N+1}}$ is closed and not exact. Since one can take $k_0$ arbitrarily large, we conclude that $\dim HC_{n-3}^{[\ep]}(\xi)  = \infty$.
\end{proof}

\begin{proof}[Proof of Theorem~\ref{thm: hyperbolic2}]
Arguing indirectly, suppose that every periodic orbit is hyperbolic, and therefore nondegenerate. The hypothesis that $\dim HC_{n-3}^{[\ep]}(\xi) > 0$ implies that there exists a closed orbit $\gamma$ of index zero. This contradicts Theorem~\ref{index zero}.
\end{proof}

\begin{theorem}
\label{non-hyperbolic}
Suppose that there is no periodic orbit with Conley-Zehnder index equal to zero and that there exists a positive integer $C$ such that
\begin{equation}\label{ineq_hyp_thm}
(-1)^{n}\sum_{i=n-3}^{mC+n-3} (-1)^ib_{i}^{[\ep]}(\xi) < (-1)^{n}mC\chi_+^{[\ep]}(\xi)
\end{equation}
for every $m \in \N$. Assume further that there are finitely many simple periodic orbits with positive mean index. Then there is a non-hyperbolic closed orbit.
\end{theorem}

\begin{proof}
Arguing indirectly, suppose that every closed orbit is hyperbolic. In particular, every periodic orbit is nondegenerate. Firstly, let us show that there is at least one periodic orbit with positive mean index. Indeed, since by~\eqref{indhyp} we know that $\mu_{CZ}(\gamma)=\Delta(\gamma)$ for every $\gamma$, if there is no such orbit we would have that $b^{[\ep]}_i=0$ for every $i>n-3$ and $\chi_+^{[\ep]}(\xi)=0$. Moreover, the hypothesis that there is no periodic orbit of index zero implies that $b^{[\ep]}_{n-3}=0$. This contradicts~\eqref{ineq_hyp_thm}.

Let $\gamma_1, ..., \gamma_r$ be the simple periodic orbits with positive mean index. Let $C^\prime = \text{lcm}\{C,2\prod_{k=1}^r \Delta(\gamma_k)\}$ and $c_i(\gamma_k) = \sum_j \dim HC_i(\alpha,\gamma_k^j)$. Note that each $\Delta(\gamma_k) = \mu_{CZ}(\gamma_k) \geq 1$ is an integer by~\eqref{indhyp}. Again by~\eqref{indhyp} we conclude that $$ \sum_{i=n-3}^{m\Delta(\gamma_k)+n-3} (-1)^i c_i(\gamma_k) = \ep_k(-1)^{\Delta(\gamma_k) + n-3}m $$ for every $1\leq k \leq r$ and $m \in 2\N$, where $\ep_k = 1$ if $\gamma_k^2$ is good and $\ep_k = 1/2$ if $\gamma_k^2$ is bad. Here we used that in our situation $\gamma_k^2$ is good if, and only if, $\Delta(\gamma_k)$ is even. From this, we arrive at $$ \sum_{i=n-3}^{mC^\prime+n-3} (-1)^i c_i(\gamma_k) = \ep_k mC^\prime\frac{(-1)^{\Delta(\gamma_k) + n-3}}{\Delta(\gamma_k)}, $$ since $mC^\prime = (mC^\prime/\Delta(\gamma))\Delta(\gamma)$ and $mC^\prime/\Delta(\gamma) \in 2\N$. Hence, by Corollary~\ref{cor:mec}, $$ \sum_{k=1}^r \sum_{i=n-3}^{mC^\prime+n-3} (-1)^i c_i(\gamma_k) = mC^\prime\chi_+^{[\ep]}(\xi). $$ Now, we need the following special version of the strong Morse inequality.

\begin{lemma}
Suppose that there is no periodic orbit of index zero and every periodic orbit is non-degenerate. Then
$$ b^{[\ep]}_i - b^{[\ep]}_{i-1} + ... \pm b^{[\ep]}_{n-3}  \leq c_i - c_{i-1} + ... \pm c_{n-3} $$
for every $i\geq n-3$.
\end{lemma}

\begin{proof}
Fix an augmentation $\ep_0$ with homotopy class $[\ep]$ for the DGA of full-contact homology with $\Q$-coefficients associated to $\alpha$ and to some regular almost complex structure in $\J(\alpha)$. Since every periodic is nondegenerate and there is no periodic orbit of index zero we have that $HC^{[a,b),\ep_0}_{n-3}(\alpha) = 0$ for every $0 \leq a < b \leq \infty$ not in $\Sigma(\alpha)$. In particular, $b^{[\ep]}_{n-3} = c_{n-3} = 0$. Moreover, by the long exact sequence for filtered contact homology, the function $$ \chi_l(a,b) = \dim HC_l^{[a,b),\ep_0}(\alpha) - \dim HC_{l-1}^{[a,b),\ep_0}(\alpha) + ... \pm \dim HC_{n-2}^{[a,b),\ep_0}(\alpha) $$ is subadditive for every $l\geq n-2$. This implies the inequality $$ b^{[\ep]}_i - b^{[\ep]}_{i-1} + ... \pm b^{[\ep]}_{n-3}  = b^{[\ep]}_i - b^{[\ep]}_{i-1} + ... \mp b^{[\ep]}_{n-2} \leq c_i - c_{i-1} + ... \mp c_{n-2} = c_i - c_{i-1} + ... \pm c_{n-3} $$ for every $i\geq n-3$, as desired.
\end{proof}

By the previous lemma we get
\begin{align*}
(-1)^{n-1}mC^\prime\chi_+^{[\ep]}(\xi) & = (-1)^{n-1}\sum_{k=1}^r \sum_{i=n-3}^{mC^\prime+n-3} (-1)^i c_i(\gamma_k) \\
& \geq (-1)^{n-1}\sum_{i=n-3}^{mC^\prime+n-3} (-1)^ib_i^{[\ep]}(\xi) \\
& > (-1)^{n-1}mC^\prime\chi_+^{[\ep]}(\xi),
\end{align*}
a contradiction. Here, we used~\eqref{ineq_hyp_thm} in the last inequality. Note that the term $(-1)^{n-1}$ comes from the fact that the term with exponent $mC^\prime+n-3$ must have positive sign (notice that $C^\prime$ is even).
\end{proof}

\begin{proof}[Proof of Theorem~\ref{thm: hyperbolic1}]
The result for the positive Euler characteristic is immediate by Theorems~\ref{index zero} and~\ref{non-hyperbolic}. The argument for the negative Euler characteristic is analogous.
\end{proof}

\appendix

\section{Finite-energy curves and periodic orbits}\label{exist_orbits}

Here we revisit basic facts about pseudo-holomorphic maps proved in~\cite{93} for the contact case. Consider a stable Hamiltonian structure $\H = (\xi,X,\omega)$ defined on a manifold $N$, a compact smooth domain $K\subset N$ (possibly with boundary), and some $J \in \J(\H)$.

We will use an $\R$-invariant Riemannian metric $g_0 = da\otimes da + g$ on $\R \times N$, where $g$ is a Riemannian metric on $N$ and $a$ denotes the $\R$-coordinate. Domains in $\C$ or $\R\times S^1$ are equipped with their standard Euclidean metric. Norms of maps are taken with respect to these metrics.

The point of proving the following lemmas is that the dynamics of $X$ may be very degenerate, hence the results from~\cite{sftcomp} are not available. However, the following arguments are contained in~\cite{93}.

\begin{lemma}\label{bubb_inv}
Let $F_n = (a_n,f_n) : \R\times S^1 \to \R\times K$ be smooth $J$-holomorphic maps. Viewing $F_n(s+it) \sim F_n(s,t)$ as $i$-periodic maps on $\C$, suppose $\exists \{z_n\} \subset \C$ such that $|dF_n(z_n)|\to\infty$. There are sequences $\{z'_j\} \subset \C$, $\{\delta_j\},\{R_j\},\{d_j\} \subset \R$, and a subsequence $F_{n_j}$ such that $|z_{n_j}-z'_j| \to 0$, $\delta_j\to 0^+$, $R_j \to+\infty$, $\delta_j R_j \to +\infty$ and that the maps
\[
  \begin{array}{ccc}
    \util_j : B_{\delta_j R_j}(0)  \to \R\times N, &  & \util_j(z) = \tau_{d_j} \circ F_{n_j} (z'_j + z/R_j)
  \end{array}
\]
converge in $C^\infty_{\rm loc}$ to a $J$-holomorphic map $\util:\C\to\R\times N$ satisfying $E(\util)>0$ and $\sup_{z\in\C} |d\util(z)| < \infty$. Moreover, $E(\util)\leq \sup_nE(F_n)$.
\end{lemma}

\begin{proof}
By Hofer's Lemma (Lemma 5.12 from~\cite{sftcomp}), there exists $z'_n\in \C$, $\delta_n\to0^+$ such that $|z'_n-z_n|\to0$ and if we set $R_n = |dF_n(z'_n)|$ then $R_n\to\infty$, $\delta_nR_n\to\infty$ and $|dF_n| \leq 2R_n$ on $B_{\delta_n}(z'_n)$. We set $d_n = -a_n(z'_n)$ and $\util_n = \tau_{d_n}\circ F_n(z'_n+z/R_n)$ on $B_{\delta_nR_n}(0) \subset \C$. Thus $\util_n(0) \in 0\times N$ and $|d\util_n|\leq 2$ on $B_{\delta_nR_n}(0)$, for all $n$. The $\util_n$ satisfy $\partial_s\util_n + J(\util_n)\partial_t\util_n = 0$. Elliptic estimates provide $C^\infty_{\rm loc}$-bounds for the sequence $\{\util_n\}$ so, up to selection of a subsequence, we may assume $\util_n\to \util$ in $C^\infty_{\rm loc}$, where $\util$ is $J$-holomorphic. Then $|d\util(0)|=1$ and $|d\util(z)|\leq 2 \ \forall z$ since the same holds for $\util_n$. The inequality $E(\util) \leq \sup_nE(F_n)$ is easy to check.
\end{proof}

\begin{remark}\label{replace_domain}
The proof of Lemma~\ref{bubb_inv} shows that 
we can replace the domain $\R\times S^1$ of the maps $F_n$ by $(\C,i)$, or by $(\D,i)$ and assume that $z_n \to 0$. The conclusion is exactly the same in both cases. One can also prove a version of Lemma~\ref{bubb_inv} when the domains of the maps $F_n$ form an exhausting and increasing sequence of open subsets of $\R\times S^1$ and the sequence $z_n$ is bounded. 
\end{remark}

\begin{lemma}\label{zero_omega}
Fix $J \in \J(\H)$ and let $F = (a,f) :\C\to\R\times K$ be a non-constant $J$-holomorphic map satisfying $\int_\C f^*\omega = 0$. Then there exists a (not necessarily periodic) trajectory $x$ of $X$ and an entire function $H:\C\to\C$ such that $F = Z^x \circ H$ where the $J$-holomorphic immersion $Z^x:\C \to \R\times N$ is defined by $Z^x(s+it) = (s,x(t))$. If, in addition, $|dF|$ is bounded then $H(z) = \alpha z + \beta$ with $\alpha \not=0$.
\end{lemma}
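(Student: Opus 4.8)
The plan is to exploit the condition $\int_\C f^*\omega = 0$ to show that the image of $F$ lies in a single "trivial cylinder" over a trajectory of $X$, and then to reduce the map to a holomorphic function of one complex variable. First I would recall that for a cylindrical $J\in\J(\H)$, writing $F=(a,f)$, the Cauchy-Riemann equation splits: the differential $df$ decomposes into a component along $\R X$ and a component in $\xi$, and the pullback $f^*\omega$ is (pointwise) a non-negative multiple of the standard area form on $\C$, vanishing exactly where $\pi_\xi\circ df = 0$ (here $\pi_\xi:TN\to\xi$ is the projection along $X$). Hence $\int_\C f^*\omega = 0$ forces $\pi_\xi\circ df \equiv 0$, i.e.\ the $\xi$-component of $f$ vanishes identically, so $f$ moves only in the $X$-direction. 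This is the key structural observation; everything else is bookkeeping.

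Next I would set up coordinates adapted to $X$ near the (compact) image: since $\pi_\xi\circ df\equiv 0$, for each $z$ the vector $df(z)$ is proportional to $X(f(z))$, and using $\mathcal L_X\lambda = 0$ one checks that the function $h := f^*\lambda$, viewed through $i_X\lambda = 1$, encodes how fast $f$ flows along $X$. Writing $z = s+it$, the $J$-holomorphicity $J\partial_a = X$ combined with the vanishing $\xi$-component yields the Cauchy-Riemann-type system relating $a$ and $h$. Concretely, one shows that $\partial_t a = $ (component of $df$ along $X$ measured against $\lambda$) and $\partial_s a = 0$ in the flow direction, which forces $\partial_s f$ and $\partial_t f$ to be governed by a single holomorphic function. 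Fixing a base trajectory $x$ of $X$ through a chosen point of the image and letting $Z^x(s+it) = (s,x(t))$ be the associated immersion, one verifies $Z^x$ is $J$-holomorphic (this uses $\mathcal L_X\lambda = 0$ and $i_X\omega = 0$, i.e.\ precisely the stable Hamiltonian axioms), and that $F$ factors through $Z^x$: the factoring map $H:\C\to\C$ is defined locally by matching the $(s,t)$-parameters, and the Cauchy-Riemann equations for $F$ and $Z^x$ imply $H$ is holomorphic. Since $F$ is defined on all of $\C$ and $Z^x$ is an immersion, $H$ extends to an entire function, and $F = Z^x\circ H$. If $F$ is non-constant then $H$ is non-constant.

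Finally, for the addendum: if $|dF|$ is bounded, then since $Z^x$ is an immersion with image in a compact set and $dZ^x$ is bounded below away from zero there, $|dH|$ is bounded on $\C$; a bounded entire function that is a derivative-bounded holomorphic map is affine by Liouville's theorem applied to $H'$, so $H(z) = \alpha z + \beta$, and $\alpha\neq 0$ because $H$ is non-constant. I expect the main obstacle to be the careful verification that the $\xi$-component of $df$ really does vanish identically and that the resulting factoring map $H$ is globally well-defined and holomorphic rather than merely locally so — in particular handling the possibility that $x$ is non-periodic (so $Z^x$ need not descend to $\R\times S^1$), which is why the statement keeps $Z^x$ defined on $\C$ and invokes monodromy-free lifting on the simply connected domain $\C$. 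The compactness of $K$ is used to ensure the trajectory $x$ and all the relevant quantities stay bounded, so that elliptic/Liouville arguments apply.
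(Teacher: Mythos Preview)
Your proposal is correct and follows essentially the same route as the paper: deduce $\pi_\xi\circ df\equiv 0$ from pointwise non-negativity of $f^*\omega$, conclude the image lies in a single trivial cylinder $Z^x(\C)$, factor $F=Z^x\circ H$ with $H$ entire, and finish with Liouville on $H'$. The paper differs only in implementation: it shows image containment by an ODE argument along paths (rather than local coordinate matching), and it handles the global definition of $H$ by a clean case split---$Z^x$ is injective when $x$ is non-periodic, and when $x$ is $T$-periodic one first factors through the injective map on $\R\times\R/T\Z$ and then lifts. One small correction to your addendum: the image of $Z^x$ is \emph{not} compact (the $\R$-factor is unbounded); the uniform lower bound $|\zeta|\leq k\,|dZ^x(w)\zeta|$ comes instead from the explicit form of $Z^x$ and the $\R$-invariance of the metric $g_0$.
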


\begin{proof}
The identity $\bar\partial_J(F)=0$ tells us that $\int_\C f^*\omega = 0 \Rightarrow f^*\omega \equiv0$, so that $df$ takes values on $\R X\circ f$ and $da(z) = 0 \Leftrightarrow df(z) = 0 \Leftrightarrow dF(z) = 0$ for every $z$. Fix $z_1,z_0 \in \C$ and any smooth curve $z(t) : (-\epsilon,1+\epsilon) \to \C$ satisfying $z(0) = z_0$ and $z(1) = z_1$. Let $x:\R\to N$ be the trajectory of $X$ satisfying $x(0) = f(z_0)$. There is a unique function $g(t)$ defined by $df(z(t)) \cdot z'(t) = g(t) X\circ f\circ z(t)$. 
Then $Y(t,p) = g(t)X(p)$ defines a time-dependent vector field $(-\epsilon,1+\epsilon)\times N \to TN$. Consider $h(t) := \int_0^t g(\tau)d\tau$. Then $f\circ z$ and $x\circ h$ solve $\beta'(t) = Y(t,\beta(t))$ with the same initial condition, and hence are equal. Since $z_1$ was arbitrary it follows that $F(\C) \subset Z^x(\C)$.

Assume $x$ is not periodic. Then there exists a unique function $H:\C\to\C$ satisfying $F=Z^x \circ H$ because $Z^x$ is 1-1 in this case. Since $Z^x$ is a $J$-holomorphic immersion we conclude, using the similarity principle, that $H$ is holomorphic, see Lemma 2.4.3 from~\cite{mcdsal}. When $x$ is periodic, the map $Z^x$ descends to an injective map $\underline Z^x$ defined on $\R\times \R/T\Z$, where $T>0$ is the minimal period of $x$. As before there exists a unique holomorphic function $\underline H: \C\to\R\times \R/T\Z$ satisfying $F = \underline Z^x \circ \underline H$ since $\underline Z^x$ is 1-1. Clearly $\underline H$ can be lifted to a holomorphic map $H:\C \to \C$ satisfying $F=Z^x \circ H$.

If $w\in\C$ and $\zeta \in T_w\C$ then there is an estimate $|\zeta| \leq k |dZ^x(w) \cdot \zeta|$, the constant $k$ being independent of $w,\zeta$. This follows very easily from the particular form of the function $Z^x$ and the $\R$-invariance of the metric $g_0$. Thus $|dH|$ is bounded if so is $|dF|$, and the conclusion follows from Liouville's Theorem.
\end{proof}

\begin{lemma}\label{plane}
If $F: \C \to \R\times K$ satisfies $\bar\partial_J(F)=0$ for some 
$J\in\J(\H)$, $\sup_{z\in\C} |dF(z)| < \infty$ and $0<E(F)<\infty$ then $E_\omega(F) >0$.
\end{lemma}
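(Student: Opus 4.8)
The plan is to prove the statement in contrapositive form: assuming $E_\omega(F)=0$ together with $\bar\partial_J(F)=0$ and $\sup_{z\in\C}|dF(z)|<\infty$, I will show that $E(F)=+\infty$, which is incompatible with the hypothesis $0<E(F)<\infty$. Note first that $E(F)>0$, so $F$ is not constant (a constant map has vanishing energy).

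Since $F$ is non-constant and $\int_\C f^*\o=E_\omega(F)=0$ with $|dF|$ bounded, the last assertion of Lemma~\ref{zero_omega} applies: there exist a trajectory $x$ of $X$ and constants $\alpha\in\C\setminus\{0\}$, $\beta\in\C$ with $F=Z^x\circ H$, where $H(z)=\alpha z+\beta$ and $Z^x(s+it)=(s,x(t))$. The remaining step is to show that $\sup_{\phi\in\Lambda}\int_\C F^*(d\phi\wedge\lambda)=+\infty$, which by the definition of the energy of a plane in~\ref{fecurves} forces $E(F)=+\infty$.

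To do this I would first compute the relevant pullback. Since $i_X\lambda=1$ and $\dot x=X\circ x$ one has $x^*\lambda=dt$, hence $(Z^x)^*\lambda=dt$, and, for $\phi\in\Lambda$ regarded as a function of the $\R$-coordinate, $(Z^x)^*(d\phi\wedge\lambda)=\phi'(s)\,ds\wedge dt$. Pulling this back by the holomorphic affine map $H$ and writing $z=s+it$ for the coordinate on the source gives
\[
F^*(d\phi\wedge\lambda)=\phi'(\re\, H(z))\,|\alpha|^2\,ds\wedge dt .
\]
Next, changing variables by the bijection $w=H(z)$, whose Jacobian is $|\alpha|^2$, and applying Fubini (legitimate since all integrands here are non-negative), one obtains
\[
\int_\C F^*(d\phi\wedge\lambda)=\int_\C \phi'(\re\, w)\,du\,dv=\Big(\int_\R \phi'(u)\,du\Big)\Big(\int_\R dv\Big),\qquad w=u+iv .
\]
Choosing any $\phi\in\Lambda$ with $\phi(-\infty)=0$ and $\phi(+\infty)=1$ makes the first factor equal to $1$ while the second is $+\infty$, so this integral is $+\infty$ for that particular $\phi$; hence the supremum, and therefore $E(F)$, is infinite — the desired contradiction. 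Thus $E_\omega(F)>0$.

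The argument is short; the main (modest) obstacle is simply being careful with the definition of the energy of a finite-energy plane from~\ref{fecurves} and with the role of $\alpha\neq 0$. The crucial point is that $\alpha\neq 0$ forces $\re\, H$ to be a \emph{non-constant} affine function of $(s,t)$, so that its level sets are genuine lines and the transverse integral $\int_\R dv$ diverges; if $\alpha$ were $0$ the image of $F$ would reduce to the single trajectory cylinder $Z^x$ and there would be no contradiction. It is precisely the bounded-derivative hypothesis, entering through Lemma~\ref{zero_omega}, that rules this out. One should also note in passing that the image staying in the compact set $K$ (which forces $x(\R)\subset K$) is harmless, since compactness of $K$ was already used in Lemma~\ref{zero_omega} and plays no further role here.
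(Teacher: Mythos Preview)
Your proof is correct and follows exactly the same route as the paper: assume $E_\omega(F)=0$, invoke Lemma~\ref{zero_omega} to write $F=Z^x\circ(\alpha z+\beta)$ with $\alpha\neq 0$, and conclude $E(F)=\infty$. The paper simply asserts the last implication in one sentence, while you carry out the explicit computation of $\int_\C F^*(d\phi\wedge\lambda)$; the argument is the same. One small expository point: in your closing remark, it is the \emph{non-constancy} of $F$ that forces $\alpha\neq 0$ (a constant $H$ would make $F$ constant), whereas the bounded-derivative hypothesis is what, via Liouville's theorem inside Lemma~\ref{zero_omega}, forces $H$ to be affine rather than a general entire function.
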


\begin{proof}
If $E_\omega(F)=0$ then, by Lemma~\ref{zero_omega}, there exists a trajectory $x$ such that $F(z) = Z^x(\alpha z+\beta)$ for some $\alpha\not=0$. This implies $E(F) = \infty$, a contradiction.
\end{proof}

%
%
%


The next lemma is an important characterization of non-constant finite-energy cylinders in cylindrical cobordisms.

\begin{lemma}\label{cylinder}
Fix $J\in\J(\H)$ and let $F =(a,f) : \R \times \R/\Z \to \R\times N$ be a non-constant finite-energy $J$-holomorphic map satisfying $E_\omega(F) = 0$. Then there exists $\gamma = (x,T) \in \P(\H)$, constants $a_0,b_0 \in \R$, and a sign $\epsilon = \pm1$ such that $F(s,t) = (\epsilon Ts+a_0,x(\epsilon Tt+b_0))$.
\end{lemma}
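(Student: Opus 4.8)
The plan is to run the standard degeneration analysis for a non-constant finite-energy cylinder in a cylindrical cobordism, but using only the weak structure $\ker\omega = \R X$ (so the results of \cite{sftcomp} are not directly invoked, in keeping with the spirit of the appendix). First I would observe that the hypothesis $E_\omega(F) = 0$ together with $\bar\partial_J(F) = 0$ forces $f^*\omega \equiv 0$ pointwise, so that exactly as in the proof of Lemma~\ref{zero_omega}: $df$ takes values in $\R X\circ f$, $da = 0$ precisely where $df = 0$, and the nonlinear Cauchy--Riemann equation decouples. The key structural point to extract is that along the cylinder the image is constrained: locally $F$ factors through a map $Z^x$ for a trajectory $x$ of $X$, i.e. $F$ maps into a single ``cylinder over a trajectory''. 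The subtlety compared to Lemma~\ref{zero_omega} is that the domain is $\R\times\R/\Z$ rather than $\C$, so I cannot immediately conclude the trajectory is periodic; I will have to recover periodicity from the asymptotic analysis.

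Next I would handle the asymptotics. Since $0 < E(F) < \infty$, at least one puncture, say $+\infty\times S^1$, is non-removable (if both were removable, $F$ would extend over a sphere and $E_\omega(F) = 0$ would force $F$ constant, using that $\int_{S^2} f^*\omega = 0$ and the factorization of Lemma~\ref{zero_omega} again). Here I would \emph{not} use Proposition~\ref{partial_asymptotic_behavior} directly, since $X$ need not be non-degenerate; instead I would argue by hand using Lemma~\ref{hofer}-type reasoning, or more cheaply: the factorization $F = Z^x\circ H$ with $H$ holomorphic (obtained as in Lemma~\ref{zero_omega}, lifting through the covering $\R\times\R/T\Z$ if $x$ is $T$-periodic) reduces the problem to understanding holomorphic maps $H : \R\times\R/\Z \to \R\times\R/T\Z$ (or into $\C$) of finite energy. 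A finite-energy holomorphic cylinder with values in $\R\times\R/T\Z$ and non-removable punctures must, after the biholomorphism $\R\times\R/\Z \cong \C^*$, be of the form $z\mapsto (\epsilon\log|z| + \text{const}, \text{arg-part})$; concretely $H(s,t) = (\epsilon k s + a_0', k(\epsilon t) + b_0')$ for an integer $k\neq 0$ and $\epsilon = \pm 1$. This is the step where periodicity of $x$ is forced: if $x$ were not periodic, $H$ would take values in $\C$ and finite energy would make $H$ bounded near the puncture hence constant there, contradicting non-removability. So $x = (x,T)$ with $T>0$ its minimal period (or a multiple thereof), and composing $Z^x$ with $H$ gives $F(s,t) = (\epsilon T' s + a_0, x(\epsilon T' t + b_0))$ for some period $T'$ of $x$; renaming $T := T'$ and noting $(x,T)\in\P(\H)$ completes the identification. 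One also checks the second puncture is then automatically non-removable with the same sign $\epsilon$, consistent with the formula.

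The main obstacle I anticipate is the asymptotic step: controlling the holomorphic function $H$ near the punctures without the hypothesis that $X$ is non-degenerate, i.e. showing that finite energy plus non-removability of the puncture forces the simple logarithmic normal form rather than something wilder like an essential singularity. The clean way around this is precisely the factorization through $Z^x$ reducing everything to a one-variable statement about $H$: a holomorphic map $\C^* \to \C$ or $\C^* \to \C/T\Z$ whose ``energy'' (here essentially controlled by $\sup|dF| \cdot$ the conformal data, plus finiteness of $E(F)$) is finite; then the puncture behavior of $H$ is governed by the classical removable/pole/essential trichotomy, and finite energy together with the requirement that the puncture be non-removable for $F$ rules out the removable case (constant) and the essential case, leaving the pole case, which is exactly the logarithmic normal form after passing to $\R\times\R/\Z$. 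Once that normal form for $H$ is in hand, the rest is bookkeeping: substituting back into $Z^x$, reading off $\epsilon$, $a_0$, $b_0$, and the period $T$, and verifying $\gamma = (x,T)\in\P(\H)$.
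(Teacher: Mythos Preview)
Your plan is correct but takes a genuinely different route from the paper. The paper lifts $F$ to a $1$-periodic map on $\C$ and first shows $|dF|$ is bounded by a bubbling argument: if not, Lemma~\ref{bubb} produces a finite-energy plane whose $\omega$-energy must be positive by Lemma~\ref{plane}, contradicting $f^*\omega\equiv 0$. With bounded derivative, Lemma~\ref{zero_omega} applies on $\C$ and gives $F = Z^x\circ H$ with $H$ \emph{affine} by Liouville; $1$-periodicity in $t$ then forces $\alpha\in\R$ and $x$ to be $|\alpha|$-periodic in one line. Your route skips the bounded-derivative step and instead classifies the holomorphic factor $H:\C^*\to\C$ (or $\C^*\to\C^*$ in the periodic case) via the puncture trichotomy. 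This is valid, but you should make the key mechanism explicit rather than gesture at it: by the area formula the $\lambda$-part of the energy equals $\sup_\phi\int_\C\phi'(\mathrm{Re}\,w)\,\#H^{-1}(w)\,dA(w)$, so big Picard (essential singularity $\Rightarrow \#H^{-1}(w)=\infty$ a.e.) forces infinite energy, and in the non-periodic target case even $\#H^{-1}(w)\geq 1$ on a set of infinite measure already gives $\int dv=\infty$, which forces $x$ periodic and then pins down $H(z)=cz^k$. The paper's approach is shorter in context since it reuses Lemmas~\ref{bubb} and~\ref{plane} already in place; yours avoids the bubbling analysis but imports Picard's theorem and requires more case analysis.
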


\begin{proof}
We can think of $F(s+it)$ as defined on $\C$ and $i$-periodic. 
We claim that $|dF|$ is bounded. If not, let $z_n$ satisfy $|dF(z_n)| \to \infty$ and consider $F_n(z) := \tau_{c_n} \circ F(z+z_n)$ with $c_n = -a(z_n)$, which is again $J$-holomorphic. By Lemma~\ref{bubb_inv} applied to $F_n$ and Lemma~\ref{plane} we can assume, up to the choice of a subsequence, that there exists $r_n \to 0^+$ and $z_n' \in\C$ satisfying $|z'_n-z_n| \to 0$ and $$ \liminf_{n\to\infty} \int_{B_{r_n}(z'_n)} f^*\omega > 0 $$ contradicting our hypotheses. By Lemma~\ref{zero_omega} we find a trajectory $x$ and constants $\epsilon = \pm1$, $T>0$, $\alpha = \epsilon T+ib$, $\beta = a_0+ib_0$ satisfying 
\[
F(s+it) = Z^x(\alpha z+\beta) = (\epsilon Ts-bt+a_0,x(bs+\epsilon Tt+b_0)).
\]
Since $F$ is $i$-periodic we have that $b=0$ and that $x$ is $T$-periodic.
\end{proof}

Assume that $\omega$ has a primitive $\alpha$ on a neighborhood of $K$ such that $\inf_K i_X\alpha >0$. We wish to show that, as in the contact case~\cite{93}, for finite-energy curves with image in $K$ it is possible to distinguish between positive/negative punctures, and non-removable punctures give periodic orbits for $X$. Note that $\alpha$ is a contact form near $K$. In the language of~\cite{hofer_ias} $X$ is Reeb-like near $K$ since it is a positive multiple of the Reeb vector field of $\alpha$.

Following~\cite{93}, if $F=(a,f):[0,+\infty) \times\R/\Z \to \R\times K$ is a $J$-holomorphic finite energy half-cylinder, for some fixed $J\in\J(\H)$, then the limit $$ m := \lim_{s\to\infty} \int_{\R/\Z} f^*\alpha $$ exists since $E_\omega(F)<\infty$. Under these assumptions the following important result due to Hofer~\cite{93} holds.

\begin{lemma}\label{hofer}
Let $\alpha$ be a primitive of $\omega$ near $K$ satisfying $\inf_K i_X\alpha>0$. Assume also that $m\not=0$, and let $\epsilon = \pm1$ be its sign. Then $a(s,t) \to \epsilon\infty$ as $s\to\infty$, and $\forall s_n\to\infty$ there exist $n_j \to \infty$, a periodic orbit $\gamma = (x,T) \in \P(\H)$ and $c\in\R$ such that $f(s_{n_j},t) \to x(\epsilon Tt+c)$ in $C^\infty$ as $j \to \infty$.
\end{lemma}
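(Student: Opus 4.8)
The plan is to adapt Hofer's analysis of finite-energy half-cylinders from \cite{93}; the subtlety here is that $X$ is only assumed Reeb-like on the compact set $K$, so the SFT compactness theorem is unavailable and one must argue directly with the soft-rescaling lemmas of this appendix. I would begin with two elementary observations: since $J\in\J(\H)$ and $F$ is $J$-holomorphic, the integrand $f^*\omega$ is pointwise non-negative (it equals the squared norm of the $\xi$-component of $\partial_t f$ in the $\omega$-compatible metric on $\xi$), and $E_\omega(F)\le E(F)<\infty$; consequently $\int_{[s,\infty)\times S^1}f^*\omega\to 0$ as $s\to\infty$, and the function $s\mapsto\int_{\{s\}\times S^1}f^*\alpha$ is non-decreasing with limit $m$.

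The crucial step, and the one I expect to be the main obstacle, is a uniform gradient bound on a tail: $\sup_{[s_0,\infty)\times S^1}|dF|<\infty$ for some $s_0>0$. If this failed I would choose $z_n=(s_n,t_n)$ with $s_n\to\infty$ and $|dF(z_n)|\to\infty$ and apply Lemma~\ref{bubb} to the constant sequence $F_n\equiv F$: this yields, along a subsequence, a non-constant $J$-holomorphic plane $\tilde u:\C\to\R\times K$ with bounded gradient and $0<E(\tilde u)\le E(F)$, obtained as a $C^\infty_{\rm loc}$-limit of rescalings $\tau_{d_j}\circ F(z'_j+\cdot/R_j)$ with $\mathrm{Re}\,z'_j\to\infty$, $R_j\to\infty$. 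By Lemma~\ref{plane} one has $E_\omega(\tilde u)>0$; on the other hand $E_\omega(\tilde u)=\int_\C\tilde u^*\omega$ is the limit, as $R\to\infty$ and then $j\to\infty$, of the masses $\int_{B_{R/R_j}(z'_j)}f^*\omega$, each of which is at most $\int_{[\sigma_j,\infty)\times S^1}f^*\omega$ with $\sigma_j\to\infty$, forcing $E_\omega(\tilde u)=0$ --- a contradiction. It is precisely the degeneracy of $X$ away from $K$ that prevents the usual compactness proof, so this bubbling argument (together with Lemmas~\ref{bubb}, \ref{plane}, \ref{zero_omega}) is where the real work lies.

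Granting the gradient bound, the rest is routine. Given any $s_n\to\infty$, I would set $v_n=(a(\cdot+s_n,\cdot)-a(s_n,0),\,f(\cdot+s_n,\cdot))$; the gradient bound gives uniform $C^1_{\rm loc}$-bounds (the $N$-component stays in $K$, the $\R$-component grows at most linearly), hence $C^\infty_{\rm loc}$-bounds by elliptic estimates, so along a subsequence $v_{n_j}\to v=(b,g)$ in $C^\infty_{\rm loc}(\R\times S^1)$ with $v$ $J$-holomorphic and $g$ valued in $K$. Pushing the vanishing tail $\omega$-energy to infinity gives $E_\omega(v)=0$; rewriting $v_{n_j}^*(d\phi\wedge\lambda)$ as a pullback of $F^*(d\tilde\phi\wedge\lambda)$ over a translated domain gives $E(v)\le E(F)<\infty$; and $\int_{\{0\}\times S^1}g^*\alpha=\lim_j\int_{\{s_{n_j}\}\times S^1}f^*\alpha=m\ne 0$ shows $g$, hence $v$, is non-constant. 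Then Lemma~\ref{cylinder} produces $\gamma=(x,T)\in\P(\H)$, a sign $\epsilon'=\pm1$ and constants $a_0,c\in\R$ with $v(s,t)=(\epsilon'Ts+a_0,\,x(\epsilon'Tt+c))$, so $f(s_{n_j},\cdot)\to x(\epsilon'T\,\cdot\,+c)$ in $C^\infty(S^1,N)$; and since $i_X\alpha>0$ on $K$ the identity $m=\epsilon'T\int_0^1(i_X\alpha)(x(\epsilon'Tt+c))\,dt$ forces $\epsilon'=\epsilon$. Finally, to obtain $a(s,t)\to\epsilon\infty$: since $i_X\alpha$ is bounded above on the compact $K$, every limit orbit produced this way has period $T\ge T_0:=|m|/\sup_K(i_X\alpha)>0$; from $v_{n_j}\to v$ we get $\partial_s a(s_{n_j},\cdot)\to b_s\equiv\epsilon T$ uniformly, so $\int_0^1\partial_s a(s_{n_j},t)\,dt\to\epsilon T$, whose absolute value is $\ge T_0$ and whose sign is $\epsilon$. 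As this holds along a subsequence of an arbitrary $s_n\to\infty$, the mean $\bar a(s):=\int_0^1 a(s,t)\,dt$ satisfies $\mathrm{sgn}\,\bar a'(s)=\epsilon$ and $|\bar a'(s)|\ge T_0$ for all large $s$, hence $\bar a(s)\to\epsilon\infty$, and the bound $|a(s,t)-\bar a(s)|\le\sup_t|\partial_t a(s,t)|\le C$ coming from the gradient estimate completes the argument.
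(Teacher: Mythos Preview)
Your proof is correct and follows essentially the same three-step structure as the paper's: a gradient bound via bubbling (Lemmas~\ref{bubb} and~\ref{plane}) contradicting vanishing tail $\omega$-energy, subsequential convergence of translates to a trivial cylinder via Lemma~\ref{cylinder}, and divergence of $a$ via the mean $\bar a(s)$. The only noteworthy difference is in the lower bound for $\bar a'$: the paper uses the minimal period $\sigma$ of closed $X$-orbits in $K$ and computes $\bar a'(s_n)=\int_0^1 (f^*\lambda)(\partial_t)\,dt \to T \geq \sigma$ directly from $i_X\lambda=1$, whereas you extract the bound $T\geq T_0=|m|/\sup_K(i_X\alpha)$ from the identity $m=\epsilon T\int_0^1(i_X\alpha)\circ x$; your choice is slightly more self-contained since it does not require knowing a priori that $\sigma>0$. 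One small expository slip: from the subsequence argument you only get $\liminf_{s\to\infty}\epsilon\bar a'(s)\geq T_0$, not $|\bar a'(s)|\geq T_0$ for all large $s$, but of course this is already enough for $\bar a(s)\to\epsilon\infty$.
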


We give a proof here since the statement above can not be explicitly found in the literature. The difference with the results from~\cite{93} is that $X$ is not the Reeb vector of $\alpha$ near $K$, and $\xi$ is not a contact structure (it might even be integrable). Note that the dynamics of $X$ is {\bf not} assumed to be transversely nondegenerate.

\begin{proof}
First we show $|dF|$ is bounded. If not let $(\rho_n,t_n)$ satisfy $|dF(\rho_n,t_n)| \to \infty$ and $\rho_n\to+\infty$. Define $F_n(s,t) := F(s+\rho_n,t)$ and write $F_n = (a_n,f_n)$. It follows from $E(F) < \infty$ that $\int_C f_n^*\omega \to 0$ for every compact $C\subset \R\times \R/\Z$. A combined application of Lemma~\ref{bubb_inv} and Lemma~\ref{plane} shows that $|dF_n|$ is bounded over compact sets, contradicting $|dF_n(0,t_n)|\to\infty$.

Suppose $m>0$. Define $F_n(s,t) := \tau_{c_n} \circ F(s+s_n,t)$ with $c_n = -a(s_n,0)$. Thus, by the above, $F_n$ is $C^1_{\rm loc}$-bounded and elliptic estimates tell us it is $C^\infty_{\rm loc}$-bounded. We find $n_j \to \infty$ and a smooth $J$-holomorphic map $u:\R\times \R/\Z \to \R\times N$ such that $F_{n_j} \to u$ in $C^\infty_{\rm loc}$ as $j\to\infty$. Clearly $E(u) \leq E(F)$, $E_\omega(u)=0$ and $\text{image}(u) \subset \R\times K$. Moreover $u$ is non-constant since $\int_{0\times \R/\Z} u^*\alpha = m > 0$. By Lemma~\ref{cylinder} we have $u(s,t) = (Ts+a_0,x(Tt + b_0))$, for some $(x,T) \in \P(\H)$ contained in $K$. Here we used the fact that $\inf_K i_X\alpha>0$ to conclude that the sign in Lemma~\ref{cylinder} is $+1$. 

We now prove $a(s,t) \to +\infty$ as $s\to\infty$. Consider the mean $\bar a(s) := \int_0^1 a(s,t) dt$ and $\sigma := \min \{ T>0 \mid \exists \ \gamma \in \P(\H) \text{ contained in } K \text{ with period } T \} > 0$. We claim that $\liminf_{s\to\infty} \bar a'(s) \geq \sigma$. If not let $s_n \to \infty$ satisfy $\sup_n \bar a'(s_n) \leq \sigma-\epsilon$. Arguing as above we can assume, up to the choice of a subsequence, that $f(s_n,t) \to x(Tt+c)$ in $C^\infty$, for some $\gamma=(x,T) \in \P(\H)$ contained in $K$, and $c\in\R$. Let $\lambda$ be the $1$-form defined by~\eqref{1form}. Then
\[
  \begin{aligned}
    \sigma-\epsilon &\geq \limsup_{n\to\infty} \bar a'(s_n) =  \limsup_{n\to\infty} \int_0^1 a_s(s_n,t)dt \\
    & = \limsup_{n\to\infty} \int_0^1 \lambda(f(s_n,t)) \cdot f_t(s_n,t)dt = T \geq \sigma.
  \end{aligned}
\]
This contradiction proves our claim. Thus $\bar a(s) \to +\infty$ as $s\to+\infty$. We conclude the case $m>0$ since $|a(s,t)-\bar a(s)|$ is uniformly bounded by $\sup |a_t|<\infty$. The case $m<0$ is treated similarly.
\end{proof}

The following statement, left with no proof, follows easily from the assumption that $\omega$ has a primitive $\alpha$ on $K$ satisfying $\inf_K i_X\alpha > 0$.

\begin{lemma}\label{std_props}
Suppose that $\omega$ has a primitive $\alpha$ on $K$ satisfying $\inf_K i_X\alpha > 0$. If $S$ is a closed Riemann surface and $F = (a,f) : S\setminus M \to \R\times K$ is a non-constant finite-energy $J$-holomorphic, where $M\subset S$ is finite, then $M\neq \emptyset$ and at least one point of $M$ is a non-removable positive puncture.
\end{lemma}

Finally, we prove a stronger version of Lemma~\ref{bubb_inv} 
needed in the arguments throughout the paper. Consider a stable Hamiltonian structure $\H = (\xi,X,\omega)$ defined on a manifold $N$ and a compact smooth domain $K\subset N$ with boundary. We fix $L>0$, $J \in \J(\H)$, and consider $\H_n^\pm = (\xi^\pm_n,X^\pm_n,\omega^\pm_n) \to \H$ in $C^\infty_{\rm loc}$, $J_n^\pm \in \J(\H_n^\pm)$ and $\jbar_n \in \J_L(J_n^-,J_n^+)$ such that 
\[
J_n^-,J_n^+,\jbar_n \to J
\]
in $C^\infty$ (weak or strong) as $n\to\infty$. 
Suppose $\Omega_n$ are symplectic forms on $[-L,L]\times K$ compatible with $\jbar_n$ which agree with $\omega^\pm_n$ on $T(\{\pm L\} \times K)$ up to constant positive multiples, and converge to a fixed symplectic form $\Omega$ on $[-L,L]\times K$ compatible with $J$, as $n \to \infty$. Below we use $\Omega_n$ to define the energy of $\jbar_n$-holomorphic maps as in~\eqref{energy_cobordism}.

\begin{lemma}\label{bubb}
Assume that $\omega$ has a primitive $\alpha$ near $K$ satisfying $\inf_K i_X\alpha>0$, and let $F_n = (a_n,f_n) : \R\times S^1 \to \R\times K$ be smooth $\jbar_n$-holomorphic cylinders satisfying $E(F_n)\leq C$, $\forall n$. Viewing $F_n(s,t) \sim F_n(s+it)$ as $i$-periodic maps on $\C$, suppose $\exists \{z_n\} \subset \C$ such that $|dF_n(z_n)|\to\infty$. Then one finds sequences $\{z'_j\} \subset \C$, $\{\delta_j\},\{R_j\},\{d_j\} \subset \R$, and a subsequence $F_{n_j}$ such that $|z_{n_j}-z'_j| \to 0$, $\delta_j\to 0^+$, $R_j \to+\infty$, $\delta_j R_j \to +\infty$, and that the maps
\[
  \begin{array}{ccc}
    \util_j : B_{\delta_j R_j}(0)  \to \R\times N, &  & \util_j(z) = \tau_{d_j} \circ F_{n_j} (z'_j + z/R_j)
  \end{array}
\]
$C^\infty_{\rm loc}$-converge to a $J$-holomorphic map $\util:\C\to\R\times N$ satisfying $0<E(\util) \leq M C$ and $\sup_{z\in\C} |d\util(z)| < \infty$, for some $M>0$ independent of $\{F_n\}$.
\end{lemma}

In the above statement $E(\util)$ is the energy~\eqref{energy_cylindrical}.

\begin{proof}
Arguing as in the proof of Lemma~\ref{bubb_inv}, using Hofer's lemma, we find $\util$ as the $C^\infty_{\rm loc}$-limit of a sequence of the $\jbar_n$-holomorphic maps $\util_j(z) = \tau_{d_j} \circ F_{n_j} (z'_j + z/R_j)$ defined on balls $B_{\delta_jR_j}(0)$, where the $z'_j$, $\delta_j$, $d_j$, $R_j$ satisfy the requirements in the statement. In particular, $\util$ is nonconstant and has bounded derivative.

To check the assertion about the energy, consider first the case $d_j$ is bounded. Hence, up to selection of a subsequence, $\exists d\in\R$ such that $0\leq d_j-d \to 0$ or $0\geq d_j-d \to 0$. We treat the first case, the other is analogous. In this case the characteristic function of $(-L+d_j,L+d_j]$ converges pointwise to that of $(-L+d,L+d]$. By Fatou's lemma
\begin{equation*}
\begin{aligned}
\int_{\util^{-1}((-L+d,L+d]\times N)} \util^*(\tau_{-d}^*\Omega) & \leq \liminf\int_{\util_j^{-1}((-L+d_j,L+d_j]\times N)} \util_j^*(\tau_{-d_j}^*\Omega_{n_j}) \\
&\leq \liminf \int_{F_{n_j}^{-1}((-L,L]\times N)} F_{n_j}^*\Omega_{n_j} \leq E(F_{n_j}) \leq C.
\end{aligned}
\end{equation*}
Analogously we can analyze the other relevant integrals to obtain an estimate for the following modified energy $E_{d,\Omega}(\util)$ of $\util$:
\begin{equation*}
\begin{aligned}
E_{d,\Omega}(\util) := \sup_{\phi\in\Lambda} & \int_{\util^{-1}((\R\setminus(-L+d,L+d])\times N)} \util^*(d\phi\wedge\lambda + \omega) \\
&+ \int_{\util^{-1}((-L+d,L+d]\times N)} \util^*(\tau_{-d}^*\Omega) \leq C.
\end{aligned}
\end{equation*}

\noindent {\bf Claim:} $|a(z)|\to\infty$ as $|z|\to\infty$.

\begin{proof}[Proof of Claim]
Using $E_{d,\Omega}(\util)<\infty$ we find $E_\omega(\util)<\infty$. 
Considering $\vtil(s,t) = \util(e^{2\pi(s+it)})$ defined on $\R\times S^1$, we must have that $d\vtil$ bounded. If not we get a contradiction in the following way. View $\vtil(s+it) \sim \vtil(s,t)$ as an $i$-periodic map on $\C$, and suppose that $\{\zeta_k\}\subset\C$ satisfies $|\zeta_k|\to\infty$ and $|d\vtil(\zeta_k)|\to\infty$. Then arguing as in Lemma~\ref{bubb_inv}, using Hofer's lemma, we find $\eta_k,\delta_k,R_k\in\R$ such that $\delta_k \to 0^+$, $R_k\to\infty$, $\delta_kR_k\to\infty$ and, perhaps after changing $\zeta_k$, the maps $\zeta \in B_{\delta_kR_k}(0) \mapsto \tau_{\eta_k}\circ \vtil(\zeta_k+\zeta/R_k)$ converge in $C^\infty_{\rm loc}$ to a nonconstant $J$-holomorphic plane $\widetilde w$ with $|d\widetilde w|$ bounded. It is easy to estimate $E_{d,\Omega}(\widetilde w)<\infty$ from $E_{d,\Omega}(\util)<\infty$, and we must have $E_\omega(\widetilde w)=0$ since 
$|\Re [\zeta_k]| \to \infty$ and $\delta_k\to0$. Then applying Lemma~\ref{zero_omega} we conclude that $\widetilde w(z) = Z^x(\alpha z+\beta)$ where $x$ is some $X$-trajectory. But this gives $E_{d,\Omega}(\widetilde w)=\infty$, a contradiction which shows that $d\vtil$ is bounded. Now we conclude using the Monotonicity Lemma in a standard fashion. We cannot have $a(z)$ bounded since, otherwise, the Removable Singularity Theorem would imply together with Stokes Theorem that $E_\omega(\util)=0$. Here we used that $\omega$ is exact by assumption. As explained above, since $d\util$ is bounded, Lemma~\ref{zero_omega} would give $E_{d,\Omega}(\util)=\infty$, an absurd. Thus, if $|a(z)|$ does not explode as $|z|\to\infty$ we find sequences sequences $(s_k,t_k)$ and $(s_k',t_k')$ such that $s_k<s'_k<s_{k+1}$ and, writing $\vtil=(b,v)$, we have $\sup_kb(s_k,t_k)<\infty$ and $b(s_k',t_k') \to\infty$. Since $d\vtil$ is bounded we have $\sup_k\sup_{t}b(s_k,t)<\infty$ and $\inf_t b(s_k',t) \to\infty$. Choose $\phi_0 \in \Lambda$ satisfying $\phi_0,\phi_0'>0$. Then $\Omega_0 = d(\phi_0\lambda) + A\omega$ is a symplectic form on $\R\times N$ when $A>0$ is large enough because $\ker\omega\subset\ker d\lambda$. Clearly $\Omega_0$ is compatible with $J$. Hence, applying the Monotonicity Lemma at a fixed compact piece of $\R\times N$, see~\cite{hummel}, we find smooth compact domains $S_k \subset \C$ such that $\inf\{|z|:z\in S_k\} \to \infty$ as $k\to\infty$, and that $\liminf_k \int_{S_k} \util^*\Omega_0 > 0$, a contradiction to $\int_\C\util^*\Omega_0<\infty$.
\end{proof}

For any open set $V\subset\C$ the integrals below (taken with respect to the standard orientation of $\C$) converge and we have estimates
\[
\left| \int_V u^*d\lambda \right| \leq C' \int_V u^*\omega, \ \ \ \left| \int_V \util^*(\phi d\lambda) \right| \leq C' \|\phi\|_\infty \int_V u^*\omega
\]
with $C'$ independent of $\util$. This is so because $\ker\omega\subset\ker d\lambda$, $\omega$ is symplectic in $\xi$ and $u$ is $J$-holomorphic. Let $x_k^+ < L+d$, $x_k^- > -L+d$ be regular values of $a(z)$ satisfying $x_k^\pm \to \pm L+d$. Set $U_k = \util^{-1}([x_k^-,x_k^+]\times N) = a^{-1}([x_k^-,x_k^+])$. Hence $U_k \subset \C$ are smooth closed sets, their boundaries split as $\partial U_k = \partial^+U_k \cup \partial^-U_k$, where $\partial^\pm U_k = a^{-1}(x_k^\pm)$ is oriented as the boundary of $U_k$. We have $\partial^\pm U_k = \pm \partial\{a\leq x_k^\pm\}$ with orientations, and also that $\cup_k \{a\leq x_k^+\}$ is a bounded set since $|a(z)|\to\infty$ as $|z|\to\infty$. Fixing $\phi\in\Lambda$ we compute
\[
\begin{aligned} 
\int_{U_k} \util^*(d\phi\wedge\lambda) &= \int_{U_k} \util^*d(\phi\lambda) - \int_{U_k} \util^*(\phi d\lambda) \\
&= \phi(x_k^+) \int_{\partial^+U_k} u^*\lambda + \phi(x_k^-) \int_{\partial^-U_k} u^*\lambda - \int_{U_k} \util^*(\phi d\lambda) \\
& \leq \left| \int_{\{a\leq x_k^+\}} u^*d\lambda \right| + \left| \int_{\{a\leq x_k^-\}} u^*d\lambda \right| + \left| \int_{U_k} \util^*(\phi d\lambda) \right| \\
&\leq 3C'E_\omega(\util).
\end{aligned}
\]
Letting $k\to\infty$ we get via the monotone convergence theorem
\[
\int_{\util^{-1}((-L+d,L+d)\times N)} \util^*(d\phi\wedge\lambda) \leq 3C' E_\omega(\util).
\]
Now note that
\[
\int_{\util^{-1}(\{L+d\}\times N)} \util^*(d\phi\wedge\lambda) = 0.
\]
Indeed, define the smooth function $h:\C\to\R$ by $\util^*(d\phi\wedge\lambda) = h(s+it)ds\wedge dt$. Consider the (Lebesgue) measurable set $E = \util^{-1}(\{L+d\}\times N)$. The Lebesgue measure of $E \cap \{h \neq 0\}$ is zero since it is a $1$-dimensional submanifold of $\C$. To see this note that $\util$ is transverse to $\{L+d\}\times N$ at a given point $z \in E$ where $h(z)\neq0$ because $d\phi\wedge\lambda$ vanishes on $T(\{L+d\}\times N)$. Hence $h$ integrates to zero over $E$, implying the above identity. It follows that
\[
\int_{\util^{-1}((-L+d,L+d]\times N)} \util^*(d\phi\wedge\lambda) \leq 3C' E_\omega(\util).
\]
This estimate holds independently of $\phi \in \Lambda$.

It is not hard to get an estimate $E_\omega(\util) \leq C'' E_{d,\Omega}(\util)$ for some $C''>0$ independent of $d$, $\Omega$ and $\util$. Thus, combining all these facts we get $M>0$ independent of $d$, $\Omega$ and $\util$ such that $E(\util) \leq M E_{d,\Omega}(\util)$. The case $d_n$ is unbounded is easier. 
\end{proof}

\end{document}